\newtheorem{theorem}{Theorem}[section]
\newtheorem{lemma}[theorem]{Lemma}
\newtheorem{corollary}[theorem]{Corollary}
\newtheorem{proposition}[theorem]{Proposition}
\theoremstyle{definition}
\newtheorem{definition}[theorem]{Definition}
\newtheorem{example}[theorem]{Example}
\theoremstyle{remark}
\newtheorem{remark}[theorem]{Remark}
\newtheorem{conjecture}[theorem]{Conjecture}
\newtheorem{problem}[theorem]{Problem}
\numberwithin{equation}{section}
\newcommand{\op}[1]{\operatorname{#1}}
\newcommand{\GL}{\operatorname{GL}}
\newcommand{\SL}{\operatorname{SL}}
\newcommand{\Gr}{\operatorname{Gr}}
\newcommand{\Rep}{\operatorname{Rep}}
\newcommand{\Hom}{\operatorname{Hom}}
\newcommand{\PHom}{\operatorname{PHom}}
\newcommand{\Ext}{\operatorname{Ext}}
\newcommand{\ext}{\operatorname{ext}}
\newcommand{\e}{{\sf e}}
\newcommand{\f}{{\sf f}}
\newcommand{\g}{{\sf g}}
\newcommand{\h}{{\sf h}}
\newcommand{\bn}{{\beta_n}}
\newcommand{\Tr}{\operatorname{Tr}}
\newcommand{\Ker}{\operatorname{Ker}}
\newcommand{\Coker}{\operatorname{Coker}}
\newcommand{\Img}{\operatorname{Im}}
\newcommand{\T}{\operatorname{T}}
\newcommand{\SI}{\operatorname{SI}}
\newcommand{\rank}{\operatorname{rank}}
\newcommand{\proj}{\operatorname{proj}\text{-}}
\newcommand{\dv}{\underline{\dim}}
\newcommand{\ckQ}{\widehat{k\Delta}}
\newcommand{\wtd}[1]{\widetilde{#1}}
\newcommand{\mc}[1]{\mathcal{#1}}
\newcommand{\mb}[1]{\mathbb{#1}}
\newcommand{\mr}[1]{{\sf #1}}
\newcommand{\bs}[1]{\boldsymbol{#1}}
\renewcommand{\b}[1]{\bold{#1}}
\newcommand{\cone}[1]{\mb{R}_+\Sigma_{\beta_{#1}}({T_{#1}})}
\newcommand{\br}[1]{\overline{#1}}
\newcommand{\innerprod}[1]{\langle#1\rangle}
\newcommand{\sm}[1]{\left(\begin{smallmatrix}#1\end{smallmatrix}\right)}
\def\lrcoef{c^\lambda_{\mu\,\nu}}
\def\tripar{(\lambda,\mu,\nu)}
\def\lr#1{{\sf LR}_{#1}}
\begin{document}

\title{Cluster Algebras and Semi-invariant Rings I. Triple Flags}
\author{Jiarui Fei}
\address{Department of Mathematics, University of California, Riverside, CA 92521, USA}
\email{jiarui@ucr.edu}
\thanks{}

\subjclass[2010]{Primary 13F60, 16G20; Secondary 13A50, 52B20}

\date{}
\keywords{Semi-invariant Ring, Quiver Representation, Cluster Algebra, Quiver with Potential, Mutation, Cluster Character, Triple Flag, Stability, Littlewood-Richardson Coefficient, Hive Polytope, Hive Quiver, Lattice Point}

\begin{abstract}
We prove that each semi-invariant ring of the complete triple flag of length $n$ is an upper cluster algebra associated to an ice hive quiver.
We find a rational polyhedral cone $\mr{G}_n$ such that the generic cluster character maps its lattice points onto a basis of the upper cluster algebra.
As an application, we use the cluster algebra structure to find a special minimal set of generators for these semi-invariant rings when $n$ is small.
\end{abstract}

\maketitle

\section*{Introduction}

The cluster algebra has established its wide connection with many areas in mathematics since its discovery by Fomin and Zelevinsky one and half decade ago. There are two mainstreams of results - one is categorifying cluster algebras, the other is finding cluster structure in natural mathematical objects. The purpose of this project is two-fold.
First, we hope to obtain more new examples, and put many old examples in a more uniform framework.
Second, we want to use the cluster algebra structure in an essential way to prove new results which seem unreachable by traditional methods.

The earliest examples of cluster algebras are the coordinate rings of $\SL_n/N$ and Grassmannians $\Gr\sm{n \\2}$ \cite{FZ1}.
Later the latter family was generalized by J. Scott to all Grassmannians of type $A$ \cite{Sc}, and by other authors to other types, remarkably \cite{GLSp}.
For quite a long time, they served as the main examples of cluster algebras.
Afterwards there are two kinds of generalizations, one is in a Lie-theoretic direction, and the other is in an invariant-theoretic direction.
In \cite{BFZ}, authors considered double Bruhat cells, and in \cite{GLSp}, authors considered unipotent cells and partial flag varieties.
Remarkably, J. Schr\"{o}er, B. Leclerc, and C. Greiss further generalized some of previous examples to the Kac-Moody setting \cite{GLSk}.
For the invariant-theoretic direction, S. Fomin and P. Pylyavskyy considered in \cite{FP} the semi-invariant ring of vectors and covectors in dimension three. It is further generalized to arbitrary dimension by K. Carde in a work-in-progress.

In this paper, we are going to see the cluster algebra structure in semi-invariant rings of quiver representation spaces.
For a fixed dimension vector $\beta$ of a quiver $Q$, the space of all $\beta$-dimensional representations is
$$\Rep_\beta(Q):=\bigoplus_{a\in Q_1}\Hom(k^{\beta(t(a))},k^{\beta(h(a))}).$$
The product of special linear group
$$\SL_\beta:=\prod_{v\in Q_0}\SL_{\beta(v)}$$
acts on $\Rep_\beta(Q)$ by the natural base change.
The rings of semi-invariants is by definition
$$\SI_\beta(Q):=k[\Rep_\beta(Q)]^{\SL_\beta}.$$
Many previous examples fall into this construction (up to some localization), including Grassmannians, partial flags, unipotent cells, double Bruhat cells, and mixed invariant rings at least in type $A$. So we just saw a tip of an iceberg.

However, it is impractical to consider any quiver with arbitrary dimension vector from the beginning.
The first family of quivers we consider here is the {\em triple flag quivers} $T_{p,q,r}$.
Such a family has established its importance around 2000 when Derksen and Weyman proved the saturation conjecture for the type-$A$ Littlewood-Richardson coefficients \cite{DW1}.
A triple flag $T_{p,q,r}$ with a dimension vector $\beta$ is called {\em complete} if $p=q=r=n$ and $\beta=\beta_n$ is {\em standard} as indicated below.
$$\vcenter{\xymatrix@R=1ex{
1\ar[r] & 2 \ar[r] & \cdots \ar[r]  & {n-1} \ar[dr] \\
1\ar[r] & 2 \ar[r] & \cdots \ar[r]  & {n-1} \ar[r] & n \\
1\ar[r] & 2 \ar[r] & \cdots \ar[r]  & {n-1} \ar[ur]
}}$$
We find quite amazingly that
\begin{theorem}[Theorem \ref{T:equal}]
$\SI_\bn(T_n)$ is the upper cluster algebra $\br{\mc{C}}(\Delta_n,\mc{S}_n)$ associated to the seed $(\Delta_n,\mc{S}_n)$.
\end{theorem}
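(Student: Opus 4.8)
The plan is to exhibit the seed $\mc{S}_n$ concretely inside the fraction field of $\SI_\bn(T_n)$ and then prove the two inclusions $\br{\mc{C}}(\Delta_n,\mc{S}_n)\subseteq\SI_\bn(T_n)$ and $\SI_\bn(T_n)\subseteq\br{\mc{C}}(\Delta_n,\mc{S}_n)$. For the seed, the cluster and coefficient variables of $\mc{S}_n$ are identified with explicit Schofield (determinantal) semi-invariants $c^{V}$, one for each vertex of the ice hive quiver $\Delta_n$: the frozen variables, on the boundary of the hive triangle, are determinantal semi-invariants built from the three flags (in particular all the flag minors), and the mutable variables, at the interior lattice points, are the determinantal semi-invariants attached to the hive coordinates of one fixed reference hive. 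Two things must be checked. First, algebraic independence of these $3n+\binom{n}{2}$ elements, which follows from a dimension count: the number of vertices of $\Delta_n$ equals the Krull dimension $\dim\Rep_\bn(T_n)-\dim\SL_\bn+\dim(\text{generic stabilizer})$ of $\SI_\bn(T_n)$. Second, and this is the crux, for each mutable vertex $k$ one must produce a semi-invariant $x_k'$ satisfying the exchange relation $x_k x_k'=M_1+M_2$ dictated by $\Delta_n$. The right-hand side at an interior lattice point has the shape of a three-term ``octahedron'' relation, and matching it with a genuine quadratic identity among the $c^{V}$'s is where the representation theory enters: such identities come from the classical short Plücker and determinantal identities after writing the $c^{V}$ in a standard affine chart, or from the multiplication on the relevant tensor-product multiplicity spaces. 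I expect arranging the combinatorics of $\Delta_n$ so that \emph{every} mutable vertex carries a relation of this recognizable form, and verifying it uniformly in $n$, to be the main obstacle.

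Granting the seed, the inclusion $\br{\mc{C}}(\Delta_n,\mc{S}_n)\subseteq\SI_\bn(T_n)$ follows from the standard containment criterion for upper cluster algebras. Since $\SI_\bn(T_n)=k[\Rep_\bn(T_n)]^{\SL_\bn}$ is the ring of invariants of the reductive group $\SL_\bn$ acting on an affine space, it is a finitely generated normal domain; hence it suffices to show that all cluster variables of $\mc{S}_n$, together with all cluster variables obtained from $\mc{S}_n$ by a single mutation, lie in $\SI_\bn(T_n)$ and are pairwise non-associate primes there. The first condition is exactly what the previous step supplies: the mutated variable $x_k'=(M_1+M_2)/x_k$ lies in $\SI_\bn(T_n)$ precisely because the identity $x_k x_k'=M_1+M_2$ was established there. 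Coprimality is checked by identifying $\{c^{V}=0\}$ with the non-semistable locus for the weight of $V$, which is an irreducible divisor, and by observing that the weights of the relevant $V$'s are pairwise distinct, so the divisors share no component.

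For the reverse inclusion $\SI_\bn(T_n)\subseteq\br{\mc{C}}(\Delta_n,\mc{S}_n)$, it suffices, by the Derksen--Weyman description of semi-invariant rings, to show that every member of a finite generating set of $\SI_\bn(T_n)$ --- a family of Schofield semi-invariants $c^{V}$ --- lies in the upper cluster algebra. One way is constructive: for each generating $c^{V}$ exhibit a finite mutation sequence from $\mc{S}_n$ that transports the reference hive to the extremal hive indexing the weight $\tripar$ of $c^{V}$, so that $c^{V}$ (or a monomial in it) appears as a cluster variable after finitely many steps; the bookkeeping is that of the octahedron recurrence run along a path in the hive polytope.

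The route I would ultimately favor replaces this constructive argument by a Hilbert-series comparison. Both $\SI_\bn(T_n)$ and $\br{\mc{C}}(\Delta_n,\mc{S}_n)$ are multigraded by the weight lattice, and the weight-$\tripar$ component of $\SI_\bn(T_n)$ has dimension $\lrcoef$, equal to the number of lattice points on the corresponding face of the cone $\mr{G}_n$. If the generic cluster character is shown to send the lattice points of $\mr{G}_n$ to linearly independent elements of $\br{\mc{C}}(\Delta_n,\mc{S}_n)$ of the expected weights, the two graded dimensions coincide, and together with the inclusion $\br{\mc{C}}(\Delta_n,\mc{S}_n)\subseteq\SI_\bn(T_n)$ this forces equality. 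This moves the remaining work into the polyhedral statement about $\mr{G}_n$ --- that the generic characters indexed by its lattice points are independent and exhaust every weight space --- and it is there that I expect most of the effort to lie.
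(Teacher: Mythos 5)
Your outline reproduces the paper's strategy almost exactly (Schofield semi-invariants $s(f_{i,j})$ as the seed on $\Delta_n$; factoriality plus one-step exchange relations for $\br{\mc{C}}(\Delta_n,\mc{S}_n)\subseteq\SI_\bn(T_n)$; generic cluster character on the lattice points of $\mr{G}_n$ plus the Littlewood--Richardson dimension count for the reverse inclusion), but as written it has a step that fails and leaves the two hardest ingredients unproven. The failing step is your justification of algebraic independence of the seed: counting that the number of vertices of $\Delta_n$ equals the Krull dimension of $\SI_\bn(T_n)$ proves nothing, since $\dim R$ many elements of a domain $R$ need not be algebraically independent, and no weight argument rescues this --- the level-1 weights $\f_{i,j}$ are far from linearly independent ($\binom{n+2}{2}-3$ of them, which is also the correct count rather than $3n+\binom{n}{2}$, sit in a lattice of rank $3n-2$). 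The paper flags this as a genuinely difficult point and proves it (Theorem \ref{T:ag-independent}) by induction on $n$, using Schofield's slice theorem for the exceptional sequence $\{{}^\alpha(\f_{0,n-1}),{}^\alpha(\f_{n-1,0})\}$, a localization lemma (Lemma \ref{L:localSI}), and an explicit evaluation argument (Lemma \ref{L:induction}).

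The other two ingredients you correctly identify as ``the main obstacle'' and ``where most of the effort lies,'' but you do not supply them, and they are the substance of the theorem. For the exchange relations, the paper does not hunt for Pl\"ucker-type identities in a chart: it computes via Theorem \ref{T:eqLR} that $\dim\SI_\bn^{\g_{i,j}}(T_n)=2$ and $\dim\SI_\bn^{\f_{i,j}'}(T_n)=1$, deduces irreducibility of $s_{i,j}'$ from indivisibility/extremality of $\f_{i,j}'$ (Lemmas \ref{L:irreducible}, \ref{L:f'}) --- note your identification of $\{c^V=0\}$ with a non-semistable locus is not how irreducibility is obtained and would itself need proof --- and then pins down the scalars in the three-term relation by evaluating at two explicit representations (Lemma \ref{L:exchange}). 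For the reverse inclusion, the Hilbert-series comparison only works after one (i) chooses the potential $W_n$ and shows $(\Delta_n,W_n)$ is rigid and Jacobi-finite, (ii) proves that the $\mu$-supported $\g$-vectors are exactly the lattice points of $\mr{G}_n$ (Theorem \ref{T:LP_Gn}, via the stability argument of Lemmas \ref{L:hom=0}--\ref{L:TIequi}), and (iii) constructs the volume-preserving linear isomorphism from $\mr{G}_n(\sigma)$ onto the Littlewood--Richardson triangle polytope so that its lattice points number $\lrcoef$ (Theorem \ref{T:VPiso}); linear independence of the images then comes from distinctness of $\g$-vectors and full rank of $B(\Delta_n)$ (Theorem \ref{T:CC}, Corollary \ref{C:GCC}). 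So the plan is the right one, but the proposal as it stands defers exactly the parts that constitute the proof, and the one argument it does offer in full --- the dimension count for algebraic independence --- is not valid.
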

\noindent Here, $\Delta_n$ is the ice {\em hive quiver} to be introduced in Section \ref{S:Hive}.
When $n=5$, it is displayed in Figure \eqref{fig:icehive}.
The initial {\em cluster variables} together with the {\em coefficients} \cite{FZ4} in $\mc{S}_n$ can be explicitly described in terms of {\em Schofield's semi-invariants} \cite{S1}.
By some rather trivial checking, we can show that the upper cluster algebra $\br{\mc{C}}(\Delta_n)$ is equal to the cluster algebra $\mc{C}(\Delta_n)$ for $n<6$.
However, we do not know if we always have the equality for any $n$.

To show $\br{\mc{C}}(\Delta_n,\mc{S}_n)\subseteq \SI_\bn(T_n)$, we use an idea similar to \cite{FP}.
The hard part is to show the other containment, because we do not even know a finite set of generators of $\SI_\bn(T_n)$.
For this, we consider another well-known model for the (type-A) Littlewood-Richardson coefficients -- Knutson-Tao's hive model \cite{KT}.
In fact, we consider a variation provided by \cite{PV} for the sake of exposition.
We show that each {\em hive polytope} is (volume-preservingly) isomorphic to certain polytope $\sf{G}_n(\sigma)$ inside the rational convex cone $\sf{G}_n$ spanned by {\em $\mu$-supported $\g$-vectors}. As a consequence, the lattice points of $\sf{G}(\sigma)$ count the dimension of {\em $\sigma$-graded} piece of $\SI_\bn(T_n)$.

To finish the proof, we need help from the machinery of {\em quivers with potentials} \cite{DWZ1,DWZ2}.
We assign a {\em rigid} potential $W_n$ to $\Delta_n$ such that the associated {\em Jacobian algebra} $J_n:=J(\Delta_n,W_n)$ is finite-dimensional.
We can view the lattice points of $\sf{G}_n(\sigma)$ inside $K_0(\proj J_n)$, the Grothendieck group of the homotopy category of bounded complexes of projective representations of $J_n$.
The other containment follows from the fact that the generic character of P. Plamondon \cite{P} maps the lattice points of $\sf{G}_n(\sigma)$ bijectively to a linearly independent set in the upper cluster algebra. As a corollary of the proof, we construct a basis of $\br{\mc{C}}(\Delta_n)$.
This can be viewed as an algebraic analogue of Fock-Goncharnov conjecture for $\SL_n$ \cite{FG}.

\begin{theorem}[Corollary \ref{C:basis}] The generic character maps the lattice points of $\sf{G}_n$ bijectively to a basis of $\br{\mc{C}}(\Delta_n)$.
\end{theorem}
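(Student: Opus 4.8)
The plan is to bootstrap from the graded-dimension count already established for the polytopes $\sf{G}_n(\sigma)$ together with the $\sigma$-graded pieces of $\SI_\bn(T_n)$, and to promote the fiberwise statements (over each stability weight $\sigma$) to a global statement over the whole cone $\sf{G}_n$. First I would recall that, by the analysis preceding Theorem \ref{T:equal}, the generic character $C^?$ of Plamondon sends each lattice point $\g\in\sf{G}_n\cap K_0(\proj J_n)$ to an element of the upper cluster algebra $\br{\mc C}(\Delta_n)$ whose ``leading term'' (in the sense of $\g$-vector / pointed element filtration coming from the initial seed) is the monomial $x^{\g}$; in particular the images of distinct lattice points have distinct leading terms, so the family $\{C^?(\g)\}_{\g\in\sf{G}_n\cap L}$ is linearly independent. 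This is the easy half and it does not even need the hive combinatorics — only that $\sf{G}_n$ is a cone of $\mu$-supported $\g$-vectors on which the generic character is defined and pointed.

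Second, I would establish spanning. Here is where the hive model does the work: for a fixed $\sigma$, the lattice points of $\sf{G}_n(\sigma)$ are in volume-preserving bijection with the lattice points of the corresponding hive polytope, and by Knutson--Tao (in the Pal--Velleda variant used in the excerpt) these count $\dim\SI_\bn(T_n)_\sigma = \dim\br{\mc C}(\Delta_n)_\sigma$, the last equality being Theorem \ref{T:equal}. Since the linearly independent set $\{C^?(\g) : \g\in\sf{G}_n(\sigma)\cap L\}$ lies inside the finite-dimensional graded piece $\br{\mc C}(\Delta_n)_\sigma$ and has cardinality equal to its dimension, it is a basis of that piece. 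Now I would let $\sigma$ range over all stability weights: every element of $\br{\mc C}(\Delta_n)$ is a finite sum of $\sigma$-graded components, each such component is spanned by the relevant $C^?(\g)$, and every lattice point of $\sf{G}_n$ lies in exactly one $\sf{G}_n(\sigma)$ (the cone is the union of these ``slices'' as $\sigma$ varies, by construction of $\sf{G}_n$ as the cone spanned by $\mu$-supported $\g$-vectors graded by their weight). Assembling these gives that $\{C^?(\g)\}_{\g\in\sf{G}_n\cap L}$ spans $\br{\mc C}(\Delta_n)$, and combined with the independence from the first step, it is a basis.

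The main obstacle, and the step I would treat most carefully, is the compatibility between the three gradings in play: the $\SL_\bn$-weight grading on $\SI_\bn(T_n)$, the grading on $\br{\mc C}(\Delta_n)$ by $\g$-vectors coming from the initial seed, and the decomposition of the cone $\sf{G}_n$ into its rational-polytope slices $\sf{G}_n(\sigma)$. One must check that under the isomorphism of Theorem \ref{T:equal} the $\sigma$-graded piece on the ring side corresponds precisely to the span of the generic-character images of lattice points in $\sf{G}_n(\sigma)$ — i.e., that the generic character is homogeneous for the relevant grading and that the $\mu$-support condition is exactly what cuts out the lattice points contributing to a given $\sigma$. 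This requires knowing that $J_n$ is finite-dimensional (so $K_0(\proj J_n)$ is the right home for $\g$-vectors and the generic character is everywhere defined), that $W_n$ is rigid (so Plamondon's theory applies and the $\g$-vector parametrization is honest), and that no two lattice points in the whole cone share a $\g$-vector — all of which are either assumed in the excerpt or part of the hive-to-$\sf{G}_n$ dictionary. Once the bookkeeping of gradings is pinned down, the argument is the dimension-count sandwich above, applied slice by slice and then unioned over $\sigma$.
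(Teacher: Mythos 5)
Your proposal is correct and follows essentially the same route as the paper: Corollary \ref{C:basis} is extracted from the proof of Theorem \ref{T:equal}, where linear independence comes from distinct $\g$-vectors (Corollary \ref{C:GCC}, via Lemma \ref{L:independent}), homogeneity of $C_{W_n}(\g)$ of degree $\g\bs{\sigma}_n$ comes from Lemma \ref{L:CCupper}, and the slice-by-slice dimension count $|\mr{G}_n(\sigma)\cap\mb{Z}^{\delta_n}|=\dim\SI_\bn(T_n)_\sigma$ comes from Theorems \ref{T:eqLR} and \ref{T:VPiso}, assembled over all weights $\sigma$ exactly as you describe (the reference for the LR-triangle polytopes is Pak--Vallejo, a minor naming slip). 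No substantive difference from the paper's argument.
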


So far only in few cases, a basis of a (upper) cluster algebra with coefficients is known.
To author's best knowledge, they are bases for cluster algebras of surface type \cite{MSW}, and cluster algebras arising from unipotent cells \cite{GLSg}.
However, the former only deals with the principal coefficients, the latter only deals with the coefficient-free ones.
We point out that Y. Kimura and F. Qin constructed in \cite{KQ} a remarkable {\em positive} quantum basis in a setting similar to \cite{GLSg}.

Another reason why triple flags are important is that many quivers can be embedded in a categorical sense into the triple flags.
For example, the quiver $S_6$
$$\vcenter{\xymatrix@R=3ex@C=1ex{
& 1\ar[dr]  && 1 \ar[dl] \\
1 \ar[rr] && 2 && 1 \ar[ll] \\
& 1 \ar[ur]  && 1 \ar[ul]
}}$$
can be embedded into $T_4$ with the indicated dimension vector $\gamma$ corresponding to the standard one $\beta_4$.
So finding their cluster structure will be ultimately reduced to the case of triple flags \cite{Fs2}.
Note that $\SI_\gamma(S_6)$ is the coordinate ring of the Grassmannian $\Gr\binom{6}{2}$.
In this way, we expect to be able to recover a lot of previous known examples including unipotent cells, double Bruhat cells, Grassmannians, partial flags, invariants of vector and covectors, and so on \cite{Fs2}.

Except for getting new examples, we can use the cluster algebra structure to find generators and relations of semi-invariant rings.
This will be done in a more general framework in \cite{Fp}.
In the present paper, we find a special minimal set of generators of $\SI_\bn(T_n)$ for each $n<6$.
The author tried to prove the same result for $n=4$ using a geometric method similar to \cite{C}, but the proof is quite long \cite{Ft4}.
The method seems hard to be generalized to bigger $n$. Now with the help of the cluster structure, we can finish all proofs in few pages.

This paper is organized as follows.
In Section \ref{S:SI}, we recall the work of Schofield, Derksen-Weyman, etc., on the semi-invariant rings of quiver representations.
In Section \ref{S:Tn}, we specialize the general theory to the family of triple flag quivers and make connection with the Littlewood-Richardson coefficients following \cite{DW1,DW2}.
In Section \ref{S:CA}, we recall the definition of cluster algebras and their upper bounds.
We introduce the weight configuration in Definition \ref{D:wtconfig}, which serves as the first layer of the possible cluster structure.

In Section \ref{S:QP}, we recall the mutation of quivers with potentials and their representations following \cite{DWZ1,DWZ2}.
Since we need to deal with general coefficient system, we consider something slightly more general,
called ice quivers with potentials and their $\mu$-supported representations (Definition \ref{D:ice}, \ref{D:mu_supported}).
In Section \ref{S:CC}, we recall the cluster character $C$ date back to Caldero-Chapton.
The version that we consider is in the setting of \cite{DWZ1,DWZ2}.
Theorem \ref{T:CC} gives a sufficient condition for the image of a set of representations being a linearly independent set in the upper cluster algebra.
Specializing this result to the generic character, we get a similar result in \cite{P}.
The key difference is that our domain $G(Q,W)$ is a subset of the full lattice $K_0(\proj J)$ due to the difference in the definition of upper cluster algebras.

In Section \ref{S:Hive}, we introduce the ice hive quiver with potential $(\Delta_n,W_n)$.
We give in Theorem \ref{T:LP_Gn} a complete description of the domain $G(\Delta_n,W_n)$ as lattice points of certain rational polyhedral cone $\mr{G}_n$.
In Section \ref{S:LR}, we prove in Theorem \ref{T:VPiso} that there is a volume-preserving linear transformation mapping the
cone $\mr{G}_n$ onto the Littlewood-Richardson cone.

In Section \ref{S:CS}, we prove our main results. We establish the cluster structure of $\SI_\bn(T_n)$ in Theorem \ref{T:equal}.
One difficult step is to show that our chosen initial seed is algebraically independent (Theorem \ref{T:ag-independent}).
As a corollary, we construct a basis of these (upper) cluster algebras.
In Section \ref{S:smalln}, we use the cluster structure to find a special minimal set of generators of $\SI_\bn(T_n)$ for $n< 6$.

\subsection*{Notations and Conventions}
Our vectors are exclusively row vectors. All modules are right modules.
For a quiver $Q$, we denote by $Q_0$ the set of vertices and by $Q_1$ the set of arrows.
For an arrow $a$, we denote by $t(a)$ and $h(a)$ its tail and head.
Arrows are composed from left to right, i.e., $ab$ is the path $\cdot \xrightarrow{a}\cdot \xrightarrow{b} \cdot$.
Throughout the paper, the base field $k$ is algebraically closed of characteristic zero.
Unadorned $\Hom$ and $\otimes$ are all over the base field $k$, and the superscript $*$ is the trivial dual.
For any representation $M$, $\dv M$ is the dimension vector of $M$.
For direct sum of $n$ copies of $M$, we write $nM$ instead of the traditional $M^{\oplus n}$.

\section{Semi-invariants of Quiver Representations} \label{S:SI}
\subsection{Schofield's Construction}
Let us briefly recall the semi-invariant rings of quiver representations \cite{S1}.
Let $Q$ be a finite quiver without oriented cycles.
We fix an algebraically closed field $k$ of characteristic zero.
For a dimension vector $\beta$ of $Q$, let $V$ be a $\beta$-dimensional vector space $\prod_{i\in Q_0} k^{\beta(i)}$. We write $V_i$ for the $i$-th component of $V$.
The space of all $\beta$-dimensional representations is
$$\Rep_\beta(Q):=\bigoplus_{a\in Q_1}\Hom(V_{t(a)},V_{h(a)}).$$
The product of general linear group
$$\GL_\beta:=\prod_{i\in Q_0}\GL(V_i)$$
acts on $\Rep_\beta(Q)$ by the natural base change. This action has a {\em kernel}, which is the multi-diagonally embedded multiplicative group $k^*$.
Define $\SL_\beta\subset \GL_\beta$ by
$$\SL_\beta=\prod_{i\in Q_0}\SL(V_i).$$
We are interested in the rings of semi-invariants
$$\SI_\beta(Q):=k[\Rep_\beta(Q)]^{\SL_\beta}.$$
The ring $\SI_\beta(Q)$ has a weight space decomposition
$$\SI_\beta(Q)=\bigoplus_\sigma \SI_\beta(Q)_\sigma,$$
where $\sigma$ runs through the multiplicative {\em characters} of $\GL_\beta$.
We refer to such a decomposition the $\sigma$-grading of $\SI_\beta(Q)$.
Recall that any character $\sigma: \GL_\beta\to k^*$ can be identified with a weight vector
$\sigma \in \mb{Z}^{Q_0}$
\begin{equation} \label{eq:char} \big(g(i)\big)_{i\in Q_0}\mapsto\prod_{i\in Q_0} \big(\det g(i)\big)^{\sigma(i)},
\end{equation}
Since $Q$ has no oriented cycles, the degree zero component is the field $k$ \cite{Ki}.

Let us understand these multihomogeneous components
$$\SI_\beta(Q)_\sigma:=\{f\in k[\Rep_\beta(Q)]\mid g(f)=\sigma(g)f, \forall g\in\GL_\beta \}.$$
For any projective presentation $f: P_1\to P_0$, we view it as an element in the homotopy category $K^b(\proj Q)$ of bounded complexes of projective representations of $Q$.
The {\em weight vector} $\f$ of $f$ is the corresponding element in the Grothendieck group of $K^b(\proj Q)$.
Concretely, suppose that $P_1=P(\sigma_1)$ and $P_0=P(\sigma_0)$ for $\sigma_i\in\mathbb{N}_0^{Q_0}$, then $\f = \sigma_1-\sigma_0$.
Here, we use the notation $P(\sigma)$ for $\bigoplus_{i\in Q_0} \sigma(i) P_i^{}$, where $P_i$ is the indecomposable projective representation corresponding to the vertex $i$.
From now on, we will view a weight $\sigma$ as an element in the dual $\Hom_{\mb{Z}}(\mb{Z}^{Q_0},\mb{Z})$ via the usual dot product.

We set $\sigma=\f$, and assume that $\sigma(\beta)=0$.
We apply the functor $\Hom_Q(-,N)$ to $f$ for $N\in\Rep_\beta(Q)$
\begin{equation} \label{eq:canseq} \Hom_Q(P_0,N)\xrightarrow{\Hom_Q(f,N)}\Hom_Q(P_1,N).
\end{equation}
Since $\sigma(\beta)=0$, $\Hom_Q(f,N)$ is a square matrix.
Following Schofield \cite{S1}, we define
$$s(f,N):=\det \Hom_Q(f,N).$$
We give a more concrete description for the map $\Hom_Q(f,N)$.\\
{\bf Concrete description of $\Hom_Q(f,N):\ $}
Recall that a morphism $P_1\xrightarrow{f} P_0$ can be represented by a matrix whose entries are linear combination of paths. Applying $\Hom_Q(-,N)$ to this morphism is equivalent to that we first transpose the matrix of $f$, then substitute paths in the matrix by corresponding matrix representations in $N$.

We set $s(f)(-)=s(f,-)$ as a function on $\Rep_\beta(Q)$. It is proved in \cite{S1} that $s(f)\in\SI_\beta(Q)_{\sigma}$.
In fact,
\begin{theorem}[\cite{DW1,SV}] \label{T:inv_span} $s(f)$'s span $\SI_\beta(Q)_{\sigma}$ over the base field $k$.
\end{theorem}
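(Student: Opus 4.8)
The statement asserts that the Schofield semi-invariants $s(f)$ with weight vector $\f = \sigma$ linearly span the weight space $\SI_\bn(Q)_\sigma$ — wait, more precisely $\SI_\beta(Q)_\sigma$ for a general quiver $Q$ without oriented cycles. Since the result is attributed to \cite{DW1,SV}, the plan is to reconstruct their argument rather than invent a new one. The essential input is a theorem of Derksen--Weyman (the "first fundamental theorem" for semi-invariants of quivers): the ring $\SI_\beta(Q)$ is spanned, as a vector space, by the Schofield semi-invariants $s(f, -)$ ranging over all projective presentations $f$, with the weight of $s(f,-)$ being exactly $\f$. The reduction I would carry out is: first establish that each $s(f)$ is $\sigma$-homogeneous with $\sigma = \f$ (this is already quoted from \cite{S1} in the paragraph above), so that the span of all $s(f)$ respects the weight decomposition; then invoke the spanning statement over the whole ring; then project to the $\sigma$-isotypic component, which forces the spanning $s(f)$'s for that component to be exactly those with $\f = \sigma$.

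Concretely, I would proceed as follows. \textbf{Step 1.} Recall from Schofield's construction that for any projective presentation $f\colon P_1 \to P_0$ with $\f = \sigma$ and $\sigma(\beta) = 0$, the function $s(f) = \det\Hom_Q(f,-)$ lies in $\SI_\beta(Q)_\sigma$; this is the cited result of \cite{S1} and needs no reproof. \textbf{Step 2.} Invoke the Derksen--Weyman spanning theorem: the $k$-span of $\{s(f) : f \text{ a projective presentation with } \f(\beta) = 0\}$ equals all of $\SI_\beta(Q)$. The proof of this in \cite{DW1} goes via the Cauchy/Littlewood--Richardson-type combinatorics together with the observation that the semi-invariants of determinantal type already generate in the case of the $m$-Kronecker quiver, and then a "universal" reduction transports this to arbitrary $Q$; alternatively one uses \cite{SV}, where Schofield--Van den Bergh show the $s(f)$ exhaust the semi-invariants by an argument with the canonical decomposition and a dimension count against the characters. \textbf{Step 3.} Given $h \in \SI_\beta(Q)_\sigma$, write $h = \sum_i c_i\, s(f_i)$ using Step 2; decompose the right-hand side according to weight and use Step 1 to note $s(f_i) \in \SI_\beta(Q)_{\f_i}$; comparing $\sigma$-components and using that the weight decomposition is a direct sum, only the terms with $\f_i = \sigma$ survive, so $h$ is in the span of $s(f)$'s with $\f = \sigma$. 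This is exactly the claim.

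\textbf{Main obstacle.} The nontrivial content is entirely in Step 2 — the global spanning statement — and this is genuinely a theorem (not a formality), which is why the excerpt attributes it to \cite{DW1,SV} rather than proving it. If a self-contained argument is wanted, the hard part is the reduction from arbitrary $Q$ to bipartite (in fact Kronecker-type) quivers via Derksen--Weyman's "reflection functor"/universal-localization technique, combined with the representation-theoretic interpretation of the multiplicity spaces as spaces of Schofield semi-invariants. Steps 1 and 3 are bookkeeping: Step 1 is quoted, and Step 3 is just the observation that a homogeneous element of a graded vector space lying in a span of homogeneous elements lies in the span of those of matching degree. So in the write-up I would state Step 1 as recalled, cite \cite{DW1,SV} for Step 2, and spell out only Step 3.
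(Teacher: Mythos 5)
The paper offers no proof of this statement: it is quoted directly from \cite{DW1,SV}, exactly as you do in your Step 2, and your Steps 1 and 3 are the same routine weight-homogeneity bookkeeping that the paper leaves implicit (note that the cited references already prove the weight-space version, so even the projection step is not really needed). Your parenthetical sketches of how \cite{DW1} and \cite{SV} argue are somewhat loose, but since you invoke them as citations rather than reprove them, this does not affect the correctness of your proposal.
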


It is easy to see that if $s(f)\neq 0$, then $f$ resolves some representation $M$, say of dimension $\alpha$
$$0\to P_1 \xrightarrow{f} P_0\to M\to 0.$$
From the long exact sequence
\begin{equation} \label{eq:canseq} \Hom_Q(M,N)\hookrightarrow\Hom_Q(P_0,N)\xrightarrow{\Hom_Q(f,N)}\Hom_Q(P_1,N)\twoheadrightarrow\Ext_Q(M,N),
\end{equation}
we see that $\alpha$ and $\sigma$ are related by
$\sigma(-)=-\innerprod{\alpha,-}_Q$, where $\innerprod{-,-}_Q$ is the Euler form of $Q$.
In this case, we call $\sigma$ the weight vector corresponding to $\alpha$, and denote it by $ ^\sigma\!\alpha$;
and conversely we call $\alpha$ the dimension vector corresponding to $\sigma$, and denote it by $^\alpha\!\sigma$.
It also follows from \eqref{eq:canseq} that $s(f,N)\neq 0$ if and only if $\Hom_Q(M,N)=0$, or equivalently~$\Ext_Q(M,N)=0$.
In this case, we say $M$ is (left) {\em orthogonal} to $N$, denoted by $M\perp N$.

We want to point out that the function $s(f)$ is determined, up to a scalar multiple, by the homotopy equivalent class of $f$, and thus by the isomorphism class of $M$.
If one hopes to define the function $s(f)$ uniquely in terms of representations, one can take the {\em canonical resolution} $f_{\op{can}}$ of $M$. In this case, we denote $s_M:=s(f_{\op{can}})$.
This is Schofield's original definition in \cite{S1}.

\subsection{Stability} We define the subgroup $\GL_\beta^\sigma$ to be the kernel of the character map \eqref{eq:char}. The invariant ring of its action is $$\SI_\beta^\sigma(Q):=k[\Rep_\beta(Q)]^{\GL_\beta^\sigma}=\bigoplus_{n\geqslant 0} \SI_\beta(Q)_{n\sigma}.$$

\begin{definition}
A representation $M\in\Rep_\beta(Q)$ is called {\em $\sigma$-semi-stable}
if there is some non-constant $f\in \SI_\beta^\sigma(Q)$ such that $f(M)\neq 0$.
It is called {\em stable} if the orbit $\GL_\beta^\sigma\cdot M$ is closed of dimension equal to $\dim\GL_\beta^\sigma-1$.
\end{definition}

\noindent Based on Hilbert-Mumford criterion, King provided a simple criterion for the stability of a representation.
\begin{lemma} \cite[Proposition 3.1]{Ki}  \label{L:King} A representation $M$ is $\sigma$-semi-stable (resp. $\sigma$-stable) if and only if $\sigma(\dv M)=0$ and $\sigma(\dv L)\geqslant 0$ (resp. $\sigma(\dv L)>0$) for any non-trivial subrepresentation $L$ of $M$.
\end{lemma}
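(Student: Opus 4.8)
The plan is to derive this from Mumford's geometric invariant theory, essentially following King \cite{Ki}. First I would dispose of the role of $\sigma(\dv M)$ directly: since $\dv M=\beta$ and the diagonally embedded $k^*\subset\GL_\beta$ acts trivially on $\Rep_\beta(Q)$ but through the character $t\mapsto t^{n\sigma(\beta)}$ on $\SI_\beta(Q)_{n\sigma}$, a nonzero semi-invariant of weight $n\sigma$ with $n>0$ can exist only when $\sigma(\beta)=0$. Hence if $\sigma(\dv M)=\sigma(\beta)\neq 0$, then $\SI_\beta^\sigma(Q)=k$, so no representation is $\sigma$-semi-stable, and on the other side the condition ``$\sigma(\dv M)=0$'' already fails; thus both sides of the asserted equivalence are false and we may assume $\sigma(\beta)=0$ from now on.

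Under this hypothesis, by definition $\SI_\beta(Q)_{n\sigma}$ (for $n\geqslant 0$) is the space of $\GL_\beta$-invariant global sections of the $n$-th tensor power of the trivial line bundle on the affine space $\Rep_\beta(Q)$ twisted by the character $\sigma$, so $\sigma$-semi-stability (resp.\ stability) in the sense of the Definition coincides with GIT semi-stability (resp.\ proper stability) for this linearized action. I would then invoke the Hilbert--Mumford numerical criterion in its character-twisted affine form \cite{Ki}: $M$ is semi-stable if and only if $\innerprod{\lambda,\sigma}\geqslant 0$ for every one-parameter subgroup $\lambda\colon k^*\to\GL_\beta$ for which $\lim_{t\to 0}\lambda(t)\cdot M$ exists in $\Rep_\beta(Q)$; and $M$ is stable if and only if in addition $\innerprod{\lambda,\sigma}>0$ for every such $\lambda$ not contained in the diagonal $k^*$, in which case the orbit is automatically closed of dimension $\dim\GL_\beta^\sigma-1$. (The direction of the inequality is the one forced by the sign conventions fixed in Section \ref{S:SI}.)

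The heart of the matter is the translation of this numerical criterion into the combinatorics of subrepresentations. Up to conjugacy, a one-parameter subgroup $\lambda$ is a $\mb{Z}$-grading $V_i=\bigoplus_{p}V_i^{(p)}$ of each vertex space; one checks that $\lim_{t\to 0}\lambda(t)\cdot M$ exists precisely when every structure map of $M$ preserves the descending filtration $F^{\geqslant m}V:=\bigoplus_{p\geqslant m}V^{(p)}$, i.e.\ when each $F^{\geqslant m}$ is a subrepresentation of $M$, and conversely every finite filtration of $M$ by subrepresentations arises from such a $\lambda$ once vertexwise splittings are chosen. A direct weight computation gives $\innerprod{\lambda,\sigma}=\sum_{p}p\,\sigma\big(\dv F^{\geqslant p}/F^{\geqslant p+1}\big)$, and an Abel-summation (telescoping) that uses $\sigma(\dv M)=0$ rewrites this as $\sum_{m}\sigma\big(\dv F^{\geqslant m}\big)$, a sum of values $\sigma(\dv L)$ over subrepresentations $L$ of $M$. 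Thus if every subrepresentation $L$ satisfies $\sigma(\dv L)\geqslant 0$, then $\innerprod{\lambda,\sigma}\geqslant 0$ for every admissible $\lambda$; conversely, applying the criterion to the two-step filtration $0\subset L\subset M$ attached to a given $L$ yields $\innerprod{\lambda,\sigma}=\sigma(\dv L)$, forcing $\sigma(\dv L)\geqslant 0$. The stable case is the same argument with all inequalities strict, together with the standard equivalence of proper stability with the orbit being closed and the stabilizer equal to the diagonal $k^*$.

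The one genuinely delicate point is citing the correct form of the Hilbert--Mumford criterion: the versions in the literature are usually stated for projective GIT quotients, so one must either use the character-twisted affine version directly (as in \cite{Ki}) or pass to a projectivization of $\Rep_\beta(Q)$ while keeping track of the linearization; everything else — the grading/filtration dictionary, the weight computation, and the telescoping — is routine bookkeeping with the conventions of Section \ref{S:SI}.
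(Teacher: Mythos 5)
The paper gives no proof of this lemma at all — it is quoted directly from King \cite[Proposition 3.1]{Ki}, and King's argument is exactly the one you reconstruct: reduce to the character-twisted affine Hilbert--Mumford criterion, translate one-parameter subgroups whose limits exist into filtrations of $M$ by subrepresentations, and compute the pairing $\innerprod{\lambda,\sigma}$ as a telescoping sum of values $\sigma(\dv L)$ using $\sigma(\dv M)=0$. Your write-up is correct, including the preliminary observation that $\sigma(\beta)\neq 0$ kills all semi-invariants of weight $n\sigma$, $n>0$, and the strict-inequality treatment of the stable case (granting, as you do, the standard GIT identification of the paper's orbit-theoretic definition of stability with proper stability in King's sense), so you have simply supplied the proof that the paper delegates to its reference.
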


Let $\Sigma_\beta(Q)$ be the set of all weights $\sigma$ such that $\SI_\beta(Q)_\sigma$ is non-empty.
Since $\SI_\beta(Q)_{\bs{0}}=k$, it spans a {\em pointed} cone $\mb{R}^+\Sigma_\beta(Q)$.
The next theorem is an easy consequence of King's stability criterion and Theorem \ref{T:inv_span}.
By a general $\beta$-dimensional representation, we mean in a sufficiently small Zariski open subset (``sufficient" here depends on the context).
Following \cite{S2}, we use the notation $\gamma\hookrightarrow\beta$ to mean that
a general $\beta$-dimensional representation has a $\gamma$-dimensional subrepresentation.

\begin{theorem} \cite[Theorem 3]{DW1} \label{T:cone} We have
$$\Sigma_\beta(Q)=\{\sigma\in \Hom_{\mb{Z}}(\mb{Z}^{Q_0},\mb{Z}) \mid \sigma(\beta)=0 \text{ and } \sigma(\gamma)\geq 0 \text{ for all } \gamma\hookrightarrow \beta\}.$$
In particular, $\Sigma_\beta(Q)$ is a saturated semigroup.
\end{theorem}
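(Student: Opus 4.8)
The plan is to prove the two inclusions separately; the assertion that $\Sigma_\beta(Q)$ is a saturated semigroup then comes for free, since the right-hand side is visibly the set of lattice points of the rational polyhedral cone cut out by the single equation $\sigma(\beta)=0$ together with the finitely many inequalities $\sigma(\gamma)\geq 0$ (only the finitely many $\gamma$ with $0\leq\gamma\leq\beta$ can satisfy $\gamma\hookrightarrow\beta$), hence is a saturated affine semigroup.

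For ``$\subseteq$'', take $\sigma$ with $\SI_\beta(Q)_\sigma\neq 0$. That $\sigma(\beta)=0$ is forced: the scalar copy of $k^*$ inside $\GL_\beta$ lies in the kernel of the action on $\Rep_\beta(Q)$ but acts on $\SI_\beta(Q)_\sigma$ through $\lambda\mapsto\lambda^{\sigma(\beta)}$, which is impossible on a nonzero weight space unless $\sigma(\beta)=0$. By Theorem \ref{T:inv_span} we may pick a nonzero Schofield semi-invariant $s(f)\in\SI_\beta(Q)_\sigma$; then $f$ resolves a representation $M$ with $\sigma={}^\sigma\!(\dv M)$, and its nonvanishing locus $U=\{N\mid M\perp N\}$ is a nonempty, hence dense, open subset of the irreducible affine variety $\Rep_\beta(Q)$. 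Excluding the trivial case $\sigma=0$, the function $s(f)$ is a nonconstant element of $\SI_\beta^\sigma(Q)$, so every $N\in U$ is $\sigma$-semistable, and King's criterion (Lemma \ref{L:King}) gives $\sigma(\dv L)\geq 0$ for every subrepresentation $L$ of such an $N$. Now fix any $\gamma\hookrightarrow\beta$; the locus of $\beta$-dimensional representations possessing a $\gamma$-dimensional subrepresentation is dense, hence meets $U$, and choosing $N$ in the intersection together with $L\subseteq N$ of dimension $\gamma$ yields $\sigma(\gamma)\geq 0$. As $\gamma$ was arbitrary, $\sigma$ lies in the right-hand side.

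For ``$\supseteq$'', assume $\sigma(\beta)=0$ and $\sigma(\gamma)\geq 0$ for all $\gamma\hookrightarrow\beta$. The first step is to observe that a general $N\in\Rep_\beta(Q)$ is $\sigma$-semistable: among the finitely many $\gamma$ with $0\leq\gamma\leq\beta$, those with $\gamma\not\hookrightarrow\beta$ carve out non-dense loci of representations admitting a $\gamma$-dimensional subrepresentation, so after removing all of them we obtain a dense open $U\subseteq\Rep_\beta(Q)$ on which every subrepresentation $L$ of a representation $N$ has $\dv L\hookrightarrow\beta$, hence $\sigma(\dv L)\geq 0$; together with $\sigma(\dv N)=\sigma(\beta)=0$ this makes $N$ semistable by Lemma \ref{L:King}. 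This already produces a nonconstant element of $\SI_\beta^\sigma(Q)$, i.e.\ $n\sigma\in\Sigma_\beta(Q)$ for some $n\geq 1$. The remaining, and decisive, step is to descend from $n\sigma$ to $\sigma$.

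For that last step I would again use Theorem \ref{T:inv_span}: it suffices to exhibit one projective presentation $f\colon P_1\to P_0$ with weight vector $\f=\sigma$ and $s(f)\neq 0$, since then $s(f)\in\SI_\beta(Q)_\sigma\setminus\{0\}$. Equivalently, using that $s(f,N)\neq 0$ iff the cokernel $M$ of $f$ satisfies $M\perp N$, one needs a representation $M$ with ${}^\sigma\!(\dv M)=\sigma$ --- which pins down $\dv M={}^\alpha\!\sigma$, the Euler form of the acyclic $Q$ being unimodular --- such that a general $M$ of that dimension is left-orthogonal to a general $N$ of dimension $\beta$; taking $f=f_{\op{can}}$ then finishes the proof. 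The content of this step, and the main obstacle, is precisely that the inequalities $\sigma(\gamma)\geq 0$ for $\gamma\hookrightarrow\beta$ are exactly what forces ${}^\alpha\!\sigma$ to be an honest nonnegative dimension vector and forces the generic orthogonality $\hom_Q(M,N)=\ext_Q(M,N)=0$; establishing this requires Schofield's theory of general representations and general subrepresentations (the canonical decomposition), and is the heart of the Derksen--Weyman argument --- the semistability of the generic representation only yields a semi-invariant of some multiple $n\sigma$, and the descent to weight $\sigma$ is not formal. Everything else above is routine bookkeeping with King's criterion and the openness of generic conditions on the irreducible space $\Rep_\beta(Q)$.
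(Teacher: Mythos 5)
Your ``$\subseteq$'' inclusion and the first half of ``$\supseteq$'' are correct, and they are indeed the routine part: King's criterion (Lemma \ref{L:King}) together with the fact that $\Rep_{\gamma\hookrightarrow\beta}(Q)$ is either all of $\Rep_\beta(Q)$ or a proper closed subvariety. The genuine gap is the one you flag yourself and then leave open: from semistability of a general representation you only conclude that some nonconstant element of $\SI_\beta^\sigma(Q)=\bigoplus_{n\geq 0}\SI_\beta(Q)_{n\sigma}$ exists, i.e.\ $n\sigma\in\Sigma_\beta(Q)$ for some unspecified $n\geq 1$. Since $\sigma$-semistability (and the criterion of Lemma \ref{L:King}) depends only on the ray through $\sigma$, no refinement of this genericity argument can ever force $n=1$; the descent to weight $\sigma$ itself is precisely the saturation assertion, which is the whole content of \cite[Theorem 3]{DW1} (for $T_n$ it is equivalent to the saturation conjecture for Littlewood--Richardson coefficients). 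Because that step is not carried out, the stated equality --- and hence the ``saturated semigroup'' conclusion, which you derive from the equality --- is not proved by your proposal.

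For comparison: the paper gives no proof of its own (the theorem is quoted from \cite{DW1}; the remark that it is an ``easy consequence of King's criterion and Theorem \ref{T:inv_span}'' silently also presupposes Schofield's results on general representations). The missing argument, as in \cite{DW1}, is: write $\sigma=-\innerprod{\alpha,-}_Q$ with $\alpha={}^{\alpha}\sigma$; Schofield's theorem \cite{S2} gives $\ext_Q(\alpha,\beta)=\max\{-\innerprod{\alpha,\beta-\gamma}_Q \mid \gamma\hookrightarrow\beta\}$, and since $\sigma(\beta)=0$ this maximum equals $\max\{-\sigma(\gamma)\mid\gamma\hookrightarrow\beta\}$, so the cone inequalities are equivalent to $\ext_Q(\alpha,\beta)=0$; then $\hom_Q(\alpha,\beta)=\innerprod{\alpha,\beta}_Q+\ext_Q(\alpha,\beta)=0$, and (after verifying that $\alpha$ is a genuine nonnegative dimension vector) a general $M$ of dimension $\alpha$ satisfies $M\perp N$ for general $N\in\Rep_\beta(Q)$, so $s_M$ is a nonzero element of $\SI_\beta(Q)_\sigma$ by the orthogonality criterion, as required by Theorem \ref{T:inv_span}. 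Your outline correctly identifies this as the needed input, but without stating and applying Schofield's $\ext$-formula (or otherwise reproducing the Derksen--Weyman argument) the key inclusion remains unproven.
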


Schofield relates the condition $\gamma\hookrightarrow \beta$ to some generic homological condition.
Following \cite{S2}, we introduce the notation $\hom_{Q}(\alpha,\beta)$ (resp. $\ext_{Q}(\alpha,\beta)$) to denote the dimension of the space of homomorphisms (resp. extensions) from a general $\alpha$-dimensional representation to a general $\beta$-dimensional representation of $Q$.
Let
$$\Rep_{\gamma\hookrightarrow \beta}(Q)=\{ M\in\Rep_\beta(Q)\mid M \text{ has a $\gamma$-dimensional subrepresentation} \}.$$
This is a closed subvariety of $\Rep_\beta(Q)$.
\begin{lemma}[{\cite[3]{S2}}] \label{L:ext} The codimension of $\Rep_{\gamma\hookrightarrow \beta}(Q)$ is equal to $\ext_Q(\gamma,\beta-\gamma)$.
\end{lemma}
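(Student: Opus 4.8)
The plan is to exhibit $\Rep_{\gamma\hookrightarrow\beta}(Q)$ as the image of a smooth incidence variety whose dimension is transparent, and then to compute the dimension of a general fibre of the projection onto $\Rep_\beta(Q)$.

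First I would fix subspaces $V^0_i\subseteq V_i$ with $\dim V^0_i=\gamma_i$ and form the incidence variety
$$Z=\{(V,M)\in\Gr\times\Rep_\beta(Q)\mid M_a(V_{t(a)})\subseteq V_{h(a)}\ \text{for all}\ a\in Q_1\},\qquad \Gr:=\prod_{i\in Q_0}\Gr\sm{\beta_i\\ \gamma_i}.$$
The projection $Z\to\Gr$ is $\GL_\beta$-equivariant, and since $\GL_\beta$ acts transitively on $\Gr$, it exhibits $Z$ as a vector bundle over $\Gr$ whose fibre over $V^0$ is the linear space $\{M\mid V^0\text{ is a subrepresentation of }M\}$, of dimension $\sum_{a\in Q_1}\big(\beta_{t(a)}\beta_{h(a)}-\gamma_{t(a)}(\beta_{h(a)}-\gamma_{h(a)})\big)$. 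In particular $Z$ is smooth and irreducible, and adding $\dim\Gr=\sum_{i\in Q_0}\gamma_i(\beta_i-\gamma_i)$ a short bookkeeping with the Euler form yields $\dim Z-\dim\Rep_\beta(Q)=\innerprod{\gamma,\beta-\gamma}_Q$.

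Next, the second projection $\rho\colon Z\to\Rep_\beta(Q)$ is proper (because $\Gr$ is complete), so its image is closed; that image is exactly $\Rep_{\gamma\hookrightarrow\beta}(Q)$, which is therefore irreducible, and the fibre $\rho^{-1}(M)$ is the variety $\Gr_\gamma(M)$ of $\gamma$-dimensional subrepresentations of $M$. By the theorem on fibre dimensions, for a general $M$ in the image one has $\dim\Rep_{\gamma\hookrightarrow\beta}(Q)=\dim Z-\dim\Gr_\gamma(M)$, hence
$$\op{codim}\Rep_{\gamma\hookrightarrow\beta}(Q)=-\innerprod{\gamma,\beta-\gamma}_Q+\dim\Gr_\gamma(M)=\ext_Q(\gamma,\beta-\gamma)-\hom_Q(\gamma,\beta-\gamma)+\dim\Gr_\gamma(M),$$
using $\innerprod{\gamma,\beta-\gamma}_Q=\hom_Q(\gamma,\beta-\gamma)-\ext_Q(\gamma,\beta-\gamma)$. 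Thus the claim reduces to the identity $\dim\Gr_\gamma(M)=\hom_Q(\gamma,\beta-\gamma)$ for general $M\in\Rep_{\gamma\hookrightarrow\beta}(Q)$.

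For this last identity I would argue as follows. The tangent space to $\Gr_\gamma(M)$ at a point $V$ is $\Hom_Q(M|_V,M/V)$; and the fibre of $Z\to\Gr$ over $V^0$ maps, by splitting off the ``extension'' block of each arrow, as a trivial affine bundle onto $\Rep_\gamma(Q)\times\Rep_{\beta-\gamma}(Q)$ via $M\mapsto(M|_{V^0},M/V^0)$. Using $\GL_\beta$-equivariance it follows that for a general point $(V,M)$ of $Z$ the restriction $M|_V$ and the quotient $M/V$ are of general decomposition type, independently in the two factors, so that $\dim T_V\Gr_\gamma(M)=\hom_Q(\gamma,\beta-\gamma)$ there. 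Generic smoothness in characteristic zero then promotes this equality of tangent dimension to the equality of dimensions $\dim\Gr_\gamma(M)=\hom_Q(\gamma,\beta-\gamma)$, provided one knows that for a general $M$ a general $V\in\Gr_\gamma(M)$ gives a general point of $Z$ (so that $\Gr_\gamma(M)$ is smooth at $V$ and the pair $(M|_V,M/V)$ is generic simultaneously). This reconciliation of ``general point of $Z$'' with ``general $\gamma$-dimensional subrepresentation of a general $M$'' is the \textbf{main obstacle}: it is precisely the place where Schofield's structural results on general subrepresentations are needed, and once one knows that a general subrepresentation of a general representation, together with its quotient, has general decomposition type, the dimension count closes.
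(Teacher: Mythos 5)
The paper itself gives no proof of this lemma (it is quoted from Schofield \cite{S2}), and your incidence-variety setup is essentially the standard route to it; the bookkeeping up to the reduction $\op{codim}\Rep_{\gamma\hookrightarrow\beta}(Q)=\ext_Q(\gamma,\beta-\gamma)-\hom_Q(\gamma,\beta-\gamma)+\dim\Gr_\gamma(M)$, for $M$ general in the image, is correct. The genuine gap is the last step: you leave $\dim\Gr_\gamma(M)=\hom_Q(\gamma,\beta-\gamma)$ unproven and assert that it requires ``Schofield's structural results on general subrepresentations.'' That diagnosis is doubly off. First, such structural statements (a general subrepresentation of a general $M$ in the image, together with its quotient, forms a general pair) are usually deduced from, or proved alongside, precisely this codimension formula, so invoking them here courts circularity. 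Second, no such input is needed: the ``reconciliation'' you flag is automatic, because the two genericity requirements are both dense open conditions on the irreducible variety $Z$, hence simultaneously satisfiable.

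Concretely, write $\rho\colon Z\to\Rep_\beta(Q)$ and $Y=\rho(Z)=\Rep_{\gamma\hookrightarrow\beta}(Q)$ (closed, irreducible). By generic smoothness in characteristic zero (with $Z$ smooth) there is a dense open $U\subseteq Y$ over which $\rho$ is smooth; since $\rho^{-1}(U)$ is dense open in the irreducible $Z$, the relative dimension is constant, so for every $M\in U$ the entire fibre $\Gr_\gamma(M)$ is smooth of pure dimension $\dim Z-\dim Y$, and at every point $V$ one has $\dim\Hom_Q(M|_V,M/V)=\dim T_V\Gr_\gamma(M)=\dim Z-\dim Y$. On the other hand, $(V,M)\mapsto\dim\Hom_Q(M|_V,M/V)$ is upper semicontinuous on $Z$ and attains the value $\hom_Q(\gamma,\beta-\gamma)$ (take $M=A\oplus B$ in the fibre of $Z\to\Gr$ over $V^0$ with $(A,B)$ a Hom-generic pair), so the locus where it equals $\hom_Q(\gamma,\beta-\gamma)$ is dense open in $Z$. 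Any point of the intersection of these two dense opens yields $\dim Z-\dim Y=\hom_Q(\gamma,\beta-\gamma)$, which is exactly the missing identity and closes your computation via $\dim Z-\dim\Rep_\beta(Q)=\innerprod{\gamma,\beta-\gamma}_Q$. Note that the smoothness of the whole general fibre, not just a tangent-space count at one good point, is what you need: semicontinuity of fibre dimension together with the tangent bound only gives $\op{codim}\Rep_{\gamma\hookrightarrow\beta}(Q)\leq\ext_Q(\gamma,\beta-\gamma)$, since tangent spaces bound local dimension only from above; generic smoothness upgrades this to equality (and is also the step where characteristic zero, which the paper assumes throughout, genuinely enters).
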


\begin{lemma}\cite[Lemma 1]{DW1} \label{L:exact} Suppose that we have an exact sequence of representations of $Q$
$$0\to L \to M \to N\to 0$$
with $\innerprod{\dv L,\beta}=\innerprod{\dv N,\beta}=0$, then as a function on $\Rep_\beta(Q)$, $s(M)$ is, up to a scalar, equal to $s(L)s(N)$.
\end{lemma}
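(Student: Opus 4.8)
The plan is to build a length-one projective resolution of $M$ out of those of $L$ and $N$ via the horseshoe lemma, and then to read off $s(M)$ from the resulting block triangular matrix. Since $kQ$ is hereditary, choose minimal projective resolutions $0\to P_1^L\xrightarrow{f_L}P_0^L\to L\to 0$ and $0\to P_1^N\xrightarrow{f_N}P_0^N\to N\to 0$. Applying the horseshoe lemma to the exact sequence $0\to L\to M\to N\to 0$ produces a projective resolution
\[
0\to P_1^L\oplus P_1^N\xrightarrow{\ f_M\ }P_0^L\oplus P_0^N\to M\to 0,\qquad f_M=\begin{pmatrix}f_L & g\\ 0 & f_N\end{pmatrix},
\]
for some $g\colon P_1^N\to P_0^L$. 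Because $s$ depends on a projective presentation only through its homotopy class, and only up to a nonzero scalar coming from the chosen bases of the projectives, we are free to compute $s(M)$ from $f_M$ rather than from the canonical resolution.

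Next I would fix an arbitrary $N'\in\Rep_\beta(Q)$ (writing $N'$ to avoid the $N$ of the sequence) and apply the contravariant functor $\Hom_Q(-,N')$. By the concrete description recalled above, this transposes the matrix of $f_M$ and replaces the paths by their action on $N'$, so the block triangular shape is preserved:
\[
\Hom_Q(f_M,N')=\begin{pmatrix}\Hom_Q(f_L,N') & 0\\ \Hom_Q(g,N') & \Hom_Q(f_N,N')\end{pmatrix}.
\]
The hypotheses $\innerprod{\dv L,\beta}=\innerprod{\dv N,\beta}=0$ say precisely that $\Hom_Q(f_L,N')$ and $\Hom_Q(f_N,N')$ are square matrices (and hence so is the full matrix, consistently with $\innerprod{\dv M,\beta}=0$), so the determinant of this block triangular matrix factors through its diagonal blocks:
\[
s(f_M,N')=\det\Hom_Q(f_M,N')=\det\Hom_Q(f_L,N')\cdot\det\Hom_Q(f_N,N')=s(f_L,N')\,s(f_N,N').
\]
This is a pointwise identity holding for every $N'$, hence an identity of polynomial functions on $\Rep_\beta(Q)$; thus $s(M)$ and $s(L)s(N)$ agree up to the scalar already accounted for.

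The one point that will need genuine care is the construction of $f_M$: I must check the horseshoe lemma in the precise form used here, namely that the block triangular map above indeed has cokernel $M$, and that the ambiguity in $s$ really is only an overall scalar. The latter follows because any two length-one projective resolutions of $M$ differ by trivial summands $P\xrightarrow{\op{id}_P}P$, and such a summand contributes an identity block to $\Hom_Q(-,N')$, leaving every determinant unchanged. I expect all of this to be routine bookkeeping with block matrices; the substance of the argument is simply the block triangular determinant, and — unlike several of the later arguments in the paper — no genericity or orthogonality input is needed for the identity itself.
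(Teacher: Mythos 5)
Your proof is correct and follows essentially the same route as the source the paper cites: the lemma is quoted from \cite[Lemma 1]{DW1} without proof here, and the proof there is precisely this horseshoe-lemma construction of a block-triangular presentation of $M$ together with the factorization of the determinant of a block-triangular matrix with square diagonal blocks (squareness being guaranteed by $\innerprod{\dv L,\beta}=\innerprod{\dv N,\beta}=0$). Your handling of the scalar ambiguity matches the paper's remark that $s(f)$ depends, up to a nonzero scalar, only on the homotopy class of the presentation, hence only on the isomorphism class of $M$.
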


Recall that a dimension vector $\alpha$ is called a {\em real} root if $\innerprod{\alpha,\alpha}_Q=1$.
It is called {\em Schur} if $\Hom_Q(M,M)=k$ for
$M$ general in $\Rep_\alpha(Q)$. For a real Schur root $\alpha$, $\Rep_{n\alpha}(Q)$ has a dense orbit for any $n\in \mb{N}$.

\begin{lemma} \label{L:dim=1}
If $\alpha$ is a real Schur root, then $\dim\SI_\beta(Q)_{-\innerprod{n\alpha,-}}=1$ for any $n\in \mb{N}$.
\end{lemma}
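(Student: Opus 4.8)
The plan is to reduce the statement to the fact that $s_M$ spans $\SI_\beta(Q)_\sigma$ whenever $\sigma = {}^\sigma\!\alpha = -\innerprod{\alpha,-}_Q$ with $\alpha = n\alpha_0$ for a real Schur root $\alpha_0$, together with a linear independence argument forcing the dimension to be exactly one rather than zero. First I would use Theorem \ref{T:inv_span}: $\SI_\beta(Q)_\sigma$ is spanned by the functions $s(f)$ where $f$ ranges over projective presentations with weight vector $\sigma$. Any nonzero $s(f)$ resolves a representation $M$ of dimension $\alpha'$ with $\sigma(-) = -\innerprod{\alpha',-}_Q$; since the Euler form is nondegenerate in the relevant directions (or at least the map $\alpha' \mapsto \innerprod{\alpha',-}_Q$ is injective once we know $\sigma(\beta)=0$ pins down the class), we get $\alpha' = n\alpha_0$. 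So every spanning function comes from a representation of dimension exactly $n\alpha_0$.

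Next I would invoke the hypothesis that $\alpha_0$ is a real Schur root, so that $\Rep_{n\alpha_0}(Q)$ contains a dense $\GL_{n\alpha_0}$-orbit, say the orbit of $M_n$. Any $M$ of dimension $n\alpha_0$ with $s_M \neq 0$ must be $\sigma$-semistable, hence (by King, Lemma \ref{L:King}) has no proper subrepresentation $L$ with $\sigma(\dv L) < 0$; I would argue that the generic such $M$ is the dense-orbit module $M_n$, because the semistable locus is open and nonempty (nonemptiness because $\SI_\beta(Q)_\sigma \neq 0$, which I establish below) and the dense orbit meets every nonempty open set. Two projective presentations resolving isomorphic representations give functions $s(f)$ that agree up to scalar (as recalled just before the Stability subsection, $s(f)$ depends only on the homotopy class of $f$, hence on the isomorphism class of the resolved module). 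Therefore all the spanning functions $s(f)$ with $s(f)\neq 0$ are scalar multiples of $s_{M_n}$, giving $\dim \SI_\beta(Q)_\sigma \leq 1$.

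For the lower bound, I must show $\SI_\beta(Q)_\sigma \neq 0$, equivalently $\sigma \in \Sigma_\beta(Q)$. By Theorem \ref{T:cone} this amounts to checking $\sigma(\beta) = 0$ and $\sigma(\gamma) \geq 0$ for every $\gamma \hookrightarrow \beta$. The first is the standing assumption $\sigma(\beta)=0$ (note $\innerprod{n\alpha_0,\beta}_Q = 0$ is exactly the condition under which the pairing $s(f,N)$ is defined). For the inequalities, $\sigma(\gamma) = -\innerprod{n\alpha_0,\gamma}_Q$, and I would use Lemma \ref{L:ext} together with the standard reciprocity $\ext_Q(n\alpha_0, \beta - \gamma) \geq 0$: more directly, $s_{M_n}(N) \neq 0$ for $N$ generic since $\Ext_Q(M_n, N) = 0$ for generic $N$ (as $M_n$ is a generic, in fact rigid-up-to-the-Schur-condition, representation and $\hom, \ext$ take their generic values), so $s_{M_n}$ is a nonzero element of $\SI_\beta(Q)_\sigma$. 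Combining, $\dim \SI_\beta(Q)_\sigma = 1$.

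The main obstacle I anticipate is the middle step: rigorously identifying the ``generic'' semistable representation of dimension $n\alpha_0$ with the dense-orbit module and thereby collapsing all the spanning $s(f)$ to a single line. One must be careful that a nonzero $s(f)$ only tells us $M$ is $\sigma$-semistable, not that $M$ lies in the dense orbit; the fix is to note that if $s_M \neq 0$ then $M \perp N$ for generic $N$, and combine this with the fact that the dense orbit's module $M_n$ also satisfies $M_n \perp N$ generically (by $\alpha_0$ being a real Schur root, so $n\alpha_0 \hookrightarrow$ nothing that obstructs), plus Lemma \ref{L:exact}/\ref{L:dim=1}-style multiplicativity to rule out $M$ decomposing into a non-generic combination. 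Alternatively — and this may be cleaner — one shows directly that any $M$ with $\dv M = n\alpha_0$ and $s_M \neq 0$ must be isomorphic to $M_n$ because $M_n$ is the unique (up to iso) representation of that dimension with vanishing generic $\Ext$ against itself, forcing $\Hom_Q(M,M)=k$ and hence $M$ in the dense orbit.
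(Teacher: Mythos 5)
Your reduction has a genuine gap at its central step. From $s(f)\neq 0$ you only learn that the resolved module $M$ (of dimension $n\alpha$) satisfies $M\perp N$ for \emph{some} $N\in\Rep_\beta(Q)$; this does not force $M$ to lie in the dense orbit, and neither does semistability (with respect to the weight $\innerprod{-,\beta}$ on $\Rep_{n\alpha}$ --- note that $\sigma$ itself is a character of $\GL_\beta$, not of $\GL_{n\alpha}$, so ``$\sigma$-semistable'' is not even the right notion for $M$). Concretely, for $Q:1\to 2\to 3$, $\alpha=(1,1,0)$, $\beta=(1,1,1)$, the non-generic module $M=S_1\oplus S_2$ satisfies $M\perp N$ for generic $N$, just as the dense-orbit module $E_{12}$ does; here $\Hom_Q(M,M)=k^2$, so your proposed fix (``the unique representation with $\Hom_Q(M,M)=k$'') also fails. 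The dimension bound still holds because the two Schofield invariants happen to be proportional (indeed $\det N(a_1)\det N(a_2)=\det(N(a_2)N(a_1))$), but your argument, which tries to prove proportionality by proving the modules are isomorphic, cannot establish this. Your lower-bound step is likewise unjustified: rigidity of $M_n$ does not give $\Ext_Q(M_n,N)=0$ for generic $N\in\Rep_\beta(Q)$; e.g.\ for the same quiver with $\alpha=(1,1,0)$ and $\beta=(1,0,1)$ one has $\innerprod{\alpha,\beta}_Q=0$ but $\hom_Q(\alpha,\beta)=\ext_Q(\alpha,\beta)=1$, so the weight space vanishes (so the ``$=1$'' statement really requires the weight to lie in $\Sigma_\beta(Q)$, which in the paper's applications is checked separately).

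The paper's proof sidesteps all of this by working on the other side of the reciprocity property (a consequence of Theorem \ref{T:inv_span}, \cite[Corollary 1]{DW1}): $\dim\SI_\beta(Q)_{-\innerprod{n\alpha,-}}=\dim\SI_{n\alpha}(Q)_{\innerprod{-,\beta}}$. Since $\alpha$ is a real Schur root, $\Rep_{n\alpha}(Q)$ has a dense $\GL_{n\alpha}$-orbit, so $k(\Rep_{n\alpha}(Q))^{\GL_{n\alpha}}=k$; hence any two semi-invariants on $\Rep_{n\alpha}(Q)$ of the same weight have constant ratio, and the weight space is at most one-dimensional. This avoids any need to decide which modules $M$ of dimension $n\alpha$ have $s_M\neq 0$ on $\Rep_\beta(Q)$, which, as the example above shows, is exactly where your approach breaks down. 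If you want to repair your write-up, replace the middle of your argument by this reciprocity-plus-dense-orbit step.
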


\begin{proof} As an easy consequence of Theorem \ref{T:inv_span}, we have the following reciprocity property (\cite[Corollary 1]{DW1})
$$\dim \SI_\beta(Q)_{-\innerprod{\alpha,-}}=\dim \SI_\alpha(Q)_{\innerprod{-, \beta}}.$$
Since $\Rep_{n\alpha}(Q)$ has a dense orbit, the ring of rational invariants $k(\Rep_{n\alpha}(Q))^{\GL_{n\alpha}}$ is trivial.
So if $f,g\in \SI_{n\alpha}(Q)_{\innerprod{-,\beta}}$, then $f/g\in k(\Rep_{n\alpha}(Q))^{\GL_{n\alpha}}=k$.
The result follows from the reciprocity property.
\end{proof}

A weight is called {\em extremal} in $\Sigma_\beta(Q)$ if it lies on an extremal ray of $\mb{R}_+\Sigma_\beta(Q)$.
An indivisible extremal weight $\sigma$ must be indecomposable in $\Sigma_\beta(Q)$, i.e., it cannot be written as
$\sigma=\sigma_1+\sigma_2$ with $\sigma_i\in \Sigma_\beta(Q)$.

\begin{lemma} \label{L:irreducible} If $\sigma$ is an indivisible extremal weight, then any semi-invariant function $s$ of weight $\sigma$ is an irreducible polynomial.
\end{lemma}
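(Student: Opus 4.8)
The plan is to argue by contradiction using the weight-space decomposition together with Lemma \ref{L:exact}, which expresses multiplicativity of Schofield's semi-invariants along short exact sequences. Suppose $s = s(f)$ has weight $\sigma$, where $\sigma$ is indivisible and extremal, and suppose $s = s_1 s_2$ is a nontrivial factorization into polynomials $s_1, s_2 \in k[\Rep_\bn(T_n)]$ of positive degree. Since $s$ is $\SL_\beta$-semi-invariant of weight $\sigma$ and $\GL_\beta$ acts rationally, the one-dimensional span $k\cdot s$ is $\GL_\beta$-stable; I would first check that each factor $s_i$ is itself a semi-invariant, i.e. $s_i \in \SI_\beta(Q)_{\sigma_i}$ for some character $\sigma_i$, with $\sigma_1 + \sigma_2 = \sigma$. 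This is a standard fact: the group $\GL_\beta$ is connected and reductive acting on the polynomial ring, so it permutes the irreducible factors of a semi-invariant, and since $k\cdot s$ is a line, each factor must go to a scalar multiple of itself under each $g$, hence is a semi-invariant (up to collecting equal irreducible factors).

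Next I would use that $\sigma_1,\sigma_2$ lie in $\Sigma_\beta(Q)$ — they are weights of nonzero semi-invariants $s_1, s_2$ — and that $\sigma = \sigma_1 + \sigma_2$. Because $\sigma$ is \emph{extremal}, i.e. lies on an extremal ray of the cone $\mb{R}_+\Sigma_\beta(Q)$, any such decomposition forces $\sigma_1$ and $\sigma_2$ to be nonnegative rational multiples of $\sigma$; combining with $\sigma$ being \emph{indivisible} and the fact that $\Sigma_\beta(Q)$ is a saturated semigroup (Theorem \ref{T:cone}), we get $\sigma_1 = a\sigma$ and $\sigma_2 = b\sigma$ with $a,b$ nonnegative integers, $a + b = 1$. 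Hence one of $\sigma_1,\sigma_2$ is $0$ and the other is $\sigma$. But the degree-zero component of $\SI_\beta(Q)$ is just $k$ (stated in the excerpt, since $Q$ has no oriented cycles), so the factor of weight $0$ is a constant, contradicting the assumption that the factorization was nontrivial. Therefore $s$ is irreducible.

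The one point requiring care — and the main obstacle — is the first step: verifying that the irreducible factors of a semi-invariant are again semi-invariants with weights summing (with multiplicity) to $\sigma$. The subtlety is that the $\GL_\beta$-action could a priori permute the distinct irreducible factors nontrivially, so one argues that the product of each orbit of factors is a semi-invariant; since $k\cdot s$ is a $\GL_\beta$-eigenline and $\GL_\beta$ is connected, in fact each individual irreducible factor spans an eigenline, so the orbits are singletons and each factor is a semi-invariant. Once this is in place, the cone-theoretic argument (extremal $+$ indivisible $+$ saturated $\Rightarrow$ the only decompositions in $\Sigma_\beta(Q)$ are $\sigma = \sigma + 0$) together with the triviality of the weight-$0$ piece closes the proof cleanly. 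Note that this lemma is stated for a general quiver $Q$ (no oriented cycles) and a general $\beta$, not specifically for $T_n$, so the argument should be phrased in that generality.
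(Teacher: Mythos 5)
Your proof is correct and takes essentially the same route as the paper: the factors of $s$ are semi-invariants whose weights decompose $\sigma$ inside $\Sigma_\beta(Q)$, extremality plus indivisibility forces one of these weights to be zero, and the triviality of the weight-zero component finishes the argument. The only differences are cosmetic: the paper encodes the extreme-ray property via a total degree built from the supporting hyperplanes $\innerprod{-,\gamma_i}=0$ of the ray, and it leaves implicit the standard fact (which you spell out via connectedness of $\GL_\beta$) that the irreducible factors of a semi-invariant are again semi-invariants.
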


\begin{proof} Suppose that $\sigma$ spans an extremal ray $\sf{r}$.
Since the cone $\mb{R}^+\Sigma_\beta(Q)$ is pointed and $\sf{r}$ is extremal, there is a set of dimension vectors $\{\gamma_i\}_i$ such that $\sf{r}$ is defined by the hyperplanes $\innerprod{-,\gamma_i}=0$.
Moreover, $\theta(\gamma_i)\geq 0$ for all $\theta\in \Sigma_\beta(Q)$ and all $i$, and $\theta(\gamma_i)>0$ for some $i$ if $\theta \notin \sf{r}$.
We define a total degree $d$ on the semi-invariant ring by setting $d(\sigma) = \sum_i \sigma(\gamma_i)\geq 0$. Note that the total degree $0$ component is $k \oplus \SI_\beta^\sigma (Q)$.

Suppose that $s$ factors as $s=s_1s_2$, then both $s_1$ and $s_2$ must be homogeneous of total degree~$0$.
Since $\sigma$ is indivisible, it is clear that one of them has to be a unit.
\end{proof}

%

\section{The Triple Flag Quivers} \label{S:Tn}
In this section, we specialize the previous discussion to the case of triple flag quivers, and make connection with the Littlewood-Richardson coefficients following \cite{DW1,DW2}.

Let $T_{p,q,r}$ be the quiver with $p+q+r-2$ vertices:
$$\vcenter{\xymatrix@=4ex{
^11 \ar[r] & ^12 \ar[r] & \cdots \ar[r] & ^1{p-2} \ar[r] & ^1{p-1} \ar[dr] \\
^21\ar[r] & ^22 \ar[r] & \cdots \ar[r] & ^2{q-2} \ar[r] & ^2{q-1} \ar[r] & ^3r \\
^31\ar[r] & ^32 \ar[r] & \cdots \ar[r] & ^3{r-2} \ar[r] & ^3{r-1} \ar[ur]
}}$$
We use the convention ${^1p}={^2q}={^3r}.$ We know from \cite{DW1} that if we take the {\em standard} dimension vector $\bn=\sm{1 & 2 & \cdots & {n-1}\\ 1 & 2 & \cdots & {n-1} & n, \\ 1 & 2 & \cdots & {n-1} }$ for $T_n:=T_{n,n,n}$,
then we can view $\dim\SI_\bn(T_n)_\sigma$ as a type-A Littlewood-Richardson coefficient.

\begin{theorem}[{\cite[7.1]{DW2}}] \label{T:eqLR}
If $\sigma$ is given by
$\sigma=\sm{a_1 & a_2 & \cdots & a_{n-1}\\ b_1 & b_2 & \cdots & b_{n-1} & c_n, \\ c_1 & c_2 & \cdots & c_{n-1} }$, then
\begin{equation*} \label{eq:LR} \dim\SI_\bn(T_n)_\sigma=c_{\mu,\nu}^{\lambda},
\end{equation*}
where \begin{align}
\label{eq:wt2par1} &\mu = \mu(\sigma) = (a_1+\cdots+a_{n-1},a_2+\cdots+a_{n-1},\cdots,a_{n-1}),\\
\label{eq:wt2par2} &\nu = \nu(\sigma) =
(b_1+\cdots+b_{n-1},b_2+\cdots+b_{n-1},\cdots,b_{n-1}),\\
\label{eq:wt2par3} &\lambda = \lambda(\sigma) = (-c_n,-(c_n+c_{n-1}),\cdots,-(c_n+c_{n-1}+\cdots+c_1)).
\end{align}
\end{theorem}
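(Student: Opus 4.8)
The plan is to compute $\dim\SI_\bn(T_n)_\sigma$ directly, as a space of regular functions on a product of flag varieties, and then to recognize it as a Littlewood--Richardson coefficient through the Borel--Weil theorem and the Littlewood--Richardson rule; this is a self-contained version of the computation behind \cite{DW1,DW2}.

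First I would pass to the open subset $U\subseteq\Rep_\bn(T_n)$ on which every arrow of every arm acts by an injective linear map. Since $\Rep_\bn(T_n)$ is the direct sum of its three ``arm'' subspaces, $U$ is a product $U_1\times U_2\times U_3$ of the injective loci of the three arms. The complement $\Rep_\bn(T_n)\setminus U$ is a union of determinantal loci $\{\rank(k^i\to k^{i+1})\le i-1\}$, each of codimension $2$; as $\Rep_\bn(T_n)$ is a smooth, hence normal, affine variety, algebraic Hartogs gives $k[\Rep_\bn(T_n)]=k[U]$, and this identification is $\GL_\bn$-equivariant because $U$ is $\GL_\bn$-stable. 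Hence $\SI_\bn(T_n)_\sigma=\big(k[U]^{\SL_\bn}\big)_\sigma$.

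Next I would take the $\SL_\bn$-invariants in two stages. On $U_j$ the arm determines a complete flag $0\subset V_1^{(j)}\subset\cdots\subset V_{n-1}^{(j)}\subset k^n$, and the base-change group $\prod_{i=1}^{n-1}\GL_i$ of that arm acts freely with quotient $\mathrm{Fl}_n$, so $U_j\to\mathrm{Fl}_n$ is a $\big(\prod_{i=1}^{n-1}\GL_i\big)$-torsor. Taking invariants under all the groups $\SL_i$ attached to non-central vertices and applying Borel--Weil, I obtain a decomposition of $\GL_n$-modules (the central vertex $\GL_n$ acting diagonally on the three copies of $k^n$), graded by the three arm tori $(k^{*})^{n-1}$,
\begin{equation*}
\big(k[U]\big)^{\prod\SL_i}=\bigoplus_{\mu,\nu,\rho}V_\mu^{*}\otimes V_\nu^{*}\otimes V_\rho^{*},
\end{equation*}
where the dictionary between an arm character and the highest weight it produces is precisely the passage to partial sums: the line bundle on $\mathrm{Fl}_n$ with $\det$-weight $a_i$ on the $i$-th tautological subquotient is $\bigotimes_i(\mathcal L_{\omega_i})^{a_i}$, whose global sections are $V_\mu^{*}$ with $\mu_i-\mu_{i+1}=a_i$ and $\mu_n=0$, i.e.\ $\mu=\mu(\sigma)$ of \eqref{eq:wt2par1}; similarly the $b_i$ give $\nu=\nu(\sigma)$ of \eqref{eq:wt2par2}, and the $c_1,\dots,c_{n-1}$ give $\rho$ with $\rho_i=c_i+\cdots+c_{n-1}$. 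Finally, taking the remaining central $\SL_n$-invariants and keeping only the part on which the central $\GL_n$ acts through the character $g\mapsto(\det g)^{c_n}$, one gets
\begin{equation*}
\dim\SI_\bn(T_n)_\sigma=\dim\Hom_{\GL_n}(V_\mu\otimes V_\nu,\,V_\lambda)=\lrcoef,
\end{equation*}
where $V_\lambda$ is the $\det^{-c_n}$-twist of $V_\rho^{*}$, so $\lambda=\lambda(\sigma)$ is the partition of \eqref{eq:wt2par3}. Borel--Weil also supplies the vanishing, since a non-dominant line bundle on $\mathrm{Fl}_n$ has no sections: both sides are $0$ unless $\mu,\nu,\lambda$ are partitions with $|\mu|+|\nu|=|\lambda|$. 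In particular this recovers the description of $\Sigma_\bn(T_n)$ as the Littlewood--Richardson cone, consistently with Theorem \ref{T:cone}, and with $\SI_\bn(T_n)_{\bs 0}=k$.

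The main obstacle is not any single step but the bookkeeping running through all of them: pinning down the duality conventions in Borel--Weil so that they agree with the sign conventions for the $\sigma$-grading in \eqref{eq:char}, and, above all, apportioning the central-vertex weight $c_n$ correctly --- that vertex is shared by all three arms, so one must check it is counted exactly once, as the overall $\det$-twist, which is precisely why $c_n$ enters $\lambda$ together with the running sums of the $c_i$ rather than being absorbed elsewhere. A shorter route, nearer to the cited reference, avoids the flag geometry: use Theorem \ref{T:inv_span} to reduce to Schofield's semi-invariants, invoke the reciprocity $\dim\SI_\beta(Q)_{-\innerprod{\alpha,-}}=\dim\SI_\alpha(Q)_{\innerprod{-,\beta}}$ of \cite{DW1}, and evaluate the resulting multiplicity by the Cauchy identity; the same weight-to-partition dictionary then reappears once the $\GL$-characters are recorded.
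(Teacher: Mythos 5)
The paper does not prove Theorem \ref{T:eqLR} at all: it is imported verbatim from \cite[Theorem 7.1]{DW2}, so there is no internal argument to compare against, and your proposal has to stand on its own. It does. The chain Hartogs (each non-injectivity locus is determinantal of codimension $2$ in the smooth affine space $\Rep_\bn(T_n)$) $\Rightarrow$ restriction to the injective locus $U=U_1\times U_2\times U_3$ $\Rightarrow$ each $U_j\to \mathrm{Fl}_n$ a $\prod_{i<n}\GL_i$-torsor $\Rightarrow$ Borel--Weil identification of the arm-graded pieces of $k[U]^{\prod\SL_i}$ with irreducible $\GL_n$-modules $\Rightarrow$ one application of the Littlewood--Richardson rule at the central vertex, is correct, and your computation of where $c_n$ goes (as the overall determinant twist, so that $\lambda_i=-(c_n+c_{n-1}+\cdots+c_{n+1-i})$) reproduces \eqref{eq:wt2par3} exactly; the Borel--Weil vanishing for non-dominant arm characters also gives the vanishing statement for free. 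This is a genuinely different route from the cited source: Derksen--Weyman obtain the identity algebraically, essentially along the lines of your closing remark, via the Cauchy decomposition of $k[\Rep_\beta(Q)]$ into products of Schur modules at the vertices (together with the Schofield semi-invariant/reciprocity machinery of Theorem \ref{T:inv_span}), iterating the LR rule along the arms until a single coefficient survives at the center. Your geometric route buys transparency --- the partial-sum dictionary \eqref{eq:wt2par1}--\eqref{eq:wt2par3} is just the conversion from fundamental-weight coordinates to partitions, and dominance/vanishing comes from the flag geometry --- but it leans on the genericity of the standard dimension vector (the codimension-$2$ estimate and the torsor property), whereas the Cauchy argument works uniformly for any dimension vector. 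One caution on the bookkeeping you flag: the paper's own sign conventions wobble (the formula for $\f_{i,j}$ in Lemma \ref{L:level1} versus the weight $\sigma_1-\sigma_0$ of the displayed presentation $f_{i,j}$ differ by a global sign), so the right calibration for your duality choices is to check that the level-$1$ weights land on the triples $(1^{i+j},1^i,1^j)$ as in Lemma \ref{L:level1_par}; with your stated conventions this does come out correctly.
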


Note that for $T_n$ a Schur root either has support on one arm (in which case it corresponds to a positive root of $A_n$), or it is nondecreasing along each arm. A {\em facet} of the cone $\cone{n}$ is a codimension one face of $\cone{n}$. A weight in $\Sigma_\bn(T_n)$ is called {\em extremal} if it lies on an extremal ray (dimension one face) of $\cone{n}$.
\begin{theorem}[{\cite[Theorem 7.8]{DW2}}]  \label{T:CT}
For every pair $(\gamma,\bn)$ with $\bn=\gamma+\beta_n'$, $\gamma,\beta_n'$ nondecreasing along arms, and $\gamma\circ \beta_n'=1$, the inequality $\sigma(\gamma)\geq 0$ defines a facet of $\cone{n}$. All nontrivial facets can be uniquely obtained this way.
\end{theorem}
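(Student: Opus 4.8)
The plan is to deduce the facet description of $\cone{n}$ from the general facet criterion for $\Sigma_\beta(Q)$ (Theorem \ref{T:cone} together with Lemma \ref{L:ext}) by analyzing the combinatorics of subrepresentations in $T_n$. By Theorem \ref{T:cone}, the cone $\cone{n}$ is cut out by the hyperplanes $\sigma(\gamma)=0$ as $\gamma$ ranges over dimension vectors with $\gamma\hookrightarrow\bn$, so every facet is of the form $\sigma(\gamma)\geq 0$ for some such $\gamma$. The first step is to reduce to the minimal (in the inclusion-of-support sense) such $\gamma$'s: a hyperplane $\sigma(\gamma)=0$ supports a facet only if $\gamma$ cannot be nontrivially decomposed as $\gamma=\gamma_1+\gamma_2$ with both $\gamma_i\hookrightarrow\bn$ on the relevant face, so the facet-defining $\gamma$ should itself be "indecomposable" with respect to general subrepresentation containment; this is where one uses that a Schur root of $T_n$ is either supported on a single arm or nondecreasing along each arm.

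Next I would translate the condition $\gamma\hookrightarrow\bn$ via Lemma \ref{L:ext}: the locus $\Rep_{\gamma\hookrightarrow\bn}(T_n)$ has codimension $\ext_{T_n}(\gamma,\bn-\gamma)$, and $\sigma(\gamma)\geq 0$ defines a \emph{facet} (codimension-one face) precisely when this codimension is $1$, i.e.\ when $\ext_{T_n}(\gamma,\bn-\gamma)=1$. Writing $\beta_n'=\bn-\gamma$ and recalling that for $T_n$ the generic Euler form controls $\hom-\ext$, the equation $\ext_{T_n}(\gamma,\beta_n')=1$ together with the genericity of the decomposition forces $\hom_{T_n}(\gamma,\beta_n')=0$, which is exactly the orthogonality condition $\gamma\circ\beta_n'=1$ in the statement (in the notation of \cite{DW1,DW2}, $\gamma\circ\beta_n'$ records $\ext-\hom$, or the generic ext when hom vanishes). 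One then checks that both $\gamma$ and $\beta_n'$ must be nondecreasing along the three arms: if either failed, the arm where it decreases would produce an extra summand (a root supported on a sub-arm) splitting off, contradicting indecomposability/the codimension-one condition; this is a direct case analysis on the arms of $T_n$ using the classification of Schur roots recalled just before the theorem.

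For the converse direction — that every such pair $(\gamma,\bn)$ with $\gamma$, $\beta_n'$ nondecreasing along arms and $\gamma\circ\beta_n'=1$ genuinely gives a facet, and that distinct such pairs give distinct facets — I would argue as follows. Given the numerical data, Lemma \ref{L:ext} gives codimension exactly $1$ for $\Rep_{\gamma\hookrightarrow\bn}(T_n)$, so $\sigma(\gamma)=0$ meets $\cone{n}$ in codimension at most one; non-degeneracy (that it is not the whole cone) follows because $\gamma\ne 0,\bn$ and $\cone{n}$ is full-dimensional and pointed (its degree-zero part is just $k$). Uniqueness of the pair producing a given facet comes from the fact that on the facet the generic representation has a \emph{unique} subrepresentation of the distinguished type — this uses $\gamma\circ\beta_n'=1$, i.e.\ $\hom_{T_n}(\gamma,\beta_n')=0$ so that the subrepresentation is rigidly determined — hence $\gamma$ is recovered from the facet. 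The main obstacle I anticipate is the careful bookkeeping in the arm-by-arm case analysis: showing precisely that the only way to get $\ext=1$ (rather than $\geq 2$ or $=0$) is the nondecreasing-with-$\gamma\circ\beta_n'=1$ configuration, and ruling out spurious decompositions, which requires a concrete computation of the generic Euler form $\innerprod{\gamma,\beta_n'}_{T_n}$ in terms of the entries along the arms and an induction on $n$. Everything else is a formal consequence of Theorems \ref{T:cone}, \ref{T:eqLR}, and Lemma \ref{L:ext}.
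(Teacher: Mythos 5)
This statement is not proved in the paper at all: it is imported verbatim from \cite[Theorem 7.8]{DW2}, so any self-contained argument would have to reproduce Derksen--Weyman's facet theorem. Your sketch does not do that, and its central step is incorrect. You assert that $\sigma(\gamma)\geq 0$ cuts out a facet of $\cone{n}$ precisely when $\ext_{T_n}(\gamma,\bn-\gamma)=1$, invoking Lemma \ref{L:ext}. But Lemma \ref{L:ext} computes the codimension of the subvariety $\Rep_{\gamma\hookrightarrow \bn}(T_n)$ inside the representation space $\Rep_\bn(T_n)$; that number has no direct bearing on the codimension of the face $\{\sigma(\gamma)=0\}\cap\cone{n}$, which lives in the weight space $\{\sigma:\sigma(\bn)=0\}$ --- a different space altogether. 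Worse, if $\ext_{T_n}(\gamma,\bn-\gamma)=1$ then $\gamma\not\hookrightarrow\bn$, so by Theorem \ref{T:cone} the inequality $\sigma(\gamma)\geq 0$ is not even among the defining inequalities of $\cone{n}$ and can fail on the cone. And in the situation of the theorem the hypothesis $\gamma\circ\beta_n'=1$ forces $\ext_{T_n}(\gamma,\beta_n')=0$ (otherwise a general representation has no $\gamma$-dimensional subrepresentation at all), so your later claim that ``Lemma \ref{L:ext} gives codimension exactly $1$'' is false on its own terms: it gives $0$.

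The second error is your reading of $\gamma\circ\beta_n'$. The paper defines it immediately after the statement as the number of $\gamma$-dimensional subrepresentations of a general $\bn$-dimensional representation, i.e.\ the point count of a generic quiver Grassmannian (itself an LR coefficient); it is not ``$\ext-\hom$'' nor ``the generic ext when hom vanishes''. The multiplicity-one condition $\gamma\circ\beta_n'=1$ is exactly the Knutson--Tao--Woodward-type criterion, and proving that it characterizes the nontrivial facets is the actual content of \cite[Theorem 7.8]{DW2}: for sufficiency one must produce, for each admissible pair, enough effective weights lying on the hyperplane $\sigma(\gamma)=0$ to span a hyperplane of $\{\sigma(\bn)=0\}$ (Derksen--Weyman do this via orthogonal categories and Schofield-type induction), and for necessity and uniqueness one must show every nontrivial facet arises from such a pair and determines it. None of this follows from the dimension count you propose, and your uniqueness argument again leans on the misidentification of $\gamma\circ\beta_n'$ with a hom-vanishing condition. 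Finally, the nondecreasing-along-arms hypothesis is not a consequence of ``indecomposability'' of $\gamma$; it is the normalization separating these facets from the trivial ones coming from one-arm (type $A$) roots. As it stands, the proposal does not establish either direction of the theorem.
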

\noindent Here, $\gamma\circ \beta_n'$ is the number of $\gamma$-dimensional subrepresentations of a general $\bn=\gamma+\beta_n'$ dimensional representation. It can also be interpreted as a Littlewood-Richardson coefficient (see \cite[Section 7]{DW2} for detail). Theorem \ref{T:CT} has been implemented as an algorithm to compute the cone $\cone{n}$. We call it Algorithm CT.

Let $\e_i^1,\e_j^2,\e_k^3$ be the unit vectors supported on the vertex ${^1i},{^2j},{^3k}$ respectively, so
$\{\e_i^a,\e_n \}_{i=1\dots n-1}^{a=1,2,3}$ form a standard basis of $\mb{Z}^{(T_n)_0}$.
We will use the convention that $\e_n^a=\e_n$.
It follows from Theorem \ref{T:cone} that if $\sigma\in \Sigma_\bn(T_n)$, then the last coordinate $\sigma(n)$ of $\sigma$ must be positive.
The {\em level} of $\sigma$ is by definition $\sigma(n)$.
\begin{lemma} \label{L:level1}
Each level-1 weight $\sigma\in \Sigma_\bn(T_n)$ is extremal.
It is of one of the following forms
\begin{align*}
\tag{I} &\f_{i,j}^{a,b}:=\e_n-\e_i^a-\e_j^b, & &i+j=n, a\neq b; \\
\tag{II} &\f_{i,j}:=\e_n-\e_i^1-\e_j^2-\e_k^3, & &i+j+k=n.
\end{align*}
Its corresponding dimension vector $^\alpha\!\sigma$ is a real Schur root.
There are exactly~$3(n-1)$ of first type, and $\sm{n-1\\2}$ of the second type.
\end{lemma}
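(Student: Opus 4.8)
The plan is to prove, in order: (i) every weight in families (I) and (II) belongs to $\Sigma_\bn(T_n)$ and has a real Schur root as its associated dimension vector; (ii) conversely these exhaust the level-$1$ weights, with the stated counts; (iii) each such weight is extremal. For (i): given $\f_{i,j}^{a,b}$ with $i+j=n$, let $M$ be the interval representation which is one-dimensional at each vertex of the path running up arm $a$ from $^ai$ through the centre and back down arm $b$ to $^bj$, with all structure maps the identity; given $\f_{i,j}$ with $k:=n-i-j\ge1$, let $M$ be the analogous ``spider'', one-dimensional along the tail of arm $\ell$ starting at positions $i$, $j$, $k$ respectively, with a two-dimensional centre into which the three incoming maps embed three distinct lines $L_1,L_2,L_3$. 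In both cases $-\innerprod{\dv M,-}_{T_n}=\sigma$, so $\dv M={}^\alpha\sigma$. A general $N\in\Rep_\bn(T_n)$ is a triple of generic complete flags $F^1_\bullet,F^2_\bullet,F^3_\bullet$ in $k^n$ with the arm maps the inclusions, and a short computation identifies $\Hom_{T_n}(M,N)$ with $F^a_i\cap F^b_j$ in the first case and with $\{\varphi\colon k^2\to k^n\mid\varphi(L_1)\subseteq F^1_i,\ \varphi(L_2)\subseteq F^2_j,\ \varphi(L_3)\subseteq F^3_k\}$ in the second; for generic flags these have dimension $\max(0,i+j-n)$ resp.\ $\max(0,i+j+k-n)$, which vanish precisely because $i+j=n$ (resp.\ $i+j+k=n$). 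Hence $M\perp N$, so $s_M\ne0$ lies in $\SI_\bn(T_n)_\sigma$ by Theorem \ref{T:inv_span}. Finally $M$ is a brick with $\innerprod{\dv M,\dv M}_{T_n}=1$ ($M$ is an interval module over a tree quiver in the first case; in the second, its restriction to the central $D_4$-subquiver is the brick of dimension $(1,1,1;2)$ and the arm identities propagate scalar endomorphisms), so ${}^\alpha\sigma=\dv M$ is a real Schur root.

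For (ii), let $\sigma\in\Sigma_\bn(T_n)$ have level $1$. By Theorem \ref{T:inv_span} there is an $M$ with $\dv M={}^\alpha\sigma$ and $\Hom_{T_n}(M,N)=0$ (equivalently $\Ext_{T_n}(M,N)=0$) for general $N$, and $M$ must be indecomposable: if $M=M_1\oplus M_2$, each $M_i$ is again orthogonal to the general $N$, so $\innerprod{\dv M_i,\bn}=0$ and $\sigma={}^\sigma(\dv M_1)+{}^\sigma(\dv M_2)$ with both summands nonzero in $\Sigma_\bn(T_n)$; but level is additive and every nonzero weight in $\Sigma_\bn(T_n)$ has positive level, so a level-$1$ weight admits no such splitting. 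One then classifies the indecomposable $M$ which can be orthogonal to a generic triple of flags: writing $\Hom_{T_n}(M,N)$ in terms of the flag subspaces cut out by the arm restrictions of $M$, indecomposability together with this orthogonality and the normalization of the level forces $M$ to be the interval module of the first case (hence $i+j=n$) or the spider of the second (hence $i+j+k=n$); equivalently, one can run the argument through Theorem \ref{T:eqLR}, with level $1$ forcing the Littlewood--Richardson datum of $\sigma$ into a ``single box'' shape, for which the coefficient is nonzero only at the listed weights. The counts are combinatorial: three choices of the arm carrying no entry, then $n-1$ positions on one of the other two arms, gives $3(n-1)$ of type (I); and $\#\{(i,j,k)\in\mb Z_{\ge1}^3:i+j+k=n\}=\binom{n-1}{2}$ of type (II).

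For (iii), fix a level-$1$ weight $\sigma$. By (ii), ${}^\alpha\sigma$ is a real Schur root, so Lemma \ref{L:dim=1} gives $\dim\SI_\bn(T_n)_{m\sigma}=1$ for all $m\ge1$, with generator $s_M$. Since $T_n$ has no oriented cycles we have $\SI_\bn(T_n)_{\bs{0}}=k$, so every non-constant semi-invariant has nonzero weight; as an irreducible factor of a semi-invariant is again a semi-invariant and $k[\Rep_\bn(T_n)]$ is a UFD, the indecomposability of $\sigma$ forces $s_M$ to be irreducible. If $\sigma$ were not extremal then, $\Sigma_\bn(T_n)$ being a saturated semigroup (Theorem \ref{T:cone}) and $\sigma$ lying in a face of $\cone n$ of dimension at least $2$, one could write $m\sigma=\sigma_1+\sigma_2$ for some $m\ge1$ with $\sigma_1,\sigma_2\in\Sigma_\bn(T_n)$ not proportional to each other; choosing nonzero $s_i\in\SI_\bn(T_n)_{\sigma_i}$ yields $s_1s_2=c\,s_M^{\,m}$ in the domain $k[\Rep_\bn(T_n)]$, and unique factorization with the irreducibility of $s_M$ makes each $s_i$ a scalar times a power of $s_M$, forcing $\sigma_1,\sigma_2\in\mb R_+\sigma$ --- a contradiction. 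So $\sigma$ is extremal. (Alternatively one could check directly, using the facet description of Theorem \ref{T:CT}, that $\sigma$ lies on $\dim\cone n-1$ linearly independent facets.)

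The main difficulty is the classification in step (ii): showing that ``level $1$'' rigidly constrains $M$ (equivalently its Littlewood--Richardson datum) to the interval or spider shape. I expect this to require either a careful analysis of homomorphisms from a general indecomposable representation into a generic triple of flags, or an equally careful treatment of the sign conventions relating the weight $\sigma$ to the triple $(\lambda,\mu,\nu)$ in Theorem \ref{T:eqLR}. The irreducibility input in step (iii) is secondary, but genuinely relies on $\SI_\bn(T_n)_{\bs{0}}=k$, i.e.\ on $T_n$ being acyclic.
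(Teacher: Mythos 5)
The part of the lemma that actually requires an idea --- the exhaustiveness claim that \emph{every} level-$1$ weight is of type (I) or (II) --- is precisely the part you do not prove: your step (ii) consists of two announced strategies (``analyze indecomposables orthogonal to a generic flag triple'' or ``run the argument through Theorem \ref{T:eqLR}''), and you yourself flag it as the main difficulty still to be done. That is a genuine gap, not a routine omission, because this classification is the content of the lemma; the counts and the extremality argument in your step (iii) both presuppose it. For comparison, the paper disposes of it concretely and cheaply via Theorem \ref{T:cone}: the sum conditions $i+j=n$, $i+j+k=n$ come from $\sigma(\bn)=0$ once the shape is known, and the one non-obvious exclusion --- two negative entries on the same arm, $\sigma=\e_n-\e_i^a-\e_j^a-\e_k^b$ with $i\le j$ --- is killed by exhibiting the explicit subrepresentation dimension vector $\gamma=\sum_{l=i}^{n}\e_l^a$, for which $\gamma\hookrightarrow\bn$ and $\sigma(\gamma)=-1<0$. (The remaining shape constraints can likewise be extracted from Theorem \ref{T:cone} applied to further explicit subrepresentation dimension vectors, or from the partition data of Theorem \ref{T:eqLR}, where level $1$ forces $\lambda(\sigma)$ to be a single \emph{column} $(1^m)$ --- not a ``single box'' as you write --- and $\mu(\sigma),\nu(\sigma)$ to be columns; if you pursue that route, the sign conventions relating $\sigma$ to $(\lambda,\mu,\nu)$ do need the care you anticipate.)

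The parts you do carry out are essentially correct but follow a different route from the paper. For membership of the listed weights in $\Sigma_\bn(T_n)$ and the real Schur property, the paper simply quotes $c^{1^{i+j}}_{1^i,1^j}=1$ via Lemma \ref{L:level1_par} and checks $\innerprod{^\alpha\!\sigma,{^\alpha\!\sigma}}=1$, whereas you build the interval and spider representations explicitly and compute $\Hom$ against a generic flag triple; this is a legitimate, more hands-on alternative (and exhibiting one brick does suffice for Schur-ness, by upper semicontinuity of $\dim\End$), though the generic-vanishing dimension counts you assert would still need a line of justification. For extremality, the paper offers only the one-sentence observation that the level is positive on nonzero weights, while your argument via Lemma \ref{L:dim=1}, irreducibility of $s_M$, unique factorization, and saturation is longer but actually more watertight --- positivity of the level by itself only gives indecomposability in the semigroup $\Sigma_\bn(T_n)$, not extremality of the ray. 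But since both (i) and (iii) lean on the missing classification, the proposal as it stands does not establish the lemma.
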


\begin{proof} Since $\sigma(n)$ must be positive, each level-1 $\sigma\in \Sigma_\bn(T_n)$ is extremal.
The conditions $i+j=n$ and $i+j+k=n$ are clearly necessary.
Suppose that $\sigma=\e_n-\e_i^a-\e_j^a-\e_k^b$ with $i\leq j$, then
$\gamma=\sum_{l=i}^{n}\e_l^a$ is a dimension vector contradicting Theorem \ref{T:cone}.
So the superscripts of terms in $\sigma$ must be all different.
We can also use Lemma \ref{L:level1_par} below to check that the above $\f_{i,j}^{a,b}$ and $\f_{i,j}$ do lie in $\Sigma_\bn(T_n)$ because
$c_{1^i,1^j}^{1^{i+j}}=1$.
To show $^\alpha\!\sigma$ is real Schur, we check that $\innerprod{^\alpha\!\sigma,{^\alpha\!\sigma}}=1$, and a general representation of dimension $^\alpha\!\sigma$ is indecomposable. This is quite obvious.
\end{proof}

\begin{remark} We observe that $\sm{n-1\\2}$ is exactly the dimension of GIT quotient of $\Rep_\bn(T_n)$ for a generic stability, and $\sm{n+2\\2}-3=\sm{n-1\\2}+3(n-1)$ is the Krull dimension of $\SI_\bn(T_n)$.
Later we will see that the first type are related to coefficient variables, and the second type are related to (initial) cluster variables.
\end{remark}

The author do not like the notation $\f_{i,j}^{a,b}$, and want to treat two types uniformly.
So we introduce the convention that $\e_0^a$ is the zero vector for $a=1,2,3$, then
\begin{equation} \label{eq:fij_boundary} \f_{i,0}=\f_{i,n-i}^{1,3},\ \f_{0,j}=\f_{j,n-j}^{2,3},\ \text{ and  } \f_{i,j}=\f_{i,j}^{1,2}\ (\text{if } i+j=n).
\end{equation}
Recall the correspondence \eqref{eq:wt2par1}--\eqref{eq:wt2par3}. It is easy to verify that
\begin{lemma} \label{L:level1_par}
The level-1 weights $\f_{i,j}$ are precisely mapped to the partitions $(1^{i+j},1^i,1^j)$. 
\end{lemma}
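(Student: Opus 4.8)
The plan is to verify the stated correspondence by a direct substitution into the dictionary \eqref{eq:wt2par1}--\eqref{eq:wt2par3}, exploiting the fact that the convention \eqref{eq:fij_boundary} lets us treat all level-$1$ weights --- both types of Lemma \ref{L:level1} and the three boundary families --- in one stroke. First I would record that every level-$1$ weight has the uniform shape $\f_{i,j}=\e_n-\e_i^1-\e_j^2-\e_k^3$ with $i+j+k=n$, $i,j,k\geq 0$, and $\e_0^a=0$; since for a nonzero level-$1$ weight each of $i,j,k$ is at most $n-1$, it suffices to run the computation for this single family, the cases $i=0$, $j=0$, or $k=0$ being swept up automatically by the $\e_0^a=0$ convention.

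Next I would read off the matrix of $\f_{i,j}$ in the coordinates of Theorem \ref{T:eqLR}: in the first arm only the $i$-th entry is nonzero, in the second arm only the $j$-th, in the third arm only the $k$-th, and the central coordinate is the level $1$. After passing to the sign convention of Theorem \ref{T:eqLR} (the one in which elements of $\Sigma_\bn(T_n)$ correspond to genuine partitions), this reads $a_m=\delta_{m,i}$, $b_m=\delta_{m,j}$, $c_m=\delta_{m,k}$ for $1\leq m\leq n-1$ (with $\delta_{m,0}=0$), and $c_n=-1$. Forming the partial sums in \eqref{eq:wt2par1}--\eqref{eq:wt2par2} gives $\mu(\f_{i,j})_l=1$ for $l\leq i$ and $0$ otherwise, i.e. $\mu(\f_{i,j})=(1^i)$, and symmetrically $\nu(\f_{i,j})=(1^j)$; in \eqref{eq:wt2par3} the single $-1$ in the third arm cancels the central $-1$ exactly once $l$ passes $n-k=i+j$, so $\lambda(\f_{i,j})=(1^{i+j})$. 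Feeding this into Theorem \ref{T:eqLR} recovers $\dim\SI_\bn(T_n)_{\f_{i,j}}=c_{1^i,1^j}^{1^{i+j}}=1$, which is precisely the identity invoked in the proof of Lemma \ref{L:level1}.

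To justify the word ``precisely'', I would observe that the assignment $\f_{i,j}\mapsto(1^{i+j},1^i,1^j)$ is injective --- $i$ and $j$ are recovered as the numbers of parts of $\mu$ and $\nu$ --- and that its image is exactly the set of triples of one-column shapes, which under \eqref{eq:wt2par1}--\eqref{eq:wt2par3} is exactly the set of level-$1$ weights (the level being $\lambda_1$). Together with Lemma \ref{L:level1}, this both pins each $\f_{i,j}$ to $(1^{i+j},1^i,1^j)$ and shows that no other weight is sent there.

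The computation itself is entirely mechanical; the only point that takes attention is reconciling the sign and ordering conventions between the weight vectors $\f_{i,j}$ as written in Lemma \ref{L:level1} (where the central coordinate is $+1$) and the matrix layout and partial-sum formulas quoted from \cite{DW2} in Theorem \ref{T:eqLR} (whose built-in signs, e.g.\ in \eqref{eq:wt2par3}, force the opposite normalization), so that $\mu,\nu,\lambda$ emerge as honest partitions rather than their negatives or reverses. I expect that bookkeeping to be the only place an error could creep in; once the normalization is fixed the lemma is a one-line substitution.
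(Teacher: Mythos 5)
The paper offers no written proof of this lemma (it is flagged as ``easy to verify''), and your direct substitution into \eqref{eq:wt2par1}--\eqref{eq:wt2par3}, treating all level-1 weights uniformly via the convention $\e_0^a=0$ and checking the partial sums, is exactly that verification and is correct, including the ``precisely'' part. The sign normalization you adopt (reading the dictionary so that weights in $\Sigma_\bn(T_n)$ produce genuine partitions rather than their negatives) is indeed the intended one, as the paper's later computations with this correspondence (e.g.\ in Lemma \ref{L:f'}) confirm; your only slip is the phrase ``the single $-1$ in the third arm,'' which under your chosen normalization is a $+1$ cancelling the central $-1$, but the computed $\lambda=(1^{i+j})$ is right.
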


For any $\f_{i,j}$, we can associate a projective presentation
\begin{align}
& f_{i,j}: P_i^1 \oplus P_j^2 \oplus P_k^3 \xrightarrow{( p_i^1\  p_j^2\ p_k^3)^{\T}} P_n.
\end{align}
Here, we use the convention that $P_0^a=0$, and $p_i^a$ is the unique path from $^a i$ to $n$.
By Lemma \ref{L:level1} and \ref{L:dim=1}, up to a scalar multiple the element in $\SI_\bn(T_n)_{\f_{i,j}}$ is equal to the Schofield's semi-invariant function $s(f_{i,j})$.
We will see in Section \ref{S:CS} that all $s(f_{i,j})$'s are algebraically independent over $k$.

\begin{example} \label{Ex:T4} Consider the quiver $T_4$ with the standard $\beta_4$.
We run Algorithm CT, and find 18 extremal rays in $\cone{4}$. Besides the 12 level ones, they are
\begin{align*}
2\e_n-\e_1^a-\e_2^b-\e_2^c-\e_3^a,\quad \text{and}\quad 2\e_n-\e_2^a-\e_3^b-\e_3^c.  
\end{align*}
We leave for readers to check that both correspond to real Schur roots.
We give a concrete description for some $s(f_{i,j})$. For example,
$$s(f_{01})=\det\begin{pmatrix} B_1B_2B_3\\ C_3\end{pmatrix}, \qquad s(f_{11})=\det\begin{pmatrix} A_1A_2A_3 \\ B_1B_2B_3 \\ C_2C_3 \end{pmatrix},$$
where $A_i,B_i,C_i$ are generic matrices as shown below.
$$\vcenter{\xymatrix@R=2ex@C=9ex{
1\ar[r]^{A_1} & 2 \ar[r]^{A_2} & 3 \ar[dr]^{A_3}  \\
1\ar[r]^{B_1} & 2 \ar[r]^{B_2} & 3 \ar[r]^{B_3}   & 4 \\
1\ar[r]^{C_1} & 2 \ar[r]^{C_2} & 3 \ar[ur]^{C_3}
}}$$

\end{example}

\section{Graded Cluster Algebras} \label{S:CA}
\subsection{Cluster Algebras}
We follow mostly Section 3 of \cite{FP}.
The combinatorial data defining a cluster algebra is encoded in an {\em ice} quiver $\Delta$ with no loops or oriented 2-cycles.
The first $p$ vertices of $\Delta$ are designated as {\em mutable}; the remaining $q-p$ vertices are called {\em frozen}.
If we require no arrows between frozen vertices, then
such a quiver is uniquely determined by its {\em $B$-matrix} $B(\Delta)$.
It is a $p\times q$ matrix given by
$$b_{u,v} = |\text{arrows }u\to v| - |\text{arrows }v \to u|.$$

\begin{definition} \label{D:Qmu}
Let $u$ be a mutable vertex of $\Delta$.
The {\em quiver mutation} $\mu_u$ transforms $\Delta$ into the new quiver $\Delta'=\mu_u(\Delta)$ via a sequence of three steps.
\begin{enumerate}
\item For each pair of arrows $v\to u\to w$, introduce a new arrow $v\to w$ (unless both $v$ and $w$ are frozen, in which case do nothing);
\item Reverse the direction of all arrows incident to $u$;
\item Remove all oriented 2-cycles.
\end{enumerate}
\end{definition}

\noindent The above recipe can be reformulated in terms of $B$-matrix as follows.
Let $\phi$ be the $q\times q$ matrix obtained from the identity matrix by replacing the $u$-th row by a vector $\phi_u$ where $$\phi_u(u)=-1,\quad \text{and}\quad \phi_u(v)=|\text{arrows } v\to u |.$$
We write $\phi_{\op{p}}$ for the restriction of $\phi$ to its $p\times p$ upper left corner.
Then the mutated $B$-matrix $B'$ for $\Delta'$ is related to the original one by
\begin{equation} \label{eq:mu_B} B'=\phi_{\op{p}}^{\T} B\phi.
\end{equation}
We note that $\phi=\phi^{-1}$ so the quiver mutation is an {\em involution}.

Quiver mutations can be iterated {\em ad infinitum}, using an arbitrary sequence of mutable vertices of an evolving quiver. This combinatorial dynamics drives the algebraic dynamics of seed mutations that we describe next.

\begin{definition} \label{D:seeds} 
Let $\mc{F}$ be a field containing $k$.
A {\em seed} in $\mc{F}$ is a pair $(\Delta,\b{x})$ consisting of an ice quiver $\Delta$ as above together with a collection $\b{x}=\{x_1,x_2,\dots,x_q\}$, called an {\em extended cluster}, consisting of algebraically independent (over $k$) elements of $\mc{F}$, one for each vertex of $\Delta$.
The elements of $\b{x}$ associated with the mutable vertices are called {\em cluster variables}; they form a {\em cluster}.
The elements associated with the frozen vertices are called
{\em frozen variables}, or {\em coefficient variables}.

A {\em seed mutation} $\mu_u$ at a (mutable) vertex $u$ transforms $(\Delta,\b{x})$ into the seed $(\Delta',\b{x}')=\mu_u(\Delta,\b{x})$ defined as follows.
The new quiver is $\Delta'=\mu_u(\Delta)$.
The new extended cluster is
$\b{x}'=\b{x}\cup\{x_{u}'\}\setminus\{x_u\}$
where the new cluster variable $x_u'$ replacing $x_u$ is determined by the {\em exchange relation}
\begin{equation} \label{eq:exrel}
x_u\,x_u' = \prod_{v\rightarrow u} x_v + \prod_{u\rightarrow w} x_w.
\end{equation}
\end{definition}

\noindent We note that the mutated seed $(\Delta',\b{x}')$ contains the same
coefficient variables as the original seed $(\Delta,\b{x})$.
It is easy to check that one can recover $(\Delta,\b{x})$
from $(\Delta',\b{x}')$ by performing a seed mutation again at $u$.
Two seeds $(\Delta,\b{x})$ and $(\Delta',\b{x}')$ that can be obtained from each other by a sequence of mutations are called {\em mutation-equivalent}, denoted by $(\Delta,\b{x})\sim (\Delta',\b{x}')$.

\begin{definition}[{\em Cluster algebra}] \label{D:cluster-algebra}
The {\em cluster algebra $\mc{C}(\Delta,\b{x})$} associated to a seed $(\Delta,\b{x})$ is defined as the subring of $\mc{F}$
generated by all elements of all extended clusters of the seeds mutation-equivalent to $(\Delta,\b{x})$.
\end{definition}


\noindent Note that the above construction of $\mc{C}(\Delta,\b{x})$ depends only, up to a natural isomorphism, on the mutation equivalence class of the initial quiver $\Delta$. So we may drop $\b{x}$ and simply write $\mc{C}(\Delta)$.


\subsection{Upper Bounds}

An amazing property of cluster algebras is
\begin{theorem}[{\em Laurent Phenomenon}, \textrm{\cite{FZ1,BFZ}}] \label{T:Laurent}
Any element of a cluster algebra $\mc{C}(\Delta,\b{x})$ can be expressed in terms of the
extended cluster $\b{x}$ as a Laurent polynomial, which is polynomial in coefficient variables.
\end{theorem}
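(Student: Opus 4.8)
The plan is to reduce the statement to a claim about individual cluster variables and then establish that claim by induction on the length of a mutation sequence. By Definition~\ref{D:cluster-algebra}, $\mc{C}(\Delta,\b{x})$ is generated, as a ring, by the coefficient variables together with every cluster variable occurring in some seed mutation-equivalent to $(\Delta,\b{x})$; moreover the elements of $\mc{F}$ that are Laurent polynomials in $\b{x}$ and honest polynomials in the coefficient variables are closed under addition, subtraction, and multiplication, hence form a subring of $\mc{F}$. Thus it suffices to prove: for every seed $(\Delta',\b{x}')\sim(\Delta,\b{x})$ and every $y\in\b{x}'$, the element $y$ is a Laurent polynomial in $\b{x}$ that is polynomial in the coefficient variables.

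First I would fix a mutation sequence $(\Delta,\b{x})=(\Delta_0,\b{x}_0)\xrightarrow{\mu_{u_1}}(\Delta_1,\b{x}_1)\xrightarrow{\mu_{u_2}}\cdots\xrightarrow{\mu_{u_\ell}}(\Delta_\ell,\b{x}_\ell)=(\Delta',\b{x}')$ and induct on $\ell$. For $\ell=0$ there is nothing to prove, and for $\ell=1$ the exchange relation~\eqref{eq:exrel} gives $x_{u_1}'=x_{u_1}^{-1}\big(\prod_{v\to u_1}x_v+\prod_{u_1\to w}x_w\big)$, which is visibly a Laurent polynomial in $\b{x}$, polynomial in the frozen variables (frozen vertices are never mutated, so $x_{u_1}$ is a cluster variable). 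The delicate point is the inductive step. The naive move --- writing $y\cdot z=(\text{monomials in }\b{x}_{\ell-1})$, where $z$ is the entry of $\b{x}_{\ell-1}$ at the vertex $u_\ell$ that the last mutation replaces by $y$, and substituting the inductive Laurent expansions for the entries of $\b{x}_{\ell-1}$ --- only exhibits $y$ as a Laurent polynomial divided by $z$, and $z$ is itself a genuine Laurent polynomial in $\b{x}$, not a monomial, so denominators can a priori survive.

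To control these denominators I would run the \emph{caterpillar} argument of \cite{FZ1} (cf.\ also \cite{BFZ}): isolate a segment of the mutation chain of length at most three and split into cases according to how the directions $u_i$ and $u_{i+1}$ of two consecutive mutations interact --- (i) $u_{i+1}=u_i$, in which case the two mutations cancel and the chain shortens; (ii) $u_i$ and $u_{i+1}$ non-adjacent in $\Delta_i$, in which case $\mu_{u_i}$ and $\mu_{u_{i+1}}$ commute and the chain can be rearranged; and (iii) $u_i$ and $u_{i+1}$ adjacent, the only genuinely new case. For (iii) one tracks exactly which powers of which cluster variables can occur in the denominator and invokes the crucial coprimality fact that the two exchange binomials attached to $u_i$ and $u_{i+1}$ share no common factor in the ambient Laurent ring, so that any element of $\mc{F}$ which is simultaneously Laurent with respect to two adjacent clusters and whose two possible denominators are coprime must already be Laurent with respect to their common parent cluster. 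Propagating this back through the induction yields the claim for $y$, hence for all of $\mc{C}(\Delta,\b{x})$. I expect the adjacent-directions case (iii) --- the Caterpillar Lemma proper --- to be the main obstacle: it is exactly where the no-loop/no-$2$-cycle hypothesis on the quivers and the binomial shape of~\eqref{eq:exrel} enter essentially, whereas the reduction to single variables, the length induction, and cases (i)--(ii) are routine bookkeeping.
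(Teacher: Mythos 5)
The paper does not actually prove this theorem: it is imported from \cite{FZ1,BFZ} with no argument given, so the only thing to compare your proposal with is the classical proof you are sketching. As an outline it names the right ingredients (induction along a mutation sequence, a caterpillar-type configuration, coprimality of exchange binomials in a Laurent polynomial ring, which is a UFD), but as a proof it has a genuine gap: the entire content of the theorem is delegated to the step you yourself flag as the main obstacle. Moreover, the reduction as you set it up does not close. Laurentness is not transitive --- an element that is Laurent in $\b{x}_{\ell-1}$ whose entries are in turn Laurent in $\b{x}$ need not be Laurent in $\b{x}$ --- and your case analysis on two consecutive directions $u_i,u_{i+1}$ (equal, non-adjacent, adjacent) is not how the actual argument is organized. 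The caterpillar argument of \cite{FZ1} works with a three-edge configuration of seeds $t_0,t_1,t_2,t_3$ joined in directions $j,k,j$ (the same direction repeated, with a different one in between) and proves a \emph{stronger} statement, namely that the intersection of the Laurent rings attached to $t_1,t_2,t_3$ is contained in the Laurent ring attached to $t_0$; equivalently, in the formulation of \cite{BFZ}, that the upper bound of a coprime seed is invariant under mutation. It is this intersection-type statement that can be propagated by induction; Laurentness of individual cluster variables along the chain cannot. Also, the coprimality actually needed is not merely that the exchange binomials at two adjacent vertices of one quiver are coprime, but that an exchange binomial remains coprime to the polynomial obtained from it after an intermediate mutation and substitution, which requires a separate verification.

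A second point your sketch does not address is the refinement that distinguishes the statement as quoted here from plain Laurentness: the expansion must be \emph{polynomial} in the coefficient (frozen) variables, which is exactly why the paper's $\mc{L}_{\b{x}}$ does not invert frozen variables, unlike \cite{BFZ}. Your induction only ever divides by mutable variables of intermediate clusters, but once you substitute their Laurent expansions in the initial cluster, frozen variables could a priori survive in denominators; ruling this out needs an extra argument (for instance via the separation formulas and $F$-polynomials of \cite{FZ4}, or a divisibility argument showing the reduced denominator of any cluster variable is a monomial in the initial mutable variables only). So the proposal is a reasonable road map of the known proof, but both the central lemma and the coefficient-polynomiality claim are asserted rather than proved.
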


Since $\mc{C}(\Delta,\b{x})$ is generated by cluster variables from the seeds mutation equivalent to $(\Delta,\b{x})$,
Theorem \ref{T:Laurent} can be rephrased as
$$\mc{C}(\Delta,\b{x}) \subseteq \bigcap_{(\Delta',\b{x}') \sim (\Delta,\b{x})}\mc{L}_{\b{x}'},$$
where $\mc{L}_{\b{x}}:=k[x_1^{\pm 1},\dots,x_p^{\pm 1}, x_{p+1}, \dots x_{q}]$.
Note that our definition of $\mc{L}_{\b{x}}$ is slightly different from the original one in \cite{BFZ}, where the coefficient variables are inverted in $\mc{L}_{\b{x}}:=k[x_1^{\pm 1},\dots,x_p^{\pm 1}, x_{p+1}^{\pm 1}, \dots x_{q}^{\pm 1}]$.
\begin{definition}[{\em Upper Cluster Algebra}]
The upper cluster with seed $(\Delta,\b{x})$ is
$$\br{\mc{C}}(\Delta,\b{x}):=\bigcap_{(\Delta',\b{x}') \sim (\Delta,\b{x})}\mc{L}_{\b{x}'}.$$
\end{definition}

In general, there may be infinitely many seeds mutation equivalent to $(\Delta,\b{x})$.
So the above definition is not very useful to test the membership in an upper cluster algebra.
However, the following theorem allows us to do only finitely many checking.

\begin{definition}\cite{BFZ} Let $\b{x}_u (1\leq u \leq p)$ be the {\em adjacent} cluster obtained from $\b{x}$ by applying the mutation at $u$.
We define the upper bounds
$$\mc{U}(\Delta,\b{x}):=\bigcap_{1\leq u\leq p}\mc{L}_{\b{x}_u}.$$
\end{definition}

\begin{theorem}\cite[Corollary 1.9]{BFZ} \label{T:bounds} Suppose that $B(\Delta)$ has full rank, and $(\Delta,\b{x})\sim (\Delta',\b{x}')$,
then $\mc{U}(\Delta,\b{x})=\mc{U}(\Delta',\b{x}')$. In particular, $\mc{U}(\Delta,\b{x})=\br{\mc{C}}(\Delta,\b{x})$.
\end{theorem}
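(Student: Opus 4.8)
The plan is to follow Berenstein--Fomin--Zelevinsky \cite{BFZ}. The ``in particular'' clause is a formal consequence of the mutation-invariance $\mc{U}(\Delta,\b{x})=\mc{U}(\Delta',\b{x}')$, so I would prove that first and then read off the rest. Granting the invariance: each adjacent cluster $\b{x}_u$ is mutation-equivalent to $\b{x}$, hence $\br{\mc{C}}(\Delta,\b{x})\subseteq\mc{L}_{\b{x}_u}$ for every mutable $u$ and so $\br{\mc{C}}(\Delta,\b{x})\subseteq\mc{U}(\Delta,\b{x})$. Conversely, given $z\in\mc{U}(\Delta,\b{x})$ and an arbitrary seed $(\Delta'',\b{x}'')\sim(\Delta,\b{x})$, I would write $(\Delta'',\b{x}'')=\mu_u(\Delta',\b{x}')$ with $(\Delta',\b{x}')=\mu_u(\Delta'',\b{x}'')$ for some mutable $u$, apply the invariance to get $z\in\mc{U}(\Delta',\b{x}')\subseteq\mc{L}_{\b{x}'_u}=\mc{L}_{\b{x}''}$, and conclude $z\in\bigcap_{(\Delta'',\b{x}'')\sim(\Delta,\b{x})}\mc{L}_{\b{x}''}=\br{\mc{C}}(\Delta,\b{x})$.

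For the mutation-invariance, since mutation-equivalence is symmetric and generated by single mutations, it suffices to treat $(\Delta',\b{x}')=\mu_u(\Delta,\b{x})$. Along the way I would check that the full-rank hypothesis propagates: by \eqref{eq:mu_B} we have $B(\Delta')=\phi_{\op{p}}^{\T}B(\Delta)\phi$, and both $\phi$ and $\phi_{\op{p}}$ are obtained from identity matrices by altering a single row whose diagonal entry becomes $-1$, so their determinants equal $-1$; multiplying $B(\Delta)$ on either side by an invertible matrix does not change its rank. Thus full rank is available at every seed in the mutation class, and with it the \emph{coprimality} of the seed: the exchange polynomials $P_v=\prod_{w\to v}x_w+\prod_{v\to w}x_w$ of distinct mutable vertices of a common cluster share no non-monomial factor. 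Deducing coprimality (at every reachable cluster) from full rank of $B(\Delta)$ is, I expect, the crux; this is exactly the point at which the full-rank hypothesis is indispensable.

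It then remains to prove the single-mutation invariance itself. Writing $\Sigma=(\Delta,\b{x})$, $\Sigma'=\mu_u\Sigma$, $\Sigma_j=\mu_j\Sigma$, $\Sigma'_j=\mu_j\Sigma'$, one has $\Sigma'=\Sigma_u$ and $\mu_u\Sigma'=\Sigma$, so the intersections defining $\mc{U}(\Sigma)$ and $\mc{U}(\Sigma')$ share every Laurent-ring factor except the pairs $\mc{L}_{\b{x}_j}$ versus $\mc{L}_{\b{x}'_j}$ for $j\neq u$, and the claim reduces to identities of the form
\begin{equation*}
\mc{L}_{\b{x}}\cap\mc{L}_{\b{x}_u}\cap\mc{L}_{\b{x}_j}=\mc{L}_{\b{x}}\cap\mc{L}_{\b{x}_u}\cap\mc{L}_{\b{x}'_j}\qquad(j\neq u).
\end{equation*}
To prove these I would use the structural lemmas of \cite{BFZ}: elements of $\mc{L}_{\b{x}}\cap\mc{L}_{\b{x}_u}$ are precisely the Laurent polynomials in $\b{x}$ that remain Laurent after the substitution $x_u\mapsto P_u/x_u$, and coprimality pins down their behaviour along $\{x_u=0\}$ and $\{P_u=0\}$ tightly enough that the comparison of $\mc{L}_{\b{x}_j}$ with $\mc{L}_{\b{x}'_j}$ relative to $\mc{L}_{\b{x}}\cap\mc{L}_{\b{x}_u}$ becomes a finite computation. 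As noted, the main obstacle is the coprimality input together with this intersection lemma; the remaining ingredients --- the propagation of full rank, the matching of Laurent-ring factors, and the ``in particular'' clause --- are routine.
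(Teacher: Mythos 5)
Your outline retraces the proof of \cite[Corollary 1.9]{BFZ}, and that is essentially all the paper does: Theorem \ref{T:bounds} is not proved here but cited, so at the level of strategy (full rank propagates under mutation since $\phi$ and $\phi_{\op{p}}$ are invertible, full rank gives coprimality of every seed in the mutation class, coprimality gives the single-mutation invariance via the Laurent-ring intersection lemmas, and the ``in particular'' clause is formal) you and the paper are on the same route. Two small calibration remarks: deducing coprimality from full rank is the short step in \cite{BFZ} (their Proposition 1.8), while the genuine crux is the single-mutation invariance itself (their Theorem 1.5 and the Section 4 intersection lemmas), which your sketch compresses into ``a finite computation''; and your reduction to the triple intersections $\mc{L}_{\b{x}}\cap\mc{L}_{\b{x}_u}\cap\mc{L}_{\b{x}_j}$ should be checked against the paper's definition of $\mc{U}(\Delta,\b{x})$, which, unlike \cite{BFZ}, does not list $\mc{L}_{\b{x}}$ among the factors.

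The one substantive point specific to this paper that your proposal does not address is the only non-citation content the paper supplies: here $\mc{L}_{\b{x}}=k[x_1^{\pm 1},\dots,x_p^{\pm 1},x_{p+1},\dots,x_q]$ does \emph{not} invert the frozen variables, whereas \cite{BFZ} works with $x_{p+1}^{\pm 1},\dots,x_q^{\pm 1}$. So the statement as formulated is a variant of \cite[Corollary 1.9]{BFZ}, and invoking ``the structural lemmas of \cite{BFZ}'' as black boxes is not quite legitimate: those lemmas are stated over the larger coefficient ring, and one must re-examine their proofs to see that they hold over $k[x_{p+1},\dots,x_q]$ as well. The paper acknowledges exactly this in the remark following the theorem (``if we carefully examine the argument, we find that the result is also valid for our $\mc{L}_{\b{x}}$''), and, since the theorem is only used in Lemma \ref{L:CCupper}, it offers Remark \ref{R:mu_support} as a fallback that avoids the variant altogether. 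To make your write-up complete you would either carry out that re-examination (checking that the intersection lemmas and the coprimality argument never require inverting a frozen variable) or route around the theorem as in Remark \ref{R:mu_support}.
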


\begin{remark} This theorem is originally proved for $\mc{U}(\Delta,\b{x})$ and $\br{\mc{C}}(\Delta,\b{x})$ with $\mc{L}_{\b{x}}$ defined there.
However, if we carefully examine the argument, we find that the result is also valid for our $\mc{L}_{\b{x}}$.
We only use this theorem in Lemma \ref{L:CCupper}.
If readers are not willing to accept this, please see Remark \ref{R:mu_support}.
\end{remark}

Any (upper) cluster algebra, being a subring of a field, is an integral
domain (and under our conventions, a $k$-algebra).
Conversely, given such a domain~$R$, one may be interested in
identifying $R$ as an (upper) cluster algebra.
As an ambient field~$\mc{F}$,
we can always use the quotient field~$\op{QF}(R)$.
The challenge is to find a seed $(\Delta,\b{x})$ in $\op{QF}(R)$ such
that $\br{\mc{C}}(\Delta,\b{x})=R$. The following lemma is very helpful.

\begin{lemma} \cite[Corollary 3.7]{FP} \label{L:RCA}
Let $R$ be a finitely generated unique factorization domain over $k$.
Let $(\Delta,\b{x})$ be a seed in the quotient field of $R$ such that all elements of $\b{x}$ and all elements of clusters adjacent to $\b{x}$ are irreducible elements of $R$. Then $R \supseteq \br{\mc{C}}(\Delta,\b{x})$.
\end{lemma}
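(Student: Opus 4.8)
The plan is to bound the possible denominators of an arbitrary element of $\br{\mc{C}}(\Delta,\b{x})$ by looking at its Laurent expansions in the initial cluster and in each of the $p$ clusters adjacent to it, and then to cancel those denominators using that $R$ is a UFD in which the relevant cluster variables are prime. Fix $y\in\br{\mc{C}}(\Delta,\b{x})$. Since both the seed $(\Delta,\b{x})$ and every once-mutated seed occur in the mutation class of $(\Delta,\b{x})$, the definition of the upper cluster algebra gives $y\in\mc{L}_{\b{x}}$ and $y\in\mc{L}_{\b{x}_u}$ for each mutable vertex $u$. Because $\b{x}$ is algebraically independent over $k$ and $\b{x}\subset R$, the subring $k[\b{x}]\subset R$ is a polynomial ring, $\mc{L}_{\b{x}}$ is its localization at $x_1\cdots x_p$, and likewise $\mc{L}_{\b{x}_u}$ is a localization of $k[\b{x}_u]\subset R$ (here one uses that $x_u'$, being an element of a cluster adjacent to $\b{x}$, lies in $R$). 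In particular $y\in\op{QF}(R)$, so I may write $y=g/h$ in lowest terms with $g,h\in R$ coprime.

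Next I would pin down $h$. From $y\in\mc{L}_{\b{x}}$ one gets $h\mid(x_1\cdots x_p)^N$ for $N\gg0$, and since each $x_i$ $(1\le i\le p)$ is irreducible, hence prime, in $R$, this forces $h$ to be a unit times $x_1^{a_1}\cdots x_p^{a_p}$ with all $a_i\ge0$. It remains to show every $a_u=0$. Suppose $a_u\ge1$ for some $u$. Then $x_u\mid h$ while $x_u\nmid g$, so the $x_u$-adic valuation on $R$ satisfies $v_{x_u}(y)=v_{x_u}(g)-v_{x_u}(h)<0$. On the other hand $y\in\mc{L}_{\b{x}_u}=k[x_1^{\pm1},\dots,(x_u')^{\pm1},\dots,x_p^{\pm1},x_{p+1},\dots,x_q]$, so $y\cdot\bigl(x_1\cdots x_{u-1}\,x_u'\,x_{u+1}\cdots x_p\bigr)^N\in R$ for $N\gg0$. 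None of the factors $x_i$ $(i\ne u)$ or $x_u'$ is divisible by $x_u$: each is irreducible, so divisibility by $x_u$ would make it an associate of $x_u$; this is excluded for the $x_i$ by the algebraic independence of the cluster, and $x_u'\sim x_u$ would turn the exchange relation \eqref{eq:exrel} into a nontrivial polynomial relation among the entries of $\b{x}$. Hence $v_{x_u}$ vanishes on that monomial, giving $v_{x_u}(y)\ge0$, a contradiction. Therefore $h$ is a unit and $y\in R$, proving $R\supseteq\br{\mc{C}}(\Delta,\b{x})$.

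The step I expect to be the crux is precisely the denominator-exclusion argument above: one must know that passing from the initial cluster to the one adjacent at $u$ genuinely removes $x_u$ from the pool of admissible denominators. The Laurent phenomenon by itself only confines denominators to monomials in $x_1,\dots,x_p$; to eliminate each $x_u$ in turn one really needs unique factorization, the irreducibility of the initial and once-mutated cluster variables, and the fact that a cluster variable is never an associate of another cluster variable or of its own mutation. If one prefers to keep this self-contained, the alternative is to first identify $\mc{L}_{\b{x}}\cap\mc{L}_{\b{x}_u}$ with the subring generated over the Laurent ring in the remaining variables by $x_u$ and $x_u'$ subject to $x_ux_u'=\prod_{v\to u}x_v+\prod_{u\to w}x_w$, and then feed this description into the same UFD argument; the outcome is unchanged.
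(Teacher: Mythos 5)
Your argument is the standard one behind the cited result (the paper gives no proof of its own here: it simply quotes \cite[Corollary 3.7]{FP} and remarks that the argument yields the upper-cluster-algebra version), and its skeleton is exactly right. You correctly observe that only membership in $\mc{L}_{\b{x}}$ and in the $p$ adjacent Laurent rings is used, i.e.\ you in fact prove $\mc{U}(\Delta,\b{x})\subseteq R$, which contains $\br{\mc{C}}(\Delta,\b{x})$; the reduction of the denominator $h$ to a monomial in $x_1,\dots,x_p$ via primality of irreducibles in a UFD, and the elimination of each $x_u$ using the seed adjacent at $u$, are the intended mechanism.

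The one step that is not fully justified as written is the exclusion of associates. For a general finitely generated UFD over $k$ the unit group may be strictly larger than $k^*$, and then algebraic independence does not prevent two irreducible elements from being associates: in $R=k[t^{\pm 1},s]$ the elements $s$ and $ts$ are irreducible, associates, and algebraically independent over $k$. For the same reason, if $x_u'=cx_u$ with $c$ a non-scalar unit, the exchange relation \eqref{eq:exrel} does not become a polynomial relation over $k$ among the entries of $\b{x}$, so your second exclusion is also incomplete in that generality. Both exclusions are valid as soon as $R^\times=k^\times$, or if one adds pairwise non-proportionality (equivalently, the coprimality hypotheses under which this ``starfish''-type lemma is usually stated). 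In the only situation where the paper uses the lemma this costs nothing: $R=\SI_\bn(T_n)$ is graded with $\SI_\bn(T_n)_{\bs 0}=k$ and pointed weight cone, hence $R^\times=k^\times$, and your argument then goes through verbatim; but for the lemma as literally stated (arbitrary finitely generated UFD over $k$) you should either argue that the relevant variables are non-associate or incorporate that as a hypothesis.
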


\begin{remark} The original conclusion in \cite[Corollary 3.7]{FP} is that $R \supseteq \mc{C}(\Delta,\b{x})$. However, the proof implies this stronger result (see the comment before the proof of \cite[Proposition 3.6]{FP}).
\end{remark}


\subsection{Gradings}
\begin{definition} \label{D:wtconfig} A {\em weight configuration} $\bs{\sigma}$ on an ice quiver $\Delta$ is an assignment for each vertex $v$ of $\Delta$ a (weight) vector $\bs{\sigma}(v)$ such that for each mutable $u$, we have that
\begin{equation} \label{eq:weightconfig}
\sum_{v\to u} \bs{\sigma}(v) = \sum_{u\to w} \bs{\sigma}(w).
\end{equation}
A {\em mutation} $\mu_u$ at a mutable vertex $u$ transforms $\bs{\sigma}$ into a weight configuration $\bs{\sigma}'$ of the mutated quiver $\mu_u(\Delta)$ defined as
\begin{equation} \label{eq:mu_wt}
\bs{\sigma}'(v) = \begin{cases} \displaystyle \sum_{u\to w} \bs{\sigma}(w) - \bs{\sigma}(u) & \text{if } v=u, \\ \bs{\sigma}(v) & \text{if } v\neq u. \end{cases}
\end{equation} \end{definition}

\noindent By slight abuse of notation, we can view $\bs{\sigma}$ as a matrix whose $v$-th row is the weight vector $\bs{\sigma}(v)$.
In matrix notation, the condition \eqref{eq:weightconfig} is equivalent to $B\bs{\sigma}$ is a zero matrix, and formula \eqref{eq:mu_wt} is $\bs{\sigma}'=\phi \bs{\sigma}$.
It follows from \eqref{eq:mu_B} and induction that
for any weight configuration of $\Delta$, the mutation can be iterated.

\begin{definition} \label{D:full} A weight configuration $\bs{\sigma}$ is called {\em full} if the null rank of $B^{\T}$ is equal to the rank of $\bs{\sigma}$.
The null space of $B^{\T}$ is called the {\em grading space} of the (upper) cluster algebra $\mc{C}(\Delta)$.
\end{definition}

Given a weight configuration $\bs{\sigma}$ of $\Delta$,
we can assign a multidegree (or weight) to the (upper) cluster algebra $\mc{C}(\Delta,\b{x})$ by setting
$\deg(x_v)=\bs{\sigma}(v)$ for $v=1,2,\dots,q$.
Then mutation preserves multihomogeneousity.
We say that this (upper) cluster algebra is $\bs{\sigma}$-graded, and denoted by $\mc{C}(\Delta,\b{x};\bs{\sigma})$. We refer to $(\Delta,\b{x};\bs{\sigma})$ a graded seed.

\section{Mutations of Quivers with Potentials} \label{S:QP}
In \cite{DWZ1} and \cite{DWZ2}, the mutation of quivers with potentials is invented to model the cluster algebras.
Following \cite{DWZ1}, we define a potential $W$ on a quiver $\Delta$ as a (possibly infinite) linear combination of oriented cycles in $\Delta$.
More precisely, a {\em potential} is an element of the {\em trace space} $\Tr(\ckQ):=\ckQ/[\ckQ,\ckQ]$,
where $\ckQ$ is the completion of the path algebra $k\Delta$ and $[\ckQ,\ckQ]$ is the closure of the commutator subspace of $\ckQ$.
The pair $(\Delta,W)$ is a {\em quiver with potential}, or QP for short.
For each arrow $a\in \Delta_1$, define the {\em cyclic derivative} of $W$ with respect to $a$ as
$$\partial_a W = \sum_{W=uav} vu.$$
For each potential $W$, its {\em Jacobian ideal} $\partial W$ is the (closed two-sided) ideal in $\ckQ$ generated by all $\partial_a W$.
The {\em Jacobian algebra} $J(\Delta,W)$ is $\widehat{k\Delta}/\partial W$.
A QP is {\em Jacobi-finite} if its Jacobian algebra is finite-dimensional.
If $W$ is polynomial and $J(\Delta,W)$ is finite-dimensional, then the completion is unnecessary to define $J(\Delta,W)$.
This is the case throughout this paper.

At this stage, we assume that each vertex of $\Delta$ is mutable, but later we will freeze some vertices.
The {\em mutation} $\mu_u$ of a QP $(\Delta,W)$ at a vertex $u$ is defined as follows.
The first step is to define the following new QP $\wtd{\mu}_u(\Delta,W)=(\wtd{\Delta},\wtd{W})$.
We put $\wtd{\Delta}_0=\Delta_0$ and $\wtd{\Delta}_1$ is the union of three different kinds
\begin{enumerate}
\item[$\bullet$] all arrows of $\Delta$ not incident to $u$,
\item[$\bullet$] a composite arrow $[ab]$ from $t(a)$ to $h(b)$ for each $a,b$~with~$h(a)=t(b)=u$,
\item[$\bullet$] an opposite arrow $a^*$ (resp. $b^*$) for each incoming arrow $a$ (resp. outgoing arrow $b$) at $u$.
\end{enumerate}
Note that this $\wtd{\Delta}$ is the result of first two steps in Definition \ref{D:Qmu}.
The new potential on $\wtd{\Delta}$ is given by
$$\wtd{W}:=[W]+\sum_{h(a)=t(b)=u}b^*a^*[ab],$$
where $[W]$ is obtained by substituting $[ab]$ for each words $ab$ occurring in $W$. Finally we define $(\Delta',W')=\mu_u(\Delta,W)$ as the {\em reduced part} (\cite[Definition 4.13]{DWZ1}) of $(\wtd{\Delta},\wtd{W})$.
For this last step, we refer readers to \cite[Section 4,5]{DWZ1} for details.

Now we start to define the mutation of decorated representations of $J:=J(\Delta,W)$.
\begin{definition} A decorated representation of the Jacobian algebra $J$ is a pair $\mc{M}=(M,M^+)$,
where $M\in \Rep(J)$, and $M^+$ is a finite-dimensional $k^{\Delta_0}$-module.
By abuse of language, we also say that $\mc{M}$ is a representation of $(\Delta,W)$.
\end{definition}

Consider the resolution of the simple module $S_u$
\begin{align}
\label{eq:Su_proj} \cdots \to \bigoplus_{h(a)=u} P_{t(a)}\xrightarrow{_a(\partial_{[ab]})_b} \bigoplus_{t(b)=u} P_{h(b)} \xrightarrow{_b(b)}  P_u \to S_u\to 0,\\
\label{eq:Su_inj} 0\to S_u \to I_u \xrightarrow{(a)_a} \bigoplus_{h(a)=u} I_{t(a)} \xrightarrow{_a(\partial_{[ab]})_b} \bigoplus_{t(b)=u} I_{h(b)} \to \cdots,
\end{align}
where $I_u$ is the indecomposable injective representation of $J$ corresponding to a vertex $u$.
We thus have the triangle of linear maps with $\beta_u \gamma_u=0$ and $\gamma_u \alpha_u=0$.
$$\vcenter{\xymatrix@C=5ex{
& M(u) \ar[dr]^{\beta_u} \\
\bigoplus_{h(a)=u} M(t(a)) \ar[ur]^{\alpha_u} && \bigoplus_{t(b)=u} M(h(b)) \ar[ll]^{\gamma_u} \\
}}$$

We first define a decorated representation $\wtd{\mc{M}}=(\wtd{M},\wtd{M}^+)$ of $\wtd{\mu}_u(\Delta,W)$.
We set \begin{align*}
&\wtd{M}(v)=M(v),\quad  \wtd{M}^+(v)=M^+(v)\quad (v\neq u); \\
&\wtd{M}(u)=\frac{\Ker \gamma_u}{\Img \beta_u}\oplus \Img \gamma_u \oplus \frac{\Ker \alpha_u}{\Img \gamma_u} \oplus M^+(u),\quad \wtd{M}^+(u)=\frac{\Ker \beta_u}{\Ker \beta_u\cap \Img \alpha_u}.
\end{align*}
We then set $\wtd{M}(a)=M(a)$ for all arrows not incident to $u$, and $\wtd{M}([ab])=M(ab)$.
It is defined in \cite{DWZ1} a choice of linear maps
$\wtd{M}(a^*), \wtd{M}(b^*)$ making
$\wtd{M}$ a representation of $(\wtd{\Delta},\wtd{W})$.
We refer readers to \cite[Section 10]{DWZ2} for details.
Finally, we define $\mc{M}'=\mu_u(\mc{M})$ to be the {\em reduced part} (\cite[Definition 10.4]{DWZ1}) of $\wtd{\mc{M}}$.

Let $\mc{R}ep(J)$ be the set of decorated representations of $J(\Delta,W)$ up to isomorphism. There is a bijection between two additive category $\mc{R}ep(J)$ and $K^2(\proj J)$ mapping any representation $M$ to its minimal presentation in $\Rep(J)$, and the simple representation $S_u^+$ of $k^{\Delta_0}$ to $P_u\to 0$.
Suppose that $\mc{M}$ corresponds to a projective presentation
$P(\beta_1)\to P(\beta_0)$. We see from the exact sequence \eqref{eq:Su_inj} that
\begin{equation} \label{eq:Betti} \beta_1(u)=\dim(\Ker \alpha_u/\Img \gamma_u)+\dim M^+(u), \text{ and } \beta_0(u)=\dim \Coker \alpha_u.\end{equation}

\begin{definition} The {\em $\g$-vector} $\g(\mc{M})$ of a decorated representation $\mc{M}$ is the weight vector of its image in $K^b(\proj J)$, that is, $\g=\beta_1-\beta_0$.
\end{definition}

\begin{remark} Our $\g$-vector is dual to the $\g$-vector considered in \cite{FZ4,DWZ2}. It is the negative of the $\delta$-vector considered in \cite{DF}.
\end{remark}

It follows from \eqref{eq:Betti} that (\cite[(1.13)]{DWZ2})
\begin{equation} \label{eq:delta2dim} \g(u)=\dim(\Coker \gamma_u)-\dim M(u) + \dim M^+(u).
\end{equation}

\begin{definition}\! $\mc{M}$ is called {\em $\g$-coherent} if $\min(\beta_1(u),\beta_0(u))=0$ for all~vertices~$u$,
or equivalently, $\beta_1=[\g]_+$ and $\beta_0=[-\g]_+$. Here, $[\g]_+$ is the vector satisfying $[\g]_+(u) = \max(\g(u),0)$.
\end{definition}

\noindent We also define the mutation $\mu_u$ at the level of $K_0$-groups.
For $\g\in K_0(\proj J)$, $\g':=\mu_u(\g)$ is a vector in $K_0(\proj J')$ defined by ({\em cf.} \cite[(1.3)]{DWZ2})
\begin{equation} \label{eq:mu_delta} \g'(v):=\begin{cases}
-\g(u) & \text{if } u=v;\\
\g(v)-b_{u,v}[-\g(u)]_+ & \text{if } b_{u,v}\geq 0; \\
\g(v)-b_{u,v}[\g(u)]_+ & \text{if } b_{u,v}\leq 0.
\end{cases}\end{equation}
This is also an involution. In general, the $\g$-vectors of $\mc{M}$ and $\mc{M}'$ are related by \eqref{eq:delta_mu_general}.
If we compare \eqref{eq:mu_delta} with \eqref{eq:delta_mu_general} as in the proof of \cite[Theorem 1.7]{DWZ2}, we see that they are equivalent if and only if $\beta_0=[-\g]_+$.

\begin{lemma} \label{L:torsion} The $\g$-vectors of $\mc{M}$ and $\mc{M}'$ are related by~\eqref{eq:mu_delta} if and only if $\mc{M}$ is $\g$-coherent.
\end{lemma}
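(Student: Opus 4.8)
The plan is to relate the two mutation rules \eqref{eq:mu_delta} and \eqref{eq:delta_mu_general} through the formula \eqref{eq:Betti} for the Betti numbers, exactly as hinted in the paragraph following \eqref{eq:mu_delta}. By definition $\g=\beta_1-\beta_0$, and mutation of decorated representations changes $\mc{M}$ to $\mc{M}'$ with a corresponding projective presentation $P(\beta_1')\to P(\beta_0')$; the general relation between $(\beta_1,\beta_0)$ and $(\beta_1',\beta_0')$ is recorded in \eqref{eq:delta_mu_general}. The key observation already stated in the text is that \eqref{eq:mu_delta} and \eqref{eq:delta_mu_general} produce the same vector $\g'$ precisely when $\beta_0=[-\g]_+$. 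So the lemma reduces to the purely combinatorial assertion that, for a decorated representation, $\beta_0=[-\g]_+$ holds if and only if $\mc{M}$ is $\g$-coherent, i.e. if and only if $\min(\beta_1(u),\beta_0(u))=0$ for every vertex $u$.

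First I would dispose of the easy direction. If $\mc{M}$ is $\g$-coherent then by definition $\beta_1=[\g]_+$ and $\beta_0=[-\g]_+$; in particular $\beta_0=[-\g]_+$, so by the comparison of \eqref{eq:mu_delta} with \eqref{eq:delta_mu_general} the two rules agree and the $\g$-vectors of $\mc{M}$ and $\mc{M}'$ are related by \eqref{eq:mu_delta}. For the converse, suppose the $\g$-vectors are related by \eqref{eq:mu_delta}. Since the presentation attached to $\mc{M}'$ under the bijection $\mc{R}ep(J)\leftrightarrow K^2(\proj J)$ is minimal, its weight vector $\g(\mc{M}')=\beta_1'-\beta_0'$ is computed by \eqref{eq:delta_mu_general} applied to the actual Betti numbers $(\beta_1,\beta_0)$ of $\mc{M}$. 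Hence \eqref{eq:mu_delta} and \eqref{eq:delta_mu_general}, evaluated on the given data, yield the same answer. Running through the three cases of \eqref{eq:mu_delta} (only the $v=u$ coordinate and the coordinates with $b_{u,v}$ of a fixed sign matter), agreement forces $\beta_0(u)=[-\g(u)]_+$ at the mutated vertex; combined with $\g(u)=\beta_1(u)-\beta_0(u)$ this gives $\min(\beta_1(u),\beta_0(u))=0$. For $v\neq u$ the Betti numbers are unchanged by mutation and the defining exact sequences \eqref{eq:Su_proj}, \eqref{eq:Su_inj} show $\min(\beta_1(v),\beta_0(v))=0$ automatically, since the minimal presentation of any $J$-module has no common summand $P_v$ in degrees $0$ and $1$. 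Thus $\mc{M}$ is $\g$-coherent.

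The main obstacle is bookkeeping rather than conceptual: one must be careful that \eqref{eq:delta_mu_general} is stated for the genuine minimal Betti numbers of $\mc{M}$ (not for $[\g]_+$ and $[-\g]_+$), and must verify from \eqref{eq:Betti} — which expresses $\beta_1(u)$ and $\beta_0(u)$ via $\Ker\alpha_u/\Img\gamma_u$, $M^+(u)$, and $\Coker\alpha_u$ — that the only place where the two mutation formulas can disagree is the coordinate $v=u$, and there the discrepancy is exactly governed by whether $\beta_0(u)$ equals $[-\g(u)]_+$ or exceeds it by $\min(\beta_1(u),\beta_0(u))$. Once this is isolated, the equivalence with $\g$-coherence is immediate. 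I would present the argument by first recalling \eqref{eq:delta_mu_general} explicitly, then doing the case analysis at the vertex $u$, and finally invoking minimality at the remaining vertices.
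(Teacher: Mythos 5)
Your overall route is the same as the paper's: compare \eqref{eq:mu_delta} with \eqref{eq:delta_mu_general} and observe that the two rules can only differ through $\beta_0(u)$ and $\beta_1(u)$, so agreement is governed by whether $\beta_0(u)=[-\g(u)]_+$ (equivalently $\beta_1(u)=[\g(u)]_+$). Your ``if'' direction and the case analysis at the mutated vertex are fine: agreement in a coordinate $v$ with $b_{u,v}>0$ forces $\beta_0(u)=[-\g(u)]_+$, and in one with $b_{u,v}<0$ forces $\beta_1(u)=[\g(u)]_+$ (this needs $u$ to have at least one incident arrow; the coordinate $v=u$ gives no information). The genuine gap is the last step of your converse: the claim that $\min(\beta_1(v),\beta_0(v))=0$ holds ``automatically'' at every $v\neq u$ because a minimal presentation has no common summand $P_v$ in degrees $0$ and $1$. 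That claim is false. First, the presentation attached to a decorated representation is the minimal presentation of $M$ together with the summand $P(\dv M^+)\to 0$, so by \eqref{eq:Betti} already $\mc{M}=(S_v,S_v^+)$ has $\beta_1(v)=\beta_0(v)=1$. Second, even with $M^+=0$, minimal presentations over a non-hereditary algebra such as $J$ (and in fact already over $k(1\to 2)$: take $M=S_1\oplus P_2$, whose minimal presentation is $P_2\to P_1\oplus P_2$) can repeat an indecomposable projective in both terms. If your claim were correct, essentially every decorated representation would be $\g$-coherent and the definition, and this lemma, would be pointless.

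Consequently your argument only establishes coherence at the mutated vertex $u$, which is in fact all a single mutation can detect, since the two formulas involve no Betti numbers at other vertices. The reading intended in the paper is exactly the sentence preceding the lemma: \eqref{eq:mu_delta} and \eqref{eq:delta_mu_general} are equivalent if and only if $\beta_0=[-\g]_+$, i.e.\ the statement is to be understood coordinate-wise, with \eqref{eq:mu_delta} required for every choice of mutated vertex $u$ (at this point of the paper all vertices are mutable), so that coherence at each vertex is forced by the mutation at that very vertex. Rewrite the converse in that form and drop the appeal to minimality; as written, the step covering the vertices $v\neq u$ does not hold.
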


\begin{definition} \label{D:ice}
An ice quiver with potential, or IQP for short, is a quiver with potential $(\Delta,W)$ with a set $\Delta_\diamond$ of frozen vertices.
We require that there is no arrows between any two frozen verices.
For an IQP $(\Delta,W)$, {\em freezing} a set of vertices $\b{u}$ is the operation that
we set all vertices in $\b{u}$ to be frozen, then delete all arrows between frozen vertices, and set all such arrows to be zero in the potential $W$.
\end{definition}

We freeze some vertices of $\Delta$, and make $(\Delta,W)$ an IQP.
We define the mutation of $(\Delta,W)$ at a mutable vertex $u$ as before except that
we do not create composite arrows $[ab]$ if $t(a)$ and $h(b)$ are both frozen.

\begin{lemma} \label{L:commuteFM}
Freezing a set $\b{u}$ of vertices commutes with mutations away from $\b{u}$.
\end{lemma}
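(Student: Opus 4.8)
The plan is to verify that the two operations—freezing a set $\b{u}$ of vertices and mutating at a mutable vertex $u\notin\b{u}$—produce the same IQP regardless of the order in which they are applied. Since both operations are defined by an explicit recipe on the underlying quiver together with the potential, the proof is a direct comparison of the two constructions.

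First I would set up notation: let $(\Delta,W)$ be a QP (with all vertices mutable for the moment), let $\b{u}$ be the set to be frozen, and let $u\notin\b{u}$ be the vertex at which we mutate. Call the "freeze-then-mutate" result $(\Delta_1,W_1)$ and the "mutate-then-freeze" result $(\Delta_2,W_2)$. I would first compare the unreduced QPs $\wtd{\mu}_u$ applied in each order, and only afterwards invoke the fact that taking the reduced part commutes with everything in sight (it is an operation intrinsic to a QP, independent of which vertices we later declare frozen, since freezing only affects arrows between frozen vertices and the reduction process at $u$ never touches those). So it suffices to check the claim at the level of $\wtd{\mu}_u$, i.e. on quivers and potentials before reduction.

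The key step is then a bookkeeping comparison of arrows and potential terms. On the quiver side: mutation at $u$ only alters arrows incident to $u$ and creates composites $[ab]$ with $h(a)=t(b)=u$; since $u\notin\b{u}$, at most one of $t(a),h(b)$ can lie in $\b{u}$, so the "do not create $[ab]$ when both endpoints are frozen" clause in the IQP mutation is vacuous here. Meanwhile freezing only deletes arrows between two vertices of $\b{u}$; none of the arrows incident to $u$, nor the composites $[ab]$, nor the opposite arrows $a^*,b^*$ have both endpoints in $\b{u}$, so freezing after mutation deletes exactly the arrows of $\Delta$ lying between two vertices of $\b{u}$, which is precisely the set deleted by freezing before mutation. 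Hence $\Delta_1$ and $\Delta_2$ have the same arrows. On the potential side: the arrows set to zero by freezing are those between two vertices of $\b{u}$; such an arrow cannot occur in any of the new terms $b^*a^*[ab]$ (whose arrows all touch $u$), and substituting $[ab]$ for $ab$ in $W$ commutes with setting a $\b{u}$–$\b{u}$ arrow to zero because that arrow is $\neq u$-incident and the substitution only rewrites length-two subpaths through $u$. Therefore $W_1$ and $W_2$ agree term by term.

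The main obstacle, such as it is, is not conceptual but one of carefully matching the two slightly different definitions of "delete arrows between frozen vertices" in Definitions \ref{D:ice} and in the IQP mutation recipe, and confirming that no composite arrow or opposite arrow created at $u$ can ever have both endpoints in $\b{u}$—this is exactly where the hypothesis $u\notin\b{u}$ (together with the standing assumption that $\Delta$ has no arrows between frozen vertices) is used. Once that is pinned down, equality of the unreduced QPs is immediate, and passing to reduced parts finishes the proof.
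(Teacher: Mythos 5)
There is a genuine error in the bookkeeping at the heart of your argument. You claim that because $u\notin\b{u}$, ``at most one of $t(a),h(b)$ can lie in $\b{u}$,'' so that the clause forbidding the creation of composite arrows between frozen vertices is vacuous, and consequently that freezing after mutation deletes exactly the old arrows of $\Delta$ joining two vertices of $\b{u}$. This is false: nothing prevents arrows $a\colon w_1\to u$ and $b\colon u\to w_2$ with both $w_1,w_2\in\b{u}$, in which case the composite $[ab]$ has both endpoints frozen. This situation is not exotic --- it occurs in the very application of the lemma in this paper, e.g.\ the path $(0,1)\to(1,1)\to(1,0)$ through the mutable vertex $(1,1)$ of the hive quiver, whose endpoints are boundary (frozen) vertices. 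So in the freeze-then-mutate order such $[ab]$ is never created, while in the mutate-then-freeze order it is created and the term $b^*a^*[ab]$ appears in $\wtd{\mu}_u(W)$; your comparison of the two quivers and the two potentials, as written, therefore does not match up term by term, and your assertion that the new arrows and new potential terms are untouched by the freezing is exactly where the argument breaks.

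The repair is precisely the point your proof skips: one must observe that the composites $[ab]$ with $t(a),h(b)$ both frozen, together with their potential terms $b^*a^*[ab]$, are killed by the subsequent freezing (Definition \ref{D:ice} deletes such arrows and sets them to zero in the potential), and that this matches the ``do not create $[ab]$'' clause of the IQP mutation in the other order; similarly, if the original IQP already has frozen vertices $\Delta_\diamond$ (as the lemma and its uses require, and as your setup with ``all vertices mutable'' suppresses), the cycles through arrows between $\b{u}$ and $\Delta_\diamond$ must be tracked as well. The paper handles exactly this by splitting $W=W'+W_0$, with $W_0$ the terms through arrows between the vertex being frozen and the other frozen vertices, and noting that $[W_0]$ still vanishes after freezing because $u\neq v$ and that the two sums $\sum b^*a^*[ab]$ agree once the extra composites between frozen vertices are discarded. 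Your reduction to the unreduced mutation $\wtd{\mu}_u$ and the remark that freezing commutes with passing to the reduced part are fine and agree with the paper; the gap is only in the comparison step, but that comparison is the entire content of the lemma.
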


\begin{proof} It is easy to see that freezing commutes with the reduction to reduced part.
It suffices to prove the statement for freezing a single vertex $v$ and mutation $\wtd{\mu}_u$ at a single vertex $u$.
It is clear that we get the same quiver if we change the order.
We write the potential $W$ as a sum of two parts $W=W'+W_0$, where the terms of $W_0$ include all cycles containing arrows between $v$ and other frozen vertices.
So if we first freeze $v$, then the potential becomes $W'$.
Then $\wtd{\mu}_u(W')=[W']+\sum b^*a^*[ab]$, where the sum runs through all pairs of arrows $(a,b)$ such that $h(a)=t(b)=u$ and
at least one of $t(a),h(b)$ mutable.
Now we change the order, that is, first apply $\wtd{\mu}_u$.
We have that $\wtd{\mu}_u(W)=[W']+[W_0]+\sum_{} b^*a^*[ab]$, where the summation is the same as before except that $v$ is not frozen.
After freezing $v$, $[W_0]$ still vanishes because $u\neq v$, and clearly two summations agree.
\end{proof}

\noindent So for an IQP $(\Delta,W)$ with $\Delta_\diamond$ frozen, we can equivalently define the mutation at $u$ as first performing the usual mutation at $u$, then freezing $\Delta_\diamond$.

It turns out the right category to look at for an IQP is the category of $\mu$-supported representations in $\mc{R}ep(J)$.
\begin{definition} \label{D:mu_supported}
A decorated representation $\mc{M}$ is called {\em $\mu$-supported} if the supporting vertices of $M$ are all mutable.
We denote by $\mc{R}ep^{\g,\mu}(J)$ the set of all $\g$-coherent $\mu$-supported decorated representations of $J$.
\end{definition}
\noindent  If $\mc{M}$ is not $\mu$-supported, then in general we cannot forget the linear maps between frozen vertices to make it a representation of $J(\wtd{\Delta},\wtd{W})$. But we can do so if $\mc{M}$ is $\mu$-supported.
In this case, $\wtd{\mc{M}}$ is also $\mu$-supported, so
we can define the mutation of $\mc{M}$ as before but without creating linear maps $\wtd{M}([ab])$ if $t(a),h(b)\in \Delta_\diamond$.
We also observe that for $\mu$-supported $\mc{M}$, forgetting the linear maps between frozen vertices does not change the vectors $\beta_1,\beta_0$, and thus the $\g$-vector of $\mc{M}$.

After some mutations, the ice quiver $\Delta$ may acquire some oriented 2-cycles, which would make some mutations undefined for the evolving IQP.
\begin{definition}[\cite{DWZ1}]
We say that a potential $W$ is {\em nondegenerate} for an ice quiver $\Delta$ if any finite sequence of mutations can be applied to $(\Delta,W)$ without creating oriented 2-cycles along the way. Such a IQP $(\Delta,W)$ is also called {\em nondegenerate}.
\end{definition}
\noindent It is known \cite[Proposition 7.3]{DWZ1} that nondegenerate potential always exists for any $2$-acyclic (non-ice) quiver.
For example, one can take a {\em generic} potential of the quiver.
Due to the commuting property of mutation and freezing (Lemma \ref{L:commuteFM}), a nondegenerate potential for a quiver $\Delta$ is also nondegenerate for an ice quiver obtained from $\Delta$ by freezing some vertices.
For a nondegenerate IQP $(\Delta,W)$, the ice quiver in the mutated IQP $\mu_u(\Delta,W)$ is exactly the quiver $\mu_u(\Delta)$ in Definition \ref{D:Qmu}.

\begin{definition}[\cite{DWZ1}] A potential $W$ is called {\em rigid} on a quiver $\Delta$ if
every potential on $\Delta$ is cyclically equivalent to an element in the Jacobian ideal $\partial W$.
Such a QP $(\Delta,W)$ is also called {\em rigid}.
\end{definition}
\noindent It is known \cite[Proposition 8.1, Corollary 6.11]{DWZ1} that every rigid QP is $2$-acyclic, and the rigidity is preserved under mutations. In particular, any rigid QP is nondegenerate.
However, we do not know whether the rigidity is preserved under freezing.

\section{The Cluster Character} \label{S:CC}
Throughout this section, we assume that $(\Delta,W)$ is a nondegenerate IQP.
Let $\b{x}=\{x_1,x_2,\dots,x_q\}$ be an (extended) cluster.
For a vector $\g\in \mb{Z}^q$, we write $\b{x}^\g$ for the monomial $x_1^{\g(1)}x_2^{\g(2)}\cdots x_q^{\g(q)}$.
For $u=1,2,\dots,p$, we set $\hat{y}_u= \b{x}^{-b_{u}}$ where $b_u$ is $u$-th row of the matrix $B(\Delta)$,
and let $\hat{\b{y}}=\{\hat{y}_1,\hat{y}_2,\dots,\hat{y}_p\}$.
The seed mutation of Definition \ref{D:seeds} induces the {\em $\hat{\b{y}}$-seed mutation}.
We recall the mutation rule for $\hat{\b{y}}$ \cite[(3.8)]{FZ4}
\begin{equation} \label{eq:mu_y} \hat{y}_v'=\begin{cases}
\hat{y}_u^{-1} & \text{if } v=u;\\
\hat{y}_v \hat{y}_u^{[b_{v,u}]_+}(\hat{y}_u+1)^{b_{u,v}} & \text{if } v\neq u.
\end{cases}
\end{equation}

\begin{definition}[\cite{DWZ2}] 
We define the $F$-polynomial of a representation $M$ by
\begin{equation} F_M(\b{y}) = \sum_\e \chi(\Gr^\e(M)) \b{y}^\e,
\end{equation}
where $\Gr^{\e}(M)$ is the variety parameterizing $\e$-dimensional quotient representations of $M$, and $\chi(-)$ denotes the topological Euler-characteristic.

We also define the {\em cluster character} $C: \mc{R}ep^{\g,\mu}(J) \to \mb{Z}(\b{x})$ by
\begin{equation} \label{eq:CC}
C(\mc{M})=\b{x}^{\g(\mc{M})}F_M(\hat{\b{y}})= \b{x}^{\g(\mc{M})} \sum_{\e} \chi\big(\Gr^{\e}(M) \big) \hat{\b{y}}^{\e}.
\end{equation}
\end{definition}

\noindent Using \eqref{eq:delta2dim}, we can reinterpret \eqref{eq:CC} as ({\em cf.} \cite[Corollary 5.3]{DWZ2})
\begin{equation} \label{eq:CCdim} C(\mc{M})=\b{x}^{-{\sf d}} \sum_{\e} \chi\big( \Gr^{\e}(M) \big) \b{x}^{\epsilon},
\end{equation}
where ${\sf d}=\dv M$ and $\epsilon(u)=\sum_v \big( [b_{v,u}]_+ \e(v) + [b_{u,v}]_+({\sf d}(v)-\e(v)) \big) -\rank \gamma_u +\dim M^+(u).$
The key fact we will use is that the vector $\epsilon$ is non-negative \cite[Corollary 5.5]{DWZ2}.
So the image of $C$ in fact lies in $\mc{L}_{\b{x}}$.

Here is the key Lemma in \cite{DWZ2}.

\begin{lemma} \label{L:key} Let $\mc{M}$ be an arbitrary representation of a nondegenerate QP $(\Delta,W)$, and let $\mc{M}'=\mu_u(\mc{M})$, then \begin{enumerate}
\item The $F$-polynomials of ${M}$ and ${M}'$ are related by
\begin{equation} \label{eq:F-poly} (\hat{y}_u+1)^{-\beta_0(u)} F_{M}(\hat{\b{y}})=(\hat{y}_u'+1)^{-\beta_0'(u)} F_{M'}(\hat{\b{y}}').
\end{equation}
\item The $\g$-vector of $\mc{M}$ and $\mc{M}'$ are related by
\begin{equation} \label{eq:delta_mu_general}  \g'(u)=\begin{cases}
-\g(u) & \text{if } u=v;\\
\g(v)+[b_{v,u}]_+\g(u)+b_{v,u}\beta_0(u) & \text{if } u\neq v,
\end{cases}
\end{equation}
and satisfies
\begin{equation} \label{eq:delta_u}  \g(u)=\beta_0'(u) - \beta_0(u).
\end{equation}
\end{enumerate}
\end{lemma}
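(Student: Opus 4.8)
The plan is to prove this by assembling the ingredients of \cite{DWZ2}, where it appears as the central technical result; the two parts are handled separately.

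\emph{Part (2): the $\g$-vector transformation.} I would start from the identity \eqref{eq:delta_u}, which is the cleanest. Since $\g(u)=\beta_1(u)-\beta_0(u)$ by the definition of the $\g$-vector, \eqref{eq:delta_u} is equivalent to $\beta_1(u)=\beta_0'(u)$. The Betti-number formulas \eqref{eq:Betti} applied to $\mc{M}$ give $\beta_1(u)=\dim(\Ker\alpha_u/\Img\gamma_u)+\dim M^+(u)$, while $\beta_0'(u)=\dim\Coker\alpha_u'$ where $\alpha_u'$ is the incoming map at $u$ for $\mc{M}'$. Unwinding the explicit description of $\wtd{M}(u)$ and of the maps $\wtd{M}(a^*),\wtd{M}(b^*)$ from \cite[Section 10]{DWZ2}, one identifies $\Coker\alpha_u'$ with $\Ker\alpha_u/\Img\gamma_u\oplus M^+(u)$; since the reduction from $\wtd{\mc{M}}$ to $\mc{M}'$ only splits off trivial two-cycle summands it leaves these dimensions unchanged, which yields $\beta_1(u)=\beta_0'(u)$ and hence \eqref{eq:delta_u}. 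For the off-diagonal part of \eqref{eq:delta_mu_general} (the case $u\neq v$), I would use the bijection $\mc{R}ep(J)\simeq K^2(\proj J)$ recalled above: mutation of decorated representations corresponds to a fixed operation on minimal presentations $P(\beta_1)\to P(\beta_0)$ --- reverse the part supported at $u$ and add or cancel summands according to $B(\Delta)$ and $\beta_0(u)$ --- and tracking the multiplicities $\beta_i(v)$ for $v\neq u$ through this operation produces precisely the coefficients $[b_{v,u}]_+$ and $b_{v,u}\beta_0(u)$ in \eqref{eq:delta_mu_general}. This is essentially \cite[(1.3), Theorem 1.7]{DWZ2}; the equivalence with the $\g$-coherent rule \eqref{eq:mu_delta} precisely when $\beta_0=[-\g]_+$ was already observed before Lemma \ref{L:torsion}.

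\emph{Part (1): the $F$-polynomial transformation.} Since $\hat{y}_u'=\hat{y}_u^{-1}$, we have $\hat{y}_u'+1=\hat{y}_u^{-1}(\hat{y}_u+1)$, so after clearing denominators \eqref{eq:F-poly} expresses $F_{M'}(\hat{\b{y}}')$ as $F_M(\hat{\b{y}})$ times a monomial in $\hat{y}_u$ times a power of $(\hat{y}_u+1)$. I would prove this geometrically. Stratify the quotient-representation Grassmannian $\Gr(M')$ by the ranks of the relevant linear maps at the vertex $u$; each stratum is, Zariski-locally, a fiber bundle over a corresponding stratum of $\Gr(M)$ whose fiber is the product of an affine space and a Grassmannian of subspaces of a fixed vector space. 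Passing to Euler characteristics, the affine factors contribute $1$ and the Grassmannian factors contribute binomial coefficients; after the change of variables $\hat{\b{y}}\to\hat{\b{y}}'$ given by \eqref{eq:mu_y}, these binomials assemble into the factor $(\hat{y}_u+1)$ raised to the correct power. Verifying that the resulting sum of contributions reorganizes exactly into the two sides of \eqref{eq:F-poly} is the combinatorial heart of \cite[Section 5]{DWZ2}.

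\emph{Main obstacle.} Part (2) is essentially linear-algebra bookkeeping once the explicit form of $\wtd{M}$ is available. The real difficulty is Part (1): building the stratification of $\Gr(M')$ together with its bundle structure over $\Gr(M)$ and then matching Euler characteristics term by term. A secondary delicate point, common to both parts, is keeping careful control of the reduction step from $\wtd{\mc{M}}$ to $\mc{M}'$ so that none of the identities is disturbed when trivial two-cycles are split off.
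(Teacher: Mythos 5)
The paper gives no argument for this lemma at all: it is quoted as the key lemma of \cite{DWZ2} (parts (1) and (2) are precisely the $F$-polynomial and $\g$-vector mutation identities proved there), so your proposal is being measured against a bare citation. Your sketch correctly mirrors the actual DWZ2 arguments --- in part (2) the identification $\Coker\alpha_u'\cong \Ker\alpha_u/\Img\gamma_u\oplus M^+(u)$, which yields $\beta_1(u)=\beta_0'(u)$ and hence \eqref{eq:delta_u}, and in part (1) the stratification of the quiver Grassmannians at the vertex $u$ whose Grassmannian fibers contribute the binomial coefficients assembling into powers of $\hat{y}_u+1$ --- but, exactly like the paper, it defers the substantive verifications (the choice of the new maps $\wtd{M}(a^*),\wtd{M}(b^*)$, the control of the reduction step, and the Euler-characteristic bookkeeping of \cite[Section 5]{DWZ2}) to that reference, which is the same approach the paper takes.
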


\begin{lemma} \label{L:CCupper} The mutations commute with the cluster character $C$.
So if $\mc{M}\in\mc{R}ep^{\g,\mu}(J(\Delta,W))$, then $C(\mc{M})$ is an element in the upper cluster algebra $\br{\mc{C}}(\Delta)$.
Moreover, if $(\Delta,\b{x})$ is $\bs{\sigma}$-graded, then
$C(\mc{M})$ is multihomogeneous of degree $\g\bs{\sigma}$.
\end{lemma}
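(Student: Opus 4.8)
The statement has three parts: (i) mutations commute with $C$; (ii) for $\mc{M}\in\mc{R}ep^{\g,\mu}(J)$, $C(\mc{M})\in\br{\mc{C}}(\Delta)$; (iii) if $(\Delta,\b{x})$ is $\bs{\sigma}$-graded then $C(\mc{M})$ is multihomogeneous of degree $\g(\mc{M})\bs{\sigma}$. I would prove these in that order, since (ii) follows from (i) plus the upper-bound theorem, and (iii) is an independent grading bookkeeping argument.

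\textbf{Step 1: mutations commute with $C$.} This is the heart of the matter and essentially a repackaging of Lemma \ref{L:key}. I want to show $C(\mc{M})$ computed in the seed $(\Delta,\b{x})$ equals $C(\mu_u\mc{M})$ computed in the mutated seed $(\mu_u\Delta,\b{x}')$. Write $C(\mc{M})=\b{x}^{\g}F_M(\hat{\b{y}})$ and $C(\mc{M}')=(\b{x}')^{\g'}F_{M'}(\hat{\b{y}}')$. Substituting the exchange relation \eqref{eq:exrel} for $x_u'=x_u^{-1}(\prod_{v\to u}x_v+\prod_{u\to w}x_w)$ one checks, exactly as in \cite[proof of Theorem 5.1]{DWZ2}, that $\b{x}'=\b{x}$ on non-$u$ coordinates while $x_u'$ acquires a factor $(\hat{y}_u+1)$; combining the $\g$-vector transformation \eqref{eq:delta_mu_general}–\eqref{eq:delta_u} with the $F$-polynomial relation \eqref{eq:F-poly} and the $\hat{\b{y}}$-mutation rule \eqref{eq:mu_y}, all the $(\hat{y}_u+1)$-powers cancel and the monomial prefactors match. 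The only genuinely new point relative to \cite{DWZ2} is that $(\Delta,W)$ is now an \emph{ice} QP, so one must use the IQP mutation of Section \ref{S:QP}: since $\mc{M}$ is $\mu$-supported, forgetting the maps between frozen vertices changes neither $\beta_1,\beta_0$ nor the $\g$-vector (as observed right after Definition \ref{D:mu_supported}), and the frozen columns of $B(\Delta)$ are untouched by mutation, so the computation goes through verbatim with the mutable part of the $B$-matrix. I would remark that $\mc{M}'$ is again $\g$-coherent and $\mu$-supported, so the iteration is legitimate.

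\textbf{Step 2: membership in the upper cluster algebra.} By the discussion after \eqref{eq:CCdim} (the non-negativity of $\epsilon$, \cite[Corollary 5.5]{DWZ2}), $C(\mc{M})\in\mc{L}_{\b{x}}$. Applying Step 1 to the $p$ single mutations $\mu_u$, $1\le u\le p$, gives $C(\mc{M})=C(\mu_u\mc{M})\in\mc{L}_{\b{x}_u}$ for each adjacent cluster $\b{x}_u$. Hence $C(\mc{M})\in\bigcap_{1\le u\le p}\mc{L}_{\b{x}_u}=\mc{U}(\Delta,\b{x})$, and by Theorem \ref{T:bounds} (valid for our $\mc{L}_{\b{x}}$ by the Remark following it, or by Remark \ref{R:mu_support}) this equals $\br{\mc{C}}(\Delta,\b{x})=\br{\mc{C}}(\Delta)$.

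\textbf{Step 3: homogeneity.} With $\deg x_v=\bs{\sigma}(v)$, the monomial $\b{x}^{\g}$ has degree $\g\bs{\sigma}$ (viewing $\bs{\sigma}$ as the matrix of rows $\bs{\sigma}(v)$). It remains to see each term $\hat{\b{y}}^{\e}$ in $F_M(\hat{\b{y}})$ is homogeneous of degree $0$: indeed $\hat y_u=\b{x}^{-b_u}$ has degree $-b_u\bs{\sigma}=-(B\bs{\sigma})_u=0$ by the weight-configuration condition \eqref{eq:weightconfig} (equivalently $B\bs{\sigma}=0$). Hence every monomial of $C(\mc{M})$ has degree $\g\bs{\sigma}$, as claimed.

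\textbf{Main obstacle.} The routine-but-delicate point is Step 1: verifying that the ice/frozen modification does not break the $C$-compatibility identity of \cite{DWZ2}. One has to be careful that ``not creating composite arrows $[ab]$ between two frozen vertices'' is exactly compensated by the fact that such arrows would only contribute to columns of $B$ indexed by frozen vertices, which never enter the $\hat{\b{y}}$-variables or the mutation rule \eqref{eq:mu_delta}; and that $\mu$-supportedness is what makes $F_M$ and $\g(\mc{M})$ insensitive to the erased frozen maps. Once this is pinned down, everything else is the verbatim computation from \cite[\S5]{DWZ2}, which I would cite rather than reproduce.
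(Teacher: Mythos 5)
Your proposal is correct and follows essentially the same route as the paper: reduce the commuting property to the relations of Lemma \ref{L:key} together with the exchange relation \eqref{eq:exrel} and the $\hat{\b{y}}$-mutation rule \eqref{eq:mu_y}, deduce membership in $\br{\mc{C}}(\Delta)$ from the adjacent clusters via Theorem \ref{T:bounds} and the non-negativity of $\epsilon$, and get the grading from $B\bs{\sigma}=\b{0}$. The only differences are cosmetic: the paper carries out the cancellation explicitly (using \eqref{eq:mu_delta}, legitimate since $\mc{M}$ is $\g$-coherent) where you cite \cite{DWZ2}, and your side remark that $\mu_u(\mc{M})$ is again $\g$-coherent is not actually needed for the single-mutation argument (only $\mu$-supportedness of $\mu_u(\mc{M})$ is used).
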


\begin{proof}
The commuting property $\mu_u (C(\mc{M})) = C(\mu_u(\mc{M}))$
is equivalent to that \begin{align*}
\b{x}^\g F_{M}(\hat{\b{y}}) = (\b{x}')^{\g'} F_{M'}(\hat{\b{y}}').
\end{align*}
Comparing with \eqref{eq:F-poly}, we see that it suffices to show that
$$ (\hat{y}_u+1)^{\beta_0(u)} \b{x}^\g = (\hat{y}'_u+1)^{\beta_0'(u)} (\b{x}')^{\g'}.$$
The following equalities are all equivalent to the above.
\begin{align*}
(\hat{y}_u+1)^{\beta_0(u)} \b{x}^\g &= \hat{y}_u^{-\beta_0'(u)}(\hat{y}_u+1)^{\beta_0'(u)} (\b{x}^{[b_u]_+}+\b{x}^{[-b_u]_+})^{\g'(u)} x_u^{-\g'(u)} \b{x}^{\g'-\g'(u)\e_u} & \eqref{eq:exrel},\eqref{eq:mu_y}, \\
\b{x}^{b_u \beta_0'(u)} \b{x}^\g &= (\b{x}^{b_u}+1)^{\g(u)} (\b{x}^{[b_u]_+}+\b{x}^{[-b_u]_+})^{-\g(u)} x_u^{\g(u)} \b{x}^{\g'-\g'(u)\e_u} & \eqref{eq:delta_u},\eqref{eq:mu_delta}, \\
\b{x}^{b_u \beta_1(u)} &= \b{x}^{-[-b_u]_+\g(u)} \b{x}^{[b_u]_+\beta_1(u)-[-b_u]_+\beta_0(u)} & \eqref{eq:delta_u},\eqref{eq:mu_delta}.
\end{align*}
The last equality can be easily verified using $b_u=[b_u]_+-[-b_u]_+$ and $\g=\beta_1-\beta_0$.

To show $C(\mc{M})\in\br{\mc{C}}(\Delta)$, we use a argument similar to that of \cite[Theorem 1.3]{P}.
Let $\b{x}_u$ be the cluster obtained from $\b{x}$ by applying the mutation at $u$.
By Theorem \ref{T:bounds}, it suffices to show that $C(\mc{M})\in\mc{L}_{\b{x}_u}$ for any mutable vertex $u$.
The expression of $C(\mc{M})$ with respect to the cluster $\b{x}_u$ is given by $\mu_{u}(C(\mc{M}))$,
which is $C(\mu_{u}(\mc{M}))\in \mc{L}_{\b{x}_u}$.
The last statement about the degree follows directly from \eqref{eq:CC} and \eqref{eq:mu_wt}.
In certain sense, this says that the $\g$-vector governs all gradings.
\end{proof}

\begin{remark} \label{R:mu_support} For readers not willing to accept our version of Theorem \ref{T:bounds}, we can argue as follows.
By the version in \cite{BFZ}, it remains to show that $\mu_{\b{u}} (C(\mc{M}))$ is actually polynomial in coefficient variables for any sequence of mutations $\mu_{\b{u}}$.
This is true (initially) for $C(\mc{M})$.
Again by the commuting property $\mu_{\b{u}} (C(\mc{M})) = C(\mu_{\b{u}} (\mc{M}))$,
it is always polynomial in coefficient variables.

Suppose that $\mc{M}$ can be mutated from a positive representation $(0,M^+)$ via a sequence of mutations $\mu_{\b{u}}$.
Since $M^+$ is semisimple, it decomposes as a direct sum of simple representations $\bigoplus_{v\in \Delta_0} {m_v}S_v^+$ with multiplicity $m_v\in \mb{N}_0$.
Then $C(\mc{M})=\mu_{\b{u}}(C(0,M^+))$ is the {cluster monomial} $\prod_{v\in \Delta_0} \mu_{\b{u}}(x_v)^{m_v}$.
By definition, a {\em cluster monomial} is a monomial in elements of any given extended cluster.
\end{remark}

Suppose that an element $z\in\br{\mc{C}}(\Delta)$ can be written as
\begin{equation}\label{eq:z} z = \b{x}^{\g(z)} F(\hat{y}_1,\dots,\hat{y}_p),
\end{equation}
where $F$ is a primitive rational polynomial, and $\g(z)\in \mb{Z}^q$.
If we assume that the matrix $B(\Delta)$ has full rank,
then the elements $\hat{y}_1,\hat{y}_2,\dots,\hat{y}_p$ are algebraically independent so that the vector $\g(z)$ is uniquely determined \cite{FZ4}.
We call the vector~$\g(z)$~the (extended) {\em $\g$-vector} of $z$.
Note that the $\g$-vector of $C(\mc{M})$ as in Lemma \ref{L:CCupper} is $\g(\mc{M})$.
Definition implies at once that for two such elements $z_1,z_2$ we have that
$\g(z_1z_2) = \g(z_1) + \g(z_2)$.
So the set $G(\Delta)$ of all $\g$-vectors in $\br{\mc{C}}(\Delta)$ forms a sub-semigroup of $\mb{Z}^q$.

\begin{lemma} \label{L:independent} Assume that the matrix $B(\Delta)$ has full rank.
Let $Z=\{z_1,z_2,\dots,z_k\}$ be a subset of $\br{\mc{C}}(\Delta)$ with well-defined $\g$-vectors.
If $\g(z_i)$'s are all distinct, then $Z$ is linearly independent over $k$.
\end{lemma}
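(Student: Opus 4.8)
The plan is to reduce everything to the defining property \eqref{eq:z} of the $\g$-vector together with the algebraic independence of the $\hat{y}_u$'s. Suppose toward a contradiction that there is a nontrivial $k$-linear relation $\sum_{i=1}^k c_i z_i = 0$ with $c_i\in k$, not all zero; after discarding the terms with $c_i=0$ we may assume every $c_i\neq 0$. Each $z_i$ is, by hypothesis, an element of $\br{\mc{C}}(\Delta)$ with a well-defined $\g$-vector, so by \eqref{eq:z} we may write $z_i = \b{x}^{\g(z_i)} F_i(\hat{y}_1,\dots,\hat{y}_p)$ with $F_i$ a primitive rational polynomial in the $\hat{y}_u$.

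The key step is to separate the relation by monomials in $\b{x}$. Work inside the Laurent polynomial ring $\mc{L}_{\b{x}}\subseteq\mc{F}$, or more conveniently inside the field of fractions, viewing everything as a Laurent series / rational expression in $x_1,\dots,x_q$. Each $\hat{y}_u = \b{x}^{-b_u}$ is itself a Laurent monomial in $\b{x}$, so $\b{x}^{\g(z_i)}F_i(\hat{\b{y}})$ is a Laurent polynomial whose exponents all lie in the coset $\g(z_i) + \sum_u \mb{Z}\,(-b_u)$ of the sublattice $L:=\sum_u \mb{Z}\,b_u\subseteq\mb{Z}^q$ spanned by the rows of $B(\Delta)$. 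Since $B(\Delta)$ has full rank $p$, the $\hat{y}_u$ are algebraically independent (this is exactly the fact quoted from \cite{FZ4} just before the statement), so the map $\mb{Z}^p\to L$, $\e\mapsto -\sum_u\e(u)b_u$, is injective and $F_i(\hat{\b{y}})$ is a nonzero element of $\mc{L}_{\b{x}}$ whose distinct $\hat{\b{y}}$-monomials remain distinct as $\b{x}$-monomials; in particular $\b{x}^{\g(z_i)}F_i(\hat{\b{y}})\neq 0$ and its support lies entirely in $\g(z_i)+L$.

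Now group the terms of $\sum_i c_i z_i$ according to the cosets of $L$ in $\mb{Z}^q$. Because the $\g(z_i)$ are pairwise distinct, I would like to conclude they lie in pairwise distinct cosets of $L$; this is the one point that needs a little care, and is the main obstacle. If $\g(z_i)$ and $\g(z_j)$ were in the same coset of $L$, the corresponding $\b{x}$-supports could overlap and cancellation between $z_i$ and $z_j$ would be possible. The resolution is that when $\g(z_i)\equiv\g(z_j)\pmod L$ one has $\g(z_i)-\g(z_j) = -\sum_u m_u b_u$ for integers $m_u$, i.e. $\b{x}^{\g(z_i)} = \b{x}^{\g(z_j)}\prod_u\hat{y}_u^{m_u}$, so $z_i$ and $z_j$ both lie in $\b{x}^{\g(z_j)}\cdot k[\hat{y}_1^{\pm1},\dots,\hat{y}_p^{\pm1}]$; writing $z_i = \b{x}^{\g(z_j)}\,G_i(\hat{\b{y}})$ with $G_i$ a Laurent polynomial, distinctness of the $\g$-vectors forces $G_i$ and $G_j$ to have different \emph{lowest} exponent vectors (the $\g$-vector of $z$ in \eqref{eq:z} is pinned down precisely because $F$ is a polynomial with nonzero constant term after clearing denominators — compare the uniqueness assertion in \eqref{eq:z}). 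Hence within each coset the relation $\sum_{i:\,\g(z_i)\equiv c} c_i\b{x}^{\g(z_j)}G_i(\hat{\b{y}})=0$ becomes, after dividing by $\b{x}^{\g(z_j)}$, a linear relation $\sum_i c_i G_i(\hat{\b{y}})=0$ among Laurent polynomials in the algebraically independent elements $\hat{y}_1,\dots,\hat{y}_p$; comparing the coefficient of the monomial $\hat{\b{y}}^{\g(z_i)-\g(z_j)}$ (for an $i$ whose $\g$-vector is extremal among those in the coset) isolates a single $c_i$ and forces $c_i=0$, a contradiction. Across cosets there is no interference since distinct cosets of $L$ give disjoint $\b{x}$-supports. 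This kills every $c_i$ and proves linear independence of $Z$.
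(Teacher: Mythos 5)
Your argument is sound in substance, but it is a more roundabout packaging of the same mechanism the paper uses, and its one delicate step needs to be made precise. The paper avoids the coset decomposition altogether: since $B(\Delta)$ has full rank, one can choose a grading of $x_1,\dots,x_q$ (a linear functional taking a strictly positive value on each $-b_u$) so that every $\hat{y}_u$ has positive total degree; then $\b{x}^{\g(z_i)}$ is the unique monomial of minimal degree in $z_i$, and extracting the minimal-degree part of $\sum_i c_i z_i=0$ leaves a vanishing combination of the distinct monomials $\b{x}^{\g(z_i)}$, killing the coefficients (iterating if the minima differ among the $z_i$). Your coset-by-coset reduction is valid -- monomials in different cosets of $L$ cannot cancel, by full rank -- but the within-coset step is where you are vague: ``different lowest exponent vectors'' and ``extremal among those in the coset'' are not well-defined notions for Laurent polynomials in several variables. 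What you actually know is that the support of $G_i=\hat{\b{y}}^{m_i}F_i$ is contained in $m_i+\mb{N}^p$ and contains $m_i$; this uses that $F_i$ has nonzero constant term, a hypothesis your proof and the paper's proof both need and which you correctly read into \eqref{eq:z}. The monomial $\hat{\b{y}}^{m_i}$ can be isolated only when $m_i$ is not $\geq m_j$ componentwise for every $j\neq i$; a vertex of the convex hull of the $m_j$ need not satisfy this, so ``extremal'' must mean: $m_i$ minimal in the componentwise partial order, or equivalently minimizing a strictly positive linear functional on the exponents (ties are harmless, since the tied monomials $\hat{\b{y}}^{m_j}$ are distinct). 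With that reading your argument closes; and choosing one strictly positive functional once and for all is exactly the paper's single-grading shortcut, which renders the coset bookkeeping unnecessary.
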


\begin{proof} A similar statement is proved in \cite{P} in a slightly different setting.
Here, we just adapt his proof to our setting.
The full rank condition on $B$ implies that $\hat{y}_1,\hat{y}_2,\dots,\hat{y}_p$ are algebraically independent,
and moreover we can assign a grading on $x_1,x_2,\dots,x_q$ such that each $\hat{y}_u$ has positive total degree.
Then the total degree of the monomial $\b{x}^{\g}$ is minimal among the total degree of all monomials in \eqref{eq:z}.
Suppose that $\sum_i a_i z_i=0$, then we extract the terms of minimal total degree: $\sum_i a_i \b{x}^{\g(z_i)}=0$.
Since $\g(z_i)$ are all distinct, we conclude that $a_i=0$ for all $i$.
\end{proof}

\noindent It follows from Lemma \ref{L:CCupper} and \ref{L:independent} that
\begin{theorem} \label{T:CC}
Suppose that IQP $(\Delta,W)$ is non-degenerate and $B(\Delta)$ has full rank.
Let $\mc{R}$ be a set of $\g$-coherent $\mu$-supported decorated representations with all distinct $\g$-vectors, then $C$ maps $\mc{R}$ (bijectively) to a set of linearly independent elements in the upper cluster algebra $\br{\mc{C}}(\Delta)$.
\end{theorem}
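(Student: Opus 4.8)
The plan is to assemble Theorem~\ref{T:CC} directly from the two preceding lemmas, with essentially no new work. First I would invoke Lemma~\ref{L:CCupper}: since $(\Delta,W)$ is nondegenerate, every $\mc{M}\in\mc{R}\subseteq\mc{R}ep^{\g,\mu}(J)$ has $C(\mc{M})\in\br{\mc{C}}(\Delta)$. This shows $C(\mc{R})\subseteq\br{\mc{C}}(\Delta)$, so it makes sense to ask about linear independence there. Second, I would observe that each $C(\mc{M})$ is exactly of the form~\eqref{eq:z}: by definition $C(\mc{M})=\b{x}^{\g(\mc{M})}F_M(\hat{\b{y}})$, where $F_M$ is a polynomial in $\hat{y}_1,\dots,\hat{y}_p$ with $F_M(\b{0})=1$ (the $\e=\b{0}$ term contributes $\chi(\Gr^{\b{0}}(M))=1$), hence $F_M$ is a primitive rational polynomial. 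Since $B(\Delta)$ has full rank, the remark following Lemma~\ref{L:CCupper} applies: the $\hat{y}_u$ are algebraically independent, so the $\g$-vector of $C(\mc{M})$ is well-defined and equals $\g(\mc{M})$.

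Third, I would feed this into Lemma~\ref{L:independent}. The set $Z=C(\mc{R})=\{C(\mc{M}):\mc{M}\in\mc{R}\}$ is a subset of $\br{\mc{C}}(\Delta)$ with well-defined $\g$-vectors, namely $\g(C(\mc{M}))=\g(\mc{M})$. By hypothesis the $\g$-vectors $\g(\mc{M})$ for $\mc{M}\in\mc{R}$ are pairwise distinct, so in particular $C$ is injective on $\mc{R}$ (two representations with equal image would have equal $\g$-vectors), and the elements of $Z$ have pairwise distinct $\g$-vectors. Lemma~\ref{L:independent} then yields that $Z$ is linearly independent over $k$. This gives the theorem: $C$ restricts to a bijection from $\mc{R}$ onto the linearly independent set $C(\mc{R})\subseteq\br{\mc{C}}(\Delta)$.

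There is really no serious obstacle here — the theorem is a packaging of Lemmas~\ref{L:CCupper} and~\ref{L:independent}, exactly as the sentence preceding the statement indicates. The only point deserving a word of care is the identification of the $\g$-vector: one must confirm that the distinguished monomial in~\eqref{eq:z} for $z=C(\mc{M})$ is $\b{x}^{\g(\mc{M})}$ and that this is forced by the full-rank hypothesis, so that ``all distinct $\g$-vectors'' for the representations translates literally into ``all distinct $\g$-vectors'' for their images. Once that bookkeeping is in place, the proof is a two-line citation.
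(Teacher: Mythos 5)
Your proposal is correct and matches the paper's own argument: the paper derives Theorem \ref{T:CC} precisely by combining Lemma \ref{L:CCupper} (which puts $C(\mc{M})$ in $\br{\mc{C}}(\Delta)$ with $\g$-vector $\g(\mc{M})$) with Lemma \ref{L:independent}. Your extra remark verifying that $F_M$ is primitive and that the full-rank hypothesis pins down the $\g$-vector is exactly the bookkeeping the paper leaves implicit.
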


\begin{definition} \label{D:compatible} If there is some $\mc{R}$ as in Theorem \ref{T:CC} such that
its image under $C$ is a basis of $\br{\mc{C}}(\Delta)$,
then we say that $W$ is a {\em compatible} potential for the upper cluster algebra,
or in short, $W$ is upper-compatible.
\end{definition}

The rest of this section is devoted to find a choice of $\mc{R}$ such that $C(\mc{R})$
spans a subspace as large as possible. It turns out that for appropriate $\g$-vectors,
we can take elements of $\mc{R}$ generically in some sense.

\begin{definition}
To any $\g\in\mathbb{Z}^{\Delta_0}$ we associate the {\em reduced} presentation space $$\PHom_J(\g):=\Hom_J(P([\g]_+),P([-\g]_+)).$$
We denote by $\Coker(\g)$ the cokernel of a general presentation in $\PHom_J(\g)$.
\end{definition}
\noindent Reader should be aware that $\Coker(\g)$ is just a notation rather than a specific representation. If we write $M=\Coker(\g)$, this simply means that we take a general presentation in $\PHom_J(\g)$, then let $M$ to be its cokernel.
The following lemma is well-known (see \cite{DF}).
\begin{lemma} \label{L:homotopy} A general presentation in $\Hom_J(P_1,P_0)$ of weight $\g$ is homotopy-equivalent to a general presentation in $\PHom_J(\g)$.
\end{lemma}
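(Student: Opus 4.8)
\textbf{Proof plan for Lemma \ref{L:homotopy}.} The statement is essentially a bookkeeping fact about minimal presentations versus arbitrary presentations, so the plan is to reduce a general presentation $P_1 \xrightarrow{f} P_0$ of weight $\g$ to its minimal form and argue that the generic behaviour is unchanged. First I would recall that for any map $f\colon P_1\to P_0$ in $\proj J$, one can split off a maximal direct summand on which $f$ acts as an isomorphism; concretely, write $f$ in matrix form over the idempotents and use that $J$ is a finite-dimensional (in our applications) basic algebra, so that the radical is nilpotent and a non-minimal presentation contains a summand of the form $P_i \xrightarrow{\ \id\ } P_i$ (equivalently, $f$ has a nonzero ``scalar'' entry at some vertex). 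Cancelling all such summands produces a homotopy-equivalent minimal presentation $P_1'\xrightarrow{f'} P_0'$ with $P_1'=P([\g]_+)$, $P_0'=P([-\g]_+)$, by the standard fact that the weight vector determines the minimal presentation and $\g=\beta_1-\beta_0$ forces the multiplicities after cancellation to be $[\g]_+$ and $[-\g]_+$ (this is the content of \eqref{eq:Betti} together with $\g$-coherence).

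The second, and more substantial, step is the genericity transfer. I would set $P_1=P(\beta_1)$, $P_0=P(\beta_0)$ with $\beta_1-\beta_0=\g$ but $\beta_1,\beta_0$ not necessarily $[\pm\g]_+$, and consider the full presentation space $\Hom_J(P_1,P_0)$. Inside it, the locus of presentations that are \emph{not} minimal of the expected type is where the ``diagonal block'' $\Hom_{k^{\Delta_0}}(\mathrm{top}\,P_1,\mathrm{top}\,P_0)\cap(\text{iso part})$ is nonzero; a general $f$ achieves the maximal possible rank on this block, and cancelling that rank's worth of summands lands one in $\PHom_J(\g)$. The key point is that this cancellation is an open, dominant operation: a general $f\in\Hom_J(P_1,P_0)$ maps, under ``take reduced part,'' to a general point of $\PHom_J(\g)$, because the fibres of this map over $\PHom_J(\g)$ are, up to the action of the group of homotopies (changes of basis compatible with the filtration), irreducible and of constant dimension. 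Conversely a general presentation in $\PHom_J(\g)$ is already minimal, hence equals its own reduced part, so it arises this way. Thus ``general in $\Hom_J(P_1,P_0)$ of weight $\g$'' and ``general in $\PHom_J(\g)$'' have the same reduced part up to homotopy equivalence, which is exactly the claim.

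The main obstacle I anticipate is making the genericity argument clean rather than hand-wavy: one must check that the map ``$f\mapsto$ reduced part of $f$'' from (an open subset of) $\Hom_J(P_1,P_0)$ to $\PHom_J(\g)$ is well-defined up to homotopy and dominant, i.e.\ that the extra summands one cancels do not constrain the reduced part. This is standard in the quiver-with-potential literature, and indeed the lemma is cited as well-known with a pointer to \cite{DF}; so in the write-up I would either invoke the relevant statement from \cite{DF} directly (the identification of $\g$-vectors with $\delta$-vectors and the presentation-space genericity results there), or give the short argument above using that $J$ is basic finite-dimensional and that splitting off $P_i\xrightarrow{\id}P_i$ summands is a Zariski-open condition. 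Since all our Jacobian algebras $J_n=J(\Delta_n,W_n)$ are finite-dimensional, there is no completion subtlety, and the proof is genuinely a few lines once the reduction-to-reduced-part formalism of \cite[Section 4,5]{DWZ1} is in hand.
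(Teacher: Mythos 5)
The paper never proves this lemma: it is quoted as well-known with a pointer to \cite{DF}, so your fallback of simply invoking \cite{DF} is literally what the paper does, and your direct sketch is the standard argument behind that reference. The sketch is essentially correct, but your first paragraph overstates one point: for an \emph{arbitrary} $f\in\Hom_J(P_1,P_0)$ of weight $\g$, cancelling the $P_i\xrightarrow{\operatorname{id}}P_i$ summands does \emph{not} automatically yield a presentation in $\PHom_J(\g)$. For instance $P_u\xrightarrow{r}P_u$ with $r$ a nonzero radical element is already minimal, has weight $\g=0$, but is not homotopy-equivalent to the zero presentation of $\PHom_J(0)$; the weight vector does not determine the terms of the minimal presentation, and $\g$-coherence is a genuine condition rather than a consequence of \eqref{eq:Betti}, so invoking it there is circular. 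What is true — and what your second paragraph correctly isolates — is that the induced map on tops can be prescribed freely (the scalar diagonal blocks of $f$ are unconstrained), so a \emph{general} $f$ has these blocks of maximal rank $\min(\beta_1(u),\beta_0(u))$ at every vertex $u$, and only then does cancellation land in $\PHom_J(\g)$; the first paragraph should be folded into this generic statement. For the dominance step, rather than the somewhat vague ``fibres are irreducible of constant dimension,'' it is cleaner to use the $\Aut_J(P_1)\times\Aut_J(P_0)$-action: the set of $f$ whose reduced part lies in a prescribed dense open $U\subseteq\PHom_J(\g)$ contains the orbit of $\{g\oplus\operatorname{id}\mid g\in U\}$, which is constructible and dense in the (open) maximal-rank locus, so a general $f$ reduces to a general $g$. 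With these two repairs your write-up is complete; in the paper's context the citation alone suffices.
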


\begin{definition}
A weight vector $\g\in K_0(\proj J)$ is called {\em $\mu$-supported} if $\Coker(\g)$ is $\mu$-supported.
Let $G(\Delta,W)$ be the set of all $\mu$-supported vectors in $K_0(\proj J)$.
\end{definition}

A weight vector $\g\in K_0(\proj J)$ is called {\em positive-free} if a general presentation in $\PHom_J(\g)$ does not contain a direct summand of form $P_1\to 0$.
Any $\g\in K_0(\proj J)$ can be decomposed as $\g=\g' + \g^+$ with $\g'$ positive-free and $\g^+\in \mb{Z}_{\geq 0}^q$ such that a general presentation in $\PHom_J(\g)$ is a direct sum of a presentation in $\PHom_J(\g')$ and a presentation in $\PHom_J(\g^+)$.

\begin{lemma} \label{L:delta_coh} A general presentation in $\PHom_J(\g)$ corresponds to a $\g$-coherent decorated representation.
\end{lemma}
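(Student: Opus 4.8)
The plan is to reduce the statement to the definition of $\g$-coherence, namely that for the decorated representation $\mc{M}=(M,M^+)$ attached to a general presentation in $\PHom_J(\g)$, one has $\min(\beta_1(u),\beta_0(u))=0$ at every vertex $u$, equivalently $\beta_1=[\g]_+$ and $\beta_0=[-\g]_+$. First I would invoke Lemma \ref{L:homotopy}: a general presentation of weight $\g$ in an arbitrary $\Hom_J(P_1,P_0)$ is homotopy-equivalent to a general presentation in $\PHom_J(\g)=\Hom_J(P([\g]_+),P([-\g]_+))$. Since $\beta_1,\beta_0$ are read off from a \emph{minimal} presentation and the $\g$-vector $\g=\beta_1-\beta_0$ is a homotopy invariant, it suffices to show that a general presentation in $\PHom_J(\g)$ is already minimal, i.e. admits no contractible direct summand $P_i\xrightarrow{\ \Id\ }P_i$ and no summand $0\to P_i$ (a summand $P_i\to 0$ is allowed — it only contributes to $M^+$, not to $\beta_0$, so it does not violate coherence; this is exactly why one separates off $\g^+$, but I do not even need that refinement here since such summands leave $\beta_1(u)=[\g]_+(u)$, $\beta_0(u)=[-\g]_+(u)$ intact).

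The key step is therefore: for general $f\in\Hom_J(P([\g]_+),P([-\g]_+))$, the induced map $P([\g]_+)(u)\to P([-\g]_+)(u)$ on the "top" of the projectives — more precisely, the component of $f$ between the summands $P_u$ on each side, read modulo $\operatorname{rad} J$ — has no common $P_u$–$P_u$ identity block. Concretely, at a fixed vertex $u$ we have either $[\g]_+(u)=0$ or $[-\g]_+(u)=0$ (they cannot both be positive), so there are no arrows of the form "identity path $e_u$" available to form a 2-cycle in the presentation at $u$ in the first place; the only way a contractible summand $P_u\to P_u$ could appear is across the reduction, but a general $f$ — having generic scalar for the $e_u$-coefficient whenever a $P_u$ sits on both sides, which by the above never happens — avoids it. Hence no cancellation occurs at any vertex, $\beta_1=[\g]_+$ and $\beta_0=[-\g]_+$ identically, and $\mc{M}$ is $\g$-coherent. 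I would phrase this by noting that $\operatorname{Hom}_J(P_i,P_j)$ has a basis of paths $i\to j$ in $(\Delta,W)$, that a path of length zero exists only when $i=j$, and that $[\g]_+$ and $[-\g]_+$ have disjoint support, so no length-zero entry ever occurs in the matrix of $f$; a matrix with all entries in $\operatorname{rad}J$ gives a presentation with no contractible summand, hence a minimal one.

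The one genuine subtlety — and the step I expect to need the most care — is the claim that "no length-zero entry" already forces minimality over a \emph{non-hereditary} algebra $J$: for path algebras of quivers this is immediate, but for a Jacobian algebra one must make sure that the homotopy reduction to reduced/minimal form (in the sense of \cite{DWZ1}) cannot manufacture an identity block out of entries that are individually in the radical. This is ruled out because passing to the reduced part only \emph{removes} trivial summands and strictly decreases the ranks $\beta_1(u),\beta_0(u)$ — it never increases them — so if we start with $\beta_1=[\g]_+$, $\beta_0=[-\g]_+$ having disjoint support at every $u$, reduction is forced to be the identity operation. I would cite \cite{DF} for the statement that a general presentation in $\PHom_J(\g)$ is minimal (this is the content appealed to when $G(\Delta,W)$ is defined), and conclude $\beta_1(u)=[\g]_+(u)$, $\beta_0(u)=[-\g]_+(u)$ for all $u$, which is precisely $\g$-coherence.
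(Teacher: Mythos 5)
Your argument is correct in substance, but it takes a genuinely different route from the paper. The paper's proof is a two-line reduction: decompose $\g=\g'+\g^+$ with $\g'$ positive-free, and cite \cite[Theorem 2.3]{DF} (see also \cite[Corollary 2.8]{P}) to conclude that $\Coker(\g')$ is a general representation in some irreducible component whose minimal presentation lies in $\PHom_J(\g')$; coherence then follows since the $\g^+$-part only contributes summands $P\to 0$. You instead argue directly from the shape of $\PHom_J(\g)$: because $[\g]_+$ and $[-\g]_+$ have disjoint supports, every matrix entry of a presentation is a map $P_i\to P_j$ with $i\neq j$, hence lands in $\operatorname{rad}P_j$, so no contractible summand $P_u\xrightarrow{\ 1\ }P_u$ can split off and the ranks of the associated reduced presentation are exactly $[\g]_+$ and $[-\g]_+$. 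This is a valid and more elementary argument; notably it needs no genericity at all (every presentation in $\PHom_J(\g)$, not just a general one, is $\g$-coherent), whereas the paper's route leans on the machinery of \cite{DF} but in exchange records the stronger fact that $\Coker(\g')$ is a general representation in an irreducible component with minimal presentation of weight $\g'$, which is the input actually reused elsewhere (e.g.\ in Proposition \ref{P:muG}). Two small points of hygiene in your write-up: the phrase ``hence a minimal one'' is not accurate --- a radical presentation of non-positive-free weight is not the minimal presentation of its cokernel, only the minimal presentation of $M$ plus the summands $P(\dv M^+)\to 0$, i.e.\ the reduced representative under the bijection $\mc{R}ep(J)\leftrightarrow K^2(\proj J)$; your earlier parenthetical shows you know this, so just say ``reduced'' rather than ``minimal''. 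Likewise your closing appeal to \cite{DF} for ``a general presentation in $\PHom_J(\g)$ is minimal'' is only true for positive-free $\g$; as your own radical-entries argument shows, you do not need that citation at all.
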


\begin{proof} We decompose $\g$ as $\g=\g'+\g^+$.
It follows from \cite[Theorem 2.3]{DF} (see also \cite[Corollary 2.8]{P}) that $\Coker(\g')$ is a general representation in some irreducible component $C$, and thus has its minimal presentation in $\PHom_J(\g')$. The result follows.
\end{proof}

\noindent It follows from \cite[Theorem 7.1]{DWZ2} that
\begin{lemma} \label{L:reachable} If $\mc{M}$ can be mutated from a positive representation $(0,M^+)$,
then it corresponds to a {\em rigid} presentation in the sense of \cite{DF}. In particular, the presentation is general.
\end{lemma}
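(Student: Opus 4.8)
The plan is to track rigidity along the mutation sequence. Under the bijection $\mc{R}ep(J)\leftrightarrow K^2(\proj J)$ of Section \ref{S:QP}, the positive representation $\mc{N}=(0,M^+)$ with $M^+=\bigoplus_v m_vS_v^+$ corresponds to the projective presentation $\bigl(\bigoplus_v m_v P_v\bigr)\to 0$. This presentation is evidently rigid in the sense of \cite{DF}: it has no nontrivial self-extensions because its target is zero, so equivalently its $E$-invariant in the sense of \cite{DWZ2} vanishes. Thus the starting point of the sequence is rigid, and the goal is to show that rigidity is preserved at each step and then that a rigid presentation is automatically the general one.

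First I would invoke \cite[Theorem 7.1]{DWZ2}, which in the form I need says that for a nondegenerate QP the $E$-invariant of a decorated representation is unchanged by the mutation $\mu_u$, and that a decorated representation is rigid precisely when this invariant is zero. A small point to dispose of is that we are working with an IQP rather than a plain QP; but by Lemma \ref{L:commuteFM} freezing commutes with mutation away from the frozen vertices, and deleting the arrows between frozen vertices alters neither the underlying $J$-module nor the vectors $\beta_1,\beta_0$ (hence not the $\g$-vector) of any representation occurring in the sequence, so the $E$-invariant and its mutation-invariance may be read off in the ambient non-ice QP. Consequently $\mc{M}=\mu_{\b{u}}(\mc{N})$ satisfies $E(\mc{M})=E(\mc{N})=0$, i.e. the presentation attached to $\mc{M}$ is rigid.

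It remains to see that a rigid presentation of weight $\g:=\g(\mc{M})$ is general. Here I would use the reduced presentation space $\PHom_J(\g)=\Hom_J(P([\g]_+),P([-\g]_+))$, which is an irreducible affine space carrying the action of $\Aut P([\g]_+)\times\Aut P([-\g]_+)$ and whose generic point represents a general presentation of weight $\g$ by Lemma \ref{L:homotopy}. By \cite{DF}, rigidity of a presentation is equivalent to the vanishing of its infinitesimal deformations modulo this group action, so a rigid presentation has an open orbit; since an irreducible variety has at most one open orbit, any two rigid presentations of weight $\g$ are isomorphic, and the orbit of the presentation of $\mc{M}$ is open and dense in $\PHom_J(\g)$. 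Hence that presentation is the general one of weight $\g(\mc{M})$, which is the last assertion.

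The hard part is really the bookkeeping at the interface of the two sources: checking that the correspondence $\mc{R}ep(J)\leftrightarrow K^2(\proj J)$ matches ``$E$-invariant zero'' in the sense of \cite{DWZ2} with ``rigid presentation'' in the sense of \cite{DF}, compatibly with the positive-free decomposition $\g=\g'+\g^+$ (the summands $P_1\to 0$ contribute nothing to either the $E$-invariant or the deformation space, so rigidity of the reduced presentation transfers). Once this dictionary is pinned down, the lemma follows formally from the mutation-invariance of the $E$-invariant together with the open-orbit argument, with no genuine computation involved.
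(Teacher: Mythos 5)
Your argument is exactly the paper's: the paper proves this lemma by simply citing \cite[Theorem 7.1]{DWZ2} (mutation-invariance of the $E$-invariant, which vanishes for $(0,M^+)$) together with the fact from \cite{DF} that a rigid presentation has a dense orbit, hence is general; your proposal just fills in these details, including the harmless reduction from the ice quiver to the ambient QP via Lemma \ref{L:commuteFM}. So the proposal is correct and takes essentially the same route, only spelled out more explicitly than the paper's one-line citation.
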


\begin{definition}[\cite{P}]
We define the {\em generic character} $C_W:G(\Delta,W)\to \mb{Z}(\b{x})$~by
\begin{equation} \label{eq:genCC}
C_W(\g)=\b{x}^{\g} \sum_{\e} \chi\big(\Gr^{\e}(\Coker(\g)) \big) \hat{\b{y}}^{\e}.
\end{equation}
\end{definition}

It follows from Theorem \ref{T:CC}, Lemma \ref{L:delta_coh} and \ref{L:reachable} that
\begin{corollary}[{{\em cf.} \cite[Theorem 1.1]{P}}] \label{C:GCC} Suppose that IQP $(\Delta,W)$ is non-degenerate and $B(\Delta)$ has full rank.
The generic character $C_W$ maps $G(\Delta,W)$ (bijectively) to a set of linearly independent elements in $\br{\mc{C}}(\Delta)$ containing all cluster monomials.
\end{corollary}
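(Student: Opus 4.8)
\emph{Proof proposal.} The plan is to realize $C_W$ as the restriction of the cluster character $C$ of Theorem~\ref{T:CC} to a suitably chosen family of decorated representations, and then let Theorem~\ref{T:CC} together with Lemmas~\ref{L:delta_coh} and~\ref{L:reachable} do the work. Concretely, for each $\g\in G(\Delta,W)$ I would pick a general presentation in the reduced presentation space $\PHom_J(\g)$ and let $\mc{M}_\g$ be the decorated representation it defines; its underlying module is $\Coker(\g)$, which is $\mu$-supported by the very definition of $G(\Delta,W)$. By Lemma~\ref{L:delta_coh} the representation $\mc{M}_\g$ is $\g$-coherent, and a $\g$-coherent representation has $\beta_1=[\g]_+$, $\beta_0=[-\g]_+$, so its $\g$-vector is exactly $\g$. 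Hence $\mc{R}:=\{\mc{M}_\g\mid \g\in G(\Delta,W)\}$ is a set of $\g$-coherent $\mu$-supported decorated representations with pairwise distinct $\g$-vectors, which is precisely the kind of set Theorem~\ref{T:CC} applies to.

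Next I would invoke Theorem~\ref{T:CC} --- using that $(\Delta,W)$ is nondegenerate and $B(\Delta)$ has full rank --- to conclude that $C$ restricts to a bijection from $\mc{R}$ onto a linearly independent subset of $\br{\mc{C}}(\Delta)$. Comparing the defining formula~\eqref{eq:CC} for $C$ with the formula~\eqref{eq:genCC} for $C_W$, and using $M_\g=\Coker(\g)$, gives $C(\mc{M}_\g)=C_W(\g)$ for every $\g$, so $C_W(G(\Delta,W))=C(\mc{R})$ is linearly independent in $\br{\mc{C}}(\Delta)$. Injectivity of $C_W$ is then automatic: the $F$-polynomial factor $\sum_{\e}\chi(\Gr^{\e}(\Coker(\g)))\hat{\b{y}}^{\e}$ has constant term $\chi(\Gr^{\bs{0}})=1$, so $\g$ is recovered as the $\g$-vector of $C_W(\g)$, which is well defined because $B(\Delta)$ has full rank; alternatively one appeals directly to Lemma~\ref{L:independent}.

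To obtain the clause ``containing all cluster monomials'', I would recall (cf.\ Remark~\ref{R:mu_support}) that every cluster monomial is $C(\mc{M})$ for some decorated representation $\mc{M}$ obtained from a positive representation $(0,M^+)$ by a sequence of mutations at mutable vertices; such an $\mc{M}$ is $\mu$-supported, and by Lemma~\ref{L:reachable} it corresponds to a rigid, hence general, presentation in $\PHom_J(\g(\mc{M}))$. Therefore $\mc{M}$ coincides up to isomorphism with $\mc{M}_{\g(\mc{M})}$, and $C(\mc{M})=C_W(\g(\mc{M}))$, so every cluster monomial lies in the image of $C_W$.

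The main obstacle lies not in this bookkeeping but in the inputs it rests on: first, identifying a general presentation of weight $\g$ with one in $\PHom_J(\g)$ whose cokernel is legitimately $\Coker(\g)$ in the $\g$-coherent sense (Lemmas~\ref{L:homotopy} and~\ref{L:delta_coh}, ultimately \cite{DF}); second, the genericity and rigidity of reachable representations (Lemma~\ref{L:reachable}, i.e.\ \cite[Theorem~7.1]{DWZ2}). I expect the first of these to be the delicate point; once it is in hand, matching $C(\mc{M}_\g)$ with $C_W(\g)$ and chasing $\g$-vectors is routine.
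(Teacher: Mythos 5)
Your proposal is correct and follows essentially the same route as the paper, which deduces the corollary directly from Theorem \ref{T:CC} together with Lemmas \ref{L:delta_coh} and \ref{L:reachable}; you have merely spelled out the bookkeeping (that a general presentation in $\PHom_J(\g)$ yields a $\g$-coherent $\mu$-supported representation with $\g$-vector exactly $\g$, that $C(\mc{M}_\g)=C_W(\g)$, and that reachable representations give the cluster monomials) that the paper leaves implicit.
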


\noindent A similar result was first proved by P. Plamondon in the setting of the (generalized) cluster category. As mentioned in \cite[Remark 4.1]{P} the result is also valid in the setting of quivers with potentials. The author wants to thank Plamondon for pointing this out to him.
Here, we just gave a simple direct proof. Moreover, the definition of upper cluster algebras in \cite{P} is taken from \cite{BFZ}, so instead of $G(\Delta,W)$ the domain of the generic character is the full lattice $K_0(\proj J)$.

\begin{proposition} \label{P:muG}
$G(\Delta,W)$ is a semigroup, and $G(\mu_u (\Delta,W))=\mu_u (G(\Delta,W))$.
\end{proposition}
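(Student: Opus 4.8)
The plan is to prove the two statements separately, with the semigroup property following formally once the mutation-compatibility is in hand.

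First I would establish that $G(\Delta,W)$ is a semigroup. Take $\g_1,\g_2\in G(\Delta,W)$, so that $\Coker(\g_1)$ and $\Coker(\g_2)$ are $\mu$-supported. Using the decomposition $\g_i = \g_i' + \g_i^+$ into a positive-free part and a part in $\mb{Z}_{\geq 0}^q$, and the additivity of generic presentations under direct sums, I want to argue that $\Coker(\g_1+\g_2)$ is obtained (generically) as an extension or, more carefully, that its support is contained in the union of the supports of $\Coker(\g_1)$ and $\Coker(\g_2)$. The cleanest route is via the dimension-vector interpretation: by \eqref{eq:delta2dim} the support of $\Coker(\g)$ is controlled by $\g$ together with the generic ranks $\rank\gamma_u$, and general presentations in $\PHom_J(\g_1+\g_2)$ degenerate from the direct sum of general presentations in $\PHom_J(\g_1)$ and $\PHom_J(\g_2)$; upper-semicontinuity of dimension vectors under degeneration then forces $\dv\Coker(\g_1+\g_2)(v) \leq \dv\Coker(\g_1)(v) + \dv\Coker(\g_2)(v)$ for every vertex $v$, so a frozen $v$ stays out of the support. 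Hence $\g_1+\g_2\in G(\Delta,W)$.

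Next, for the mutation statement, the key is Lemma \ref{L:torsion} together with \eqref{eq:mu_delta}: I would show that $\mu_u$, viewed as the piecewise-linear involution on $K_0(\proj J)$ from \eqref{eq:mu_delta}, restricts to a bijection $G(\Delta,W)\to G(\mu_u(\Delta,W))$. Given $\g\in G(\Delta,W)$, let $\mc{M}=\Coker(\g)$ be the corresponding $\g$-coherent (Lemma \ref{L:delta_coh}) $\mu$-supported decorated representation. Then $\mu_u(\mc{M})$ is a decorated representation of $\mu_u(\Delta,W)$, and since $u$ is mutable and $\mc{M}$ is $\mu$-supported, the discussion after Definition \ref{D:mu_supported} shows $\mu_u(\mc{M})$ is again $\mu$-supported; by the DWZ construction it is still $\g$-coherent, and by Lemma \ref{L:torsion} its $\g$-vector is exactly $\mu_u(\g)$ in the sense of \eqref{eq:mu_delta}. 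What remains is to identify $\mu_u(\mc{M})$ with $\Coker(\mu_u(\g))$ generically — i.e. that mutation sends a general presentation of weight $\g$ to a general presentation of weight $\mu_u(\g)$. This is where I would invoke Lemma \ref{L:homotopy} and the genericity transfer: mutation of representations is defined on all of $\mc{R}ep(J)$ and is an involution, it is a morphism of varieties on the relevant presentation spaces (DWZ), and it sends general points to general points because its inverse $\mu_u$ also does; combined with Lemma \ref{L:delta_coh} this pins down $\Coker(\mu_u(\g))$ up to the generic isomorphism class. Therefore $\mu_u(\g)\in G(\mu_u(\Delta,W))$, giving $\mu_u(G(\Delta,W))\subseteq G(\mu_u(\Delta,W))$; applying the same to $\mu_u(\Delta,W)$ and using that $\mu_u$ is an involution gives the reverse inclusion and hence equality.

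The main obstacle I expect is the genericity transfer in the mutation step: one must be careful that ``general presentation in $\PHom_J(\g)$'' maps under $\mu_u$ to ``general presentation in $\PHom_J(\mu_u(\g))$'', rather than merely to some presentation with the right $\g$-vector. The subtlety is that $\mu_u$ on decorated representations is only piecewise-defined and the positive/decorated parts can interact with the mutation; the clean way around it is to reduce to the positive-free, $\g$-coherent situation (absorbing $\g^+$ into a decorated summand that mutation treats transparently), invoke that $\Coker(\g')$ lies in a single irreducible component with its minimal presentation generic (as in the proof of Lemma \ref{L:delta_coh}), and then use that mutation induces a bijection between the corresponding irreducible components of representation varieties, carrying the generic point to the generic point. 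The semigroup property, by contrast, is essentially a semicontinuity argument and should be routine.
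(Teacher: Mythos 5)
Your semigroup argument is essentially the paper's: take general presentations $d\in\PHom_J(\g_1)$ and $d'\in\PHom_J(\g_2)$, note that the cokernel support of a general presentation of the combined weight can only shrink from that of $d\oplus d'$ by semicontinuity of rank, and conclude. One precision you gloss over: $d\oplus d'$ lies in $\Hom_J(P([\g_1]_++[\g_2]_+),P([-\g_1]_++[-\g_2]_+))$, which is in general strictly larger than the reduced space $\PHom_J(\g_1+\g_2)$ (the positive and negative parts of $\g_1$ and $\g_2$ may cancel), so to transfer the conclusion to a general presentation in $\PHom_J(\g_1+\g_2)$ you need Lemma \ref{L:homotopy} (homotopy-equivalent presentations have the same cokernel); this is exactly the citation with which the paper closes this half.

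The mutation half is where there is a genuine gap. Your first steps coincide with the paper's: pick $\mc{M}$ corresponding to a general presentation in $\PHom_J(\g)$, use Lemmas \ref{L:delta_coh} and \ref{L:torsion} to get $\g(\mu_u(\mc{M}))=\mu_u(\g)$, and observe that $\mu_u(\mc{M})$ is again $\mu$-supported since mutation only alters the mutable vertex $u$. But you then insist that one must identify $\mu_u(\mc{M})$ with $\Coker(\mu_u(\g))$, i.e. that mutation carries general presentations to general presentations, and your justification is not sound: mutation of decorated representations is not a morphism of varieties (the construction involves quotients such as $\Ker\gamma_u/\Img\beta_u$ whose dimensions jump between strata), and ``it sends general points to general points because its inverse does too'' is circular unless backed by a constructibility or component-bijection argument, which you do not supply and which is not among the paper's lemmas. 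The point you are missing is that this stronger claim is unnecessary: to show $\mu_u(\g)\in G(\mu_u(\Delta,W))$ it suffices to exhibit one $\mu$-supported decorated representation admitting a presentation of weight $\mu_u(\g)$ --- which $\mu_u(\mc{M})$ is --- because the same semicontinuity you use in the semigroup half, combined again with Lemma \ref{L:homotopy}, shows that the cokernel of a general presentation in $\PHom_{J'}(\mu_u(\g))$ has support contained in that of any particular representation of that weight. This is how the paper concludes; with that observation your argument closes, the ``main obstacle'' you flag disappears, and the reverse inclusion follows from the involution property exactly as you say.
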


\begin{proof} Let $d$ and $d'$ be two general presentations in $\PHom_J(\g)$ and $\PHom_J(\g')$.
If the cokernels of $d$ and $d'$ are not supported on $v$, then so is $d\oplus d'\in \Hom_J(P_1,P_0)$.
By the semi-continuity of the rank function, this is also true for a general presentation in $\Hom_J(P_1,P_0)$.
Note that the weight of $d\oplus d'$ is $\g+\g'$.
Hence, $\g+\g'\in G(\Delta,W)$ by Lemma \ref{L:homotopy}.

Since $\mu_u$ is an involution, it suffices to show $\mu_u (G(\Delta,W)) \subseteq G(\mu_u(\Delta,W))$.
We pick some $\g\in G(\Delta,W)$. Let $\mc{M}$ be some decorated representation corresponding to a general presentation in $\PHom_J(\g)$.
By Lemma \ref{L:delta_coh},  $\mu_u(\g(\mc{M}))=\g(\mu_u(\mc{M}))$.
$\mc{M}$ is $\mu$-supported, then so is $\mu_u(\mc{M})$.
Hence $\mu_u(\g)\in G(\mu_u(\Delta,W))$.
\end{proof}

By Corollary \ref{C:GCC}, we have that $G(\Delta,W)\subseteq G(\Delta)$.
It seems difficult to describe both $G(\Delta,W)$ and $G(\Delta)$ in general.
The following problems are important.

\begin{problem} \label{P:GQ} Under what conditions, does the upper cluster algebra $\br{\mc{C}}(\Delta)$ have a basis such that all its elements have distinct $\g$-vectors?
For which potentials, do we have the equality $G(\Delta)=G(\Delta,W)$?
\end{problem}

\begin{problem} \label{P:saturated}
Is $G(\Delta,W)$ {\em saturated} in $\mb{Z}^q$?
If yes, is $G(\Delta,W)$ consisted of lattice points in some rational polyhedral cone?
We can ask the same question for $G(\Delta)$.
\end{problem}

\noindent We will see in Section \ref{S:Hive} that for ice hive quivers with certain potentials, the answer to both problems are positive.

\section{The Hive Quivers} \label{S:Hive}
The {\em hive quiver $\wtd\Delta_n$ of size $n$} is a quiver in the plane with $\binom {n+2}{2}$ vertices arranged in a triangular grid consisting of $n^2$ small triangles formed by arrows.
We label the vertices as shown in Figure~\ref{fig:hive}.
\begin{figure}[h]
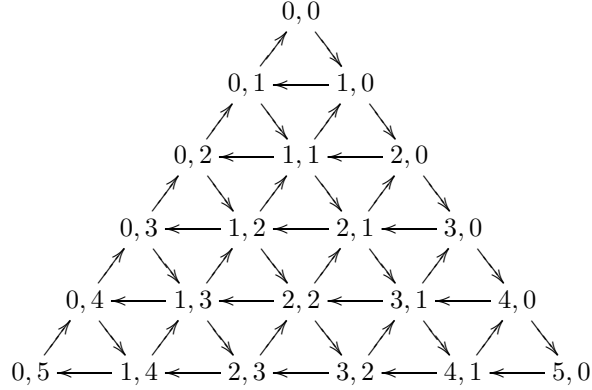

\begin{center}
$\hivefivenoice$
\end{center}
\caption{Hive quiver $\wtd{\Delta}_5$.}
\label{fig:hive}
\end{figure}
The {\em ice hive quiver $\Delta_n$} is obtained from the hive quiver $\wtd\Delta_n$ by forgetting three vertices $\{(0,0),(0,n),(n,0)\}$,
then freezing all boundary vertices as shown in Figure \ref{fig:icehive}.
We denote by $\delta_n$ the vertex set of $\Delta_n$.
The next two lemmas are straightforward.

\begin{figure}[h]
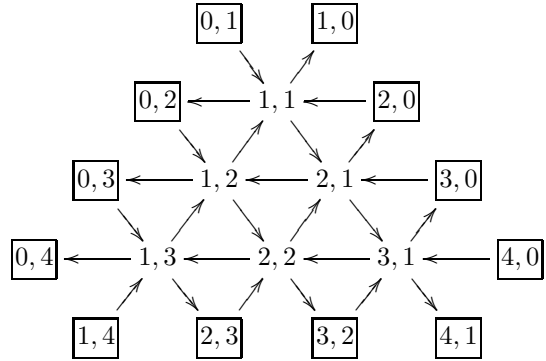

\begin{center}
$\hivefive$
\end{center}
\caption{Ice hive quiver $\Delta_5$.}
\label{fig:icehive}
\end{figure}

\begin{lemma} The $B$-matrix of $\Delta_n$ is of full rank.
\end{lemma}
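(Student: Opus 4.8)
The plan is to analyze the combinatorics of the ice hive quiver $\Delta_n$ directly and exhibit a $p\times p$ minor of $B(\Delta_n)$ that is a unit, where $p$ is the number of mutable vertices (the interior vertices $(i,j)$ with $i,j\geq 1$ and $i+j\leq n-1$). Recall that $B=B(\Delta_n)$ is the $p\times q$ matrix recording signed arrow counts from mutable vertices to all vertices, and ``full rank'' means rank $p$. So it suffices to find an ordered subset of $p$ columns (i.e. $p$ vertices, mutable or frozen) on which the corresponding square submatrix is invertible over $\mathbb{Z}$ (or just over $\mathbb{Q}$).

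First I would set up coordinates: index interior vertices by $(i,j)$, and note from Figure \ref{fig:hive} that each interior vertex has six neighbours $(i\pm1,j)$, $(i,j\pm1)$, $(i+1,j-1)$, $(i-1,j+1)$, with the three ``outgoing'' arrows going (say) down-right, left, and up-right and the three ``incoming'' ones coming from the opposite directions — this is exactly the standard hexagonal hive pattern, so the row $b_u$ of $B$ for $u=(i,j)$ has entries $+1$ at three neighbours and $-1$ at the other three (with the obvious truncation when a neighbour is one of the deleted corners). The key structural observation is that the frozen boundary vertices give us ``extra'' columns, and the cleanest strategy is to order the interior vertices along diagonals (say by the value of $i$, then $j$) and pair vertex $(i,j)$ with a suitably chosen neighbour column so that the resulting matrix is triangular with $\pm1$ on the diagonal. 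Concretely, I expect that sending the mutable vertex $(i,j)$ to the column indexed by its down-right neighbour $(i,j+1)$ (which, on the bottom diagonal $i+j=n-1$, is a frozen boundary vertex) produces a matrix that, with the right ordering, is lower-triangular: the arrow $(i,j)\to(i,j+1)$ contributes the diagonal $\pm1$, and all other contributions of row $(i,j)$ land in columns that come ``earlier'' in the order.

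The key steps, in order, are: (1) fix the precise orientation convention of $\Delta_n$ from Figure \ref{fig:hive} and write down $b_u$ explicitly as a $\pm1$-combination of the (at most six) neighbours of $u$; (2) choose the injection $\iota$ from mutable vertices to vertices described above, $\iota(i,j)=(i,j+1)$, and choose a total order $\prec$ on mutable vertices (lexicographic in $(i,j)$, or by antidiagonal) so that for every mutable $u$, all neighbours $v$ of $u$ other than $\iota(u)$ satisfy $\iota^{-1}(v)\prec u$ or $v$ is not in the image of $\iota$ at all; (3) conclude that the $p\times p$ submatrix of $B$ on columns $\iota(\text{mutable vertices})$, ordered by $\prec$, is triangular with $\pm1$ diagonal, hence has determinant $\pm1$, so $B$ has full rank.

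The main obstacle is step (2): verifying that a single choice of $\iota$ and $\prec$ works uniformly, including all the boundary/truncation cases near the three deleted corners and along the three frozen arms — it is easy to get a triangular structure in the ``bulk'' but the edges of the triangle require care, and one may need a slightly more clever matching (e.g. different local rules on the three boundary strips) rather than the naive one. An alternative, if the explicit triangulation proves fiddly, would be to argue by induction on $n$: the ice hive quiver $\Delta_n$ contains $\Delta_{n-1}$ as a sub-configuration after deleting one boundary diagonal, the deleted diagonal contributes a block of new mutable vertices whose rows are independent from the rest because each carries a fresh frozen neighbour, and a block-triangular argument then reduces full rank of $B(\Delta_n)$ to full rank of $B(\Delta_{n-1})$; the base case $n=2$ or $n=3$ is checked by hand. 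Either way the proof is short and purely combinatorial, which is why the excerpt calls it ``straightforward.''
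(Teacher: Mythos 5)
The paper itself gives no argument here (the lemma is dismissed as ``straightforward''), so there is no official proof to compare with; your triangular-minor strategy is a perfectly reasonable way to make ``straightforward'' precise, and it does go through --- but the step you flag as the main obstacle (your step (2)) is exactly the part you leave unverified, and the two orders you suggest need correction. With the orientation of Figure 1, the arrows out of a mutable vertex $(i,j)$ go to $(i+1,j)$, $(i,j-1)$, $(i-1,j+1)$ and the arrows into it come from $(i-1,j)$, $(i,j+1)$, $(i+1,j-1)$; in particular $(i,j+1)$ is not the down-right neighbour but the lower-left one, which is harmless since all that matters is $b_{(i,j),(i,j+1)}=-1\neq 0$. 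Your matching $\iota(i,j)=(i,j+1)$ is injective, its image avoids the three deleted corners (its members have first coordinate $\geq 1$ and second coordinate $\geq 2$), and the only other neighbours of $(i,j)$ that can lie in the image of $\iota$ are $(i,j-1)$, $(i\pm 1,j)$, $(i+1,j-1)$, $(i-1,j+1)$, whose $\iota$-preimages are $(i,j-2)$, $(i+1,j-1)$, $(i-1,j-1)$, $(i+1,j-2)$, $(i-1,j)$. All five strictly decrease the linear functional $i+2j$ (by $4,1,3,3,1$ respectively), so ordering the mutable vertices by increasing $i+2j$ (ties broken arbitrarily) makes the selected $p\times p$ submatrix triangular with $\pm 1$ on the diagonal; no separate analysis of the boundary strips is needed, because a neighbour that is deleted or outside the image of $\iota$ simply contributes no selected column. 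Beware that plain lexicographic order in $(i,j)$ does \emph{not} work: the neighbour $(i+1,j)$ has preimage $(i+1,j-1)$, which is lexicographically later than $(i,j)$; the antidiagonal order works only if ties on $i+j$ are broken with $i$ increasing, so that $(i+1,j-1)$ precedes $(i,j)$. With this one verification supplied your argument is complete (and the inductive alternative you sketch would also work, but is unnecessary).
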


\noindent Recall the definition of the level-1 weight vector $\f_{i,j}$ in Lemma \ref{L:level1} and \eqref{eq:fij_boundary}.
\begin{lemma} \label{L:WC} The assignment $(i,j)\mapsto \f_{i,j}$ on the vertices of $\Delta_n$
defines a full weight configuration $\bs{\sigma}_n$ on $\Delta_n$.
\end{lemma}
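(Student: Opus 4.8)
The plan is to verify the two defining conditions of a weight configuration from Definition~\ref{D:wtconfig} directly on $\Delta_n$, using the explicit description of $\f_{i,j}$ and the combinatorics of the hive quiver. First I would fix the convention for the arrows: in $\wtd\Delta_n$ (Figure~\ref{fig:hive}) each small upward triangle has vertices $(i,j)$, $(i,j+1)$, $(i+1,j)$ with arrows $(i,j+1)\to(i,j)$, $(i+1,j)\to(i,j)$ of one orientation and $(i+1,j)\to(i,j+1)$, while each small downward triangle $(i,j)$, $(i-1,j)$, $(i,j-1)$ contributes the long horizontal arrow. In particular, for a mutable (interior) vertex $u=(i,j)$ of $\Delta_n$ the incoming and outgoing arrows are exactly the six edges of the hexagon around $u$: the incoming arrows come from $(i,j+1)$, $(i+1,j-1)$, $(i-1,j)$ and the outgoing arrows go to $(i,j-1)$, $(i-1,j+1)$, $(i+1,j)$ (I would double-check the precise pattern against Figure~\ref{fig:hivefivenoice}, but the point is that the three "in" neighbours and three "out" neighbours are the alternating vertices of the surrounding hexagon).

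Then condition \eqref{eq:weightconfig} becomes the identity
\[
\f_{i,j+1}+\f_{i+1,j-1}+\f_{i-1,j} \;=\; \f_{i,j-1}+\f_{i-1,j+1}+\f_{i+1,j}
\]
for every interior vertex $(i,j)$. Using $\f_{i,j}=\e_n-\e_i^1-\e_j^2-\e_k^3$ with $k=n-i-j$ (and the convention $\e_0^a=0$, together with \eqref{eq:fij_boundary} to handle the case $i+j=n$, i.e. $k=0$), the $\e_n$ terms contribute $3\e_n$ on each side and cancel. The remaining check splits into three coordinate families. For the first arm: the left side has $-\e_i^1-\e_i^1-\e_{i-1}^1 = -2\e_i^1-\e_{i-1}^1$ and the right side has $-\e_i^1-\e_{i-1}^1-\e_{i+1}^1$; these are \emph{not} literally equal, so I expect the correct hexagon pattern is the one where the two horizontal neighbours $(i-1,j)$ and $(i+1,j)$ lie on opposite sides — i.e. one incoming and one outgoing — which makes each of the indices $i-1,i,i,i+1$ (and likewise for $j$ and for $k$) appear once on each side. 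So the key computational step is: for each of the three index families $\{i\}$, $\{j\}$, $\{k=n-i-j\}$, the multiset of first/second/third-arm indices appearing among the three "in" neighbours equals the multiset appearing among the three "out" neighbours. I would present this as a short lemma about the six hexagon neighbours: their $i$-coordinates are $\{i-1,i,i,i,i+1\}$ split evenly, and similarly for $j$ and $k$, and hence the sums of the $\e$-vectors match term by term. The boundary vertices require no condition since \eqref{eq:weightconfig} is only imposed at mutable vertices, so this completes the weight-configuration part.

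For fullness (Definition~\ref{D:full}), I need to show $\operatorname{rank}\bs{\sigma}_n$ equals the nullity of $B(\Delta_n)^{\T}$. Since the previous lemma gives that $B(\Delta_n)$ has full rank — it is a $p\times q$ matrix with $p$ the number of mutable vertices and $q=p+($number of frozen vertices$)$ — the nullity of $B^{\T}$ is $q-p$, the number of frozen (boundary) vertices, which is $3(n-1)+3 = 3n$... more precisely the $3n$ boundary vertices of $\Delta_n$. So I must check that the $3n$ vectors $\f_{i,j}$ attached to boundary vertices are linearly independent (equivalently, span a space of dimension $3n$); since they all lie in the $q$-dimensional weight lattice and the interior $\f_{i,j}$ are automatically in their span via the relations just verified, it suffices to exhibit independence of the boundary ones. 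The boundary of $\Delta_n$ consists of the three arms, each carrying weights $\f$ with one of $i,j,k$ equal to $0$, and by \eqref{eq:fij_boundary} the boundary weights on arm $a$ are of the form $\e_n-\e_{i}^{a}-\e_{n-i}^{a'}$; reading off their $\e_i^a$-components as $i$ ranges over $1,\dots,n-1$ and pairing with the "corner" weights, one gets an essentially triangular $3n\times(\text{number of }\e\text{'s})$ incidence, so independence is a routine rank computation. The main obstacle is purely bookkeeping: getting the arrow orientations of the hive quiver exactly right (so that the hexagon in/out split is the symmetric one) and carefully handling the three boundary conventions in \eqref{eq:fij_boundary} at the corners where two of $i,j,k$ vanish; once the orientation convention is pinned down against Figure~\ref{fig:hive}, both verifications are short. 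I would therefore state the proof as: (1) identify the six hexagon neighbours of an interior vertex and their in/out split; (2) observe the resulting multiset identity on $i$-, $j$-, and $k$-indices, giving \eqref{eq:weightconfig}; (3) count boundary vertices, invoke full rank of $B$, and check independence of the boundary $\f_{i,j}$'s to conclude fullness.
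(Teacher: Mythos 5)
Your verification of the weight-configuration condition is essentially correct, and in fact the in/out split you wrote down at the beginning (incoming from $(i,j+1),(i-1,j),(i+1,j-1)$; outgoing to $(i,j-1),(i+1,j),(i-1,j+1)$) is already the right one for $\Delta_n$. With that split each of the index families $\{i-1,i,i+1\}$, $\{j-1,j,j+1\}$, $\{k-1,k,k+1\}$ ($k=n-i-j$) occurs exactly once on each side, so the two sums of $\f$'s agree; the convention $\e_0^a=0$ together with \eqref{eq:fij_boundary} makes this uniform near the boundary. The intermediate step where you found ``$-2\e_i^1-\e_{i-1}^1$'' on the left is only a computational slip: with your own list of in-neighbours the first-arm indices are $i,i-1,i+1$, already balanced, so no change of hexagon pattern is needed. (The paper gives no proof here; it declares the lemma straightforward.)

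The fullness part, however, has a genuine gap. First, $\Delta_n$ has $3(n-1)$ frozen vertices, not $3n$ (the three corners are deleted), so the nullity of $B^{\T}$ is $q-p=3(n-1)$. More seriously, your plan --- prove the frozen-vertex weights are linearly independent and that the mutable-vertex weights lie in their span --- cannot work. The relation \eqref{eq:weightconfig} at a mutable vertex $u$ does not involve $\f_u$ at all, so it never expresses an interior weight through boundary ones; and the boundary weights are linearly dependent for every $n\geq 3$: for instance
\[
\f_{1,0}+\f_{0,n-1}+\f_{n-1,1}\;=\;\f_{0,1}+\f_{n-1,0}+\f_{1,n-1},
\]
all six terms being frozen-vertex weights (for $n=3$ this is precisely the relation \eqref{eq:weightconfig} at the unique mutable vertex $(1,1)$). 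Concretely, for $n=3$ the six boundary weights span only a $5$-dimensional space, while all seven weights span the required $6=3(n-1)$ dimensions, so the interior weights are indispensable. The correct bookkeeping is the opposite of yours: the upper bound $\rank\bs{\sigma}_n\leq q-p$ is automatic, either from $B\bs{\sigma}_n=0$ together with the full-rank lemma, or from the observation that every $\f_{i,j}$ lies in the hyperplane $\{\sigma:\sigma(\bn)=0\}$ of dimension $3n-3$ inside $\mb{Z}^{(T_n)_0}\otimes\mb{R}$ (which has dimension $3n-2$, not $q$). What actually requires an argument is the lower bound: among all the $\f_{i,j}$, boundary and interior together, one must exhibit $3(n-1)$ linearly independent vectors, i.e. show they span that hyperplane.
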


Let $a$ (resp $b$, $c$) denote the sum of all northeast (resp. southeast, west) arrows.
We put the potential $W=abc-acb$ on the quiver $\Delta_n$.
Then the Jacobian ideal is generated by the elements
\begin{align}
\label{eq:rel_Jn1} &e_u(ab-ba), e_u(bc-cb), e_u(ca-ac) & \text{for $u$ mutable,} \\
\label{eq:rel_Jn2} &e_vba, ace_v, e_vac, cbe_v, e_vcb, bae_v & \text{for $v$ frozen.}
\end{align}

\noindent We will identify a path $p$ in $\Delta_n$ with a sequence of vertices.
\begin{definition}  \label{D:Gn} A {\em straight path} in $\Delta_n$ from a vertex $(i,j)$ is any of the following three kinds
\begin{align*}
\tag{NE} &(i,j),(i,j-1),\dots,(i,j-k),\\
\tag{SE} &(i,j),(i+1,j),\dots,(i+k,j),\\
\tag{W} &(i,j),(i-1,j+1),\dots,(i-k,j+k).
\end{align*}
We define a cone $\mr{G}_n\subset \mb{R}^{\delta_n}$ by
$$\begin{cases}
\g(v)\geq 0 & \text{for all frozen vertices $v$,}\\
\sum_{u\in p} \g(u) \geq 0 & \text{for all maximal straight paths $p$ from mutable vertices.}
\end{cases}$$
Here, the notation $\sum_{u\in p} \g(u)$ stands for
$\g(u_1)+\g(u_2)+\cdots+ \g(u_s)$ if $p$ is the path passing $u_1,u_2,\dots,u_s$.
\end{definition}

Let $T_{0,1}$ be the injective (simple) representation $I_{0,1}$.
For $j\geq 2$, let $T_{0,j}$ be the unique indecomposable representation supported on the straight path from $(j,0)$ to $(0,j)$.
Note that $\dv T_{0,j}$ is $(1,1,\dots,1)$ on its support.
\begin{lemma} \label{L:QP_Delta} The IQP $(\Delta_n,W_n)$ is rigid and Jacobi-finite.
Moreover, we have an (injective) presentation of $T_{0,j}$
\begin{align} \label{eq:hive_seq}
& 0\to T_{0,j} \to I_{0,j} \xrightarrow{bc} I_{0,j-1} & \text{for } 2\leq j\leq n-2, \\
& 0\to T_{0,n-1} \to I_{0,n-1} \xrightarrow{(bc,ac)} I_{0,n-2}\oplus I_{1,n-1}. &
\end{align}
\end{lemma}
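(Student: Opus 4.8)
The plan is to prove the three assertions of Lemma \ref{L:QP_Delta} in turn: Jacobi-finiteness, rigidity, and the two injective presentations of $T_{0,j}$.

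\textbf{Jacobi-finiteness.} First I would write down the Jacobian algebra $J_n=J(\Delta_n,W_n)$ explicitly using relations \eqref{eq:rel_Jn1}--\eqref{eq:rel_Jn2}. The relations $e_u(ab-ba)$, $e_u(bc-cb)$, $e_u(ca-ac)$ at mutable vertices say that, locally around a mutable vertex, the three "turning" operations commute in the appropriate sense; the relations at frozen vertices kill certain compositions. The key point is that any sufficiently long path must either leave the triangular grid (impossible, since all outgoing arrows eventually hit a frozen vertex or the boundary) or acquire a repeated "direction pattern" that the commutation relations allow us to rewrite, and then one of the frozen relations $e_vba$, $ace_v$, etc., forces it to vanish. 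Concretely I would argue that every path in $J_n$ is equivalent modulo $\partial W_n$ to a straight path, or a path with at most one "turn," of bounded length $\le n$; since there are finitely many such, $J_n$ is finite-dimensional. This is the standard type of argument for hive-type or triangular Jacobian algebras and is essentially a finite bookkeeping check on path combinatorics.

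\textbf{Rigidity.} Since $W_n = abc - acb$ where $a,b,c$ are the sums of all arrows in the three directions, I would show every cycle in $\Delta_n$ is cyclically equivalent modulo $\partial W_n$ to something in the Jacobian ideal. The cyclic derivatives $\partial_a W_n$, $\partial_b W_n$, $\partial_c W_n$ generate, via the commutation relations they encode, enough to rewrite any oriented cycle: an oriented cycle in a triangular grid of this type must, up to cyclic equivalence, be built from the elementary triangles $abc$ and $acb$, and the relations say $abc \equiv acb$ in the Jacobian algebra's trace space while the individual derivative relations let us contract any cycle to a combination of such elementary triangles. Alternatively, and more cleanly, I would invoke \cite[Proposition 8.1]{DWZ1}: it suffices to exhibit that the QP is $2$-acyclic (clear from Figure \ref{fig:hive}, which has no $2$-cycles) and that $W_n$ spans, together with $\partial W_n$, all of $\Tr(\widehat{k\Delta_n})$ modulo commutators — which reduces again to the combinatorial claim that all cycles are generated by the elementary triangles. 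I expect this to follow from the same path analysis used for Jacobi-finiteness. Note that once rigidity is established, nondegeneracy (needed throughout Section \ref{S:CS}) comes for free by \cite[Corollary 6.11]{DWZ1}.

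\textbf{The injective presentations.} For the last part I would directly identify $T_{0,j}$, the unique indecomposable supported on the straight path from $(j,0)$ to $(0,j)$ with dimension vector all $1$'s on that support, and compute its minimal injective copresentation over $J_n$. The socle of $T_{0,j}$ is $S_{0,j}$, so the injective envelope is $I_{0,j}$; then $I_{0,j}/T_{0,j}$ must be computed from the structure of $I_{0,j}$ as a $J_n$-representation, which by the relations \eqref{eq:rel_Jn1}--\eqref{eq:rel_Jn2} is supported on paths into $(0,j)$, and one checks that the cokernel has socle $S_{0,j-1}$ for $2 \le j \le n-2$ (giving the map $bc\colon I_{0,j} \to I_{0,j-1}$) while for $j = n-1$ the cokernel picks up an extra composition factor $S_{1,n-1}$ because of how the grid meets the frozen boundary near the vertex $(1,n-1)$, forcing the target $I_{0,n-2}\oplus I_{1,n-1}$ and the map $(bc, ac)$. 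This is essentially a local computation around the relevant vertices using the explicit relations.

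\textbf{Main obstacle.} The hard part will be the path-combinatorics underlying both Jacobi-finiteness and rigidity: one must be careful that the commutation relations at mutable vertices, together with the annihilation relations at frozen vertices, genuinely suffice to reduce every path to a bounded-length normal form and every cycle to the elementary triangles, with no "escaping" long paths hiding near the frozen boundary. Getting the boundary bookkeeping exactly right — in particular why the presentation of $T_{0,n-1}$ differs from the generic one — is the delicate point; the rest is routine.
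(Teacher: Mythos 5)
Your proposal is correct and follows essentially the same route as the paper: rigidity and Jacobi-finiteness are established there by exactly the kind of path/relation combinatorics you outline (the paper simply defers to the model argument of \cite[Example 8.7]{DWZ1}), and the copresentations are obtained by the same local computation you describe — dual paths into $(0,j)$ not passing $(0,j-1)$ reduce to straight paths, with the extra summand $I_{1,n-1}$ and the map $(bc,ac)$ appearing only at $j=n-1$ because of the frozen boundary. One minor caveat: \cite[Proposition 8.1]{DWZ1} says that rigid QPs are $2$-acyclic, not the converse, so your ``alternative'' rigidity argument is really just the definition restated, and the direct combinatorial verification remains the actual content.
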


\begin{proof} The proof of the first statement is similar to that in \cite[Example 8.7]{DWZ1}.
Due to the relation \eqref{eq:rel_Jn1} and \eqref{eq:rel_Jn2},
it is easy to see that for $j\neq n-1$, \begin{enumerate}
\item A dual path in $I_{0,j}$ vanishes under $bc$ if and only if it does not pass $(0,j-1)$;
\item Any dual path to $(0,j)$ not passing $(0,j\!-\!1)$ is equivalent to a straight path.
\item[(1')] A dual path in $I_{0,n-1}$ vanishes under $(bc,ac)$ if and only if it neither passes $(0,n-2)$ nor passes $(1,n-1)$;
\item[(2')] Any dual path to $(0,n-1)$ not passing $(0,n-2)$ or $(1,n-1)$ is equivalent to a straight path.
\end{enumerate}
The exact sequences reformulate the above statements.
\end{proof}

\noindent By symmetry, we also consider the representations $T_{i,0}$ (resp. $T_{i,n-i}$) supported on the straight path from $(i,n-i)$ to $(i,0)$ (resp. from $(0,i)$ to $(i,n-i)$). They have the similar presentations.

\begin{definition} A vertex $v$ is called {\em maximal} in a representation $M$ if all subrepresentations $L\subsetneq M$ are not supported on $v$.
\end{definition}

\noindent Note that each above $T_v$ has a maximal vertex.

\begin{lemma} \label{L:hom=0} Suppose that a representation $T$ contains a maximal vertex $v$. Let $M=\Coker(\g)$, then $\Hom_J(M,T)=0$ if and only if $\g(\dv S)\geq 0$ for all subrepresentations $S$ of $T$.
\end{lemma}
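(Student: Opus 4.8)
The statement relates a $\Hom$-vanishing for $M=\Coker(\g)$ into a fixed representation $T$ with a maximal vertex, to a sign condition on $\g$ over the dimension vectors of subrepresentations of $T$. The plan is to interpret $\Hom_J(M,T)$ via the presentation defining $M$, exactly as in the Schofield-type setup of Section \ref{S:SI}. First I would take a general presentation $P([\g]_+)\xrightarrow{f} P([-\g]_+)$ in $\PHom_J(\g)$, so that $M=\Coker f$, and apply $\Hom_J(-,T)$. Since projectives are acyclic for $\Hom_J(-,T)$ and $\Hom_J(P_u,T)\cong T(u)$, the complex $\Hom_J(P([\g]_+),T)\to\Hom_J(P([-\g]_+),T)$ computes $\Hom_J(M,T)$ as its cokernel (and the kernel of its transpose-direction map as an $\Ext$), precisely as in \eqref{eq:canseq}. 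So $\Hom_J(M,T)=0$ is equivalent to surjectivity of a certain linear map $\Phi_T(f)$ between the spaces $\bigoplus_v [\g]_+(v)\,T(v)$ and $\bigoplus_v [-\g]_+(v)\,T(v)$.

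The heart of the argument is then a genericity/dimension-count statement: for a \emph{general} $f$, the map $\Phi_T(f)$ is surjective if and only if, for every subrepresentation $S\subseteq T$, the "local codimension contribution" is nonnegative, which unwinds to $\g(\dv S)\ge 0$. I would organize this as follows. (i) For the forward direction, suppose $\Hom_J(M,T)=0$ for general $f$; given a subrepresentation $S\subseteq T$, restrict/compose with the inclusion $S\hookrightarrow T$ to see that a general presentation also has no maps to $S$ killing it, and then compare dimensions: $\dim\Hom_J(M,S)\ge \dim\Hom_J(P([-\g]_+),S)-\dim\Hom_J(P([\g]_+),S) = -\g(\dv S)$ (using $\Hom_J(P_u,S)=S(u)$ and bilinearity). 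Since the left side is $0$, this forces $\g(\dv S)\ge 0$. Here the hypothesis that $T$ has a maximal vertex $v$ is what guarantees $T$ itself (not merely $M$) governs the relevant subobjects — it prevents "hidden" subrepresentations of $M$ from spoiling the count, and lets one run the argument with subrepresentations of $T$ alone. (ii) For the converse, assume $\g(\dv S)\ge 0$ for all $S\subseteq T$; I would show a general $f$ yields a surjective $\Phi_T(f)$ by an induction on $\dim T$ (or on a maximal chain of subrepresentations), peeling off the maximal vertex $v$: the quotient $T/(\text{top at }v)$ and the simple $S_v$ both satisfy the inductive hypothesis, the short exact sequence $0\to T'\to T\to S_v\to 0$ (or its dual) gives a long exact sequence in $\Hom_J(M,-)$, and one checks the connecting/rank terms vanish generically using $\g(\dv S)\ge0$ applied to the sub $S=T'$ and to $S=T$. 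Maximality of $v$ ensures the filtration step is clean, i.e. that the relevant quotient is again of the allowed form so the induction closes.

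The step I expect to be the main obstacle is the converse, specifically verifying that a \emph{general} presentation $f$ (as opposed to some special one) achieves the surjectivity — i.e. that the naive dimension count is actually attained generically. This is where one must use that $f$ is general in $\PHom_J(\g)$ together with semicontinuity of rank, and possibly the structure of $J=J_n$ (its relations \eqref{eq:rel_Jn1}--\eqref{eq:rel_Jn2}) to exhibit even one presentation attaining the bound. For the specific $T$'s that will be used downstream — the representations $T_{0,j}$, $T_{i,0}$, $T_{i,n-i}$ of Lemma \ref{L:QP_Delta}, which are supported on straight paths with all-ones dimension vector — this is tractable: their subrepresentations are easy to enumerate (prefixes/suffixes of the path), so $\g(\dv S)\ge 0$ over all of them is exactly the system of straight-path inequalities defining $\mr{G}_n$ in Definition \ref{D:Gn}, and the genericity can be checked by an explicit block-triangular model for $\Phi_T(f)$. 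I would therefore first prove the lemma for general $T$ with a maximal vertex by the inductive scheme above, and remark that for the path representations the hypothesis "$\g(\dv S)\ge 0$ for all subrepresentations" is literally the straight-path condition, which is how this lemma feeds into the description of $G(\Delta_n,W_n)$ as the lattice points of $\mr{G}_n$.
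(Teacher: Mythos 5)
Your forward direction is correct and is exactly the paper's argument: a subrepresentation $S\subseteq T$ gives an inclusion $\Hom_J(M,S)\subseteq\Hom_J(M,T)$, and applying $\Hom_J(-,S)$ to the presentation forces $\g(\dv S)\geq 0$ by the dimension count. Note, however, that the maximal-vertex hypothesis plays no role in this direction; your remark that it is needed there to rule out ``hidden'' subrepresentations of $M$ misplaces where the hypothesis enters.

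The converse, which you yourself flag as the main obstacle, is where the proposal has a genuine gap, and the inductive scheme you sketch would fail. Peeling off the top at the maximal vertex uses the sequence $0\to T'\to T\to S_v\to 0$, but $S_v$ is a \emph{quotient} of $T$, not a subrepresentation, so the hypothesis $\g(\dv S)\geq 0$ for subrepresentations gives no control on $\g(\e_v)$; and $\Hom_J(M,S_v)=0$ for $M=\Coker(\g)$ is equivalent to $\g(v)\geq 0$ (if $\g(v)<0$ then $P_v$ occurs in $P([-\g]_+)$ and, since the reduced presentation lands in the radical, $S_v$ occurs in the top of $M$). Thus the filtration pieces need not satisfy your inductive hypothesis, and after $\Hom_J(M,T')=0$ the long exact sequence only gives an injection $\Hom_J(M,T)\hookrightarrow\Hom_J(M,S_v)$, which proves nothing. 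The remaining ``generic surjectivity'' claim is left unproved, with only a gesture toward block-triangular models. The paper's proof supplies the missing mechanism via stability: add $c=\g(\dv T)$ copies of $P_v$ to the target $P_0$ of a general presentation, obtaining weight $\g'$ with $\g'(\dv T)=0$ and $\g'(\dv S)=\g(\dv S)\geq 0$ for every proper subrepresentation $S\subsetneq T$ --- this is precisely where maximality of $v$ is used, since proper subrepresentations are not supported at $v$. King's criterion (Lemma \ref{L:King}) then shows $T$ is $\g'$-semi-stable, hence $\Hom_J(M',T)=0$ for the cokernel $M'$ of a general presentation $(f,g)$ of weight $\g'$; finally, since $f$ is then general in $\Hom_J(P_1,P_0)$ and any $\phi$ with $\phi f=0$ extends by zero to $(\phi,0)$ annihilating $(f,g)$, one concludes $\Hom_J(M,T)=0$. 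Without this (or an honest proof of your genericity claim), the converse direction does not go through.
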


\begin{proof} If $\Hom_J(M,T)=0$, then $\Hom_J(M,S)=0$, and thus $\g(\dv S)\geq 0$ for all subrepresentations $S$ of $T$.
Conversely, suppose that $\PHom_J(\g)=\Hom_J(P_1,P_0)$.
We add $c=\g(\dv T)$ copies of $P_v$'s to $P_0$ so that a general presentation $P_1\to P_0\oplus cP_v \to M' \to 0$ has weight $\g'=\g+ c \e_v$.
It satisfies that $\g'(\dv T)=0$ and $\g'(\dv S)=\g(\dv S)\geq 0$ for all subrepresentations $S \subsetneq T$.
By King's criterion (Lemma \ref{L:King}), we see that $T$ is $\g'$-semi-stable, and thus $\Hom_J(M',T)=0$.
Now a general presentation $P_1\xrightarrow{(f,g)} P_0\oplus cP_v$ must have $f$ general in $\Hom_J(P_1,P_0)$.
Hence, $\Hom_J(M',T)=0$ implies $\Hom_J(M,T)=0$.
\end{proof}

\begin{lemma} \label{L:TIequi} Let $M=\Coker(\g)$, then $\Hom_J(M,T_v)=0$ for each frozen $v$ if and only if $\Hom_J(M,I_v)=0$ for each frozen $v$.
\end{lemma}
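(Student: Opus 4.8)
The plan is to relate $\Hom_J(M,T_v)$ and $\Hom_J(M,I_v)$ through the injective presentations of the $T_v$ established in Lemma~\ref{L:QP_Delta}. Recall that for the relevant frozen boundary vertex $v=(0,j)$ (and its symmetric counterparts), we have a short exact sequence of the form
\begin{equation*}
0\to T_{0,j}\to I_{0,j}\xrightarrow{\ bc\ } I_{0,j-1}
\end{equation*}
for $2\le j\le n-2$, and the modified version $0\to T_{0,n-1}\to I_{0,n-1}\xrightarrow{(bc,ac)}I_{0,n-2}\oplus I_{1,n-1}$ at the ``corner''; for $j=1$ we have simply $T_{0,1}=I_{0,1}$. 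Applying $\Hom_J(M,-)$ to such a sequence gives a left-exact sequence $0\to\Hom_J(M,T_{0,j})\to\Hom_J(M,I_{0,j})\to\Hom_J(M,I_{0,j-1})$, which already shows $\Hom_J(M,I_v)=0$ for all frozen $v$ forces $\Hom_J(M,T_v)=0$ for all frozen $v$.

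For the converse, I would argue by induction on $j$ along each arm. The base case is $T_{0,1}=I_{0,1}$, so vanishing of $\Hom_J(M,T_{0,1})$ is literally vanishing of $\Hom_J(M,I_{0,1})$. For the inductive step, suppose $\Hom_J(M,T_v)=0$ for every frozen $v$ and that we have already shown $\Hom_J(M,I_{0,j-1})=0$ (and, near the corner, also $\Hom_J(M,I_{1,n-1})=0$, which is handled symmetrically by the other arm). Then the sequence above reads $0\to\Hom_J(M,T_{0,j})\to\Hom_J(M,I_{0,j})\to\Hom_J(M,I_{0,j-1})=0$, so $\Hom_J(M,I_{0,j})\cong\Hom_J(M,T_{0,j})=0$. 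Running this simultaneously up all three arms (the arms share only the central vertex, which is mutable, not frozen, so no conflict arises) yields $\Hom_J(M,I_v)=0$ for every frozen $v$. The symmetric presentations for $T_{i,0}$ and $T_{i,n-i}$ mentioned after Lemma~\ref{L:QP_Delta} let me repeat the same induction verbatim on the other two families of boundary vertices.

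The point that needs the most care is the interaction at the three corners of the triangle, where the presentation of $T_{0,n-1}$ involves $I_{0,n-2}\oplus I_{1,n-1}$ rather than a single injective: here the induction must be organized so that the vertex $(1,n-1)$ on the \emph{adjacent} arm is processed first (it sits one step in from the corner along that arm), so that its injective hom-vanishing is available when we reach the corner. Concretely I would order the frozen vertices by distance from the corners inward along each arm and induct in that order; since the corners are the three frozen vertices farthest from the shared central region, and each corner's presentation only references injectives at vertices strictly closer to the interior on the two incident arms, the induction is well-founded. Everything else is a routine diagram chase using left-exactness of $\Hom_J(M,-)$ together with the explicit maps $bc$ and $(bc,ac)$ from Lemma~\ref{L:QP_Delta}.
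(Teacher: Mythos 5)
Your argument is correct and is essentially the paper's own proof: the easy direction follows from left-exactness (the paper phrases it via subrepresentations of $T_v\subseteq I_v$), and the converse is the same induction along each boundary arm using the injective copresentations of Lemma \ref{L:QP_Delta}. The only point you over-engineer is the corner bookkeeping: no global ordering by ``distance from the corners'' is needed, since the extra injective $I_{1,n-1}$ in the presentation of $T_{0,n-1}$ is itself simple and equals $T_{1,n-1}$ (it is the base case of the adjacent arm, exactly as $T_{0,1}=I_{0,1}$), so its vanishing is immediate from the hypothesis and the three arm inductions run independently.
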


\begin{proof}  Since each subrepresentation of $T_v$ is also a subrepresentation of $I_v$, one direction is clear.
Conversely, let us assume that $\Hom_J(M,T_v)=0$ for each frozen $v$. We prove that $\Hom_J(M,I_{0,j})=0$ by induction on $k$.
For $k=1$, we have that $T_{0,1}=I_{0,1}$.
Now suppose that it is true for $k=j-1$, that is, $\Hom_J(M,T_{0,j-1})=0$.
By Lemma \ref{L:QP_Delta}, for $j<n-1$ $\Hom_J(M,I_{0,j})=0$ is equivalent to $\Hom_J(M,T_{0,j})=0$.
$\Hom_J(M,I_{0,n-1})=0$ is equivalent to $\Hom_J(M,T_{0,n-1})=0$ and $\Hom_J(M,T_{1,n-1})=0$.
We are done by symmetry and induction.
\end{proof}

\begin{theorem} \label{T:LP_Gn} The set of lattice points $\mr{G}_n\cap \mb{Z}^{\delta_n}$ is exactly $G(\Delta_n,W_n)$.
\end{theorem}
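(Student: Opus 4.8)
The plan is to prove the two inclusions $G(\Delta_n,W_n)\subseteq \mr{G}_n\cap\mb{Z}^{\delta_n}$ and $\mr{G}_n\cap\mb{Z}^{\delta_n}\subseteq G(\Delta_n,W_n)$ separately, in both cases translating the defining conditions of $\mr{G}_n$ into statements about vanishing of homomorphisms from $M=\Coker(\g)$ to the injective representations $I_v$ at the frozen vertices. The key reduction, provided by Lemma \ref{L:TIequi} together with Lemma \ref{L:QP_Delta}, is that $M$ is $\mu$-supported if and only if $\Hom_J(M,I_v)=0$ for every frozen $v$, which in turn (by Lemma \ref{L:TIequi}) is equivalent to $\Hom_J(M,T_v)=0$ for every frozen $v$. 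Since each $T_v$ has a maximal vertex (the remark after Lemma \ref{L:QP_Delta}), Lemma \ref{L:hom=0} then says $\Hom_J(M,T_v)=0$ precisely when $\g(\dv S)\geq 0$ for every subrepresentation $S\subseteq T_v$. So the whole theorem comes down to matching up the list of subrepresentations of the $T_v$'s with the list of inequalities defining $\mr{G}_n$: the nonnegativity $\g(v)\geq 0$ at frozen vertices, and the ``maximal straight path'' inequalities $\sum_{u\in p}\g(u)\geq 0$ from mutable vertices.

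First I would settle the claim that $M$ being $\mu$-supported is equivalent to $\Hom_J(M,I_v)=0$ for all frozen $v$. One direction is immediate: if $M$ is supported on a frozen vertex $v$ then, since $I_v$ is the injective envelope of $S_v$, there is a nonzero map $M\to I_v$. Conversely, if $\Hom_J(M,I_v)=0$ for all frozen $v$, then $M$ has no composition factor $S_v$ with $v$ frozen, hence $M$ is $\mu$-supported. Next, I would enumerate the subrepresentations of each $T_v$. By construction $T_{0,j}$ (and by symmetry $T_{i,0}$, $T_{i,n-i}$) is the indecomposable representation with dimension vector all $1$'s along a straight path, with the arrows along the path acting as identities; its subrepresentations are exactly the ``initial segments'' of that path read from the maximal vertex, i.e.\ the representations supported on $(i,j),(i,j-1),\dots,(i,j-k)$ and their analogues — precisely the straight subpaths. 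Therefore $\g(\dv S)\geq 0$ for all $S\subseteq T_v$, ranging over all frozen $v$, is exactly the assertion that $\sum$ of $\g$ over every straight subpath terminating at a boundary vertex is $\geq 0$. Taking the longest such path forces nonnegativity of all shorter initial segments that share the maximal endpoint? No — one must be careful: the inequalities for all initial segments are genuinely more than just the maximal-path inequality. I would observe that the inequality $\g(v)\geq 0$ at a frozen vertex $v$ is the length-one initial segment, and that combining these boundary inequalities with the full maximal-straight-path inequalities from mutable starting points recovers every initial-segment inequality (an initial segment from the maximal vertex is the maximal straight path minus a tail, and the tail sum is itself a sum of boundary-vertex values which are $\geq 0$ — here one uses that the tail of a straight path in $\Delta_n$ ends at the boundary). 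This bookkeeping, carried out for each of the three families of straight paths, is what yields the equivalence between the membership conditions of $\mr{G}_n$ and the vanishing conditions $\Hom_J(M,T_v)=0$, hence $M$ being $\mu$-supported.

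For the final packaging I would invoke Lemma \ref{L:homotopy} and Lemma \ref{L:delta_coh} so that a general presentation in $\PHom_J(\g)$ is $\g$-coherent with cokernel $\Coker(\g)$, making ``$\g\in G(\Delta_n,W_n)$'' literally the statement ``$\Coker(\g)$ is $\mu$-supported''; then the chain of equivalences above closes the proof. The step I expect to be the main obstacle is the combinatorial matching in the previous paragraph: verifying that the inequalities coming from \emph{all} initial segments of the straight paths (which is what $\Hom_J(M,T_v)=0$ really produces, via all subrepresentations $S\subseteq T_v$) are exactly captured by the two clauses in Definition \ref{D:Gn} — nonnegativity at frozen vertices plus the sum over \emph{maximal} straight paths from \emph{mutable} vertices — and in particular checking that no redundant or missing inequalities arise at the corners where two arms of the hive meet, and that the $n-1$ case of \eqref{eq:hive_seq} (where $I_{0,n-1}$ maps to a sum of two injectives) does not introduce extra constraints beyond those already accounted for. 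Everything else is a direct application of the lemmas already proved.
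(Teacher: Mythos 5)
Your overall route is exactly the paper's: reduce $\mu$-supportedness of $\Coker(\g)$ to $\Hom_J(\Coker(\g),I_v)=0$ for frozen $v$, pass to the $T_v$ via Lemma \ref{L:TIequi}, apply Lemma \ref{L:hom=0}, and then match the resulting inequalities $\g(\dv S)\ge 0$, $S\subseteq T_v$, with the defining inequalities of $\mr{G}_n$. The skeleton is fine (your explicit argument that $\mu$-support is detected by the injectives at frozen vertices is a correct filling-in of a step the paper leaves implicit), but the one substantive verification --- the combinatorial matching --- is the place where your sketch goes wrong. The subrepresentations of $T_v$ are \emph{not} the ``initial segments read from the maximal vertex'': they are the segments closed under the arrows, i.e.\ the terminal segments containing the socle $S_v$ (the maximal vertex lies only in the whole module). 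Consequently the list of inequalities $\g(\dv S)\ge 0$ splits cleanly into three kinds: (i) $S=S_v$, giving the frozen-vertex inequalities $\g(v)\ge 0$; (ii) $S$ a proper segment longer than the socle, whose starting vertex is mutable --- and such a segment \emph{is literally} the maximal straight path from that mutable vertex, so no derivation at all is needed here; (iii) $S=T_v$, giving the extra inequalities $\g(\dv T_v)\ge 0$, which are the only conditions not among the clauses of Definition \ref{D:Gn}.

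Your proposed bookkeeping for the leftover step is incorrect as stated: you claim a segment inequality is recovered as ``the maximal straight path minus a tail, the tail being a sum of boundary-vertex values''. The tail of a straight path is not a sum of boundary-vertex values (only its last vertex is on the boundary), and in any case subtracting a nonnegative quantity from a nonnegative total does not give the needed inequality --- the logic runs in the wrong direction. What is actually required is the opposite, and easier, redundancy: each extra inequality (iii) follows from the $\mr{G}_n$ clauses by \emph{addition}, since $\g(\dv T_v)$ is the value at the top frozen vertex of the supporting line (nonnegative by clause (i) of Definition \ref{D:Gn}) plus the sum over the maximal straight path from the adjacent mutable vertex (nonnegative by the other clause); for the three corner vertices $(0,1),(n-1,0),(1,n-1)$ the module $T_v$ is simple, so (iii) is already a frozen-vertex inequality. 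This is precisely the two-sentence observation with which the paper closes its proof, so once you correct the identification of the subrepresentations and replace your subtraction argument by this addition, your proof coincides with the paper's.
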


\begin{proof} Due to Lemma \ref{L:hom=0} and \ref{L:TIequi}, it suffices to show that $\mr{G}_n$ is defined by $\g(\dv S)\geq 0$ for all subrepresentations $S$ of $T_v$ and all $T_v$.
We notice that these defining conditions are the union of the defining conditions of $\mr{G}_n$ and $\g(\dv T_v)\geq 0$.
But the latter conditions for $v\neq (0,1),(n-1,0),(1,n-1)$ are clearly redundant.
\end{proof}

\begin{remark} For any IQP $(\Delta,W)$, we can defined a cone $\mr{G}_{I}(\Delta,W)\subset \mb{R}^{\Delta_0}$ by $\g(\dv S)\geq 0$ for all subrepresentations $S\subseteq I_v$ and all frozen vertices $v$.
It is not hard to see that the cone $\mr{G}_n = \mr{G}_{I}(\Delta_n,W_n)$.
It is also clear that $G(\Delta,W)$ is always contained in $\mr{G}_{I}(\Delta,W) \cap \mb{Z}^{\Delta_0}$. We are curious about the next problem.
\end{remark}

\begin{problem} For what kind of IQP, we have the equality
$$G(\Delta,W)=\mr{G}_{I}(\Delta,W) \cap \mb{Z}^{\Delta_0}?$$
\end{problem}

For a fixed weight vector $\sigma$, we consider the convex polytope $\mr{G}_n(\sigma)$ obtained from $\mr{G}_n$ by adding the condition
$\g \bs{\sigma}_n =\sigma$. We will show in next section that there is a volume-preserving linear transformation mapping $\mr{G}_n(\sigma)$ onto a hive polytope of Knutson-Tao.

\section{Littlewood-Richardson Triangles} \label{S:LR}

\begin{definition}[\cite{PVa}]
A {\em Littlewood-Richardson triangle of size $n$} is an element $\h=\big(\h(i,j)\big)\in \mb{R}^{\delta_n}$ that satisfies the following conditions:
\begin{align}
\label{eq:Tineq1} & \h(i,j)\ge 0, & & \text{for  $ij\neq 0$,} \\
\label{eq:Tineq2} & \sum_{k=0}^{i-j} \h(k,j) \ge \sum_{k=0}^{i-j} \h(k,j+1),
& & \text{for  $1\le j\le i \leq n-1$},\\
\label{eq:Tineq3} &\sum_{k=0}^{j-1} \h(i-k,k) \ge \sum_{k=0}^{j} \h(i+1-k,k), &
 & \text{for  $1\le j\le i \leq n-1$}.
\end{align}
\end{definition}

\noindent
We denote by $\lr n$ the cone of all Littlewood-Richardson triangles in $\mb{R}^{\delta_n}$. 
To each $\h=\big(\h(i,j)\big) \in \lr n$ we associate the following numbers:
\begin{align*}
&\mu_i = \h(i,0), & & \text{for $1\le i\le n-1$,}\\
&\nu_j = \sum_{k=1}^{n-j} \h(k,j), & & \text{for $1\le j\le n-1$,}\\
&\lambda_i = \sum_{k=0}^i \h(i-k,k), & & \text{for $1\le i\le n$.}
\end{align*}
Then it follows from \eqref{eq:Tineq1}--\eqref{eq:Tineq3} that $\lambda,\mu$ and $\nu$ are partitions with $|\lambda| = |\mu| + |\nu|$.
We call $\tripar$ the {\em type} of $\h$, and denote by  $\lr n\tripar$
the set of all LR triangles of type $\tripar$;
this is a convex polytope.
It is proved in \cite{PVa} that there is a volume-preserving linear transformation mapping $\lr n \tripar$ onto a hive polytope of Knutson-Tao \cite{KT}. In particular,

\begin{lemma} \cite[Corollary 4.2]{PVa} \label{L:LP_LR}
$\lr n \tripar$ has $\lrcoef$ integral points.
\end{lemma}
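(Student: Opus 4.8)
The plan is to reduce the count to the defining property of the Knutson--Tao hive model \cite{KT}, via the comparison map quoted in the sentence just before the statement. Recall that, by \cite{KT}, $\lrcoef$ is the number of integral hives of size $n$ whose boundary is prescribed by $\tripar$; these hives are the lattice points of a rational polytope $\mr{H}_n\tripar$ (the real hives with that boundary, cut out by the rhombus inequalities). So it suffices to produce an affine isomorphism $\lr n\tripar \xrightarrow{\ \sim\ } \mr{H}_n\tripar$ that identifies the two lattices.

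First I would write down the explicit linear map $\Phi$ of \cite{PVa}, which sends an LR triangle $\h=(\h(i,j))\in\mb{R}^{\delta_n}$ to the node-labeling of the size-$n$ triangular array given by suitable partial sums of the entries of $\h$ along staircase regions, together with a fixed additive normalization of the boundary values. The verification then has two parts. \emph{(i)} A direct comparison of inequalities shows that the rhombus conditions defining a hive --- the three rhombus families, including the degenerate boundary ones --- translate under $\Phi$ exactly into the positivity \eqref{eq:Tineq1} and the two chains \eqref{eq:Tineq2}, \eqref{eq:Tineq3}, while the boundary of $\Phi(\h)$ reads off precisely the partitions $\lambda(\h),\mu(\h),\nu(\h)$ associated to $\h$; hence $\Phi$ restricts to an isomorphism of polytopes $\lr n\tripar\to\mr{H}_n\tripar$. \emph{(ii)} Ordering the nodes compatibly with the order in which the staircase sums are formed exhibits the matrix of $\Phi$ as integral and triangular with diagonal entries $\pm1$, hence unimodular, so $\Phi$ carries $\mb{Z}^{\delta_n}$ bijectively onto the hive lattice and matches integral LR triangles with integral hives. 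Combining (i), (ii) and the Knutson--Tao count gives $\#\big(\lr n\tripar\cap\mb{Z}^{\delta_n}\big)=\lrcoef$.

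The main obstacle is the bookkeeping in part (i): one must line up three distinct rhombus families of the hive against only the two displayed inequality chains plus positivity, being careful with index ranges, and check that the degenerate boundary rhombi reproduce the monotonicity of $\lambda,\mu,\nu$ as well as the relation $|\lambda|=|\mu|+|\nu|$. This is entirely mechanical but easy to get wrong; once the dictionary between the LR-triangle inequalities and the hive inequalities is nailed down, the unimodularity in (ii) and the reference to \cite{KT} finish the proof immediately.
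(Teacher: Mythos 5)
Your proposal is correct and takes essentially the same route as the paper, which does not prove this lemma itself but quotes it from Pak--Vallejo: their argument is precisely the partial-sum linear identification of $\lr n\tripar$ with a Knutson--Tao hive polytope that you sketch, followed by the hive count of $\lrcoef$. Your explicit insistence on integrality and unimodularity of the map (rather than mere volume-preservation, which is all the paper's preceding sentence records) is exactly the strengthening needed to transfer lattice points, so the sketch is sound.
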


For any weight $\sigma$, we get a triple of partition $(\lambda(\sigma),\mu(\sigma),\nu(\sigma))$ via \eqref{eq:wt2par1}--\eqref{eq:wt2par3}.
We denote the polytope $\lr n(\lambda(\sigma),\mu(\sigma),\nu(\sigma))$ by $\lr n (\sigma)$.
It is clear that $\lr n (1^{i+j},1^i,1^j)$ has a unique integral point $\h_{i,j}$ satisfying
$$\begin{cases} \h_{i,j}(k,0)=\h_{i,j}(i,l)=1  &  \text{if } 1\leq k \leq i, 1\leq l \leq j, \\
\h_{ij}(k,l)=0 & \text{otherwise}.
\end{cases}$$


\begin{theorem} \label{T:VPiso} There is a volume preserving linear isomorphism $\mb{R}^{\delta_n} \to \mb{R}^{\delta_n}$ mapping the polytope $\mr{G}_n(\sigma)$ onto the polytope $\lr n (\sigma)$. In particular, $\mr{G}_n(\sigma)$ has $c_{\mu(\sigma),\nu(\sigma)}^{\lambda(\sigma)}$ integral points.
\end{theorem}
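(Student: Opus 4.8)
The plan is to write down an explicit linear map $\Phi:\mb{R}^{\delta_n}\to\mb{R}^{\delta_n}$ and verify three things: that it is volume-preserving (unimodular as an integer matrix), that it sends the inequalities cutting out $\mr{G}_n$ to the inequalities cutting out $\lr n$, and that it intertwines the affine slice condition $\g\bs{\sigma}_n=\sigma$ with the type condition $(\lambda(\sigma),\mu(\sigma),\nu(\sigma))$. The natural candidate is a ``partial summation'' map along the straight paths of Definition \ref{D:Gn}: since the defining functionals of $\mr{G}_n$ are the coordinates at frozen vertices together with the total sums $\sum_{u\in p}\g(u)$ over maximal straight paths, and the defining functionals of $\lr n$ in \eqref{eq:Tineq1}--\eqref{eq:Tineq3} are single coordinates $\h(i,j)$ for $ij\neq 0$ together with the differences of partial sums along rows/diagonals, one expects $\Phi$ to be an upper-triangular substitution expressing each $\h(i,j)$ as an alternating sum of the $\g(u)$ lying on an initial segment of an appropriate straight path. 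Concretely I would try $\h(i,j)=\sum_{u\in p_{\le(i,j)}}\g(u)$ for a suitable choice of straight path through $(i,j)$ (using the NE/SE/W directions according to whether $(i,j)$ is interior, on the bottom arm, or on a slanted arm), so that the telescoping $\sum_{k}\h(k,j)-\sum_k\h(k,j+1)$ collapses to a single $\g$-value at a mutable vertex and hence is exactly one of the maximal-straight-path inequalities (or a boundary coordinate).

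First I would fix the indexing dictionary once and for all: match $\delta_n=\{(i,j): i,j\ge 0,\ 1\le i+j\le n\}\setminus\{(0,0),(0,n),(n,0)\}$ as used for $\Delta_n$ with the index set in the Littlewood--Richardson triangle, identify which vertices are frozen, and record, for each mutable vertex $u$, the three maximal straight paths through it and their total-sum functionals. Second, I would define $\Phi$ by the partial-summation formulas above and compute its matrix: because straight paths are nested, the matrix is triangular with $\pm 1$ on the diagonal in the $(i,j)$-degree-lexicographic order, so $\det\Phi=\pm1$ and $\Phi$ is volume-preserving and restricts to a bijection on lattice points. Third, I would push the inequalities through: for each maximal straight path $p$ from a mutable vertex, verify that $\sum_{u\in p}\g(u)$ becomes, under $\Phi^{-1}$, exactly a difference $\sum_k\h(k,j)-\sum_k\h(k,j+1)$ or $\sum_k\h(i-k,k)-\sum_k\h(i+1-k,k)$ from \eqref{eq:Tineq2}--\eqref{eq:Tineq3}, and check that for frozen $v$ the coordinate $\g(v)\ge0$ becomes the positivity $\h(i,j)\ge0$ for some $ij\neq 0$ (and conversely, the three ``missing'' positivity constraints in $\lr n$ corresponding to the deleted corners are automatically implied, matching the remark after Theorem \ref{T:LP_Gn} that the constraints $\g(\dv T_v)\ge0$ at the non-extremal boundary vertices are redundant). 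Finally, I would check the slice: compute $\g\bs{\sigma}_n$ in the $\h$-coordinates using the weight configuration $(i,j)\mapsto\f_{i,j}$ from Lemma \ref{L:WC} together with Lemma \ref{L:level1_par} ($\f_{i,j}\leftrightarrow(1^{i+j},1^i,1^j)$), and confirm it matches the formulas defining $\mu_i,\nu_j,\lambda_i$ in terms of $\h$; since $\Phi$ is linear it carries the affine subspace $\{\g\bs{\sigma}_n=\sigma\}$ onto $\{\text{type}=(\lambda(\sigma),\mu(\sigma),\nu(\sigma))\}$, and the integral-point count then follows from Lemma \ref{L:LP_LR}.

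The main obstacle I anticipate is purely combinatorial bookkeeping rather than any conceptual difficulty: getting the three families of straight paths (NE along rows, SE down columns, W along anti-diagonals) to align correctly with the three families of LR-triangle inequalities, and in particular handling the boundary/arm vertices and the three deleted corners so that the edge cases of the telescoping sums come out right. One has to be careful that the map chosen is simultaneously compatible with all three directions — a choice of path per vertex that telescopes \eqref{eq:Tineq2} correctly must not break \eqref{eq:Tineq3} — which may force a case split on the region of $\delta_n$ in which $(i,j)$ lies. Once the dictionary is pinned down, each individual verification is a one-line telescoping computation, and unimodularity is immediate from triangularity, so the proof should be short modulo the setup. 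Alternatively, if writing $\Phi$ directly is awkward, one could instead compose the known volume-preserving isomorphism of \cite{PVa} between $\lr n\tripar$ and a Knutson--Tao hive polytope with a transparent change of coordinates, but I expect the direct partial-summation map to be cleanest and to make the lattice-point statement (and the comparison of defining inequalities with Theorem \ref{T:LP_Gn}) completely explicit.
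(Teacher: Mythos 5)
Your proposal is correct and takes essentially the same route as the paper: the paper's map is exactly the triangular partial-summation change of coordinates you describe (equivalently defined by $\e_{i,j}\mapsto \h_{i,j}$, the unique lattice point of $\lr n(\f_{i,j})$, so that the type condition matches $\g\bs{\sigma}_n=\sigma$ by Lemma \ref{L:level1_par}), with unimodularity from triangularity, the three families of LR-triangle inequalities matched to the NE/SE/W straight-path and frozen-vertex inequalities by telescoping, and the lattice-point count from Lemma \ref{L:LP_LR}. The only bookkeeping difference is that the $j=0$ coordinates are given by a sum over a whole region rather than a single path segment, which falls under the boundary case analysis you already anticipated.
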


\begin{proof} The definition of the isomorphism is obvious.
Let $\e_{i,j}$ be the standard basis of $\mb{R}^{\delta_n}$, and $\f_{i,j}$ be the level-1 weight vector defined in Section \ref{S:Tn}.
By Lemma \ref{L:level1_par} and the remark above, there is a unique integral point $\h_{i,j}\in \lr n(\f_{i,j})\subset \mb{R}^{\delta_n}$.
Then the assignment $\e_{i,j}\mapsto \h_{i,j}$ induces a linear map $\varphi:\mb{R}^{\delta_n}\to \mb{R}^{\delta_n}$.
We need to show that \begin{enumerate}
\item $\varphi$ is volume preserving.
\item $\varphi$ pulls the supporting hyperplanes of $\lr n(\sigma)$ back to those of $\mr{G}_n(\sigma)$.
\end{enumerate}
We order the standard basis of $\mb{R}^{\delta_n}$ according to the lexicographic order of the subindices, that is,
${\sf E} = \{\e_{0,1},\e_{0,2},\dots,\e_{1,0},\e_{1,1},\dots,\e_{n-1,0},\e_{n-1,1} \}$.
The matrix of $\varphi$ with respect to ${\sf E}$ is upper triangular with ones on the main diagonal.
So it has determinant one, and thus volume preserving.

We write the definition of $\varphi$ in coordinates:
$$\varphi(\g)=\h, \quad \text{where } \h(i,j)=
\begin{cases} \sum_{l\geq j} \g(i,l) & \text{if $j\neq 0$}, \\ \sum_{\substack{k\geq j}, l}  \g(k,l) & \text{if $j=0$}.
\end{cases}$$
It is easy to verify case by case that
\eqref{eq:Tineq1} matches with $\sum_{(i,j)\in p} \g(i,j)\geq 0$ for the straight paths $p$ of type (NE) and $\g(i,j)\geq 0$ for $j=0$;
\eqref{eq:Tineq2} matches with the type (SE) and $\g(i,j)\geq 0$ for $i+j=n$;
\eqref{eq:Tineq3} matches with the type (W) and $\g(i,j)\geq 0$ for $i=0$.
Finally, by our construction the type of $\h$ is $\g\bs{\sigma}_n=\sigma$.
\end{proof}

\section{Cluster Structure in $\SI_\bn(T_n)$} \label{S:CS}
\subsection{Algebraic Independence}
In this subsection, we show that the set of level-1 semi-invariants
$$\mc{S}_n:=\{s(f_{ij})\in \SI_\bn(T_{n})\mid (i,j)\in \delta_{n}\}$$
are algebraically independent. The proof is a little technical, so we suggest that readers skip this part for the first-time reading.

We first recall a {\em slice theorem} in \cite{S1}.
Let $Q$ be any finite quiver without oriented cycles.
For a representation $M$ of $Q$,
the right {\em orthogonal category} $M^\perp$ is the abelian subcategory $\{M\in\Rep(Q)\mid M\perp N\}$.
A representation $E$ is called {\em exceptional} if $\Hom_Q(E,E)=k$ and $\Ext_Q(E,E)=0$, so the dimension vector of $E$ corresponds to a real Schur root $\epsilon$.

Schofield showed that the category $E^\perp$ is equivalent to the category $\Rep(Q_E)$ for another quiver $Q_E$ with one vertex less than $Q$. We compose this equivalence with the embedding $E^\perp \hookrightarrow\Rep(Q)$, and obtain a functor $\iota_E:\Rep(Q_E)\hookrightarrow\Rep(Q)$.
It induces a linear inclusion of $K_0(\Rep(Q_E))$ into $K_0(\Rep(Q))$.
If $E\perp \beta$, which means that $E$ is right orthogonal to a general representation in $\Rep_\beta(Q)$,
then we denote by $\beta_\epsilon$ the dimension vector of $Q_E$ such that $\beta$ is $\beta_\epsilon$ under the inclusion.

\begin{theorem} \label{T:homofibre}\cite[Theorem 3.2]{S1} If $E\perp \beta$, then $\Rep_\beta(E^\perp):=\Rep_\beta(Q)\cap E^\perp$ is isomorphic to the homogeneous fibre space  $\GL_\beta\times_{\GL_{\beta_\epsilon}}\Rep_{\beta_\epsilon}(Q_E)$.
\end{theorem}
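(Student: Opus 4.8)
The plan is to promote Schofield's abstract equivalence $\iota_E\colon\Rep(Q_E)\xrightarrow{\sim}E^\perp$ to an isomorphism of varieties. Since $E\perp\beta$, the Euler form satisfies $\innerprod{\epsilon,\beta}_Q=0$, so for $N\in\Rep_\beta(Q)$ one has $\Hom_Q(E,N)=0\iff\Ext_Q(E,N)=0$; hence $\Rep_\beta(Q)\cap E^\perp=\{N\mid\Hom_Q(E,N)=0\}$ is a dense open, and therefore smooth and irreducible, subset of the affine space $\Rep_\beta(Q)$. First I would realize $\iota_E$ concretely: set $T:=\iota_E(kQ_E)\in\Rep(Q)$, a projective generator of $E^\perp$ with $\End_Q(T)\cong kQ_E$ (as $\iota_E$ is fully faithful), so by the Eilenberg--Watts theorem $\iota_E\cong T\otimes_{kQ_E}(-)$. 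Because $E^\perp$ is an exact abelian subcategory of $\Rep(Q)$, the functor $\iota_E$ is exact; consequently $T$ is flat, hence projective, as a right $kQ_E$-module, and $\dv\iota_E(M)$ depends on $M$ only through $\dv M$ --- it equals $\beta$ exactly when $\dv M=\beta_\epsilon$.

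Next I would build the map. Applying $T\otimes_{kQ_E}(-)$ to the tautological family over the affine space $S:=\Rep_{\beta_\epsilon}(Q_E)$ yields a family of representations of $Q$ whose underlying $\mc{O}_S$-module is coherent of constant rank $\beta$ (by exactness of $T\otimes_{kQ_E}(-)$ and additivity of dimension), hence locally free, hence free by the Quillen--Suslin theorem. Trivializing it gives an algebraic morphism $\rho\colon S\to\Rep_\beta(Q)\cap E^\perp$ with $\rho(M)\cong\iota_E(M)$. Functoriality of $T\otimes_{kQ_E}(-)$ applied to the base-change isomorphisms $M\xrightarrow{\sim}h\cdot M$ defines a group homomorphism $\iota\colon\GL_{\beta_\epsilon}\to\GL_\beta$, injective because $T$ generates, with $\rho(h\cdot M)=\iota(h)\cdot\rho(M)$. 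Thus $\Psi\colon\GL_\beta\times S\to\Rep_\beta(Q)\cap E^\perp$, $(g,M)\mapsto g\cdot\rho(M)$, is $\GL_\beta$-equivariant and invariant under the $\GL_{\beta_\epsilon}$-action $(g,M)\mapsto(g\,\iota(h)^{-1},h\cdot M)$, so it descends to a morphism $\overline\Psi\colon\GL_\beta\times_{\GL_{\beta_\epsilon}}\Rep_{\beta_\epsilon}(Q_E)\to\Rep_\beta(Q)\cap E^\perp$.

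Then I would verify $\overline\Psi$ is bijective. For surjectivity, any $N$ in the target is $\cong\rho(M)$ with $M=\iota_E^{-1}(N)$ made concrete by a choice of basis, so $N=g\cdot\rho(M)=\overline\Psi[g,M]$. For injectivity, if $g\cdot\rho(M)=g'\cdot\rho(M')$ then $\iota_E(M)\cong\iota_E(M')$, hence $M'=h\cdot M$ for some $h\in\GL_{\beta_\epsilon}$, and substituting shows $g^{-1}g'\iota(h)\in\Aut_Q(\rho(M))$; since $\iota_E$ is fully faithful this group equals $\iota(\Aut_{Q_E}(M))$, which forces $[g,M]=[g',M']$. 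Finally both varieties are smooth and irreducible --- the target as a dense open in affine space, the source as an associated bundle over $\GL_\beta/\iota(\GL_{\beta_\epsilon})$, a base which is smooth because $\iota(\GL_{\beta_\epsilon})\cong\GL_{\beta_\epsilon}$ is a special group, so the quotient map is Zariski-locally trivial --- and a bijective morphism of smooth varieties over a field of characteristic zero is an isomorphism.

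The main obstacle, I expect, is twofold. First, Schofield's equivalence is stated only at the level of abstract categories, so one must pin it down as a bimodule tensor functor $T\otimes_{kQ_E}(-)$ in order for it to interact with the base-change group actions; this is precisely what manufactures the embedding $\iota\colon\GL_{\beta_\epsilon}\hookrightarrow\GL_\beta$ appearing in the homogeneous fibre space, and one must check carefully that $\iota$ is a well-defined injective homomorphism and that $\rho$ is genuinely equivariant for it. Second, one must upgrade what is morally a quotient-stack statement to an honest isomorphism of varieties, and it is here that local freeness of the underlying sheaf (Quillen--Suslin over the affine space $\Rep_{\beta_\epsilon}(Q_E)$), the ``specialness'' of $\GL_{\beta_\epsilon}$, and the characteristic-zero hypothesis are all needed.
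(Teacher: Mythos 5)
The paper offers no proof of this statement; it is quoted verbatim from Schofield \cite[Theorem 3.2]{S1}, so your proposal can only be judged on its own terms. Your overall strategy is sound, and most steps are fine: since $\innerprod{\epsilon,\beta}_Q=0$ the locus $\Rep_\beta(Q)\cap E^\perp$ is indeed a dense open (smooth, irreducible) subset of the affine space; $\iota_E$ is exact and is a tensor functor $T\otimes_{kQ_E}(-)$ with $T$ projective over $kQ_E$; the descent of $\Psi$, the bijectivity argument, and the final step (a bijective morphism onto a smooth --- hence normal --- variety in characteristic zero is an isomorphism, using that $\GL_{\beta_\epsilon}$ is special to see the source is smooth and irreducible) are all correct as stated.

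The genuine gap is where you define $\iota$ and claim $\rho(h\cdot M)=\iota(h)\cdot\rho(M)$. A trivialization produced by Quillen--Suslin is just \emph{some} isomorphism of the family with the trivial bundle and carries no naturality in $M$. With an arbitrary trivialization, the matrix comparing $\rho(M)$ with $\rho(h\cdot M)$ depends on both $h$ and the point $M$ --- it is a cocycle $\GL_{\beta_\epsilon}\times\Rep_{\beta_\epsilon}(Q_E)\to\GL_\beta$, not a fixed homomorphism: changing the trivialization by a gauge $g\colon\Rep_{\beta_\epsilon}(Q_E)\to\GL_\beta$ replaces it by $g(h\cdot M)\,\iota(h)\,g(M)^{-1}$. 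The same defect undermines the identification $\Aut_Q(\rho(M))=\iota(\Aut_{Q_E}(M))$ in your injectivity step, and without a fixed $\iota$ the homogeneous fibre space in the statement is not even pinned down. The missing idea is to make the trivialization functorial in $M$: since each $e_vT$ is a projective right $kQ_E$-module, fix once and for all isomorphisms $e_vT\cong\bigoplus_w(e_wkQ_E)^{m_{v,w}}$; then $e_v\bigl(T\otimes_{kQ_E}M\bigr)\cong\bigoplus_w M(w)^{m_{v,w}}$ canonically and naturally in $M$, with $\sum_w m_{v,w}\beta_\epsilon(w)=\beta(v)$. In these coordinates $\rho$ is visibly algebraic, $\iota(h)$ is the block-diagonal matrix assembled from the $h(w)$ (an injective homomorphism independent of $M$, since every $e_wkQ_E$ occurs in $T$), the equivariance and the equality $\Aut_Q(\rho(M))=\iota(\Aut_{Q_E}(M))$ hold on the nose, and Quillen--Suslin becomes unnecessary. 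With that repair the rest of your argument goes through.
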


\noindent It follows (\cite[Corollary 6.11]{Fm}, see also \cite{DW2}) that if $E\perp \beta$, then we have an isomorphism
$\SI_{\beta_\epsilon}^{^\sigma(\alpha_\epsilon)}(Q_E)\cong \SI_\beta^{^\sigma\!\alpha}(Q).$
So we have an embedding $\iota: \SI_{\beta_\epsilon}(Q_E)\hookrightarrow \SI_{\beta}(Q)$.


\begin{lemma} \label{L:localSI}
Let $s:=s(E)$ and $\SI_\beta(Q)_{s}$ be the localization of $\SI_\beta(Q)$ at $s$, then there is an embedding mapping $x$ to $s$
$$\SI_{\beta_\epsilon}(Q_E)[x,x^{-1}] \hookrightarrow \SI_\beta(Q)_{s}.$$
In particular, if $\{s_1,\dots,s_n\}\subset \SI_{\beta_\epsilon}(Q_E)$ is algebraically independent, then so is
$\{s,\iota(s_1),\dots,\iota(s_n)\}$.
\end{lemma}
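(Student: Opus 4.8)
\emph{The plan is} to describe the localized ring $\SI_\beta(Q)_s$ geometrically, as the ring of $\SL_\beta$-invariant functions on the Schofield slice cut out by $s$, and to read both the Laurent structure and the injectivity off that description.

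\emph{Step 1 (the non-vanishing locus).} First I would identify $D(s):=\{N\in\Rep_\beta(Q)\mid s(N)\neq 0\}$. Since $s=s(E)$ and $s(f,N)\neq 0$ iff $\Hom_Q(E,N)=0$ (as recalled after \eqref{eq:canseq}, with the resolved module being $E$), and since $E\perp\beta$ forces $\innerprod{\epsilon,\beta}_Q=0$, the Euler form gives $\dim\Hom_Q(E,N)=\dim\Ext_Q(E,N)$ for \emph{every} $N\in\Rep_\beta(Q)$. Hence $\Hom_Q(E,N)=0\iff E\perp N$, and $D(s)=\Rep_\beta(E^\perp)$ in the notation of Theorem \ref{T:homofibre}.

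\emph{Step 2 (localization and the slice).} Because $s$ is an $\SL_\beta$-invariant lying in the domain $k[\Rep_\beta(Q)]$, localization commutes with taking $\SL_\beta$-invariants, so $\SI_\beta(Q)_s=k[\Rep_\beta(Q)]_s^{\SL_\beta}=k[D(s)]^{\SL_\beta}$. Theorem \ref{T:homofibre} identifies $D(s)=\Rep_\beta(E^\perp)$ with $\GL_\beta\times_{\GL_{\beta_\epsilon}}\Rep_{\beta_\epsilon}(Q_E)$, $\GL_\beta$-equivariantly for the left action on the $\GL_\beta$-factor. Pushing the $\SL_\beta$-action onto $\GL_\beta$, noting it commutes with the right $\GL_{\beta_\epsilon}$-action, and using $\SL_\beta\backslash\GL_\beta\cong(k^*)^{Q_0}=:T$ (via the determinants), I would obtain
\[
\SI_\beta(Q)_s\;\cong\;\bigl(k[T]\otimes k[\Rep_{\beta_\epsilon}(Q_E)]\bigr)^{\GL_{\beta_\epsilon}}\;=\;\bigoplus_{\chi}\SI_{\beta_\epsilon}(Q_E)_{-\rho^{*}\chi}\,t^{\chi},
\]
the sum over characters $\chi$ of $T$, where $\rho\colon\GL_{\beta_\epsilon}\hookrightarrow\GL_\beta\twoheadrightarrow T$ and $t^{\chi}$ is the corresponding monomial in $k[T]$. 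The map $\rho^{*}$ admits a section, namely the weight map $\iota_{*}$ of the embedding $\iota$ from the paragraph preceding this lemma: the Schofield equivalence $E^\perp\simeq\Rep(Q_E)$ is exact, so $\iota_{*}\tau$ restricts to $\tau$ along $K_0(Q_E)\hookrightarrow K_0(Q)$, which says $\rho^{*}\iota_{*}=\op{id}$; in particular $\iota_{*}$ is injective and $\ker\rho^{*}\cap\op{im}\iota_{*}=0$. Since $\#(Q_E)_0=\#Q_0-1$, $\ker\rho^{*}$ has rank one. Now $s$, being a nowhere-zero $\SL_\beta$-invariant on the slice, is a unit in $k[T]\otimes k[\Rep_{\beta_\epsilon}(Q_E)]$; as $\Rep_{\beta_\epsilon}(Q_E)$ is a vector space, the units of this ring are scalars times monomials $t^{\chi}$, and $\GL_{\beta_\epsilon}$-invariance forces $\chi\in\ker\rho^{*}$. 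Hence $s$ equals a nonzero scalar times a single monomial $t^{\chi_s}$ with $\chi_s\in\ker\rho^{*}\setminus\{0\}$. It follows that $x\mapsto s$, $a\mapsto\iota(a)$ sends $\sum_m a_m x^m$ to $\sum_{m,\tau}(\text{scalar})\,\iota(a_{m,\tau})\,t^{\iota_{*}\tau+m\chi_s}$, where $a_{m,\tau}$ are the $Q_E$-weight components of $a_m$; applying $\rho^{*}$ shows $\chi_s\notin\op{im}\iota_{*}\otimes\mb{Q}$, so distinct $(m,\tau)$ give distinct exponents $\iota_{*}\tau+m\chi_s$, and the resulting ring homomorphism $\SI_{\beta_\epsilon}(Q_E)[x,x^{-1}]\to\SI_\beta(Q)_s$ is injective — the desired embedding.

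\emph{Step 3 (algebraic independence) and the main obstacle.} Given $s_1,\dots,s_n\in\SI_{\beta_\epsilon}(Q_E)$ algebraically independent over $k$, the set $\{x,s_1,\dots,s_n\}$ is algebraically independent over $k$ in $\SI_{\beta_\epsilon}(Q_E)[x,x^{-1}]$, since $x$ is transcendental over $\op{QF}(\SI_{\beta_\epsilon}(Q_E))\supseteq k(s_1,\dots,s_n)$; pushing forward along the injective homomorphism of Step 2 (a polynomial relation among $s,\iota(s_1),\dots,\iota(s_n)$ pulls back to one among $x,s_1,\dots,s_n$) yields that $\{s,\iota(s_1),\dots,\iota(s_n)\}$ is algebraically independent over $k$, a fortiori inside $\SI_\beta(Q)$. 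The technical heart is Step 2 — making Schofield's embedding $\GL_{\beta_\epsilon}\hookrightarrow\GL_\beta$ concrete enough to see that $s$ lies in the rank-one ``extra'' grading direction $\ker\rho^{*}$ and does not interact with $\op{im}\iota_{*}$. If one only wants the embedding, the geometry can be bypassed: clear denominators in $\sum_m\iota(a_m)s^m=0$ by a power of $s$ (legitimate since $\SI_\beta(Q)\hookrightarrow\SI_\beta(Q)_s$), split each $a_m$ into weight components $a_{m,\tau}$, observe $\iota(a_{m,\tau})s^m$ is homogeneous of weight $\iota_{*}\tau-m\innerprod{\epsilon,-}_Q$ with these weights distinct for distinct $(m,\tau)$ (as $\innerprod{\epsilon,-}_Q$ vanishes on $K_0(Q_E)$ while $\iota_{*}\tau$ restricts to $\tau$), and conclude each $a_{m,\tau}=0$; this shorter route is the one I would actually write up.
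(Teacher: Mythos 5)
Your proposal is correct, and the ``shorter route'' you say you would actually write up is essentially the paper's own proof: the paper defines $\varphi(rx^d)=\iota(r)s^d$ and proves injectivity by exactly the weight-separation you describe --- the weight of $\iota(r)$ lies in ${}^{\sigma}(\epsilon^\perp)$ (equivalently, $\innerprod{\epsilon,-}_Q$ vanishes on the dimension vectors coming from $E^\perp$), while $\innerprod{\epsilon,\epsilon}_Q=1$, so distinct powers of $s$ shift the weight transversally to the image of $\iota$ and the homogeneous components vanish one by one (the paper phrases this as an induction along the partial order given by adding multiples of ${}^\sigma\!\epsilon$). Your Step 3 is the routine, unwritten, ``in particular'' of the lemma and is fine.

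Your primary route (Steps 1--2) is a genuinely different argument and also works: identifying the principal open set $D(s)$ with $\Rep_\beta(E^\perp)$, invoking Theorem \ref{T:homofibre}, and describing $\SI_\beta(Q)_s$ as $\bigl(k[T]\otimes k[\Rep_{\beta_\epsilon}(Q_E)]\bigr)^{\GL_{\beta_\epsilon}}$ with $s$ a monomial unit in a rank-one extra grading direction. This buys more --- a structural, graded picture of the whole localization rather than just an injection --- but it leans on compatibilities the paper never has to make explicit: that the $T$-grading coincides with the $\sigma$-weight grading, and that $\rho^*\iota_*=\op{id}$, which ultimately rests on the Euler form of $Q_E$ being the restriction of that of $Q$ under Schofield's equivalence (standard for perpendicular categories, but worth a citation). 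The paper sidesteps all of this by pairing weights against $\epsilon$ itself (using only $\innerprod{\epsilon,\dv N}_Q=0$ for $N\in E^\perp$ and $\innerprod{\epsilon,\epsilon}_Q=1$ plus nondegeneracy of the Euler form) instead of restricting them to $K_0(Q_E)$; note that your shorter route's justification ``$\iota_*\tau$ restricts to $\tau$'' quietly uses the same Euler-form compatibility, so if you keep that phrasing you should either cite it or switch to the pairing-against-$\epsilon$ form of the argument.
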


\begin{proof} We define an algebra morphism
$$\varphi: \SI_{\beta_\epsilon}(Q_E)[x,x^{-1}]\to \SI_\beta(Q)_{s}\quad \text{by}\quad
rx^d\mapsto \iota(r)s^d.$$
To show $\varphi$ is injective, we suppose that $\varphi(\sum_d r_dx^d)=\sum_d \iota(r_d)s^d=0$.
We define a partial order on the weights such that $\sigma_1\geq \sigma_2$ if and only if $\sigma_1-\sigma_2=n ({^\sigma\!\epsilon})$ for some $n\in \mb{N}_0$.
Now the leading term of $\sum_d \iota(r_d)s^d$ has the highest weight because $\epsilon \notin \epsilon^\perp:=\{\alpha\mid \innerprod{\epsilon,\alpha}_Q=0\}$ and the weight of $\iota(r_d)$ is in $^\sigma(\epsilon^\perp)$.
So we see inductively that each $\iota(r_d)$, and thus each $r_d$, has to vanish.

\end{proof}

\begin{remark} \label{R:exseq} The above construction together with Lemma \ref{L:localSI} can be inductively generalized from an exceptional representation to an exceptional sequence $\mb{E}$.
We refer the readers to \cite[Section 2]{DW2} for details.
\end{remark}
We recall that an {\em exceptional sequence of dimension vector} $\mb{E}:=\{e_1,e_2,\dots, e_n\}$ is a sequence of real Schur roots of $Q$ such that $e_i\perp e_j$ for any $i<j$.
It is called {\em quiver} if $\innerprod{e_j,e_i}_Q\leq 0$ for any $i<j$.
It is called {\em complete} if $n=|Q_0|$.
The {\em quiver} of a quiver exceptional sequence $\mb{E}$ is by definition the quiver with vertices labeled by $e_i$ and $\ext_Q(e_j,e_i)$ arrows from $e_j$ to $e_i$.
According to \cite[Theorem 4.1]{S2}, $\ext_Q(e_j,e_i)=-\innerprod{e_j,e_i}_Q$.
Now we return to the triple flag quivers.

\begin{lemma} \label{L:exsequence}
$$\mb{E}:=\{ {^\alpha(\f_{0,n-1})},{^\alpha(\f_{n-1,0})},{^\alpha(\e_1^3)},\e_{n-1}^1+\e_{n-1}^2+\e_n,\e_{n-1}^3,\dots,\e_2^3,\e_{n-2}^2,\dots,\e_1^2,\e_{n-2}^1,\dots,\e_1^1\}$$
is a complete quiver exceptional sequence in $\Rep(T_n)$
such that the quiver $T_{n-1}'$ of
$$\{{^\alpha(\e_1^3)},\e_{n-1}^1+\e_{n-1}^2+\e_n,\e_{n-1}^3,\dots,\e_2^3,\e_{n-2}^2,\dots,\e_1^2,\e_{n-2}^1,\dots,\e_1^1\}$$
is
$$\vcenter{\xymatrix@R=3ex{
\e_1^1\ar[r] & \e_2^1 \ar[r] & \cdots \ar[r]  & \e_{n-2}^1 \ar[dr] \\
\e_1^2\ar[r] & \e_2^2 \ar[r] & \cdots \ar[r]  & \e_{n-2}^2 \ar[r] & \e_{n-1}^1+\e_{n-1}^2+\e_n \ar[r]^{\qquad a} & ^\alpha(\e_1^3)\\
\e_2^3\ar[r] & \e_3^3 \ar[r] & \cdots \ar[r]  & \e_{n-1}^3 \ar[ur]
}}$$
\end{lemma}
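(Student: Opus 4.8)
The plan is to verify the three conditions defining a complete quiver exceptional sequence: every entry of $\mb{E}$ is a real Schur root; $\hom_{T_n}(e_i,e_j)=\ext_{T_n}(e_i,e_j)=0$ whenever $e_i$ precedes $e_j$; and $\innerprod{e_j,e_i}_{T_n}\le 0$ for $i<j$. Since $\mb{E}$ has $3n-2=|(T_n)_0|$ entries and the displayed vectors are easily seen to be $\mb{Z}$-linearly independent, the first two conditions already force $\mb{E}$ to be a complete exceptional sequence; the third, together with $\ext_{T_n}(e_j,e_i)=-\innerprod{e_j,e_i}_{T_n}$ (\cite[Theorem 4.1]{S2}), then recovers the quiver of the tail $\{{}^\alpha(\e_1^3),\dots\}$ from the completely explicit Euler form of $T_n$; in particular the pendant arrow $a$ is $\ext_{T_n}(F,{}^\alpha(\e_1^3))=-\innerprod{F,{}^\alpha(\e_1^3)}_{T_n}=1$, where $F:=\e_{n-1}^1+\e_{n-1}^2+\e_n$.

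I would organize the orthogonality checks by a chain of reductions. The simples $\e_i^1$ $(1\le i\le n-2)$ occupy the right end of $\mb{E}$ in the order $\e_{n-2}^1,\dots,\e_1^1$; since for a source vertex $v$ one has $\hom_{T_n}(X,S_v)=X(v)=\innerprod{X,S_v}_{T_n}$ and hence $\ext_{T_n}(X,S_v)=0$ for all $X$, peeling $S_v$ with $v={}^1 1$ off the right end simply deletes the source $v$; iterating this along arms $1$ and $2$ reduces the claim to the statement that the first $n+2$ entries $({}^\alpha(\f_{0,n-1}),{}^\alpha(\f_{n-1,0}),{}^\alpha(\e_1^3),F,\e_{n-1}^3,\dots,\e_2^3)$ form a complete quiver exceptional sequence for the finite-type quiver $Q''$ left after deleting arms $1,2$ --- the ``broom'' of Dynkin type $D_{n+2}$ consisting of all of arm $3$ together with the two bristles ${}^1(n-1)\to n\leftarrow {}^2(n-1)$. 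Here ${}^\alpha(\f_{0,n-1})$ and ${}^\alpha(\f_{n-1,0})$ are exceptional by Lemma~\ref{L:level1}, and I would record that they are the thin indecomposables supported on all of arm $3$ together with $\{{}^2(n-1),n\}$, resp.\ $\{{}^1(n-1),n\}$; a short computation with these interval modules shows that $({}^\alpha(\f_{0,n-1}),{}^\alpha(\f_{n-1,0}))$ is an exceptional pair and that the Euler form between them vanishes, so that $\Hom=0$ forces $\Ext=0$.

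Next, Schofield's reduction ($E^\perp\cong\Rep(Q_E)$, Theorem~\ref{T:homofibre} and the discussion preceding it) passes from $Q''$ to $\Rep(Q_E)$ with $Q_E$ on $n$ vertices, and the central claim is that $Q_E$ is the linearly oriented $A_n$, namely the path $\e_2^3\to\dots\to\e_{n-1}^3\to F\to{}^\alpha(\e_1^3)$, with its simples carried to $F,\e_{n-1}^3,\dots,\e_2^3$ and to the representation whose dimension vector is by definition ${}^\alpha(\e_1^3)$ (concretely, the thin indecomposable on arm $3$ together with $n$). Granting this, $({}^\alpha(\e_1^3),F,\e_{n-1}^3,\dots,\e_2^3)$ is exactly the sink-to-source listing of the simples of a linearly oriented $A_n$, hence a complete quiver exceptional sequence whose quiver is that $A_n$; reassembling the two reductions --- the peeled source simples recombining into the two linear arms of $T_{n-1}'$, each ending in an arrow into the center $F$ --- reproduces both the complete exceptional sequence $\mb{E}$ and the displayed quiver $T_{n-1}'$, the one genuinely new arrow being the pendant arrow $a$ produced by the Schofield reduction. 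It is probably cleanest to run the whole argument as an induction on $n$, using that $T_{n-1}'$ is $T_{n-1}$ with a single extra pendant vertex.

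The hard part will be exactly this Schofield reduction: computing the reduced quiver $Q_E$ of the right orthogonal category of $({}^\alpha(\f_{0,n-1}),{}^\alpha(\f_{n-1,0}))$ inside $\Rep(Q'')$, identifying it with $A_n$, and pinning down which representation of $Q''$ the pendant simple corresponds to --- that is, the object denoted ${}^\alpha(\e_1^3)$. Everything else is either an arrow count in $T_n$ or a small computation with thin and interval modules, and is considerably lightened by the identity $\hom-\ext=\innerprod{-,-}$, which produces the vanishing of one of $\Hom,\Ext$ from the vanishing of the other wherever the (explicit) Euler form vanishes.
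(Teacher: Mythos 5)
Your skeleton is sound as far as it goes: peeling the arm-1 and arm-2 simples off the right end is legitimate (each is the injective simple at a source of the successively truncated quiver, every earlier member of $\mb{E}$ vanishes there, and $\Hom/\Ext$ between representations supported on a full subquiver are unchanged); your identification of ${}^\alpha(\f_{0,n-1})$, ${}^\alpha(\f_{n-1,0})$, ${}^\alpha(\e_1^3)$ as the thin interval modules you describe is the right one; and you are correct that once the sequence is known to be a quiver exceptional sequence, the displayed quiver $T_{n-1}'$ is forced by the Euler form via $\ext(e_j,e_i)=-\innerprod{e_j,e_i}_{T_n}$. The problem is that the proposal stops exactly where the content of the lemma sits: the assertion that the right orthogonal category of $({}^\alpha(\f_{0,n-1}),{}^\alpha(\f_{n-1,0}))$ in the broom is $\Rep$ of a linearly oriented $A_n$ whose simples are precisely $F=\e_{n-1}^1+\e_{n-1}^2+\e_n$, the arm-3 simples and ${}^\alpha(\e_1^3)$ is only ``granted,'' and you yourself flag it as the hard part. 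As written this is a genuine gap, and an unnecessary one: Schofield's reduction buys you nothing here, since computing $Q_E$ and tracing the images of its simples through the equivalence is strictly more work than what it is meant to replace, namely the finitely many generic $\Hom$/$\Ext$ vanishings among a handful of explicit thin and tree modules on the broom --- exactly the same two-line interval-module computations you already perform for the pair $\mb{F}$, half of which follow from the vanishing of the Euler form together with the vanishing of one of $\Hom,\Ext$. If you keep your reduction scheme, just do those checks directly and drop the orthogonal-category detour.

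For comparison, the paper does not use your reduction at all: it treats the orthogonality verification as the routine direct check, and offers as an alternative a structural derivation via Rudakov's mutations of exceptional sequences. One starts from the standard complete exceptional sequence of simples $\{\e_n,\e_{n-1}^3,\dots,\e_1^3,\e_{n-1}^2,\dots,\e_1^2,\e_{n-1}^1,\dots,\e_1^1\}$ and performs braid moves: move $\e_1^3$ to the left (it becomes ${}^\alpha(\e_1^3)$), bring $\e_{n-1}^2,\e_{n-1}^1$ next to $\e_n$, push $\e_n$ past them (it becomes $\e_{n-1}^1+\e_{n-1}^2+\e_n$), and push $\e_{n-1}^2,\e_{n-1}^1$ past ${}^\alpha(\e_1^3)$ (they become ${}^\alpha(\f_{0,n-1})$ and ${}^\alpha(\f_{n-1,0})$). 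Since mutation preserves complete exceptional sequences, exceptionality and completeness of $\mb{E}$ come for free, and only the Euler-form computation of the quiver remains --- this is the cleaner structural argument your Schofield step is groping toward.
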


\begin{proof} It is trivial to check that $\mb{E}$ is a complete quiver exceptional sequence following the definition.
Alternatively, one can perform the {\em mutation} operators \cite{R} on the standard sequence
$$\mb{E}_0:=\{\e_n,\e_{n-1}^3,\dots,\e_1^3,\e_{n-1}^2,\dots,\e_1^2,\e_{n-1}^1,\dots,\e_1^1\}$$
as follows.
\begin{enumerate}
\item Apply the left mutations to move $\e_1^3$ to the left, and it becomes ${^\alpha(\e_1^3)}$;
\item Apply the left mutations to move $\e_{n-1}^2$ and $\e_{n-1}^1$ right next to $\e_n$, and they remain themselves;
\item Apply the right mutations 2 times to move $\e_n$ passing $\e_{n-1}^2$ and $\e_{n-1}^1$, and it becomes $\e_{n-1}^1+\e_{n-1}^2+\e_n$;
\item Apply the left mutations 2 times to move $\e_{n-1}^2$ and $\e_{n-1}^1$ passing ${^\alpha(\e_1^3)}$, and they become ${^\alpha(\f_{0,n-1})}$ and ${^\alpha(\f_{n-1,0})}$.
\end{enumerate}
\end{proof}

\noindent To simplify the notation, we label the rightmost vertex of $T_{n-1}'$ by $n$, so $\e_n$ is the unit vector supported on $n$. We also observe that $T_{n-1}'$ contains $T_{n-1}$ as a subquiver. So the weight vector $\f_{i,j}$ for $T_{n-1}$ is naturally a weight vector for $T_{n-1}'$ if we set its $n$-th coordinate to be zero.

Let $\mb{F}$ be the exceptional sequence $\{ {^\alpha(\f_{0,n-1})},{^\alpha(\f_{n-1,0}}) \}$. By Lemma \ref{L:exsequence} and definition, the quiver $T_{n-1}'$ is the quiver $(T_n)_{\mb{F}}$.
So we have an embedding $\Rep((T_n)_{\mb{F}})\cong \Rep(\mb{F}^\perp)\hookrightarrow \Rep(T_n)$.
Let $\beta_{n-1}'$ be the following dimension vector of $T_{n-1}'$.
$$\vcenter{\xymatrix@R=3ex{
1\ar[r] & 2 \ar[r] & \cdots \ar[r]  & n-2 \ar[dr] \\
1\ar[r] & 2 \ar[r] & \cdots \ar[r]  & n-2 \ar[r] & n-1 \ar[r] & 1\\
1\ar[r] & 2 \ar[r] & \cdots \ar[r]  & n-2 \ar[ur]
}}$$

\begin{lemma} \label{L:embed_Tn-1}
Under the embedding, the dimension vector $\beta_{n-1}'$ goes to the standard one $\beta_n$ of $T_n$.
Moreover, $\f_{i,j}$ goes to
\begin{align*}
&\f_{i,j} & \text{for } & i+j=n-1, ij\neq 0,\\
&\f_{i,j}+\f_{n-1,0} &  \text{for } & i=0,\\
&\f_{i,j}+\f_{0,n-1} & \text{for } & j=0,\\
&\f_{i,j}+\f_{0,n-1}+\f_{n-1,0} & \text{for } & 1< i+j< n-1,ij\neq 0;
\intertext{$\f_i:=\e_n+\e_{n-1}-\e_i^1-\e_{n-i}^2$ goes to }
&\f_{i,n-i} & \text{for } & 1\leq i \leq n-1.
\end{align*}
Here, we use the convention that $\e_{n-1}^1=\e_{n-1}^2=\e_{n-1}$.
Moreover, let $L,N_1,N_2$ be the cokernel of $f_{i,j},f_{n-1,0},f_{0,n-1}$ respectively,
and $M_1,M_2,M$ be general representations of dimension ${^\alpha(\f_{i,j}+\f_{n-1,0})},{^\alpha(\f_{i,j}+\f_{0,n-1})},{^\alpha(\f_{i,j}+\f_{0,n-1}+\f_{n-1,0})}$ respectively.
Then we have exact sequences \begin{align*}
&0\to L\to M_1 \to N_1\to 0 &  \text{for } & i=0, \\
&0\to L\to M_2 \to N_2\to 0 &  \text{for } & j=0, \\
&0\to M_1\to M \to N_2\to 0 &  \text{for } & 1< i+j< n-1,ij\neq 0.
\end{align*}
\end{lemma}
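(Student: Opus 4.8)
The plan is to deduce everything from Schofield's orthogonal-category reduction (Theorem~\ref{T:homofibre}), applied iteratively along the two-term exceptional sequence $\mb{F}=\{{^\alpha(\f_{0,n-1})},{^\alpha(\f_{n-1,0})}\}$ as in Remark~\ref{R:exseq}, together with the multiplicativity of Schofield's semi-invariants along short exact sequences (Lemma~\ref{L:exact}). First I would check that $\mb{F}\perp\beta_n$: since $\f_{0,n-1},\f_{n-1,0}\in\Sigma_{\beta_n}(T_n)$, the functions $s(f_{0,n-1})$ and $s(f_{n-1,0})$ are nonzero, so by the discussion after Theorem~\ref{T:inv_span} their cokernels ${^\alpha(\f_{0,n-1})}$ and ${^\alpha(\f_{n-1,0})}$ are left orthogonal to a general $\beta_n$-representation. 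Hence the embedding $\iota\colon\Rep(T_{n-1}')=\Rep((T_n)_{\mb{F}})\hookrightarrow\Rep(T_n)$ is defined and $(\beta_n)_{\mb{F}}$ is a genuine dimension vector of $T_{n-1}'$. To identify $(\beta_n)_{\mb{F}}=\beta_{n-1}'$ I would read off the dimension vectors, as $T_n$-representations, of the simple objects of $\mb{F}^\perp$ (equivalently, of the real Schur roots in the reduced exceptional sequence of Lemma~\ref{L:exsequence}) and express $\beta_n$ as their nonnegative combination; the coefficients are precisely the entries of $\beta_{n-1}'$. A dimension count $\dim(\Rep_{\beta_n}(T_n)\cap\mb{F}^\perp)=\dim\Rep_{\beta_{n-1}'}(T_{n-1}')$ through the fibre space of Theorem~\ref{T:homofibre} gives an independent check.

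For the weight vectors I would use that the iterated isomorphisms $\SI^{^\sigma\gamma}_{(\beta_n)_{\mb{F}}}(T_{n-1}')\cong\SI^{^\sigma(\iota_*\gamma)}_{\beta_n}(T_n)$ coming from the isomorphism recorded after Theorem~\ref{T:homofibre} identify the $T_{n-1}'$-weight ${^\sigma\gamma}$ with the $T_n$-weight ${^\sigma(\iota_*\gamma)}$, where $\iota_*$ is the induced $K_0$-inclusion; this is because the Euler form of $\mb{F}^\perp$ is the restriction of that of $T_n$. So it suffices to compute $\iota_*$ on the cokernel dimension vectors of the level-$1$ presentations of $T_{n-1}'$ carrying the weights $\f_{i,j}$ and $\f_i$. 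For $\f_i=\e_n+\e_{n-1}-\e_i^1-\e_{n-i}^2$ I would argue directly from the shape of $T_{n-1}'$ in Lemma~\ref{L:exsequence}: the extra vertex $n$ together with the arrow $a$ is exactly what forces the associated $\mb{F}^\perp$-representation to have image of dimension ${^\alpha(\f_{i,n-i})}$ in $\Rep(T_n)$, so its weight becomes $\f_{i,n-i}$.

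For the remaining weights and for the exact sequences I would work simultaneously. Put $L=\Coker(f_{i,j})$, $N_1=\Coker(f_{n-1,0})$, $N_2=\Coker(f_{0,n-1})$ in $\Rep(T_n)$, so $\dv L={^\alpha(\f_{i,j})}$ and $N_1,N_2$ are the two exceptional representations. When $i=0$ I would show that a general $M_1$ of dimension ${^\alpha(\f_{0,j}+\f_{n-1,0})}$ fits into a short exact sequence $0\to L\to M_1\to N_1\to 0$ with $L$ and $N_1$ themselves general; genericity follows from a codimension computation via Lemma~\ref{L:ext}, namely the vanishing $\ext_{T_n}({^\alpha(\f_{0,j})},{^\alpha(\f_{n-1,0})})=0$, checked from the explicit $\beta_n$ and the classification of level-$1$ weights. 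This identifies $\iota_*$ of the cokernel dimension vector of the corresponding $T_{n-1}'$-presentation with $\dv M_1$, so the image of $\f_{0,j}$ is $\f_{0,j}+\f_{n-1,0}$; and since $\innerprod{\dv L,\beta_n}=\innerprod{\dv N_1,\beta_n}=0$, Lemma~\ref{L:exact} gives $s(M_1)=s(f_{0,j})\,s(f_{n-1,0})$ up to a scalar. The case $j=0$ is the mirror statement with $N_1$ replaced by $N_2$. For $1<i+j<n-1$, $ij\neq0$, I would first build $M_1$ of dimension ${^\alpha(\f_{i,j}+\f_{n-1,0})}$ as above, then build $M$ of dimension ${^\alpha(\f_{i,j}+\f_{0,n-1}+\f_{n-1,0})}$ as a general extension $0\to M_1\to M\to N_2\to 0$, again checking $\ext_{T_n}(\dv M_1,{^\alpha(\f_{0,n-1})})=0$; iterating Lemma~\ref{L:exact} yields $s(M)=s(f_{i,j})\,s(f_{n-1,0})\,s(f_{0,n-1})$ and the weight formula. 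The case $i+j=n-1$ carries no correction because there $\beta_{n-1}'$ already agrees with $\beta_n$ on the relevant support.

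The hard part is this last step: verifying that the short exact sequences are generic — that the relevant $\ext_{T_n}$ groups vanish and that sub, quotient and middle term are simultaneously general — and, equivalently, pinning down $\iota_*$ on the cokernel dimension vectors so that the correction terms $\f_{0,n-1}$ and $\f_{n-1,0}$ appear in exactly the stated positions. This is the combinatorially heaviest part of the argument, resting on the explicit shape of $\beta_n$, Lemma~\ref{L:level1}, and the precise exceptional sequence produced in Lemma~\ref{L:exsequence}.
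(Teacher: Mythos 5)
Your identification of $\beta_{n-1}'\mapsto\beta_n$ and your construction of the exact sequences follow the paper's route (the $K_0$ recipe for the inclusion, and the vanishing $\ext_{T_n}(L,N_i)=0$ fed into Lemma~\ref{L:ext}). However, there is a genuine gap in how you obtain the weight formulas in the three corrected cases. You correctly reduce the second assertion to computing $\iota_*$ on cokernel dimension vectors, but then, for $i=0$ (and likewise for $j=0$ and $1<i+j<n-1$), you claim that constructing $0\to L\to M_1\to N_1\to 0$ in $\Rep(T_n)$ ``identifies $\iota_*$ of the cokernel dimension vector of the corresponding $T_{n-1}'$-presentation with $\dv M_1$.'' It does not: that sequence is a statement entirely internal to $\Rep(T_n)$ --- $L$, $M_1$, $N_1$ are defined purely from $T_n$-data --- and carries no information about the embedding $\iota$. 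To know that the weight $\f_{0,j}$ of $T_{n-1}'$ goes to $\f_{0,j}+\f_{n-1,0}$, i.e.\ that the relevant $T_{n-1}'$-dimension vector is sent to $\dv L+\dv N_1$, you must expand ${}^\alpha(\f_{0,j})$, computed in $T_{n-1}'$, against the real Schur roots $e_v$ of the exceptional sequence of Lemma~\ref{L:exsequence}, using $\iota_\epsilon(\beta)=\sum_v\beta(v)e_v$ and its weight analogue $\iota_\epsilon(\f)=\sum_v({}^\alpha\f)(v)\,{}^\sigma(e_v)$ --- which is exactly what the paper does and then verifies directly. As written, your argument presupposes the very dimension count it is supposed to establish: you choose $M_1$ of dimension ${}^\alpha(\f_{0,j}+\f_{n-1,0})$ and then declare it to be the image, so the weight formula is assumed rather than proved.

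The fix is routine but must actually be carried out: list the roots $e_v$ from Lemma~\ref{L:exsequence}, compute $\sum_v({}^\alpha\f_{i,j})(v)e_v$ case by case, and check that it equals the $T_n$-dimension vector of the stated weight; the same computation also handles $\f_i\mapsto\f_{i,n-i}$ (where your appeal to ``the shape of $T_{n-1}'$'' is only a heuristic) and the uncorrected case $i+j=n-1$, for which ``agrees on the relevant support'' is likewise not a proof. Once the weight images are established this way, your treatment of the exact sequences --- the generic $\ext$-vanishing, Lemma~\ref{L:ext}, and the analogous step for $0\to M_1\to M\to N_2\to 0$ --- coincides with the paper's; the extra multiplicativity remark via Lemma~\ref{L:exact} is not part of this lemma and is only needed later, in Theorem~\ref{T:ag-independent}.
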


\begin{proof} Let us recall the recipe of the linear inclusion $\iota_\epsilon$ of the Grothendieck groups.
Let $\beta$ be a dimension vector of $T_{n-1}'$, then $\iota_\epsilon(\beta)=\sum_v \beta(v) e_v$, where $e_v$ is the real Schur root in the exceptional sequence corresponding to the vertex $v$.
The recipe for a weight vector $\f$ of $T_{n-1}'$ is similar: $\iota_\epsilon(\f)=\sum_v {(^\alpha\!\f)}(v) {^\sigma(e_v)}$.
Then the first part can be easily verified.

By Lemma \ref{L:level1}, $L,N_1$ and $N_2$ are general representations.
It is easy to check that $\hom_{T_n}(L,N_i)=\hom_{T_n}(N_i,L)=0$,
$\innerprod{L,N_i}_{T_n}=0$, and $\innerprod{N_i,L}_{T_n}=-1$,
so $\ext_{T_n}(L,N_i)=0$ and $\ext_{T_n}(N_i,L)=1$.
According to Lemma \ref{L:ext}, we have the first two exact sequences.
The existence of the last exact sequence is proved similarly.
\end{proof}

\begin{lemma} \label{L:2arm} A general representation in $\Rep_\bn(T_n)$ has a following representative. Its matrices on the first arm are $(I_k\ 0)$; on the second arm are $(0\ I_k)$, where $I_k$ is a $k\times k$ identity matrix for $k=1\dots,n-1$ and $0$ is a zero column vector.
\end{lemma}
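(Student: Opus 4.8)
The plan is to produce a dense open subset $U\subseteq\Rep_\bn(T_n)$ such that every $M\in U$ is carried into the stated form by the base‑change group $\GL_{\beta_n}$, applied in three successive phases. This is essentially a normal‑form computation for a pair of complete flags in $k^n$, the flags being read off from the two arms that are to be normalized.

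First I would isolate the relevant generic conditions. For a representation $M$ whose arm maps are all injective, let $W^{(1)}_i\subseteq k^n$ be the image in the central space of the composite of the arm‑$1$ maps running from the vertex ${}^1i$ to the central vertex $n$; then $\dim W^{(1)}_i=i$ and $0\subset W^{(1)}_1\subset W^{(1)}_2\subset\cdots\subset W^{(1)}_{n-1}\subset k^n$ is a complete flag. Define $W^{(2)}_i$ similarly from the arm‑$2$ maps. The one nontrivial genericity statement is that for $M$ in a dense open set these two flags are opposite, i.e. $\dim\big(W^{(1)}_i\cap W^{(2)}_j\big)=\max(0,i+j-n)$ for all $i,j$. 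Since this is a Zariski‑open condition it suffices to exhibit a single $M$ satisfying it, and the representation already in the target form does: there $W^{(1)}_i=\langle e_1,\dots,e_i\rangle$ while $W^{(2)}_j=\langle e_{n-j+1},\dots,e_n\rangle$, which are manifestly opposite. As injectivity of all arm maps is also open and dense, I take $U$ to be the intersection of these two loci, and fix $M\in U$.

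In phase one I use the factor $\GL_n\subseteq\GL_{\beta_n}$ acting at the central vertex to choose a basis $e_1,\dots,e_n$ of $k^n$ in which $W^{(1)}_i=\langle e_1,\dots,e_i\rangle$ and $W^{(2)}_j=\langle e_{n-j+1},\dots,e_n\rangle$ for all $i,j$; this is the standard fact that two opposite complete flags can be put simultaneously in standard and anti‑standard position (concretely, each $W^{(1)}_i\cap W^{(2)}_{n-i+1}$ is a line, and these $n$ lines form a basis). In phase two I proceed outward along arm $1$, applying the base change at ${}^1(n-1),\,{}^1(n-2),\,\dots,\,{}^11$ in turn. When I reach ${}^1i$ the maps ${}^1(i+1)\to\cdots\to n$ have already been normalized, so their composite is the inclusion of $k^{i+1}$ onto the first $i+1$ coordinates of $k^n$; the requirement that the composite ${}^1i\to\cdots\to n$ have image $W^{(1)}_i=\langle e_1,\dots,e_i\rangle$ then forces the injective map ${}^1i\to{}^1(i+1)$ to have block form $(A'\ 0)$ with $A'$ invertible, and the base change at ${}^1i$ by $(A')^{-1}$ turns it into $(I_i\ 0)$. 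These moves leave every $W^{(1)}_i$ unchanged (they multiply the composite to the center on the source side only) and never touch arm $2$. Phase three repeats this verbatim on arm $2$, using $W^{(2)}_i=\langle e_{n-i+1},\dots,e_n\rangle$ to bring each arm‑$2$ map into the form $(0\ I_i)$. Because phases two and three use disjoint gauge factors and neither disturbs $\GL_n$, the result is a representative of the required shape.

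The argument is elementary linear algebra; the two points that need care are the bookkeeping of which gauge factors are consumed when — which is exactly why the work splits into the three phases above — and the fact that the opposedness of the two flags is a nonempty, hence dense, open condition, which I prefer to verify on the explicit normal‑form representation rather than abstractly. Everything else reduces to the classical simultaneous standard/anti‑standard form for a pair of opposite flags, together with the block normalization of an injective map with prescribed image along each arm.
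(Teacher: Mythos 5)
Your proof is correct, but it follows a genuinely different route from the paper's. The paper forgets the third arm (keeping the central vertex), so that the first two arms form a quiver of type $A_{2n-1}$ with dimension vector $(1,2,\dots,n-1,n,n-1,\dots,2,1)$, and then invokes the canonical decomposition of this dimension vector into the $n$ interval roots $\mathbf{1}_i$ supported on $n$ consecutive vertices: a general representation of the two-arm subquiver is a direct sum of the corresponding interval indecomposables, and writing such a direct sum in the obvious bases is exactly the stated normal form $(I_k\ 0)$, $(0\ I_k)$. You instead carry out a direct gauge-fixing argument: generic injectivity of the arm maps, generic opposedness of the two induced complete flags at the central vertex (verified on the explicit normal form, hence a nonempty open, and dense since $\Rep_\bn(T_n)$ is an affine space), simultaneous standard/anti-standard position via the central $\GL_n$, and then an inductive normalization outward along each arm using the base changes at the arm vertices, which indeed leave the flag subspaces and the other arm untouched. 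Both arguments are complete; the paper's is shorter and more conceptual, resting on the general principle that a general representation restricts to a general representation of the subquiver and on the interval classification in type $A$, while yours is elementary and self-contained, making explicit which factors of $\GL_{\beta_n}$ are consumed at each stage and why the relevant open locus is dense.
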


\begin{proof} If we forget the third arm (but keep the central vertex $n$), then we get a quiver of type $A_{2n-1}$ with the dimension vector shown below
$$\vcenter{\xymatrix@C=5ex{
1\ar[r] & 2\ar[r] & \cdots \ar[r] & n-1\ar[r] & n & n-1\ar[l] & \cdots \ar[l] & 2 \ar[l] & 1 \ar[l]
}}$$
This dimension vector decomposes canonically as $\sum_{i=0}^{n-1} \bold{1}_i$, where
$$\bold{1}_i=(\underbrace{0,\cdots, 0}_i, \underbrace{1,\cdots,1}_n,0,\cdots,0).$$
This means that a general representation of that dimension vector is a direct sum of general representations of dimension $\bold{1}_i$,
which is exactly what we desired.
\end{proof}

Recall the weight vector $\f_i=\e_n+\e_{n-1}-\e_i^1-\e_{n-i}^2$ in Lemma \ref{L:embed_Tn-1}. We associate for each $\f_i$ a projective presentation $f_i$
$$P_{n-1}\oplus P_n \xrightarrow{\sm{p_i^1 & p_{n-i}^2 \\ p_i^1a & 0}} P_i^1\oplus P_{n-i}^2 $$
and thus a semi-invariant function $s(f_i)\in\SI_{\beta_{n-1}'}(T_{n-1}')$. Here $a$ is the rightmost arrow as shown in the figure of Lemma \ref{L:exsequence}.
From now on, we will write $s_i$ and $s_{i,j}$ for $s(f_i)$ and $s(f_{i,j})$.


\begin{lemma} \label{L:induction}
Assume that elements in $\mc{S}_{n-1}$ are algebraically independent over $k$.
Then $\{s_i\mid 1\leq i\leq n-1\}\cup \mc{S}_{n-1}$ are algebraically independent as well.
\end{lemma}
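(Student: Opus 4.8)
The plan is to isolate the role of the extra arrow of $T_{n-1}'$. Write $a\colon c\to n$ for the arrow that $T_{n-1}'$ adds to the central vertex $c$ of $T_{n-1}$, so that $\beta_{n-1}'(c)=n-1$ and $\beta_{n-1}'(n)=1$. Then $\Rep_{\beta_{n-1}'}(T_{n-1}')=\Rep_{\beta_{n-1}}(T_{n-1})\times H$ with $H:=\Hom(k^{n-1},k)$ the affine space of the arrow $a$, and $\SL_{\beta_{n-1}'}=\SL_{\beta_{n-1}}$; hence the functions of $\mc{S}_{n-1}$ already live in this ring (they do not see the added vertex $n$, so $s(f_{ij})$ is literally the same function whether computed for $T_{n-1}$ or for $T_{n-1}'$) and are constant along the $H$-factor. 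Thus algebraic independence of $\{s_i\}_{i=1}^{n-1}\cup\mc{S}_{n-1}$ is equivalent to dominance of the morphism $\Theta\colon\Rep_{\beta_{n-1}}(T_{n-1})\times H\to\mb{A}^{\delta_{n-1}}\times\mb{A}^{n-1}$ whose first component $\Theta_1$ records the tuple $\big(s(f_{ij})(N_0)\big)_{(i,j)\in\delta_{n-1}}$ and whose second component $\Theta_2$ records $\big(s_i(N_0,a)\big)_{1\le i\le n-1}$; and $\Theta_1$ is dominant by hypothesis. So it suffices to prove that for a general $N_0$ the ``slice'' $a\mapsto\Theta_2(N_0,a)$ is dominant onto $\mb{A}^{n-1}$.

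First I would note that in the matrix of $\Hom_{T_{n-1}'}(f_i,N)$ the arrow $a$ enters only through the entry coming from the path $p_i^1a$, hence only in the single row indexed by the vertex $n$. Since a determinant is linear in each row, $s_i(N_0,a)=\ell_i(N_0)(a)$ is a homogeneous linear form in $a$ whose coefficients depend polynomially on $N_0$; consequently the slice is dominant precisely when the $(n-1)\times(n-1)$ coefficient matrix $\big(\ell_i(N_0)\big)_i$ is invertible, a Zariski-open condition on $N_0$ that only needs to be shown nonempty. For this I would invoke Lemma~\ref{L:2arm} for $T_{n-1}$ and take $N_0$ whose arm-$1$ matrices are $(I_k\ 0)$ and whose arm-$2$ matrices are $(0\ I_k)$. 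A direct computation then shows that, after using the rows of $\Hom_{T_{n-1}'}(f_i,(N_0,a))$ coming from the two arms to clear the vertex-$n$ row, the matrix is block-triangular with standard-basis blocks and a single remaining block $\sm{1&1\\ a_i&0}$, so $s_i(N_0,a)=\pm a_i$. At this particular $N_0$ the forms $\ell_1,\dots,\ell_{n-1}$ are exactly the coordinate functionals on $H$ and hence linearly independent, so the open condition is nonempty.

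The rest is formal. Let $U\subseteq\Rep_{\beta_{n-1}}(T_{n-1})$ be the dense open locus where $\big(\ell_i(N_0)\big)_i$ is invertible. For $N_0\in U$ the slice is a linear isomorphism $H\xrightarrow{\ \sim\ }\mb{A}^{n-1}$, so $\Theta(U\times H)=\Theta_1(U)\times\mb{A}^{n-1}$; and since $\Theta_1$ is dominant while $U$ is dense in the irreducible variety $\Rep_{\beta_{n-1}}(T_{n-1})$, the image $\Theta_1(U)$ is dense in $\mb{A}^{\delta_{n-1}}$. Therefore $\Theta$ is dominant, which is the assertion.

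The step I expect to be the real work is the explicit evaluation of $s_i$ on the normal form of Lemma~\ref{L:2arm}, i.e.\ verifying that the $n\times n$ determinant collapses to $\pm a_i$ uniformly in $i$ (the extreme cases $i=1$ and $i=n-1$, in which $\e_i^1$ or $\e_{n-i}^2$ is the central vertex, should be noted in passing); everything else is soft. One should also check at the outset that the product decomposition $\Rep_{\beta_{n-1}'}(T_{n-1}')=\Rep_{\beta_{n-1}}(T_{n-1})\times H$ is compatible with the two readings of $s(f_{ij})$, so that the hypothesis on $\mc{S}_{n-1}$ transfers verbatim to $T_{n-1}'$.
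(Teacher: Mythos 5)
Your proof is correct, and its computational core is the same as the paper's: both evaluate the $s_i$ on the representation whose arm-one matrices are $(I_k\ 0)$ and arm-two matrices are $(0\ I_k)$, and find $s_i=\pm a_i$ (the paper computes the determinant of $\sm{I_i & O_{i,n-i-1} & X_i \\ O_{n-i,i-1}& I_{n-i} & O_{n-i,1}}$ and gets $(-1)^{n-i}x_i$; your ``remaining block'' is really the submatrix $\sm{1 & x_i\\ 1 & 0}$ in rows $i,i+1$ and columns $i,n$, but the conclusion is identical), and both then feed in the inductive independence of $\mc{S}_{n-1}$. Where you genuinely differ is the wrap-up. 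The paper argues by contradiction: it takes the third arm in general position and the arrow generic, evaluates a putative relation, uses genericity of the $x_i$ to produce a nonzero element of the subring generated by $\mc{S}_{n-1}$ that vanishes at the restricted representation, and then needs Lemma~\ref{L:2arm} in its full strength (the restriction to $T_{n-1}$ is a general representation) to get the contradiction. You instead exploit that each $s_i$ is a homogeneous linear form in the new arrow $a$ (since the added vertex is one-dimensional, the $a$-dependence sits in a single row/column of $\Hom_{T_{n-1}'}(f_i,N)$), which decouples the two factors of $\Rep_{\beta_{n-1}'}(T_{n-1}')=\Rep_{\beta_{n-1}}(T_{n-1})\times H$: everything reduces to generic invertibility of the coefficient matrix $(\ell_i(N_0))_i$ plus dominance of $\Theta_1$, and the normal-form point is used only to show the open locus $U$ is nonempty. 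Consequently you never need the third arm to be general, nor the ``general representative'' content of Lemma~\ref{L:2arm} --- only the explicit test point --- which makes the genericity bookkeeping cleaner while the paper's route needs the stability of a general $T_{n-1}$-representation at that step. The one step you flag as unverified is exactly the determinant the paper computes explicitly, so there is no gap.
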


\begin{proof} We consider a representation $M\in \Rep_{\beta_{n-1}'}(T_{n-1}')$ whose matrices on the first arm are $(I_k \ 0)$; on the second arm are $(0 \ I_k)$; on the third arm are in general position; and the matrix for the arrow $a$ is generic $(x_1,x_2,\dots,x_{n-1})^{\T}$.
Then the semi-invariant $s_i$ is the determinant of the block matrix
$$\begin{pmatrix} I_i & O_{i,n-i-1} & X_i \\ O_{n-i,i-1}& I_{n-i} & O_{n-i,1} \end{pmatrix}$$
where $I_i$ is the $i\times i$ identity matrix, $O_{i,j}$ is the $i\times j$ zero matrix, and $X_i=(x_{1},\dots,x_{i})^{\T}$.
It is easy to see that this determinant equals to $(-1)^{n-i} x_i$.

Now suppose that $\{s_i\mid 1\leq i\leq n-1\}\cup \mc{S}_{n-1}$ are algebraically dependent, that is, there is a non-zero polynomial $p\in k[y_1,y_2,\dots,y_{\frac{1}{2}(n^2+3n-8)}]$ such that
$$p\big( s_1,\dots,s_{n-1}, \{ s_{i,j} \}_{(i,j)\in \delta_{n-1}} \big)=0.$$
Its total degree on $y_1,\dots,y_{n-1}$ must be strictly positive because elements in $\mc{S}_{n-1}$ are algebraically independent.
We evaluate $p$ at the representation $M$, and get
$$p\big( (-1)^{n-1}x_1,\dots,-x_{n-1},\{s_{i,j}(M)\}_{(i,j)\in \delta_{n-1}} \big)=0.$$
Since $x_1,\dots,x_{n-1}$ are generic variables, we conclude that there is some function $r\in R$ vanishes at $M$, where $R\subset \SI_{\beta_{n-1}}(T_{n-1})$ is the subring generated by $\mc{S}_{n-1}$.
By Lemma \ref{L:2arm}, the restriction of $M$ on the subquiver $T_{n-1}$ is a general representation of dimension $\beta_{n-1}$.
But a general representation in $\Rep_{\beta_{n-1}}(T_{n-1})$ is stable, so
there is no such $r$. We get a contradiction.
\end{proof}


\begin{theorem} \label{T:ag-independent} The semi-invariant functions in $\mc{S}_n$ are algebraically independent over the base field $k$.
\end{theorem}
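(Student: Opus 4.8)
The plan is to induct on $n$. For $n=1$ the ice hive quiver $\Delta_1$ has no vertices, so $\mc{S}_1=\emptyset$ and the claim is vacuous; the inductive step below then takes care of every $n\geq 2$. So assume $n\geq 2$ and that the functions in $\mc{S}_{n-1}$ are algebraically independent over $k$; I want to deduce the same for $\mc{S}_n$.

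Granting the hypothesis, Lemma \ref{L:induction} already gives that $\mc{S}_{n-1}\cup\{s_i\mid 1\leq i\leq n-1\}$ is algebraically independent inside $\SI_{\beta_{n-1}'}(T_{n-1}')$, a set of $|\delta_{n-1}|+(n-1)$ functions. Next I would invoke the exceptional-sequence machinery: by Lemma \ref{L:exsequence} the quiver $T_{n-1}'$ is the slice quiver $(T_n)_{\mb{F}}$ of the two-term exceptional sequence $\mb{F}=\{{^\alpha(\f_{0,n-1})},{^\alpha(\f_{n-1,0})}\}$, and Lemma \ref{L:embed_Tn-1} tells us the attendant Schofield embedding $\iota\colon \SI_{\beta_{n-1}'}(T_{n-1}')\hookrightarrow\SI_{\beta_n}(T_n)$ (coming from Theorem \ref{T:homofibre}) carries the standard $\beta_{n-1}'$ to the standard $\beta_n$. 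Applying the iterated form of Lemma \ref{L:localSI} recorded in Remark \ref{R:exseq} to $\mb{F}$, the set
$$\mc{A}\ :=\ \{s_{0,n-1},\ s_{n-1,0}\}\ \cup\ \iota\big(\mc{S}_{n-1}\cup\{s_i\mid 1\leq i\leq n-1\}\big)$$
is algebraically independent in $\SI_{\beta_n}(T_n)$. A count shows $|\mc{A}|=2+(n-1)+|\delta_{n-1}|=|\delta_n|=|\mc{S}_n|$, the middle equality because $|\delta_n|-|\delta_{n-1}|=3+(n-2)=(n-1)+2$ by the enumeration of level-$1$ weights in Lemma \ref{L:level1}.

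The final step is to push $\mc{A}$ into the subring generated by $\mc{S}_n$. By the first half of Lemma \ref{L:embed_Tn-1}, $\iota(s_i)$ has weight $\f_{i,n-i}$ and $\iota(s_{i,j})$ has weight $\f_{i,j}$, $\f_{i,j}+\f_{n-1,0}$, $\f_{i,j}+\f_{0,n-1}$, or $\f_{i,j}+\f_{0,n-1}+\f_{n-1,0}$ according to the position of $(i,j)$ in $\delta_{n-1}$; and that lemma supplies the short exact sequences $0\to L\to M_1\to N_1\to 0$, $0\to L\to M_2\to N_2\to 0$, $0\to M_1\to M\to N_2\to 0$, with $L,N_1,N_2$ the cokernels of $f_{i,j},f_{n-1,0},f_{0,n-1}$. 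Since the weights $\f_{i,j},\f_{n-1,0},\f_{0,n-1}$ all lie in $\Sigma_{\beta_n}(T_n)$, the hypothesis of Lemma \ref{L:exact} is met, so $s(M_1)$, $s(M_2)$, $s(M)$ factor up to nonzero scalars as $s_{i,j}\,s_{n-1,0}$, $s_{i,j}\,s_{0,n-1}$, $s_{i,j}\,s_{0,n-1}\,s_{n-1,0}$; together with the fact that each of the relevant multidegree components of $\SI_{\beta_n}(T_n)$ is one-dimensional (an easy Littlewood--Richardson/Pieri computation via Theorem \ref{T:eqLR}), this gives, up to scalars, $\iota(s_i)=s_{i,n-i}$ and $\iota(s_{i,j})$ equals the corresponding product. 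Hence every element of $\mc{A}$ lies in $k[\mc{S}_n]$, so $k(\mc{A})\subseteq k(\mc{S}_n)$; since $\mc{A}$ is algebraically independent, $\operatorname{trdeg}_k k(\mc{S}_n)\geq|\mc{A}|=|\mc{S}_n|$, and as $k(\mc{S}_n)$ is generated by $|\mc{S}_n|$ elements this is an equality, which is precisely the algebraic independence of $\mc{S}_n$.

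I expect the crux to be this last identification of the images $\iota(s_{i,j})$: the exact sequences in Lemma \ref{L:embed_Tn-1} are tailor-made for it, but one still has to run them through Lemma \ref{L:exact} and invoke one-dimensionality of the pertinent weight spaces in order to upgrade ``$s(M_1)$ factors as $s_{i,j}\,s_{n-1,0}$'' to ``$\iota(s_{i,j})$ is a scalar multiple of $s_{i,j}\,s_{n-1,0}$''. The remaining ingredients — the induction, the bookkeeping that $|\mc{A}|=|\delta_n|$, and the transcendence-degree comparison — are routine.
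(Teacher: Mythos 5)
Your proposal is correct and follows essentially the same route as the paper: induction on $n$, Lemma \ref{L:induction} to adjoin the $s_i$'s, the exceptional-sequence embedding of Lemma \ref{L:localSI}/Remark \ref{R:exseq} applied to $\mb{F}=\{{^\alpha(\f_{0,n-1})},{^\alpha(\f_{n-1,0})}\}$, identification of the images via Lemma \ref{L:embed_Tn-1} and Lemma \ref{L:exact}, and a counting/transcendence-degree conclusion. The only (welcome) difference is that you make explicit the one-dimensionality of the weight spaces $\SI_{\beta_n}(T_n)_{\f_{i,j}+\f_{0,n-1}},\dots$ (a short Pieri/LR check, which indeed holds) to pin down $\iota(s_{i,j})$ up to scalar, a point the paper leaves implicit.
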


\begin{proof} We prove by induction on $n$. If $n=2$, then the statement is trivial.
Now suppose that it is true for $n=m$.
Applying Lemma \ref{L:localSI} (see Remark \ref{R:exseq}) to the exceptional sequence $\mb{F}=\{{^\alpha(\f_{0,m})},{^\alpha(\f_{m,0})}\}$, we get
$$\SI_{\beta_m'}(T_m')[x^{\pm 1},y^{\pm 1}] \hookrightarrow \SI_{\beta_{m+1}}(T_{m+1})_{s_{0,m}s_{m,0}},$$
where the quiver $T_m'$ and dimension vector $\beta'$ are as in Lemma \ref{L:embed_Tn-1}.

By induction and Lemma \ref{L:induction}, we have that
$$\{s_{i,j}\mid (i,j)\in \delta_m\} \cup \{s_i\mid 1\leq i \leq m\}$$ are algebraically independent.
Then by Lemma \ref{L:localSI}, \ref{L:embed_Tn-1}, and \ref{L:exact}
\begin{align*} & \{s_{m,0},s_{0,m}\} \cup
\{s_{0,j}s_{m,0}, s_{i,0}s_{0,m} \mid 1\leq i,j \leq m \} \cup\\
& \{s_{i,j}s_{m,0}s_{0,m} \mid {1<i+j<m, ij\neq 0} \} \cup
\{s_{i,j} \mid {i+j=m, m+1, ij\neq 0} \}
\end{align*}
is algebraically independent.
We notice that all elements above, up to some multiple of $s_{m,0}$ and $s_{0,m}$, are elements in $\{s_{i,j} \mid (i,j)\in \delta_{m+1} \}$.
This implies that level-1 semi-invariants in $T_{m+1}$ are algebraically independent.
We finish the proof by induction.
\end{proof}

\subsection{Initial Exchanges}
Recall the weight configuration $\bs{\sigma}_n$ for $\Delta_n$.
Let $\g_{i,j}= \f_{i-1,j}+\f_{i,j+1}+\f_{i+1,j-1}= \f_{i+1,j}+\f_{i,j-1}+\f_{i-1,j+1}$.
It is not hard to verify that $^\alpha(\g_{i,j})$ is an {\em isotropic} Schur root of $T_n$, but we do not need this fact.
If we mutate $(\Delta_n,\bs{\sigma}_n)$ at a mutable vertex $(i,j)$, then
$$\f_{i,j}' = \g_{i,j} - \f_{i,j} = 2\e_n-\e_{i-1}^1-\e_{i+1}^1-\e_{j-1}^2-\e_{j+1}^2-\e_{k-1}^3-\e_{k+1}^3,\qquad i,j,k\geq 1,i+j+k=n.$$
\noindent As before, we use the convention that $\e_{0}^a$ is the zero vector for $a=1,2,3$.

\begin{lemma} \label{L:f'} For each $(i,j)\in \delta_n$, we have that
\begin{enumerate}
\item $\dim \SI_\bn^{\g_{i,j}}(T_n)=2$ and $\dim \SI_\bn^{\f_{i,j}'}(T_n)=1$.
\item The weight $\f_{i,j}'$ is extremal in $\Sigma_\bn(T_n)$, so $\SI_\bn^{\f_{i,j}'}(T_n)$ is spanned by an irreducible polynomial.
\end{enumerate} \end{lemma}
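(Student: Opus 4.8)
The plan is to compute both $\SI_\bn^{\g_{i,j}}(T_n)$ and $\SI_\bn^{\f_{i,j}'}(T_n)$ by identifying the relevant Littlewood--Richardson coefficients via Theorem~\ref{T:eqLR}, and then to establish extremality by exhibiting the facets of $\cone{n}$ on which $\f_{i,j}'$ lies (or, more cheaply, by appealing to the description of $\mr{G}_n$ and Theorem~\ref{T:VPiso}). First I would translate the weights $\g_{i,j}$ and $\f_{i,j}'$ into triples of partitions using \eqref{eq:wt2par1}--\eqref{eq:wt2par3}. Since $\f_{i,j}=\e_n-\e_i^1-\e_j^2-\e_k^3$ corresponds to $(1^{i+j},1^i,1^j)$ by Lemma~\ref{L:level1_par}, and $\g_{i,j}=\f_{i-1,j}+\f_{i,j+1}+\f_{i+1,j-1}$ has level $3$, a direct computation shows $\mu(\g_{i,j})$, $\nu(\g_{i,j})$, $\lambda(\g_{i,j})$ are rectangular-ish partitions for which $c^{\lambda}_{\mu\nu}=2$; likewise $\f_{i,j}'=\g_{i,j}-\f_{i,j}$ is a level-$2$ weight whose associated triple gives $c^\lambda_{\mu\nu}=1$. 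These are small, explicit Littlewood--Richardson computations (two-row or hook-type shapes), so I would carry them out by the Littlewood--Richardson rule or by the hive/LR-triangle count of Lemma~\ref{L:LP_LR}. Alternatively, and perhaps more uniformly, one can use Theorem~\ref{T:VPiso}: $\dim\SI_\bn(T_n)_\sigma$ equals the number of lattice points of $\mr{G}_n(\sigma)$, and for $\sigma=\g_{i,j}$ or $\f_{i,j}'$ one checks directly from Definition~\ref{D:Gn} that this polytope has exactly two, resp.\ one, lattice point.

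For part (2), I would show $\f_{i,j}'$ is extremal in $\Sigma_\bn(T_n)$. The cleanest route is via the cone $\mr{G}_n$: under the weight configuration $\bs{\sigma}_n$, mutating at $(i,j)$ sends $\f_{i,j}\mapsto\f_{i,j}'$, and extremality of $\f_{i,j}'$ in $\Sigma_\bn(T_n)=\mb{R}^+\Sigma_\bn(T_n)\cap(\text{lattice})$ corresponds, under the volume-preserving isomorphism of Theorem~\ref{T:VPiso}, to the fact that the polytope $\mr{G}_n(\f_{i,j}')$ is a single point lying on enough facets of $\mr{G}_n$ to pin down an extremal ray; equivalently, one produces the $\binom{n-1}{2}+3(n-1)-1$ independent dimension vectors $\gamma$ with $\f_{i,j}'(\gamma)=0$ and $\gamma\hookrightarrow\bn$ directly from the straight-path and frozen-vertex inequalities of Definition~\ref{D:Gn}. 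Once extremality is in hand, and since $\f_{i,j}'$ is indivisible (its last coordinate is $2$, but its entries have no common factor, as one reads off from the explicit formula $2\e_n-\e_{i-1}^1-\e_{i+1}^1-\cdots$), Lemma~\ref{L:irreducible} immediately gives that any semi-invariant of weight $\f_{i,j}'$ is irreducible, and part (1) says this space is one-dimensional.

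The main obstacle I anticipate is the extremality statement: verifying $\dim\SI=2$ and $\dim\SI=1$ reduces to finite, explicit LR-coefficient computations which are essentially bookkeeping, but to show $\f_{i,j}'$ spans an \emph{extremal} ray one must exhibit a codimension-one-worth of facets of $\cone{n}$ through it, and the facets of $\cone{n}$ are governed by the Derksen--Weyman criterion (Theorem~\ref{T:CT}), which can be delicate near the boundary cases $ij=0$ or $i+j=n$ where some $\e^a_{i\pm1}$ vanish. I would handle the boundary cases separately, using the conventions in \eqref{eq:fij_boundary}, and in the generic interior case list the relevant $\gamma\hookrightarrow\bn$ explicitly from the straight-path description of $\mr{G}_n$, which I expect makes the required rank count transparent. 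If a fully self-contained argument proves cumbersome, the fallback is to invoke Theorem~\ref{T:VPiso} together with the observation that a lattice polytope that is a single point and sits on an extremal ray of the ambient cone $\mr{G}_n$ forces the corresponding ray of $\Sigma_\bn(T_n)$ to be extremal, sidestepping Algorithm~CT entirely.
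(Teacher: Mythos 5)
Your part (1) is essentially the paper's argument: translate $\g_{i,j}$ and $\f_{i,j}'$ into partition triples via \eqref{eq:wt2par1}--\eqref{eq:wt2par3} and evaluate the two Littlewood--Richardson coefficients (the paper passes to transposed partitions and reduces to $c^{(3,2,1)}_{(2,1),(2,1)}=2$ and $c^{(3,1)}_{(2),(2)}=1$); your alternative of counting lattice points of $\mr{G}_n(\sigma)$ via Theorem \ref{T:VPiso} is also legitimate and non-circular, though in either form you only assert the computation rather than do it.

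For part (2) there is a genuine gap. You try to read off extremality of $\f_{i,j}'$ in $\cone{n}$ from the cone $\mr{G}_n$, but these cones live in different spaces: $\mr{G}_n\subset\mb{R}^{\delta_n}$ is cut out by the straight-path and frozen-vertex inequalities, while the facets of $\cone{n}\subset\mb{R}^{(T_n)_0}$ are cut out by dimension vectors $\gamma\hookrightarrow\bn$ (Theorem \ref{T:CT}); Definition \ref{D:Gn} does not hand you such $\gamma$'s, so the promised list of independent dimension vectors vanishing on $\f_{i,j}'$ is not forthcoming (and your count is the dimension of the $\g$-vector space, not the $3n-3$ facets one would need in weight space). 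Your fallback principle can be made rigorous, but only under two hypotheses you never verify: that the real polytope $\mr{G}_n(\f_{i,j}')$ is literally a single point (having a unique lattice point is strictly weaker), and that this point spans an extremal ray of $\mr{G}_n$ (extremality is not preserved by the projection $\g\mapsto\g\bs{\sigma}_n$ in general, and it is not obvious that the mutated $\g$-vector is extremal in $\mr{G}_n$; the paper itself cautions that extremal rays of $\mr{G}_n$ and $\g$-vectors of cluster variables need not match). Note also that Theorem \ref{T:VPiso} is an isomorphism between $\mr{G}_n(\sigma)$ and $\lr{n}(\sigma)$ inside $\mb{R}^{\delta_n}$; it says nothing about the map to weight space. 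The paper avoids all of this with a short level argument: $\f_{i,j}'$ has level $2$ and every weight in $\Sigma_\bn(T_n)$ has positive level, so any decomposition $\f_{i,j}'=\f_1+\f_2$ in $\Sigma_\bn(T_n)$ would force both summands to be level-$1$, which the explicit classification of level-$1$ weights in Lemma \ref{L:level1} rules out; extremality (together with indivisibility, which you do check) then feeds into Lemma \ref{L:irreducible} exactly as you intend. I would replace your $\mr{G}_n$-based argument by this level argument, or else supply proofs of the two missing hypotheses.
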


\begin{proof} (1). From the correspondence \eqref{eq:wt2par1}--\eqref{eq:wt2par3}, we need to show that
$c_{\mu(\g_{i,j})\nu(\g_{i,j})}^{\lambda(\g_{i,j})}=2$.
It is easy to see that the transposed partitions $\lambda(\g_{i,j})^*,\mu(\g_{i,j})^*,\nu(\g_{i,j})^*$ is given by
$$(3,2,1)+(i+j-2)(1,1,1), (2,1,0)+(i-1)(1,1,1), (2,1,0)+(j-1)(1,1,1).$$
So everything reduces to show $\lrcoef = 2$ for $\tripar=\big((3,2,1),(2,1),(2,1)\big)$.
This is clear from the Littlewood-Richards rule.
Similarly, we show that $c_{\mu(\f_{i,j}')\nu(\f_{i,j}')}^{\lambda(\f_{i,j}')}=1$.
The transposed partitions $\lambda(\f_{i,j}')^*,\mu(\f_{i,j}')^*,\nu(\f_{i,j}')^*$ is given by
$$(3,1)+(i+j-2)(1,1), (2,0)+(i-1)(1,1), (2,0)+(j-1)(1,1).$$
So we only need to observe that $c_{(2),(2)}^{(3,1)}=1$.

(2). Suppose that $\f_{i,j}' = \f_1 + \f_2$ with $\f_1,\f_2\in \Sigma_\bn(T_n)$,
then $\f_1$ and $\f_2$ must be two level-1 weights.
But this is clearly impossible. So $\f_{i,j}'$ spans an extremal ray.
According to Lemma \ref{L:irreducible}, $\SI_\bn^{\f_{i,j}'}(T_n)$ is spanned by an irreducible polynomial,
\end{proof}

For each $\f_{i,j}'$, we associate a Schofield's semi-invariant function corresponding to the presentation
\begin{align*}
& P_{i-1}^1\oplus P_{j+1}^2\oplus P_{k-1}^3\oplus P_{i+1}^1\oplus P_{j-1}^2\oplus P_{k+1}^3\xrightarrow{\sm{p_{i-1}^1 & p_{j+1}^2 & p_{k-1}^3 & 0 & 0 & p_{k+1}^3 \\0 & 0 & 0 & p_{i+1}^1 & p_{j-1}^2  & p_{k+1}^3}^{\T}} 2P_n.
\end{align*}

\noindent We keep using the short hand $s_{i,j}$ and $s_{i,j}'$ for Schofield's semi-invariants $s(f_{i,j})$ and $s(f_{i,j}')$.
The relations below are the initial exchange relations needed to apply Lemma \ref{L:RCA}.
\begin{lemma} \label{L:exchange} We have the following relations
\begin{align*}
& (-1)^n s_{i,j}s_{i,j}' = s_{i-1,j} s_{i,j+1} s_{i+1,j-1} + s_{i+1,j} s_{i,j-1} s_{i-1,j+1}.
\end{align*}
\end{lemma}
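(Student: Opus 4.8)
The plan is to identify both sides of the claimed identity as semi-invariant functions of the same weight $\f_{i,j}'+\f_{i,j}=\g_{i,j}$, and then to pin down the scalar by a direct evaluation on a conveniently chosen representation. First I would observe that $s_{i-1,j}s_{i,j+1}s_{i+1,j-1}$ and $s_{i+1,j}s_{i,j-1}s_{i-1,j+1}$ are each, by Lemma~\ref{L:exact}, equal to $s(M)$ for a representation $M$ of dimension $^\alpha(\g_{i,j})$, because $\g_{i,j}=\f_{i-1,j}+\f_{i,j+1}+\f_{i+1,j-1}=\f_{i+1,j}+\f_{i,j-1}+\f_{i-1,j+1}$ exhibits $\g_{i,j}$ as a sum of level-1 weights in two ways, and the associated real Schur roots sit in short exact sequences (just as in the last display of Lemma~\ref{L:embed_Tn-1}); similarly $s_{i,j}s_{i,j}'$ has weight $\f_{i,j}+\f_{i,j}'=\g_{i,j}$. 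Hence both sides lie in $\SI_\bn(T_n)_{\g_{i,j}}$, which by Lemma~\ref{L:f'}(1) is $2$-dimensional.

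So the content of the lemma is that the three products $s_{i,j}s_{i,j}'$, $s_{i-1,j}s_{i,j+1}s_{i+1,j-1}$, $s_{i+1,j}s_{i,j-1}s_{i-1,j+1}$ span this $2$-dimensional space with a linear relation, and that the coefficients are exactly $(-1)^n,-1,-1$ after normalizing the Schofield presentations $f_{i,j}$, $f_{i,j}'$ as written. The natural way to get the relation itself is the Laplace/block-matrix expansion of $\det\Hom_{T_n}(f_{i,j}',N)$: the presentation $f_{i,j}'$ maps $P_{i-1}^1\oplus P_{j+1}^2\oplus P_{k-1}^3\oplus P_{i+1}^1\oplus P_{j-1}^2\oplus P_{k+1}^3\to 2P_n$, and its matrix is visibly block-structured with two "halves," each half essentially an $f$-type presentation onto one copy of $P_n$ (with the shared arrow $p_{k+1}^3$ appearing in both rows, which is what produces two terms rather than a single product). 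Expanding the determinant of $\Hom(f_{i,j}',N)$ along the block columns coming from the two copies of $P_n$ gives exactly a sum of two $(3+3)$-minor products; matching each minor with a Schofield determinant of the appropriate $f_{i-1,j},f_{i,j+1},\dots$ up to sign, and absorbing the $s_{i,j}$ factor that must be divided out (because $\g_{i,j}$ is not indivisible — note $\f_{i,j}$ is a level-1, hence extremal, subweight), yields the stated identity.

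The main obstacle — and where the actual work is — is bookkeeping the signs: both the global sign $(-1)^n$ and the relative sign between the two terms must be tracked through the transpose in $\Hom_{T_n}(-,N)$, through the ordering of the direct summands in the presentations, and through the Laplace expansion. I would fix an explicit ordered basis of $\Hom_{T_n}(2P_n,N)=N_n\oplus N_n$ and of the target $\Hom$ of the six indecomposable projectives, write $\Hom(f_{i,j}',N)$ as a genuine square matrix, and compute its determinant by the generalized Laplace expansion along the first-row block versus second-row block of columns; the cross terms vanish (they involve a repeated column block), leaving precisely two surviving terms. An alternative, slicker route that avoids some of this is to evaluate everything on the explicit "identity-matrix" representatives of Lemma~\ref{L:2arm}: on such $M$, the matrices $A_i,B_i$ are $(I_k\ 0)$ and $(0\ I_k)$, and each $s_{i-1,j}(M),\dots$ becomes an explicit small determinant (essentially a single coordinate or a product of coordinates of the third-arm matrices and the central maps), so the identity reduces to a finite determinantal identity in a handful of generic variables; one then invokes that a general representation is $\sigma$-stable (so the common weight space is spanned by a nonvanishing function) to conclude that an identity holding on a dense set holds identically, and reads off the constants $(-1)^n,-1,-1$ from that computation. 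I expect to present the block-Laplace argument for conceptual clarity and defer the sign normalization to a short explicit check on a representative of Lemma~\ref{L:2arm}.
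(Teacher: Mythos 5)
Your first reduction coincides with the paper's: all three products lie in the weight-$\g_{i,j}$ component, which is $2$-dimensional by Lemma \ref{L:f'}(1), so a nontrivial linear relation exists and the problem is to pin down its coefficients. But your main route for producing the relation, the block-Laplace expansion of $\det\Hom_{T_n}(f_{i,j}',N)$, does not work as described. The matrix of $\Hom(f_{i,j}',N)$ has two column blocks (the two copies of $N(n)$) and six row blocks, and the row block coming from $P_{k+1}^3$, of size $k+1$, meets \emph{both} column blocks. The generalized Laplace expansion along the two column blocks therefore has $k+1$ potentially nonzero terms (one for each choice of the single row of that shared block that joins the minor taken from the first column block), not two, and the resulting $n\times n$ minors are not the Schofield determinants $s_{i\pm1,j}$, $s_{i,j\pm1}$, $s_{i\mp1,j\pm1}$. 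More fundamentally, $\det\Hom(f_{i,j}',N)$ is $s_{i,j}'(N)$ alone, while the left-hand side of the lemma is a product of two determinants, so no single Laplace expansion can produce it; the step ``absorbing the $s_{i,j}$ factor that must be divided out'' has no content, since no such factor occurs in the expansion. What this route actually requires is a condensation-type (Desnanot--Jacobi/Pl\"{u}cker) identity relating $s_{i,j}s_{i,j}'$ to sums of products of complementary minors, and that argument is missing.

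Your fallback route (evaluation on the normal forms of Lemma \ref{L:2arm} plus a density argument) is in the spirit of the paper's proof but stops short of the real work: with the third arm left fully generic, ``reading off the constants'' means computing all seven specialized semi-invariants as polynomials in $O(n^2)$ generic entries and verifying a nontrivial determinantal identity, which is essentially the lemma itself. The paper instead exploits the $2$-dimensionality to reduce everything to determining three scalars, and evaluates the three products at two explicitly chosen degenerate representations $M$ and $M'$ (arms in the normal form of Lemma \ref{L:2arm}, with one specially chosen sparse matrix on the third, respectively second, arm), where every factor specializes to $0$ or $\pm1$; the resulting small linear system forces the three coefficients to be equal. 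To complete your proof you would need either to supply the condensation identity, or to make the evaluation step concrete by exhibiting specializations at which all seven semi-invariants are computable and which yield a rank-two system for the coefficients.
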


\begin{proof}
Let \begin{align*}
F_0=(-1)^n s_{i,j}s_{i,j}',\ F_1= s_{i-1,j}s_{i,j+1}s_{i+1,j-1},\ F_2= s_{i+1,j}s_{i,j-1}s_{i-1,j+1}.
\end{align*}
Since $\dim \SI_\bn^{\g_{i,j}}(T_n)=2$, we must have that
$a_0 F_0 = a_1 F_1  + a_2 F_2,$ for some $a_0,a_1,a_2\in k$.
To find $a_i$, we consider a special representation $M$,
whose matrices on the first arm are $(I_k \ 0)$;
on the second arm are $(0 \ I_k)$;
on the third arm are $(I_k \ 0)$ for $k=1,\dots,n-2$,
where $I_k$ and $0$ are as in Lemma \ref{L:2arm}.
The last matrix on the third arm is
$$M(a_{n-1}^3)_{rc}=\begin{cases} 1 & \text{ if } r-c=i-1,i,i+1 \\ 0 & \text{ otherwise.} \end{cases}.$$
Specializing the semi-invariants at $M$, we find that
$s_{i-1,j},s_{i+1,j-1},s_{i+1,j},s_{i-1,j+1}$ are triangular with diagonal all ones.
Moreover, elementary linear algebra calculation shows that $s_{i,j}'=(-1)^{n}$ and
$$\big( s_{i,j},s_{i,j+1},s_{i,j-1} \big)=
\begin{cases} (-1)^q(1,1,0) & j=3q \\ (-1)^q(1,0,1) & j=3q+1 \\ (-1)^q(0,-1,1) & j=3q+2. \end{cases}$$
Similarly, we consider another representation $M'$ which is the same as $M$ except that
the last matrix $M(a_{n-1}^2)$ on the second arm is replaced by $(I_{n-1} \ 0)+(0\ I_{n-1})$.
We find that the specialization of $s_{i-1,j},s_{i+1,j-1},s_{i+1,j},s_{i-1,j+1},s_{i,j}'$ at $M'$ is the same as at $M$.
But $$\big( s_{i,j},s_{i,j+1},s_{i,j-1}\big)=
\begin{cases} (-1)^q(1,0,1) & j=3q \\ (-1)^q(0,-1,1) & j=3q+1 \\ (-1)^q(-1,-1,0) & j=3q+2. \end{cases}$$
Therefore, we conclude that $a_0=a_1=a_2$ by solving a linear system.
\end{proof}

\subsection{Main Theorem}
\begin{theorem} \label{T:equal} We assign for each vertex $(i,j)$ of the hive quiver $\Delta_n$ the semi-invariant function
$s(f_{ij})$.
Then $\SI_\bn(T_n)$ is the upper cluster algebra $\br{\mc{C}}(\Delta_n,\mc{S}_n)$.
\end{theorem}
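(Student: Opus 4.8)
The plan is to prove the two inclusions $\br{\mc{C}}(\Delta_n,\mc{S}_n)\subseteq \SI_\bn(T_n)$ and $\SI_\bn(T_n)\subseteq \br{\mc{C}}(\Delta_n,\mc{S}_n)$ separately, combining the algebraic-independence result (Theorem \ref{T:ag-independent}), the lattice-point description of $G(\Delta_n,W_n)$ (Theorem \ref{T:LP_Gn}), and the volume-preserving isomorphism with the Littlewood-Richardson cone (Theorem \ref{T:VPiso}).

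\textbf{Step 1: $(\Delta_n,\mc{S}_n)$ is a genuine seed.} By Theorem \ref{T:ag-independent} the functions $s(f_{ij})$, $(i,j)\in\delta_n$, are algebraically independent over $k$, so $\mc{S}_n$ is a legitimate extended cluster in $\op{QF}(\SI_\bn(T_n))$. By Lemma \ref{L:WC} the assignment $(i,j)\mapsto \f_{i,j}$ is a full weight configuration $\bs{\sigma}_n$ on $\Delta_n$, so $(\Delta_n,\mc{S}_n;\bs{\sigma}_n)$ is a graded seed; the $B$-matrix of $\Delta_n$ has full rank by the first lemma of Section \ref{S:Hive}, so Theorem \ref{T:bounds} and Lemma \ref{L:RCA} are available.

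\textbf{Step 2: $\br{\mc{C}}(\Delta_n,\mc{S}_n)\subseteq \SI_\bn(T_n)$.} Here I would invoke Lemma \ref{L:RCA}: $\SI_\bn(T_n)$ is a finitely generated UFD over $k$ (it is the ring of semi-invariants of a connected simply connected group acting on an affine space, hence a UFD; finite generation is classical). It remains to check that every element of the initial cluster and of each adjacent cluster is irreducible in $\SI_\bn(T_n)$. The initial cluster variables $s(f_{ij})$ for mutable $(i,j)$ have indivisible extremal weight $\f_{i,j}$ by Lemma \ref{L:level1}, hence are irreducible by Lemma \ref{L:irreducible}; the coefficient variables $s(f_{ij})$ for frozen $(i,j)$ are likewise irreducible. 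The adjacent cluster variable at a mutable vertex $(i,j)$ is, by the exchange relation \eqref{eq:exrel}, exactly $s_{i,j}'$ up to sign by Lemma \ref{L:exchange}; and $s_{i,j}'$ has indivisible extremal weight $\f_{i,j}'$ by Lemma \ref{L:f'}(2), hence is irreducible by Lemma \ref{L:irreducible}. So Lemma \ref{L:RCA} gives the containment.

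\textbf{Step 3: $\SI_\bn(T_n)\subseteq \br{\mc{C}}(\Delta_n,\mc{S}_n)$, via dimension counting.} This is where the cluster-character machinery enters and where I expect the main difficulty. The IQP $(\Delta_n,W_n)$ is nondegenerate (rigid, by Lemma \ref{L:QP_Delta}) and $B(\Delta_n)$ has full rank, so by Corollary \ref{C:GCC} the generic character $C_{W_n}$ maps $G(\Delta_n,W_n)$ bijectively to a linearly independent subset of $\br{\mc{C}}(\Delta_n)$, and by Lemma \ref{L:CCupper} an element with $\g$-vector $\g$ is $\bs{\sigma}_n$-homogeneous of weight $\g\bs{\sigma}_n$. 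Fix a weight $\sigma$ and restrict to the $\sigma$-graded piece: the number of lattice points $\g\in G(\Delta_n,W_n)$ with $\g\bs{\sigma}_n=\sigma$ equals $\#(\mr{G}_n(\sigma)\cap\mb{Z}^{\delta_n})$ by Theorem \ref{T:LP_Gn}, which by Theorem \ref{T:VPiso} equals $c^{\lambda(\sigma)}_{\mu(\sigma),\nu(\sigma)}$, which by Theorem \ref{T:eqLR} equals $\dim \SI_\bn(T_n)_\sigma$. Since these $C_{W_n}(\g)$ are linearly independent elements of $\br{\mc{C}}(\Delta_n,\mc{S}_n)$ lying in degree $\sigma$, and from Step 2 $\br{\mc{C}}(\Delta_n,\mc{S}_n)\subseteq\SI_\bn(T_n)$ which is the direct sum of its finite-dimensional graded pieces, the $\sigma$-graded piece of $\br{\mc{C}}(\Delta_n,\mc{S}_n)$ has dimension at least $\dim\SI_\bn(T_n)_\sigma$, hence equals it. Summing over all $\sigma$ gives $\br{\mc{C}}(\Delta_n,\mc{S}_n)=\SI_\bn(T_n)$.

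\textbf{Main obstacle.} The delicate point is the compatibility of gradings in Step 3: one must be sure that $C_{W_n}(\g)$ genuinely lands in the $\sigma$-graded piece of $\SI_\bn(T_n)$ under the identification of $\br{\mc{C}}(\Delta_n,\mc{S}_n)$ with a subring of $\SI_\bn(T_n)$ — i.e. that the cluster-algebra grading by $\g\mapsto\g\bs{\sigma}_n$ matches the $\GL_{\bn}$-weight grading on semi-invariants. This is exactly what makes the weight configuration $\bs{\sigma}_n=(\f_{i,j})$ the right one, but it requires knowing that the initial cluster variables $s(f_{ij})$ have the stated semi-invariant weights (Lemma \ref{L:level1_par}, \eqref{eq:fij_boundary}) and that the full-rank condition forces the $\g$-vector to control all gradings (Lemma \ref{L:CCupper}). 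Once that is in place, the counting argument closes because both sides are graded with finite-dimensional pieces and one inclusion is already known, so a dimension equality in each degree upgrades to an equality of algebras.
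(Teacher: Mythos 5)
Your proposal is correct and follows essentially the same route as the paper: Theorem \ref{T:ag-independent} for the seed, Lemma \ref{L:RCA} together with Lemma \ref{L:exchange} and Lemma \ref{L:f'}(2) (plus the UFD property of $\SI_\bn(T_n)$) for the containment $\br{\mc{C}}(\Delta_n,\mc{S}_n)\subseteq \SI_\bn(T_n)$, and then the graded dimension count via Corollary \ref{C:GCC}, Theorem \ref{T:LP_Gn}, Theorem \ref{T:VPiso}, and Theorem \ref{T:eqLR} for the reverse inclusion. The only difference is that you spell out a few details the paper leaves implicit (irreducibility of the initial cluster variables and the matching of the $\bs{\sigma}_n$-grading with the semi-invariant weight grading via Lemma \ref{L:CCupper}), which is exactly how the paper intends those steps to be justified.
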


\begin{proof}  Due to Theorem \ref{T:ag-independent}, such an assignment does define a cluster algebra $\mc{C}(\Delta_n,\mc{S}_n)$.
By \cite[Theorem 3.17]{PV}, any $\SI_\bn(T_n)$ is a UFD.
Due to Lemma \ref{L:exchange} and \ref{L:f'}.(2), we can apply Lemma \ref{L:RCA} and get the containment $\br{\mc{C}}(\Delta_n,\mc{S}_n)\subseteq \SI_\bn(T_n)$.
By Corollary \ref{C:GCC}, $C_{W_n}(\mr{G}_n(\sigma)\cap \mb{Z}^{\delta_n})$ is a linearly independent set in $\SI_\bn(T_n)_\sigma$ for any weight $\sigma$.
But the dimension of $\SI_\beta(T_n)_\sigma$ is counted by $|\mr{G}_n(\sigma)\cap \mb{Z}^{\delta_n}|$ according to Theorem \ref{T:eqLR} and \ref{T:VPiso}.
Hence, we have the equality $\br{\mc{C}}(\Delta_n,\mc{S}_n)= \SI_\bn(T_n)$.
\end{proof}

\noindent It follows from the proof that
\begin{corollary} \label{C:basis} The generic character $C_{W_n}$ maps the lattice points in $\mr{G}_n$ (bijectively) onto a basis of $\br{\mc{C}}(\Delta_n)$.
\end{corollary}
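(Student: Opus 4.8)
The plan is to deduce the corollary from the ingredients already assembled for Theorem~\ref{T:equal}, organizing everything by the weight grading and matching a linearly independent set against a dimension count.

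First I would recall from Lemma~\ref{L:WC} that $\bs{\sigma}_n$ is a full weight configuration, so $\br{\mc{C}}(\Delta_n)$ carries the $\bs{\sigma}_n$-grading with $\deg s(f_{ij})=\f_{ij}$; under the identification $\br{\mc{C}}(\Delta_n,\mc{S}_n)=\SI_\bn(T_n)$ of Theorem~\ref{T:equal} this grading is exactly the $\sigma$-grading of the semi-invariant ring. By Lemma~\ref{L:CCupper}, for $\g\in G(\Delta_n,W_n)$ the element $C_{W_n}(\g)$ is multihomogeneous of degree $\g\bs{\sigma}_n$, so $C_{W_n}$ sends the lattice points of the fiber polytope $\mr{G}_n(\sigma)=\{\g\in\mr{G}_n\mid \g\bs{\sigma}_n=\sigma\}$ into the single graded piece $\SI_\bn(T_n)_\sigma$. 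Since $\g\bs{\sigma}_n$ is an integral weight for every integral $\g$, the lattice points of $\mr{G}_n$ are the disjoint union over $\sigma$ of the lattice points of $\mr{G}_n(\sigma)$.

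Next I would invoke Theorem~\ref{T:LP_Gn} to identify $\mr{G}_n\cap\mb{Z}^{\delta_n}$ with $G(\Delta_n,W_n)$ and, using that $B(\Delta_n)$ has full rank, apply Corollary~\ref{C:GCC}: $C_{W_n}$ is injective on $G(\Delta_n,W_n)$ with linearly independent image in $\br{\mc{C}}(\Delta_n)$. Restricting to a fixed $\sigma$, the image of $\mr{G}_n(\sigma)\cap\mb{Z}^{\delta_n}$ is a linearly independent subset of $\SI_\bn(T_n)_\sigma$ of size $|\mr{G}_n(\sigma)\cap\mb{Z}^{\delta_n}|=c^{\lambda(\sigma)}_{\mu(\sigma)\,\nu(\sigma)}$ by Theorem~\ref{T:VPiso}, which by Theorem~\ref{T:eqLR} equals $\dim_k\SI_\bn(T_n)_\sigma$. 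A linearly independent set of the right size in a finite-dimensional space is a basis, so $C_{W_n}$ maps $\mr{G}_n(\sigma)\cap\mb{Z}^{\delta_n}$ bijectively onto a basis of $\SI_\bn(T_n)_\sigma$.

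Finally I would take the union over all $\sigma$: since $\SI_\bn(T_n)=\bigoplus_\sigma\SI_\bn(T_n)_\sigma=\br{\mc{C}}(\Delta_n)$ and each graded piece receives a basis, the full image is a basis of $\br{\mc{C}}(\Delta_n)$, and bijectivity is inherited from the injectivity of $C_{W_n}$ on $G(\Delta_n,W_n)$. There is no real obstacle here beyond bookkeeping; the only points needing care are the compatibility of the cluster-algebra grading induced by $\bs{\sigma}_n$ with the semi-invariant $\sigma$-grading — this is exactly what legitimizes the per-$\sigma$ dimension comparison — and the observation that the fibers $\mr{G}_n(\sigma)$ genuinely partition the lattice points of $\mr{G}_n$, which is immediate because $\g\mapsto\g\bs{\sigma}_n$ is integral and linear.
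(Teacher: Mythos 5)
Your proposal is correct and follows essentially the same route as the paper: the paper derives Corollary~\ref{C:basis} directly from the proof of Theorem~\ref{T:equal}, where Corollary~\ref{C:GCC} gives linear independence of $C_{W_n}(\mr{G}_n(\sigma)\cap\mb{Z}^{\delta_n})$ in each graded piece and Theorems~\ref{T:eqLR} and~\ref{T:VPiso} supply the matching dimension count, exactly as you organize it weight by weight. Your explicit bookkeeping of the grading compatibility (via Lemma~\ref{L:CCupper}) and of the partition of lattice points into the fibers $\mr{G}_n(\sigma)$ simply spells out what the paper leaves implicit.
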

\noindent This can be viewed as an algebraic analogue of the full Fock-Goncharnov conjecture for $\SL_n$ \cite{FG}.

\begin{proposition} \label{P:finite} Suppose that a cluster algebra $\mc{C}(\Delta)$ has only finitely many clusters.
If all extremal rays in the $\g$-vector cone $\mb{R}_+G(\Delta)$ are generated by the $\g$-vectors of cluster variables,
then $G(\Delta)$ is generated by the $\g$-vectors of cluster and coefficient variables over $\mb{Z}_+$.
\end{proposition}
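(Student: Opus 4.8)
The plan is to pin down $\mb{R}_+G(\Delta)$ as the polyhedral cone $\mathcal C$ generated by the $\g$-vectors of the (finitely many, since $\mc C(\Delta)$ has finitely many clusters) cluster variables together with the unit vectors $\e_v$, $v$ frozen --- these $\e_v$ being precisely the $\g$-vectors of the coefficient variables --- and then to exploit the fact that the cluster ($\g$-vector) fan subdivides $\mathcal C$ into \emph{unimodular} simplicial cones. I would first recall the standard facts available in finite type: for each extended cluster $C$ the $\g$-vectors of its members form a $\mb{Z}$-basis of $\mb{Z}^q$ (sign-coherence --- the initial cluster gives the standard basis and the mutation rule \eqref{eq:mu_delta} acts unimodularly), and the corresponding simplicial cones $\sigma_C$ fit together into a genuine fan $\mathcal F$ with finitely many maximal cones and support $|\mathcal F|\subseteq\mathcal C$.

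Next I would record the easy inclusion. By Corollary \ref{C:GCC} the image of the generic character contains all cluster monomials, in particular all cluster and coefficient variables; hence every $\g$-vector $\g_i$ of a cluster variable and every $\e_v$ lies in $G(\Delta)$, and since $G(\Delta)$ is a subsemigroup of $\mb{Z}^q$ we get $\mathcal C\subseteq\mb{R}_+G(\Delta)$; as $\mathcal C$ contains the initial-cluster cone $\mb{R}_{\geqslant0}^q$ it is full-dimensional. I would also note that $|\mathcal F|$ is convex: in finite type one checks (via Lemma \ref{L:reachable} and reachability) that $G(\Delta,W)$ is exactly the set of $\g$-vectors of cluster monomials, i.e.\ $G(\Delta,W)=|\mathcal F|\cap\mb{Z}^q$, which is a semigroup by Proposition \ref{P:muG}; therefore $|\mathcal F|=\mb{R}_+G(\Delta,W)$ is convex, and so $|\mathcal F|=\mathcal C$.

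The heart of the matter, and the step I expect to be the main obstacle, is the reverse inclusion $\mb{R}_+G(\Delta)\subseteq\mathcal C$, which is where the hypothesis is consumed: it says that every extremal ray of the convex cone $\mb{R}_+G(\Delta)$ is spanned by some $\g_i$ and hence lies in $\mathcal C$. If $\mb{R}_+G(\Delta)$ were known to be a \emph{closed} cone this would finish it --- after passing to the quotient by the lineality space, on which the cones become pointed, a closed convex cone is the conical hull of its extremal rays, and these lie in the convex cone $\mathcal C$. The delicate point is to rule out the a priori possibility that $\mb{R}_+G(\Delta)$ is a non-closed convex cone whose finitely many extremal rays all sit inside $\mathcal C$ while the cone itself overflows $\mathcal C$. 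This too rests on finite type: one invokes the equality $\br{\mc C}(\Delta)=\mc C(\Delta)$, writes an arbitrary element of $G(\Delta)$ as a polynomial in cluster and coefficient variables, and extracts its lowest--$\hat{\b y}$-degree monomial exactly as in the proof of Lemma \ref{L:independent}; since that minimal monomial comes from a product of cluster and coefficient variables, its $\g$-vector is a $\mb{Z}_+$-combination of the $\g_i$ and $\e_v$, so $G(\Delta)\subseteq\mathcal C\cap\mb{Z}^q$ and hence $\mb{R}_+G(\Delta)=\mathcal C$. In this light the extremal-ray hypothesis is the convenient, directly verifiable reformulation of this closedness.

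Finally, with $\mb{R}_+G(\Delta)=\mathcal C=|\mathcal F|$ established, the fan $\mathcal F$ subdivides $\mathcal C$ into finitely many unimodular simplicial cones $\sigma_C$; any lattice point of $\mathcal C$ lies in some $\sigma_C$, and since the generators of $\sigma_C$ --- the $\g$-vectors of the extended cluster $C$ --- form a $\mb{Z}$-basis, that lattice point is a $\mb{Z}_+$-combination of $\g$-vectors of cluster and coefficient variables. Hence $G(\Delta)\subseteq\mathcal C\cap\mb{Z}^q=\mb{Z}_+\langle\g_i,\e_v\rangle\subseteq G(\Delta)$, so all three coincide, which is the assertion.
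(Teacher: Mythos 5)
Your skeleton (unimodular cluster cones, the final lattice-point argument) matches the paper's, and both your easy inclusion $\mathcal C\subseteq\mb{R}_+G(\Delta)$ and your concluding unimodularity step are fine. But the two middle steps, which carry the real content, have genuine gaps. First, the convexity of the support $|\mathcal F|$ of the cluster fan: you deduce it from the claim that in finite type $G(\Delta,W)=|\mathcal F|\cap\mb{Z}^q$. Lemma \ref{L:reachable} and Corollary \ref{C:GCC} only give that $\g$-vectors of cluster monomials lie in $G(\Delta,W)$; the reverse containment (every $\mu$-supported $\g$-vector is the $\g$-vector of a cluster monomial) is precisely the hard statement, is proved nowhere in the paper, and in the intended application ($\Delta_n$, $n<6$) it is only obtained \emph{a posteriori} from this very proposition together with Corollary \ref{C:basis} --- so this step is circular. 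The paper obtains convexity by a different route that avoids $G(\Delta,W)$ and any potential altogether: it cites \cite[Proposition 6.1]{DF}, which says that finiteness of clusters forces the union $\mr{G_r}$ of the cones $\mb{R}_+\b{g}$ over all clusters to cover the whole space in the coefficient-free case, and upgrades this to the present coefficient setting via the fact (\cite{T}) that an extended $\g$-vector is determined by its principal part; this shows directly that $\mr{G_r}$ is a polyhedral convex cone.

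Second, the reverse inclusion $\mb{R}_+G(\Delta)\subseteq\mathcal C$: you invoke $\br{\mc C}(\Delta)=\mc C(\Delta)$ in finite type, which again is not available here (for $\Delta_n$, $n<6$, that equality is a \emph{corollary} of this proposition, not an input), and even granting it, your leading-term extraction does not work as stated: when $z$ is written as a polynomial in cluster and coefficient variables, the minimal-degree monomials of the individual cluster-monomial summands may cancel, so the minimal monomial $\b{x}^{\g(z)}$ of $z$ need not be the leading monomial of any summand; it may only occur as a higher term of some summand, whose exponent differs from that summand's $\g$-vector by $-\e B(\Delta)$ for some $\e\geq 0$, and such shifts need not stay in $\mathcal C$. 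Hence you cannot conclude $\g(z)\in\mathcal C$ this way. Note also that if this step did work, the extremal-ray hypothesis would never be used, whereas in the paper it is exactly this hypothesis, combined with the polyhedrality of $\mr{G_r}$ and the trivial inclusion $\mr{G_r}\subseteq\mb{R}_+G(\Delta)$, that yields $\mr{G_r}=\mb{R}_+G(\Delta)$; after that, the unimodularity of each cluster cone (\cite[Theorem 1.7]{DWZ2}) finishes the proof exactly as in your last paragraph.
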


\begin{proof} Let $\b{x}=(x_1,x_2,\dots,x_q)$ be a cluster with $\g$-vectors $\b{g}=(\g_1,\g_2,\dots, \g_q)$.
Let $\mr{G_r}$ be the union of $\mb{R}_+ \b{g}$ for all clusters.
It is proved in \cite[Proposition 6.1]{DF} that for coefficient-free cluster algebras, the finiteness of clusters implies that $\mr{G_r}$ covers the whole $\mb{R}^q$.
The extended $\g$-vector of a cluster variable is uniquely determined by its principal part (see the remark after \cite[Definition 3.7]{T}).
So the same argument as in \cite{DF} can show that in general $\mr{G_r}$ is a polyhedral cone, and thus equal to $\mb{R}_+G(\Delta)$.
It is known \cite[Theorem 1.7]{DWZ2} that $\b{g}$ form a $\mb{Z}$-basis of the lattice $\mb{Z}^q$.
Since $\mr{G_r}$ is a union of unimodular cones $\mb{R}_+ \b{g}$, $G(\Delta)$ is generated by the $\g$-vectors of cluster and coefficient variables over $\mb{Z}_+$.
\end{proof}

\begin{corollary} For $n<6$, we have that $\br{\mc{C}}(\Delta_n)=\mc{C}(\Delta_n)$.
\end{corollary}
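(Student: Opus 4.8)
The plan is to apply Proposition \ref{P:finite} to the ice hive quiver $\Delta_n$ for every $n<6$. Recall from Theorem \ref{T:LP_Gn} and Corollary \ref{C:basis} that $G(\Delta_n)=\mr{G}_n\cap\mb{Z}^{\delta_n}$, so $\mb{R}_+G(\Delta_n)=\mr{G}_n$, and that $C_{W_n}$ maps $\mr{G}_n\cap\mb{Z}^{\delta_n}$ bijectively onto a basis of $\br{\mc{C}}(\Delta_n)$, while by Corollary \ref{C:GCC} the cluster monomials are exactly the images under $C_{W_n}$ of the non-negative integer combinations of the extended $\g$-vectors of a single extended cluster. Thus, once we verify the two hypotheses of Proposition \ref{P:finite} — that $\mc{C}(\Delta_n)$ has finitely many clusters, and that every extremal ray of $\mr{G}_n$ is generated by the $\g$-vector of a member of some extended cluster (a cluster or a coefficient variable) — its proof shows that $\mr{G}_n$ is the union of the unimodular cones spanned by the extended $\g$-vectors of the clusters, so every lattice point of $\mr{G}_n$, and hence every basis element $C_{W_n}(\g)$ of $\br{\mc{C}}(\Delta_n)$, is a cluster monomial. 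This yields $\br{\mc{C}}(\Delta_n)\subseteq\mc{C}(\Delta_n)$, and the reverse inclusion is automatic.

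The first hypothesis is a finite check on the mutable part of $\Delta_n$. For $n=2$ there are no mutable vertices, and for $n=3$ there is a single one, so there is nothing to do (for $n=2$ the algebra is just a polynomial ring). For $n=4$ the three mutable vertices $(1,1),(1,2),(2,1)$ support a single oriented $3$-cycle, which is mutation-equivalent to the Dynkin quiver $A_3$; in particular $\mc{C}(\Delta_4)$ has finitely many clusters. For $n=5$ the six mutable vertices $(i,j)$ with $i,j\ge 1$, $i+j\le 4$ support (a copy of) the size-two hive quiver $\wtd{\Delta}_2$, whose mutation class one checks by direct computation to be finite; hence $\mc{C}(\Delta_5)$ also has finitely many clusters. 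In both remaining cases the set of cluster variables is finite and one lists all of their $\g$-vectors by iterating the rule \eqref{eq:mu_delta}.

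For the second hypothesis one enumerates the extremal rays of $\mr{G}_n$ — a finite computation, since $\delta_n$ has only $3,7,12,18$ elements for $n=2,\dots,5$ and $\mr{G}_n$ is cut out explicitly by Definition \ref{D:Gn} — and matches each with the $\g$-vector of a cluster or coefficient variable. It is organized by the projection $\g\mapsto\g\bs{\sigma}_n$ onto $\cone{n}$: running Algorithm CT one sees that for $n<6$ every extremal weight of $\cone{n}$ is indivisible with $\dim\SI_\bn(T_n)_{m\sigma}=1$ for all $m$ (the associated roots being real Schur, as already noted for $n=4$ in Example \ref{Ex:T4}), so by Lemma \ref{L:dim=1} the fibre of $\mr{G}_n$ over each such ray is a single ray, and the extremal rays of $\mr{G}_n$ correspond one-to-one to those of $\cone{n}$. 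The boundary level-$1$ rays carry the $\g$-vectors $\e_{i,j}$ of the coefficient variables $s(f_{i,j})$; the interior level-$1$ rays carry the $\g$-vectors $\e_{i,j}$ of the initial cluster variables $s_{i,j}$; and the rays over the mutated weights $\f_{i,j}'$ carry the $\g$-vectors of the adjacent cluster variables $s_{i,j}'$, by the exchange relations of Lemma \ref{L:exchange}. For $n=4$ only the three extremal weights $2\e_n-\e_2^a-\e_3^b-\e_3^c$ of Example \ref{Ex:T4} remain, and mutating the initial seed twice exhibits each as the weight of a cluster variable of the type-$A_3$ algebra $\mc{C}(\Delta_4)$. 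For $n=5$ one disposes of the finitely many remaining extremal weights of $\cone{5}$ in the same fashion, checking each against the list of cluster-variable $\g$-vectors produced above.

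The only genuine labor is this last step for $n=5$: running Algorithm CT to list the extremal rays of $\cone{5}$, confirming that they are real Schur, and realizing each of the few remaining ones by an explicit sequence of mutations of $(\Delta_5,\mc{S}_5)$. Everything else is either formal or a computation on a quiver with at most six mutable vertices — which is the sense in which, as advertised in the introduction, the equality $\br{\mc{C}}(\Delta_n)=\mc{C}(\Delta_n)$ for $n<6$ is a matter of trivial checking.
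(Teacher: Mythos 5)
Your overall route is the paper's: apply Proposition \ref{P:finite} together with Corollary \ref{C:basis}, after checking (i) finiteness of the mutation class of the mutable part of $\Delta_n$ and (ii) that every extremal ray of $\mr{G}_n$ is spanned by the $\g$-vector of a cluster or coefficient variable. Your treatment of (i), and of everything for $n\le 4$, is fine. The gap is in your verification of (ii) for $n=5$: the assertion that ``the extremal rays of $\mr{G}_n$ correspond one-to-one to those of $\cone{n}$'' is false. Knowing that the fibre of the projection $\g\mapsto\g\bs{\sigma}_n$ over each \emph{extremal} weight ray is a single ray (your Lemma \ref{L:dim=1} argument, which is fine as far as it goes) only controls what lies over the extremal rays of $\cone{n}$; it does not prevent $\mr{G}_n$ from having extremal rays that project to \emph{non-extremal} weight rays, and for $n=5$ this actually happens. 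Concretely, $\mr{G}_5$ has $45$ extremal rays while $\cone{5}$ has only $42$ (these are exactly the counts in the paper's computation and in the $n=5$ proposition of Section \ref{S:smalln}); the three extra rays of $\mr{G}_5$ are spanned by $\g$-vectors of cluster variables whose weight is, up to symmetry, the decomposable weight $3\e_5-(\e_1^3+\e_2^1+\e_2^2+\e_3^2+\e_3^3+\e_4^1)$, whose weight space is $2$-dimensional and which is not on an extremal ray of $\cone{5}$. So a search organized by Algorithm CT on $\cone{5}$ accounts for only $42$ of the $45$ extremal rays of $\mr{G}_5$ and leaves the hypothesis of Proposition \ref{P:finite} unverified on the remaining three.

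The fix is to do what the paper does: compute the extremal rays of $\mr{G}_n$ itself, directly from the inequalities of Definition \ref{D:Gn} in $\mb{R}^{\delta_n}$ (the paper uses MPT, finding $18$ rays for $n=4$ and $45$ for $n=5$), and match each one against the list of $\g$-vectors of cluster and coefficient variables generated by iterating \eqref{eq:mu_delta} (the paper uses Keller's applet); there is no shortcut through $\cone{n}$. A smaller point worth flagging: Theorem \ref{T:LP_Gn} identifies $\mr{G}_n\cap\mb{Z}^{\delta_n}$ with $G(\Delta_n,W_n)$, not with $G(\Delta_n)$, so equating $\mb{R}_+G(\Delta_n)$ with $\mr{G}_n$ when invoking Proposition \ref{P:finite} needs a word of justification (Corollary \ref{C:basis} plus the minimal-degree argument of Lemma \ref{L:independent}); the paper glosses this too, but your write-up states it as if it were literally Theorem \ref{T:LP_Gn}.
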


\begin{proof} The first nontrivial case is $n=3$. This case is obvious.
For $n=4,5$, we use the software MPT \cite{MPT} to compute all extremal rays of $\mr{G}_n$.
We find that there are 18 extremal rays and 45 extremal rays in $\mr{G}_4$ and $\mr{G}_5$.
Checking with \cite{Ke}, they are all generated by the $\g$-vectors of cluster and coefficient variables.
Our claim then follows from Proposition \ref{P:finite} and Corollary \ref{C:basis}.
\end{proof}

\noindent We can show that $\br{\mc{C}}(\Delta_6)=\mc{C}(\Delta_6)$ but the proof is more involved \cite{Fs2}.
We do not know if we always have the equality $\br{\mc{C}}(\Delta_n)=\mc{C}(\Delta_n)$.

\section{Constructing Generating Sets} \label{S:smalln}

\subsection{Minimal Cluster Generating Set}

Let $Q$ be a finite quiver without oriented cycles as in Section \ref{S:SI}.
Since $\SL_\beta$ is reductive, we know from the Hochster-Roberts theorem that
the semi-invariant ring $\SI_\beta(Q)$ is finitely generated $k$-algebra.
Now we assume that $\SI_\beta(Q)$ is a cluster algebra naturally graded by weights of semi-invariants.
Although the cluster algebra may contain infinitely many cluster variables,
we can always choose a finite minimal set $F$ of generators consisting of cluster variables and coefficient variables.
We call such a set a {\em minimal cluster generating set} of $\SI_\beta(Q)$.
Recall that all cluster variables and coefficient variables are all multihomogeneous.

Let $F'$ be another minimal set of (not necessary multihomogeneous) generators of $\SI_\beta(Q)$.
We claim that $F$ and $F'$ have the same cardinality.
We know that $\SI_\beta(Q)$ is {\em positively multigraded} $k$-algebra in the sense that the degree zero component $\SI_\beta(Q)_{\b{0}}$ is $k$.
We can assume that all elements in $F'$ have no constant term,
then they belong to the ideal $J$ of $\SI_\beta(Q)$ generated by all multihomogeneous elements of nonzero degree.
By the (positively multigraded version of) Nakayama lemma, $F'$ modulo $J^2$ forms a basis of $J/J^2$.
The same argument can be applied to $F$ as well.

The idea to construct a minimal set of multihomogeneous generators is quite naive.
We compute for each weight $\sigma\in \Sigma_\beta(Q)$ the subspace $R_\sigma$ of $\SI_\beta(Q)_\sigma$
$$R_\sigma:=\sum_{\sigma'\in \Sigma_\beta(Q)} \SI_\beta(Q)_{\sigma'} \SI_\beta(Q)_{\sigma-\sigma'}.$$
Let $B_\sigma$ be a basis of any subspace of $\SI_\beta(Q)_\sigma$ complementary to $R_\sigma$.
Then $\bigcup_\sigma B_\sigma$ constitutes a minimal set of multihomogeneous generators for $\SI_\beta(Q)$.
All such minimal sets can be so obtained.
In particular, we can choose each element in $B_\sigma$ to be a cluster variable or a coefficient variable.

\begin{conjecture} \label{P:cluster_gen} If $\SI_\beta(Q)$ is a cluster algebra naturally graded by weights of semi-invariants,
then it has a unique minimal cluster generating set.
\end{conjecture}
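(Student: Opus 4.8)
The plan is to push the positively-multigraded bookkeeping from the paragraph preceding the statement one step further. For $\sigma\in\Sigma_\beta(Q)$ put $R_\sigma=\sum\SI_\beta(Q)_{\sigma'}\SI_\beta(Q)_{\sigma-\sigma'}$, the sum over nonzero $\sigma'$ with $\sigma-\sigma'$ again a weight, so that a minimal multihomogeneous generating set is the same thing as a choice, for every $\sigma$, of a subspace $B_\sigma\subseteq\SI_\beta(Q)_\sigma$ mapping isomorphically onto $\SI_\beta(Q)_\sigma/R_\sigma$, and a \emph{minimal cluster generating set} is one in which each $B_\sigma$ is spanned by cluster variables and coefficient variables. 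Because $\SI_\beta(Q)$ has $k$ in degree zero and $\Sigma_\beta(Q)$ spans a pointed cone, every cluster variable and every coefficient variable has a nonzero weight lying in $\mb{R}_{+}\Sigma_\beta(Q)$; hence any monomial of degree $\ge 2$ in these variables lies in $R_\sigma$ for its weight $\sigma$, and the images of the finitely many cluster and coefficient variables of weight $\sigma$ already span $\SI_\beta(Q)_\sigma/R_\sigma$. So the conjecture reduces to one clean assertion: \emph{for every $\sigma$ the images in $\SI_\beta(Q)_\sigma/R_\sigma$ of the cluster and coefficient variables of weight $\sigma$ are linearly independent.} (They are certainly linearly independent in $\SI_\beta(Q)$ itself, since distinct cluster monomials have distinct $\g$-vectors by Corollary \ref{C:GCC} and hence are independent by Lemma \ref{L:independent}; the whole difficulty is independence \emph{modulo} $R_\sigma$.) Granting this, the forced spanning set is automatically a basis of $\SI_\beta(Q)_\sigma/R_\sigma$ and $B_\sigma$ is unique, so the minimal cluster generating set is unique.

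For $\SI_{\bn}(T_n)$ I would prove this independence through the generic basis. By Corollary \ref{C:basis} the generic character identifies a basis of $\SI_{\bn}(T_n)=\br{\mc{C}}(\Delta_n)$ with the lattice points of $\mr{G}_n$, and it is weight-graded: $C_{W_n}(\g)$ has weight $\g\bs{\sigma}_n$, so the lattice points of the slice $\mr{G}_n(\sigma)$ form a basis of $\SI_{\bn}(T_n)_\sigma$ (Theorem \ref{T:VPiso}). The plan is to show that $R_\sigma$ is exactly the span of those $C_{W_n}(\g)$ with $\g\in\mr{G}_n(\sigma)$ \emph{not} an extremal-ray generator of $\mr{G}_n$. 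Using Proposition \ref{P:finite} and the computations recorded after it (valid at least for $n<6$), the extremal-ray generators of $\mr{G}_n$ are precisely the $\g$-vectors of cluster and coefficient variables and they generate the monoid $G(\Delta_n,W_n)=\mr{G}_n\cap\mb{Z}^{\delta_n}$; I would then run an induction on the resulting partial order on $G(\Delta_n,W_n)$, using that a product $C_{W_n}(\g')C_{W_n}(\g'')$ with $\g',\g''\ne 0$ expands in the generic basis with leading term $C_{W_n}(\g'+\g'')$ of coefficient one, all remaining terms being strictly higher in the order and again indexed by non-extremal points. Once $R_\sigma$ is pinned down this way, the cluster and coefficient variables of weight $\sigma$ that survive modulo $R_\sigma$ are exactly those sitting at extremal rays and, being distinct basis vectors not contained in the span of the remaining basis vectors, are independent modulo $R_\sigma$; uniqueness for $\SI_{\bn}(T_n)$ follows, and for $n<6$ it can be cross-checked against the explicit generating sets produced in Section \ref{S:smalln}.

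The main obstacle, and the reason this is still only a conjecture, is exactly the independence-modulo-$R_\sigma$ step for a general cluster-structured semi-invariant ring. In a unique factorization domain an irreducible element can perfectly well be a polynomial in reducible ones (think $p=ab+cd$), so nothing formal prevents a single cluster variable of a \emph{decomposable} weight $\sigma$ from lying in $R_\sigma$, or two such from becoming proportional modulo $R_\sigma$; excluding these requires precise control of which basis elements occur in the products spanning $R_\sigma$, i.e.\ of the structure constants of a distinguished basis relative to the $\g$-vector fan, input that is available for $\SI_{\bn}(T_n)$ through the explicit polytope $\mr{G}_n$ and Corollary \ref{C:basis} but not in general. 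A secondary, more routine point to isolate in the general statement is that every cluster variable really does carry a nonzero weight inside the pointed cone $\mb{R}_{+}\Sigma_\beta(Q)$ (automatic once $\SI_\beta(Q)\ne k$ and the grading is the semi-invariant grading). I would therefore settle the triple-flag case first by the argument above, and only afterwards try to axiomatize which features of $(Q,\beta)$ make it go through.
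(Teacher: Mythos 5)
The statement you are addressing is Conjecture \ref{P:cluster_gen}: the paper offers no proof of it, and your proposal does not supply one either. Your first paragraph is essentially a restatement of the discussion that already precedes the conjecture in the paper (a minimal multihomogeneous generating set corresponds to a choice, for each $\sigma$, of a basis $B_\sigma$ of a complement of $R_\sigma$), and your reduction of uniqueness to the linear independence, modulo $R_\sigma$, of the cluster and coefficient variables of weight $\sigma$ is a correct and sensible reformulation --- but that independence is the entire content of the conjecture, and you leave it unproved in general, as you yourself acknowledge. So as a proof attempt this has a gap by design; what remains to evaluate is the triple-flag strategy of your second paragraph.

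That strategy has concrete problems even for $T_n$. First, it rests on the extremal rays of $\mr{G}_n$ being generated by $\g$-vectors of cluster and coefficient variables and on Proposition \ref{P:finite}, which requires finitely many clusters; the paper itself shows both hypotheses fail at $n=6$ ($\mc{C}(\Delta_6)$ is of infinite type, and the extremal ray $\g_0$ of $\mr{G}_6$ with $\op{E}(f,f)=k$ is not the $\g$-vector of any cluster variable). Second, the asserted expansion of a product $C_{W_n}(\g')C_{W_n}(\g'')$ in the generic basis --- leading term $C_{W_n}(\g'+\g'')$ with coefficient one, all remaining terms strictly higher and indexed by non-extremal points --- is an unproved assumption about structure constants; the paper only provides additivity of $\g$-vectors, not any such triangularity or support control. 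Third, your proposed identification of $R_\sigma$ with the span of the non-extremal generic basis elements appears to fail already at $n=5$: the paper records $45$ extremal rays of $\mr{G}_5$, all generated by $\g$-vectors of the $36+12$ cluster and coefficient variables, so at most three of these variables can have non-extremal $\g$-vectors; yet six cluster variables are redundant, each lying in a two-dimensional weight space $\SI_{\beta_5}(T_5)_\sigma$ that is spanned by products of other variables, hence equal to $R_\sigma$. Thus some cluster variables whose $\g$-vectors generate extremal rays nevertheless lie in $R_\sigma$, and extremality of the $\g$-vector cannot be the criterion separating $R_\sigma$ from its complement. Any argument, even for small $n$, will have to control the structure constants of the generic basis or proceed weight by weight as in Section \ref{S:smalln}, rather than read everything off the face structure of $\mr{G}_n$.
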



\subsection{$\cone{n}$ Revisited}
Now we come back to the complete triple flags.
We will construct a minimal cluster generating set for $\SI_\bn(T_n)$ when $n< 6$.
To carry out our algorithm, we should start with all indivisible extremal weights $\sigma_{\op{e}}$ in $\Sigma_\bn(T_n)$.
Note that for any such weight $\sigma_{\op{e}}$, $\SI_\beta(Q)_{\sigma_{\op{e}}}$ is spanned by cluster variables and coefficient variables over $k$.

Recall that we have Algorithm CT to find extremal rays of $\cone{n}$.
Now with Theorem \ref{T:equal}, we can improve the algorithm as follows.
We first compute all extremal rays $\g_{\op{e}}$ in $\mr{G}_n$,
then all extremal rays in $\cone{n}$ are among $\g_{\op{e}} \bs{\sigma}_n$.

\begin{example}
In contrast to \cite[Conjecture 0.19]{GHKK}, not all extremal rays of $\mr{G}_n$ are $\g$-vectors of cluster variables.
The first counterexample appears when $n=6$. Let us consider an extremal ray of $\mr{G}_6$
$$\g_0=(\e_{2,0}+\e_{0,4}+\e_{4,2})+\e_{2,3}+\e_{3,1}+\e_{1,2}-\e_{2,1}-\e_{1,3}-\e_{3,2}.$$
We say a $\g$-vector $\g$ can be obtained via a sequence of mutations $\mu_{\b{u}}:=\mu_{u_k}\cdots \mu_{u_2} \mu_{u_1}$ from $\Delta$
if $\g$ is the $\g$-vector of the cluster variable $\mu_{\b{u}}(x_{u_k})$ (with respect to the seed $(\Delta,\b{x})$).
It is known from \cite{DWZ2,DF} that if $\g_0$ can be reached via mutations, then the {\em $\op{E}$-space} $\op{E}(f,f)=0$ for $f=\Coker(\g_0)$.
But it is not hard to show that $\op{E}(f,f)=k$.
\end{example}

The following proposition can greatly expedite our search for minimal cluster generating sets.
\begin{proposition}[{\cite[Corollary 7.12]{DW2}}] For $n\leq 7$,
the extremal rays of $\cone{n}$ are generated by weights corresponding to real Schur roots.
\end{proposition}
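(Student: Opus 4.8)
The plan is to treat this as \cite[Corollary 7.12]{DW2} and reduce it to a finite verification that dovetails with the cluster picture above. First I would enumerate the extremal rays of $\cone{n}$ for $n\leq 7$. By Theorem \ref{T:VPiso} and Corollary \ref{C:basis} the cone $\cone{n}$ is the image of $\mr{G}_n$ under the linear map $\g\mapsto\g\bs{\sigma}_n$; since the image of a rational polyhedral cone under a linear map is again rational polyhedral and its extremal rays lie among the images of the extremal rays of the source, it is enough to list the finitely many extremal rays $\g_{\op{e}}$ of $\mr{G}_n$ (as was already carried out for $n=4,5$ in the proof of the corollary following Proposition \ref{P:finite}, and is an analogous, if larger, computation for $n=6,7$) and pass to the rays $\mb{R}_+\,\g_{\op{e}}\bs{\sigma}_n$; alternatively one may simply run Algorithm CT directly on $\cone{n}$.

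Next, for each extremal ray of $\cone{n}$ with indivisible generator $\sigma_{\op{e}}$, the goal is to produce a real Schur root $\alpha$ with $\sigma_{\op{e}}={}^\sigma\!\alpha$. By Theorem \ref{T:inv_span} some projective presentation $f$ has $0\neq s(f)\in\SI_\bn(T_n)_{\sigma_{\op{e}}}$, hence resolves a representation of some dimension vector $\alpha$ with $\sigma_{\op{e}}={}^\sigma\!\alpha$; since $\sigma_{\op{e}}$ is indivisible and extremal, Lemma \ref{L:irreducible} and the multiplicativity of $s(-)$ along exact sequences (Lemma \ref{L:exact}) sharply restrict how $\alpha$ can decompose. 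It then remains, for each of the finitely many candidates, to confirm that $\alpha$ has the shape of a Schur root of $T_n$ — either supported on a single arm (a positive root of $A_n$) or nondecreasing along each arm, as noted before Theorem \ref{T:CT} — and that $\innerprod{\alpha,\alpha}_{T_n}=1$, which identifies $\alpha$ as a real Schur root and finishes the case $n\leq 7$.

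The main obstacle is precisely this last step for $n=6,7$. For $n\leq 5$ the conclusion comes for free: every extremal ray of $\mr{G}_n$ is the $\g$-vector of a cluster or coefficient variable, hence of an exceptional representation, hence of a real Schur root. From $n=6$ on this shortcut breaks, as the example above shows: the extremal ray $\g_0$ of $\mr{G}_6$ has $\op{E}(f,f)=k$ for $f$ a general presentation in $\PHom_{J_6}(\g_0)$, so $\Coker(\g_0)$ is not rigid and $\g_0$ is not reachable by mutations. One must therefore verify directly that $\g_0$ and its analogues map into rays of $\cone{n}$ whose indivisible generators still satisfy $\innerprod{\alpha,\alpha}_{T_n}=1$ — which, happily, they do for $n\leq 7$. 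For $n\geq 8$ this computation fails, extremal rays supported at isotropic loci appear, and the statement becomes false, so the bound $n\leq 7$ is genuine and no $n$-uniform argument should be expected.
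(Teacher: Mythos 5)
The paper itself gives no argument for this statement: it is imported wholesale from \cite[Corollary 7.12]{DW2}, so there is no internal proof to match and anything you write is a genuinely new route. Your reduction step is fine and in fact mirrors what the paper does in Section \ref{S:smalln}: since $\dim\SI_\bn(T_n)_\sigma$ equals the number of lattice points of $\mr{G}_n(\sigma)$ (Theorems \ref{T:eqLR}, \ref{T:VPiso}), the cone $\cone{n}$ is the image of $\mr{G}_n$ under $\g\mapsto\g\bs{\sigma}_n$, and extremal rays of the image lie among images of extremal rays of the source; alternatively Algorithm CT enumerates them directly. The problems come afterwards.

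First, your acceptance criterion does not identify real Schur roots. The condition ``supported on one arm or nondecreasing along each arm'' is stated in the paper only as a \emph{necessary} condition for a Schur root, so verifying it together with $\innerprod{\alpha,\alpha}_{T_n}=1$ proves that $\alpha$ is a real root, not that it is Schur; you would need to check in addition that a general representation of dimension $\alpha$ is indecomposable (this is exactly the extra check made in the proof of Lemma \ref{L:level1}), or compute its canonical decomposition. Second, the ``for free'' argument for $n\le 5$ rests on the chain ``extremal ray of $\mr{G}_n$ $=$ $\g$-vector of a cluster or coefficient variable $\Rightarrow$ exceptional representation $\Rightarrow$ real Schur root,'' which conflates two different categories: cluster variables correspond to rigid presentations over the Jacobian algebra $J_n=J(\Delta_n,W_n)$, and the paper establishes no dictionary sending these to exceptional $T_n$-representations. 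Indeed the weight of a cluster variable need not correspond to a real Schur root at all: for $n=5$ the weight $\sigma=3\e_5-(\e_1^3+\e_2^1+\e_2^2+\e_3^2+\e_3^3+\e_4^1)$ carries two cluster variables and has $\dim\SI_{\beta_5}(T_5)_\sigma=2$, which by Lemma \ref{L:dim=1} is impossible for a real Schur root; even for the extremal-weight cluster variables the Schur property is something to be verified (this is precisely what Example \ref{Ex:T4} leaves to the reader for $n=4$). Third, and most importantly, the entire content of the statement sits in the cases $n=6,7$, and there your proposal only asserts that the verification succeeds (``which, happily, they do'') without performing it; since this finite check is exactly what separates $n\le 7$ from the failure at $n=8$, it cannot be waved through. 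Either carry out and record the computation with the corrected Schur criterion, or do what the paper does and simply cite \cite[Corollary 7.12]{DW2}.
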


As pointed in \cite[Example 7.13]{DW2}, $n=8$ is the first example when $\cone{n}$ can contain an extremal weight $\sigma$ such that
$^\alpha\! \sigma$ is an {isotropic} Schur root.
The example given there is $$\sigma=3\e_8-(\e_1^1+\e_3^2+\e_3^3+\e_5^1+\e_6^2+\e_6^3).$$
In this case, $c_{\mu(\sigma),\nu(\sigma)}^{\lambda(\sigma)}=2$.
It turns out that $\SI_{\beta_8}(T_8)_{\sigma}$ is spanned by two cluster variables.
Interested readers can verify using \cite{Ke} that the two cluster variables have $\g$-vectors
\begin{align*}
& (\e_{0,6}+\e_{5,3}+\e_{2,0})+\e_{1,3}+\e_{3,2}-\e_{2,2}-\e_{3,3}, \\
& (\e_{0,3}+\e_{5,3}+\e_{2,0})+\e_{4,1}+\e_{1,6}-\e_{4,3}-\e_{2,1},
\intertext{which can be obtained respectively by the sequences of mutations from $\Delta_8$}
&(3,1),(2,2),(2,1),(3,1),(4,1),(3,2),(4,2),(5,2),(4,3),(3,4),(3,3),(2,4),(1,5),(2,5);\\
&(2,5),(2,4),(3,4),(3,3),(4,3),(4,2),(4,1),(3,2),(3,1),(2,2),(2,1),(1,2).
\end{align*}

\subsection{When $n\leq 6$} {\ }
The first nontrivial case appears when $n=3$. It~is~treated~in~\cite{SW}.
\begin{proposition} The semi-invariant ring $\SI_{\beta_3}(T_3)$ is a cluster algebra of type $A_1$. It can be presented by
$$k[s_{0,1},s_{1,0},s_{0,2},s_{2,0},s_{1,2},s_{2,1},s_{1,1},s_{1,1}']/(s_{1,1}s_{1,1}'-s_{0,1}s_{2,0}s_{1,2}-s_{1,0}s_{0,2}s_{2,1}).$$
\end{proposition}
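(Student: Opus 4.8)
The plan is to read the cluster structure of $\Delta_3$ off directly and then match it to the semi-invariant ring via a transcendence-degree count. First I would note that the vertex set $\delta_3=\{(0,1),(1,0),(0,2),(2,0),(1,2),(2,1),(1,1)\}$ of the ice hive quiver $\Delta_3$ has exactly one mutable vertex, namely $(1,1)$, the other six (boundary) vertices being frozen; by Lemma \ref{L:WC} the associated weight configuration assigns $\f_{i,j}$ to $(i,j)$. Hence $\mc{C}(\Delta_3,\mc{S}_3)$ is a cluster algebra of rank one, i.e. of type $A_1$, with precisely two clusters and two cluster variables $s_{1,1}$ and $s_{1,1}'$. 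Mutating the graded seed at $(1,1)$, the exchange relation \eqref{eq:exrel} together with Lemma \ref{L:exchange} taken at $(i,j)=(1,1)$ (so that the third arm index is $k=1$ and $(-1)^n=-1$) gives $s_{1,1}\,s_{1,1}'=s_{0,1}s_{2,0}s_{1,2}+s_{1,0}s_{0,2}s_{2,1}$, where $s_{1,1}'$ now denotes the cluster variable; it differs from Schofield's function $s(f_{1,1}')$ only by the scalar $(-1)^n$, which is immaterial since a cluster variable is pinned down only by its exchange relation.

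Next I would invoke Theorem \ref{T:equal} together with the equality $\br{\mc{C}}(\Delta_3)=\mc{C}(\Delta_3)$ established above for $n<6$, so that $\SI_{\beta_3}(T_3)=\mc{C}(\Delta_3,\mc{S}_3)$. In a rank-one cluster algebra the cluster algebra is generated as a $k$-algebra by its coefficient variables together with the two cluster variables, so $\SI_{\beta_3}(T_3)$ is generated by the eight elements $s_{0,1},s_{1,0},s_{0,2},s_{2,0},s_{1,2},s_{2,1},s_{1,1},s_{1,1}'$. Together with the exchange relation this yields a surjection of $k$-algebras $\pi\colon A\twoheadrightarrow\SI_{\beta_3}(T_3)$, where $A=k[S_{0,1},S_{1,0},S_{0,2},S_{2,0},S_{1,2},S_{2,1},S_{1,1},S_{1,1}']/\big(S_{1,1}S_{1,1}'-S_{0,1}S_{2,0}S_{1,2}-S_{1,0}S_{0,2}S_{2,1}\big)$, and the task is to prove $\pi$ is an isomorphism.

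This I would do by a dimension argument. The defining polynomial is linear in $S_{1,1}$ with leading coefficient the variable $S_{1,1}'$, which is prime in the polynomial ring in the remaining seven variables and does not divide the constant term $S_{0,1}S_{2,0}S_{1,2}+S_{1,0}S_{0,2}S_{2,1}$; hence the polynomial is irreducible and $A$ is an affine domain of Krull dimension $8-1=7$. On the other side, the seven elements of the initial seed $\mc{S}_3$ are algebraically independent by Theorem \ref{T:ag-independent}, while $s_{1,1}'$ is algebraic over them by the exchange relation, so $\SI_{\beta_3}(T_3)$ is an affine domain of transcendence degree $7$ (consistent with its Krull dimension $\binom{5}{2}-3=7$). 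A surjection of affine domains over $k$ of equal dimension has prime kernel $\mathfrak p$ with $\dim A/\mathfrak p=\dim A$, which forces $\mathfrak p=0$; thus $\pi$ is an isomorphism, giving the asserted presentation.

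I do not expect a genuine obstacle: once $\Delta_3$ is identified as a rank-one ice quiver, the argument is bookkeeping plus the standard fact that a surjection between equidimensional affine domains over a field is injective. The only point needing a little care is reconciling the sign in Lemma \ref{L:exchange} with the sign-free presentation displayed in the statement, which is handled by the remark that the cluster variable $s_{1,1}'$ is determined only up to the normalization built into the exchange relation \eqref{eq:exrel}.
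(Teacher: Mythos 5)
Your argument is correct, but it is genuinely different from what the paper does: the paper gives no internal proof of this proposition at all, simply remarking that the case $n=3$ ``is treated in'' \cite{SW}, where the semi-invariant algebra of the length-3 triple flag is computed directly (the eight generators and the single relation), after which the type-$A_1$ cluster structure is just read off the quiver $\Delta_3$. You instead derive the presentation entirely from the paper's own machinery: $\Delta_3$ has the unique mutable vertex $(1,1)$, so $\mc{C}(\Delta_3,\mc{S}_3)$ has exactly two seeds; Theorem \ref{T:equal} plus the corollary $\br{\mc{C}}(\Delta_n)=\mc{C}(\Delta_n)$ for $n<6$ identify $\SI_{\beta_3}(T_3)$ with this rank-one cluster algebra, hence with the $k$-algebra generated by the six coefficient variables and the two cluster variables subject (at least) to the exchange relation of Lemma \ref{L:exchange}; and the dimension count (irreducibility of the exchange polynomial giving $\dim A=7$, Theorem \ref{T:ag-independent} giving transcendence degree $7$ on the other side, and the standard fact that a surjection of equidimensional affine domains over $k$ is injective) shows there are no further relations. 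Your handling of the sign $(-1)^n$, absorbing it into the normalization of the cluster variable $s_{1,1}'$ (Schofield's invariant is only defined up to scalar), is the right way to reconcile Lemma \ref{L:exchange} with the sign-free relation in the statement. What your route buys is a proof that the proposition is a formal corollary of the paper's main theorem, independent of \cite{SW}; what the paper's citation buys is brevity and an external confirmation of the presentation that does not rely on the cluster-theoretic machinery. The only point worth making explicit if you write this up is the generation step: it uses that the cluster algebra (not just the upper one) coincides with $\SI_{\beta_3}(T_3)$, which for $n=3$ is the ``obvious'' case of the paper's corollary, or can be checked directly for a rank-one seed.
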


Below we will use a well-known fact that the number of cluster variables in a cluster algebra $\mc{\Delta}$ is completely determined by the mutable part of $\Delta$. It is equal to the number of real roots of $\Delta$ plus $|\Delta_0|$ if $\Delta$ is acyclic.

\begin{proposition} The semi-invariant ring $\SI_{\beta_4}(T_4)$ is a cluster algebra of type $A_3$. 
It is minimally generated by 9 cluster variables and 9 coefficient variables in $\mc{C}(\Delta_4,\mc{S}_4)$.
They are all of extremal weights in $\Sigma_{\beta_4}(T_4)$.
\end{proposition}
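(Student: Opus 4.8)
The plan is to reduce the assertion to a finite computation inside the cluster algebra $\mc{C}(\Delta_4,\mc{S}_4)$. First, by Theorem~\ref{T:equal} together with the corollary that $\br{\mc{C}}(\Delta_n)=\mc{C}(\Delta_n)$ for $n<6$, we have $\SI_{\beta_4}(T_4)=\mc{C}(\Delta_4,\mc{S}_4)$. The ice hive quiver $\Delta_4$ has $\binom{6}{2}-3=12$ vertices: the nine boundary vertices $(i,0)$, $(0,j)$ and $(i,j)$ with $i+j=4$ (for $1\le i,j\le 3$), which are frozen, and the three interior vertices $(1,1),(2,1),(1,2)$, which are mutable. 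Reading the arrows off the hive quiver, the full subquiver on $\{(1,1),(2,1),(1,2)\}$ is the oriented $3$-cycle bounding the central unit triangle, which is mutation-equivalent to the $A_3$ Dynkin quiver; hence $\mc{C}(\Delta_4)$ is a cluster algebra of finite type $A_3$. Consequently there are exactly nine cluster variables in $\mc{C}(\Delta_4,\mc{S}_4)$ (finite type $A_3$), and together with the nine coefficient variables $s(f_{i,j})$ attached to the frozen vertices these eighteen elements generate $\SI_{\beta_4}(T_4)$.

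Next I would pin down the weights of all eighteen generators. The nine coefficient variables carry the nine type-I level-$1$ weights $\f_{i,j}$ with $(i,j)$ on the boundary of $\delta_4$; the three initial cluster variables carry the type-II level-$1$ weights $\f_{1,1},\f_{2,1},\f_{1,2}$. The remaining six cluster variables are reached by short mutation sequences from the initial seed, and their weights are computed by iterating the weight-mutation rule \eqref{eq:mu_wt} applied to $\bs{\sigma}_4$: a single mutation produces the three weights $\f'_{i,j}$ of Lemma~\ref{L:f'}, which one checks equal the three weights $2\e_n-\e_1^a-\e_2^b-\e_2^c-\e_3^a$ of Example~\ref{Ex:T4}, and the last three are the weights $2\e_n-\e_2^a-\e_3^b-\e_3^c$ listed there. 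Comparing with the eighteen extremal rays of $\cone{4}$ produced by Algorithm CT in Example~\ref{Ex:T4}, we conclude that the eighteen generator weights are precisely those eighteen extremal rays; in particular they are pairwise distinct, and each is indivisible (every one is a primitive integer vector).

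It then remains to deduce minimality. Each generator weight $\sigma$ is extremal in $\cone{4}$ and indivisible, hence indecomposable in $\Sigma_{\beta_4}(T_4)$; and since the corresponding dimension vector is a real Schur root (Lemma~\ref{L:level1}, Lemma~\ref{L:f'}(1), and Example~\ref{Ex:T4}), Lemma~\ref{L:dim=1} gives $\dim\SI_{\beta_4}(T_4)_\sigma=1$. Thus for each such $\sigma$ the subspace $R_\sigma=\sum_{\sigma'}\SI_{\beta_4}(T_4)_{\sigma'}\SI_{\beta_4}(T_4)_{\sigma-\sigma'}$ of decomposable elements vanishes, so the generator of weight $\sigma$ cannot be dropped; and by Lemma~\ref{L:irreducible} it is irreducible. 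Running the graded-Nakayama argument of Section~\ref{S:smalln}: $\SI_{\beta_4}(T_4)$ is positively multigraded, $J$ its irrelevant ideal, and the images of the eighteen generators in $J/J^2$ lie in pairwise distinct graded pieces and are each nonzero, hence linearly independent; since they also span $J/J^2$ (the eighteen generate the ring), they form a minimal generating set, and every element of it has an extremal weight by the previous paragraph.

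The main obstacle is the middle step: explicitly following the six non-initial cluster variables through the $A_3$ exchange graph (equivalently, computing the extremal rays of $\mr{G}_4$, as done with MPT for the corollary in Section~\ref{S:CS}) and matching their weights against the list in Example~\ref{Ex:T4}. This is a finite but slightly fussy bookkeeping exercise, routine given the $A_3$ exchange relations and the weight-mutation formula. A minor secondary point is verifying $\dim\SI_{\beta_4}(T_4)_\sigma=1$ for the three ``second-type'' weights, which follows from the real-Schur-root statement of Example~\ref{Ex:T4} via Lemma~\ref{L:dim=1} (or from a one-line Littlewood--Richardson computation through Theorem~\ref{T:eqLR}).
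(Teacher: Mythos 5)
Your argument is correct and follows essentially the same route as the paper: the identification $\SI_{\beta_4}(T_4)=\mc{C}(\Delta_4,\mc{S}_4)$, the observation that the mutable part of $\Delta_4$ is of finite type $A_3$ with exactly $9$ cluster variables, and the $18$ extremal rays of $\cone{4}$ from Example~\ref{Ex:T4} combined with the graded (Nakayama-type) minimality criterion of Section~\ref{S:smalln}. The only real difference is that the step you flag as the main obstacle---explicitly tracking the weights of the six non-initial cluster variables through the $A_3$ exchange graph---is not needed: since each of the $18$ indivisible extremal weights is indecomposable in $\Sigma_{\beta_4}(T_4)$ and has a nonzero graded piece, any homogeneous generating set must contain an element of each such weight, so the $9+9$ generators must match the $18$ extremal rays bijectively by counting alone, which yields minimality and the extremality of all the weights without any weight-mutation bookkeeping.
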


\begin{proof}
It is enough to observe that there are 9 cluster variables for a cluster algebra of type $A_3$.
Together with the 9 coefficient variables, each one has to match one of the 18 extremal rays in the cone $\cone{4}$ (see Example \ref{Ex:T4}).
\end{proof}

\begin{proposition} The semi-invariant ring $\SI_{\beta_5}(T_5)$ is a cluster algebra of type $D_6$.
It is minimally generated by 30 out of 36 cluster variables and 12 coefficients of $\mc{C}(\Delta_5,\mc{S}_5)$.
They are all of extremal weights in $\Sigma_{\beta_5}(T_5)$.
The weights of six cluster variables excluded are, up to symmetry, equal to
$\sigma=3\e_5-(\e_1^3+\e_2^1+\e_2^2+\e_3^2+\e_3^3+\e_4^1).$
\end{proposition}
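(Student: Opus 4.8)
The plan is to deduce everything from the cluster structure already established. By Theorem~\ref{T:equal} (together with the corollary above, which gives $\br{\mc{C}}(\Delta_5)=\mc{C}(\Delta_5)$) we have $\SI_{\beta_5}(T_5)=\mc{C}(\Delta_5,\mc{S}_5)$, so I would first fix the cluster type: a direct check on the mutable part of $\Delta_5$ — the six interior vertices $(1,1),(2,1),(1,2),(3,1),(2,2),(1,3)$ with the arrows inherited from the hive quiver — shows that, after a short sequence of mutations to an acyclic representative, it is the Dynkin quiver $D_6$. Hence $\mc{C}(\Delta_5)$ is of finite type $D_6$, with exactly $30+6=36$ cluster variables, while $\Delta_5$ has $12$ frozen vertices, i.e.\ $12$ coefficients. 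Recall from the previous subsection that a minimal multihomogeneous generating set arises by choosing, for each $\sigma\in\Sigma_{\beta_5}(T_5)$, a basis $B_\sigma$ of a complement of $R_\sigma=\sum_{\sigma'}\SI_{\beta_5}(T_5)_{\sigma'}\,\SI_{\beta_5}(T_5)_{\sigma-\sigma'}$ in $\SI_{\beta_5}(T_5)_\sigma$, that the $B_\sigma$ may be taken among cluster and coefficient variables, and that every such set has cardinality $\dim J/J^2$ by the graded Nakayama lemma. So the proposition reduces to showing that the nonzero $B_\sigma$ occur precisely at extremal weights, that they comprise $30$ cluster variables together with all $12$ coefficients, and that the six unused cluster variables lie in the symmetry orbit of $\sigma$.

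Next I would enumerate the extremal rays of $\cone{5}$ via the improved Algorithm~CT: compute the $45$ extremal rays of $\mr{G}_5$ (already needed for the corollary above), push them forward under the weight configuration $\bs{\sigma}_5$, discard those that are not extremal in $\cone{5}$, and organize the outcome by the $S_3$-action permuting the three arms. For each resulting indivisible extremal weight $\sigma_{\mr{e}}$, Theorem~\ref{T:eqLR} and the Littlewood--Richardson rule give $\dim\SI_{\beta_5}(T_5)_{\sigma_{\mr{e}}}$; the $12$ level-$1$ weights of type~(I) are exactly the weights of the coefficient variables, and the remaining extremal rays are realized by cluster variables. Since an indivisible extremal weight carries only irreducible semi-invariants (Lemma~\ref{L:irreducible}), $R_{\sigma_{\mr{e}}}=0$, so every generator at such a weight is forced; for the finitely many divisible extremal weights a direct dimension count via Theorem~\ref{T:eqLR} gives $\dim R_{\sigma_{\mr{e}}}<\dim\SI_{\beta_5}(T_5)_{\sigma_{\mr{e}}}$, so generators are forced there as well. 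This pins down the $30$ cluster variables and $12$ coefficients that must belong to every minimal generating set.

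It then remains to check that these $42$ elements already generate $\SI_{\beta_5}(T_5)$, i.e.\ that $R_\sigma=\SI_{\beta_5}(T_5)_\sigma$ for every non-extremal $\sigma$. Because $\Sigma_{\beta_5}(T_5)$ is a finitely generated saturated semigroup (Theorem~\ref{T:cone}) whose multigraded Hilbert function is computed by Littlewood--Richardson coefficients via Theorems~\ref{T:eqLR} and~\ref{T:VPiso}, this is a finite computation: one runs the algorithm of the previous subsection degree by degree over a sufficient truncation of $\cone{5}$ (in practice with the software MPT, as for the corollary above) and verifies $\dim R_\sigma=\dim\SI_{\beta_5}(T_5)_\sigma$ off the extremal rays. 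The six cluster variables that then drop out will be found to have weight in the single symmetry orbit of $\sigma=3\e_5-(\e_1^3+\e_2^1+\e_2^2+\e_3^2+\e_3^3+\e_4^1)$, a non-extremal weight with $\dim\SI_{\beta_5}(T_5)_\sigma=1$; their redundancy is exhibited concretely by writing the spanning semi-invariant (unique up to scalar) as a product of semi-invariants of extremal weight, for instance through Lemma~\ref{L:exact} applied to a short exact sequence $0\to L\to M\to N\to 0$ in which $\dv L$ and $\dv N$ have extremal weight. Minimality and the cardinality $30+12=\dim J/J^2$ then follow from the graded Nakayama argument recalled in the first paragraph.

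The hard part will be the last step: confirming that the extremal-weight generators actually generate and, more delicately, determining \emph{exactly} which six cluster variables are superfluous. Conceptually one must rule out that a generator is forced at some non-extremal weight — which would contradict the description of $\SI_{\beta_5}(T_5)$ as generated by cluster and coefficient variables — and concretely this requires computing the multigraded Hilbert function of $\SI_{\beta_5}(T_5)$ and the subspaces $R_\sigma$ up to the needed range, which is why the argument ultimately relies on explicit machine computation on $\cone{5}$.
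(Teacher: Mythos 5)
Your skeleton (mutating the mutable part of $\Delta_5$ to a quiver of type $D_6$, hence $36$ cluster variables; the $42$ extremal rays of $\cone{5}$ accounting for $30$ cluster variables and the $12$ coefficients; minimality via the graded Nakayama argument at indivisible extremal weights) agrees with the paper. The genuine gap is in the redundancy step for the six excluded cluster variables, which is the actual content of the proof. You assert that $\sigma=3\e_5-(\e_1^3+\e_2^1+\e_2^2+\e_3^2+\e_3^3+\e_4^1)$ has $\dim\SI_{\beta_5}(T_5)_\sigma=1$ and propose to write ``the'' spanning semi-invariant as a product of extremal-weight semi-invariants via Lemma \ref{L:exact}. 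This is false: by Theorem \ref{T:eqLR} the weight $\sigma$ corresponds to $\lambda=(5,4,2)$, $\mu=(4,2)$, $\nu=(3,2)$, and the Littlewood--Richardson rule gives $c^{(5,4,2)}_{(4,2),(3,2)}=2$, so the component is two-dimensional. Consequently there is no unique spanning semi-invariant, and exhibiting one factorizable element of weight $\sigma$ does not show that an excluded cluster variable lies in the subalgebra generated by the other generators; you must show that the entire two-dimensional component does. The paper does exactly this: it produces two elements of that subalgebra of weight $\sigma$, namely $C_{W_5}(\e_{1,0}+\e_{0,3}+\e_{3,2}-\e_{1,2})\,x_{2,1}$ and $x_{2,3}x_{3,0}x_{0,1}$, and certifies their linear independence by their distinct $\g$-vectors (Lemma \ref{L:independent}); since the component is two-dimensional they span it, so the six excluded cluster variables are redundant.

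A second, smaller problem is your final verification that the $42$ retained elements generate: you propose to check $\dim R_\sigma=\dim\SI_{\beta_5}(T_5)_\sigma$ at \emph{all} non-extremal weights over a ``sufficient truncation'' of $\cone{5}$, but you give no bound making this computation finite, and it is in any case unnecessary. Since $\br{\mc{C}}(\Delta_5)=\mc{C}(\Delta_5)$ is of finite cluster type, Theorem \ref{T:equal} already shows that $\SI_{\beta_5}(T_5)$ is generated by the $36$ cluster variables together with the $12$ coefficients; so the only remaining task is to show that the six cluster variables whose weights lie in the symmetry orbit of $\sigma$ belong to the subalgebra generated by the other $42$, which is precisely the one weight-space computation above (transported along the orbit by symmetry). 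With that in hand, your minimality argument at indivisible extremal weights completes the proof as in the paper.
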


\begin{proof} If we perform a sequence of mutations at (1,3),(2,1),(1,1), and (1,2).
then the mutable part of $\Delta_5$ transforms into a Dynkin quiver of type $D_6$, which has 30 real roots.
We use Algorithm CT to find that there are 42 extremal rays in $\cone{5}$.
They correspond to the 30 cluster variables and 12 coefficients.
The weights of the remaining 6 cluster variables are, up to symmetry, equal to $\sigma$.
We need to show these 6 cluster variables are redundant.
The weight $\sigma$ corresponds to the partition $\lambda=(5,4,2),\mu=(4,2),\nu=(3,2)$.
By Theorem \ref{T:eqLR}, the dimension of $\SI_{\beta_5}(T_5)_\sigma$ is equal to $\lrcoef$, which is equal to 2 by the Littlewood-Richardson rule.
Consider the products of cluster variables
$$C_{W_5}(\e_{1,0}+\e_{0,3}+\e_{3,2}-\e_{1,2})x_{2,1} \text{ \ and \ } x_{2,3}x_{3,0}x_{0,1}.$$
The cluster variable with $\g$-vector $\e_{1,0}+\e_{0,3}+\e_{3,2}-\e_{1,2}$ can be obtained by a sequence of mutations at
$\{(1,1),(2,2),(2,1),(1,2)\}$.
The both products have weight $\sigma$ but different $\g$-vectors, so they are linearly independent.
\end{proof}

%

In general, $\SI_{\bn}(T_n)$ is not generated by semi-invariants of extremal weights.
The following proposition will be proved in \cite{Fs2} using the technique of projections.
\begin{proposition} The semi-invariant ring $\SI_{\beta_6}(T_6)$ is a cluster algebra of infinite type.
It is minimally generated by 103 cluster variables, 15 coefficients of $\mc{C}(\Delta_6,\mc{S}_6)$.
Six of these cluster variables are not of extremal weights.
Their weights up to symmetry are equal to
$\sigma=3\e_6-(\e_1^3+\e_2^1+\e_2^2+\e_4^2+\e_4^3+\e_5^1).$
\end{proposition}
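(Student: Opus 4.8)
The plan is to reduce the statement to three computations: the mutation type of the mutable part of $\Delta_6$, the list of extremal rays of $\cone6$ (equivalently of $\mr{G}_6$), and the behaviour of the multigraded generating algorithm of Section \ref{S:smalln} at the single non-extremal weight $\sigma$. First I would settle infinite type. By Theorem \ref{T:equal}, $\SI_{\beta_6}(T_6)=\br{\mc{C}}(\Delta_6,\mc{S}_6)$, and the mutable part of $\Delta_6$ is a $10$-vertex quiver; it suffices to check that it is not mutation-equivalent to any Dynkin quiver, which can be done either by exhibiting an explicit mutation sequence producing an arrow of multiplicity $\ge 3$, or by running the mutation class against the Felikson--Shapiro--Tumarkin classification with \cite{MPT}. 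Hence $\mc{C}(\Delta_6)$ has infinitely many cluster variables.

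Next I would produce the minimal cluster generating set using the algorithm of Section \ref{S:smalln}: for each weight one must compare $\SI_{\beta_6}(T_6)_\tau$ with the subspace $R_\tau=\sum_{\tau'}\SI_{\beta_6}(T_6)_{\tau'}\,\SI_{\beta_6}(T_6)_{\tau-\tau'}$ of decomposable elements, and a generator is forced exactly when the inclusion $R_\tau\subseteq\SI_{\beta_6}(T_6)_\tau$ is strict. The improved Algorithm CT of Section \ref{S:smalln} --- list the extremal rays $\g_{\op{e}}$ of $\mr{G}_6$ with \cite{MPT}, then take the weights $\g_{\op{e}}\bs{\sigma}_6$ --- gives the finite list of extremal rays of $\cone6$; cross-checking each ray with \cite{Ke} sorts them into the $15$ coefficient variables and the cluster variables of extremal weight, and for an extremal weight $\SI_{\beta_6}(T_6)_\tau$ is automatically spanned by cluster and coefficient variables. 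The dimensions $\dim\SI_{\beta_6}(T_6)_\tau=c^{\lambda(\tau)}_{\mu(\tau),\nu(\tau)}$ needed to run the comparison come from Theorem \ref{T:eqLR} together with \eqref{eq:wt2par1}--\eqref{eq:wt2par3} and the Littlewood--Richardson rule (and, when convenient, from Theorem \ref{T:VPiso} as lattice-point counts).

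The crux is the claim that, up to the $S_3$-symmetry permuting the three arms of $T_6$, the weight $\sigma=3\e_6-(\e_1^3+\e_2^1+\e_2^2+\e_4^2+\e_4^3+\e_5^1)$ is the \emph{only} non-extremal weight at which $R_\tau\subsetneq\SI_{\beta_6}(T_6)_\tau$, and that there $\dim R_\sigma=\dim\SI_{\beta_6}(T_6)_\sigma-1$. This is exactly where the projection technique of \cite{Fs2} enters: projecting $T_6$ onto the relevant smaller quiver datum identifies $\SI_{\beta_6}(T_6)_\sigma$ with a piece of a smaller semi-invariant ring whose generators are already understood, which both forces the new generator and rules out every other decomposable weight without an impossible ray-by-ray check. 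Having located the new generator, a direct search with \cite{Ke} exhibits it as $C_{W_6}$ of a $\g$-vector reachable from $\Delta_6$ by an explicit mutation sequence (the analogue of the $n=8$ sequences displayed above), so it is a cluster variable; since $\sigma$ has trivial $S_3$-stabilizer its orbit has six elements, yielding the six non-extremal cluster variables. Adding these to the cluster variables of extremal weight gives the count $103$, and together with the $15$ coefficients we obtain the asserted minimal cluster generating set; its minimality, and the fact that every minimal (possibly inhomogeneous) generating set has the same cardinality, follow from the positively-multigraded Nakayama argument of Section \ref{S:smalln}.

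\textbf{Main obstacle.} The hard part is the third paragraph: proving that $\sigma$ is the unique non-extremal generator weight, i.e. that $R_\tau=\SI_{\beta_6}(T_6)_\tau$ for all other decomposable $\tau$. There are infinitely many decomposable weights, so this is not a finite computation and genuinely requires the structural input of the projection method rather than Algorithm CT alone; controlling $\dim R_\sigma$ (as opposed to the easily computed $\dim\SI_{\beta_6}(T_6)_\sigma$) is the delicate step.
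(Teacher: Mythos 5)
You should know that the paper does not actually prove this proposition here: immediately before the statement it says the proof ``will be proved in \cite{Fs2} using the technique of projections,'' and what follows the statement is only a necessity check for the six non-extremal cluster variables. So your proposal cannot be measured against a complete argument in this paper; like the paper, you defer the genuinely hard step (that $\sigma$ is, up to symmetry, the only weight where a non-extremal generator is forced, and that exactly one new generator is needed there) to the projection method of \cite{Fs2}. In that sense your outline matches the paper's plan, and your ``main obstacle'' paragraph correctly isolates exactly the step the paper leaves to the sequel.

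Where you differ is in the partial content that this paper does supply. The paper's necessity argument is cleaner and more self-contained than the one you sketch: Algorithm CT gives $112$ extremal rays of $\mb{R}^+\Sigma_{\beta_6}(T_6)$, matched with $97$ cluster variables and the $15$ coefficients; the weight $\sigma$ corresponds to $\lambda=(6,5,2)$, $\mu=(5,2)$, $\nu=(4,2)$ with $c^{\lambda}_{\mu\,\nu}=2$, while $\sigma$ admits only one decomposition into extremal weights, namely
\begin{equation*}
\sigma=(\e_6-\e_1^3-\e_5^1)+(\e_6-\e_2^1-\e_4^2)+(\e_6-\e_2^2-\e_4^3),
\end{equation*}
so products of extremal-weight semi-invariants span at most a one-dimensional subspace of the two-dimensional space $\SI_{\beta_6}(T_6)_\sigma$, forcing a new generator in weight $\sigma$ (and six by the $S_3$-symmetry). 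Your alternative --- exhibiting the new element as $C_{W_6}$ of a $\g$-vector reachable by an explicit mutation sequence found with \cite{Ke} --- would show the extra element is a cluster variable but by itself does not show a new generator is \emph{needed}; you would still have to bound $\dim R_\sigma$, which the decomposition count does immediately. Also note the paper's count is $97$ extremal-weight cluster variables plus $6$ non-extremal ones giving $103$, rather than ``one cluster variable per extremal ray'': some extremal rays belong to coefficients, and (as the $n=6$ example with $\g_0$ in Section \ref{S:smalln} warns) not every extremal ray of $\mr{G}_n$ is even a cluster-variable $\g$-vector, so the bookkeeping you propose via \cite{Ke} needs care. With those caveats, your write-up is an accurate road map of the intended proof, but, exactly like the paper, it is not a proof within this paper.
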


\noindent Here, at least we can see that the six cluster variables of non-extremal weights are necessary.
We use Algorithm CT to find that there are 112 extremal rays in $\mb{R}^+\Sigma_{\beta_6}(T_6)$.
They corresponds to the 97 cluster variables and 15 coefficients.
The weight $\sigma$ corresponds to the partition $\lambda=(6,5,2),\mu=(5,2),\nu=(4,2)$ with $\lrcoef=2$.
However, we can only find one decomposition of $\sigma$ into extremal weights,
$$\sigma=(\e_6-\e_1^3-\e_5^1)+(\e_6-\e_2^1-\e_4^2)+(\e_6-\e_2^2-\e_4^3).$$

%

\section*{Acknowledgement}
This project began to be conceived during the MCR conference on Cluster Algebras in Snowbird June 2014. The author would like to thank Professor Gordana Todorov for invitation, Professor Harm Derksen for support, and the organizers for providing ideal working environment.
He also want to thank Professor Jerzy Weyman for carefully reading the manuscript and giving several good suggestions.

\bibliographystyle{amsplain}

\end{document}